\setlist[itemize]{itemsep=2pt, topsep=2pt, leftmargin=*}
\setlist[enumerate]{itemsep=2pt, topsep=2pt, leftmargin =*}
\pgfplotsset{compat=1.17} %%suggested by the compiler
\definecolor{BrickRed}{rgb}{0.6, 0.1875, 0.2475}
\definecolor{Cerulean}{rgb}{0.0, 0.36, 0.4875}
\definecolor{MyGreen}{rgb}{0.1946, 0.3887, 0.0551}
\newcounter{introcounter}
\newtheoremstyle{introtheorem}
  {1\baselineskip\@plus.2\baselineskip\@minus.2\baselineskip}% Space above
  {1\baselineskip\@plus.2\baselineskip\@minus.2\baselineskip}% Space below
  {\slshape}% Body font
  {}%Indent amount (empty = no indent, \parindent = para indent)
  {\bfseries}%  Thm head font
  {.}%       Punctuation after thm head
  { }%      Space after thm head: " " = normal interword space;
\theoremstyle{introtheorem}
\newtheorem{introthm}[introcounter]{Theorem}
\newtheorem{introprop}[introcounter]{Proposition}
\theoremstyle{plain}
\newtheorem{thm}{Theorem}[section]
\newtheorem{Prop}[thm]{Proposition}
\newtheorem{Lemma}[thm]{Lemma}
\newtheorem{Question}[thm]{Question}
\theoremstyle{definition}
\newtheorem{Def}[thm]{Definition}
\newtheorem{Construction}[thm]{Construction}
\newtheorem{Notation}[thm]{Notation}
\theoremstyle{remark}
\newtheorem{Ex}[thm]{Example}
\newtheorem{Rmk}[thm]{Remark}
\newlength{\temp@boite}
\newlength{\saveparindent}
\def\encadrement#1#2{%
  \def\bkvz@before@breakbox{\ifhmode\par\fi\vskip5pt\vskip\breakboxskip\relax}%
  \fboxrule=0.4pt
  \fboxsep=5pt
  \def\bkvz@set@linewidth{\advance\linewidth -2\fboxrule
                          \advance\linewidth -2\fboxsep}%
  \def\bkvz@left{\color{#2}\vrule \@width\fboxrule\hskip\fboxsep\color{black}}%
  \def\bkvz@right{\color{#2}\hskip\fboxsep\vrule \@width\fboxrule\color{black}}%
  \def\bkvz@top{\hbox to \hsize{%
      \color{#2}%
      \setlength{\temp@boite}{\fboxrule+0.5ex}%
      \vrule\@width\fboxrule\@height \temp@boite %
      \rule[0.5ex]{2em}{\fboxrule}%
      {#1}%
      \setlength{\temp@boite}{\textwidth-2em-\widthof{#1}-2\fboxrule}%
      \rule[0.5ex]{\temp@boite}{\fboxrule}%
      \setlength{\temp@boite}{\fboxrule+0.5ex}%
      \vrule\@width\fboxrule\@height \temp@boite}}%
  \def\bkvz@bottom{\color{#2}\hrule\@height\fboxrule}%
  \breakbox\vspace{0pt}}
\def\encadrementombre#1#2{%
  \fboxsep=5pt
  \fboxrule=0.4pt
  \def\bkvz@before@breakbox{%
    \ifhmode\par\fi\vskip5pt\vskip\breakboxskip\relax}%
  \def\bkvz@set@linewidth{%
    \advance\linewidth -2\fboxrule 
    \setlength{\fboxsep}{5pt}\advance\linewidth -2\fboxsep
  }%
  \def\bk@line{\hbox to \linewidth{%
      \ifbkcount\smash{\llap{\the\bk@lcnt\ }}\fi
      \setlength{\fboxsep}{0pt}\colorbox{#2!15}{%
        \setlength{\fboxsep}{5pt}%
        {\color{#2}\vrule width \fboxrule}\hskip\fboxsep
        \box\bk@bxa
        \hskip\fboxsep{\color{#2}\vrule width\fboxrule}%
        }%
      }}%
  \def\bkvz@top{\hbox to \hsize{%
      \setlength{\temp@boite}{\fboxrule+0.5ex}%
      \color{#2!15}\vrule\@width\textwidth\@height\temp@boite %
      \hspace{-\textwidth}%
      \color{#2}%
      \vrule\@width\fboxrule\@height \temp@boite %
      \rule[0.5ex]{2em}{\fboxrule}%
      {\fboxsep=2pt\colorbox{white}{#1}}%
      \setlength{\temp@boite}{\textwidth-2em-\widthof{\fboxsep=2pt\colorbox{white}{#1}}-2\fboxrule}%
      \rule[0.5ex]{\temp@boite}{\fboxrule}%
      \setlength{\temp@boite}{\fboxrule+0.5ex}%
      \vrule\@width\fboxrule\@height \temp@boite %
}}%
  \def\bkvz@bottom{{\color{teal}\hrule\@height\fboxrule}}%
  \color{black}\breakbox}%
\def\encadrementombrevar#1#2{%
  \fboxsep=5pt
  \fboxrule=0.4pt
  \def\bkvz@before@breakbox{%
    \ifhmode\par\fi\vskip5pt\vskip\breakboxskip\relax}%
  \def\bkvz@set@linewidth{%
    \advance\linewidth -2\fboxrule 
    \setlength{\fboxsep}{5pt}\advance\linewidth -2\fboxsep
  }%
  \def\bk@line{\hbox to \linewidth{%
      \ifbkcount\smash{\llap{\the\bk@lcnt\ }}\fi
      \setlength{\fboxsep}{0pt}\colorbox{#2!15}{%
        \setlength{\fboxsep}{5pt}%
        {\color{#2}\vrule width \fboxrule}\hskip\fboxsep
        \box\bk@bxa
        \hskip\fboxsep{\color{#2}\vrule width\fboxrule}%
        }%
      }}%
  \def\bkvz@top{\hbox to \hsize{%
      \setlength{\temp@boite}{\fboxrule+0.5ex}%
      \color{#2!15}\vrule\@width\textwidth\@height\temp@boite %
      \hspace{-\textwidth}%
      \color{#2}%
      \vrule\@width\fboxrule\@height \temp@boite %
      \rule[0.5ex]{2em}{\fboxrule}%
      {\fboxsep=2pt\fcolorbox{#2}{white}{#1}}%
      \setlength{\temp@boite}{\textwidth-2em-\widthof{\fboxsep=2pt\fcolorbox{#2}{#2!15}{#1}}-2\fboxrule}%
      \rule[0.5ex]{\temp@boite}{\fboxrule}%
      \setlength{\temp@boite}{\fboxrule+0.5ex}%
      \vrule\@width\fboxrule\@height \temp@boite %
}}%
  \def\bkvz@bottom{{\color{#2}\hrule\@height\fboxrule}}%
  \color{black}\breakbox}%
\newcommand{\N}{\mathbb{N}}
\newcommand{\Z}{\mathbb{Z}}
\newcommand{\R}{\mathbb{R}}
\newcommand{\C}{\mathbb{C}}
\newcommand{\Q}{\mathbb{Q}}
\newcommand{\id}[1]{\ensuremath{\mathsf{id}_{#1}}}
\newcommand{\op}[0]{\ensuremath{\mathsf{op}}}
\newcommand{\colim}[0]{\ensuremath{\mathsf{colim}}}
\newcommand{\limit}[0]{\ensuremath{\mathsf{lim}}}
\newcommand{\unit}[0]{\ensuremath{\mathds{1}}}
\newcommand{\E}[1]{\ensuremath{\mathcal{E}_{#1}}}
\newcommand{\Einfty}[0]{\ensuremath{\E{\infty}}}
\newcommand{\all}[0]{\ensuremath{\mathsf{all}}}
\newcommand{\finstar}[1]{\ensuremath{ \langle #1 \rangle }}
\newcommand{\fin}[1]{\ensuremath{ \langle #1 \rangle ^\circ}}
\newcommand{\fold}[0]{\ensuremath{\mathsf{fold}}}
\newcommand{\factor}[2]{\left. \raise 2pt\hbox{\ensuremath{#1}} \right/
 	  \hskip -2pt\raise -2pt\hbox{\ensuremath{#2}}}
\newcommand{\Affl}{{\mathbb{A}^1} }%%affine line
\newcommand{\Gm}{\mathbb{G}_m}
\newcommand{\Sph}{\mathbb{S}}
\newcommand{\calS}{\mathcal{S}}
\newcommand{\Spec}[0]{ \mathsf{Spec} }
\newcommand{\Lmot}[0]{\ensuremath{\mathsf{L}_\mathsf{mot}}}
\newcommand{\Smash}{\ensuremath{\wedge}}
\newcommand{\ko}[0]{\ensuremath{\mathsf{ko}}}
\newcommand{\ku}[0]{\ensuremath{\mathsf{ku}}}
\newcommand{\kw}[0]{\ensuremath{\mathsf{kw}}}
\newcommand{\KO}[0]{\ensuremath{\mathsf{KO}}}
\newcommand{\kgl}[0]{\ensuremath{\mathsf{kgl}}}
\newcommand{\KGL}[0]{\ensuremath{\mathsf{KGL}}}
\newcommand{\KU}[0]{\ensuremath{\mathsf{KU}}}
\newcommand{\KW}[0]{\ensuremath{\mathsf{KW}}}
\newcommand{\MSL}[0]{\ensuremath{\mathsf{MSL}}}
\newcommand{\MO}[0]{\ensuremath{\mathsf{MO}}}
\newcommand{\MU}[0]{\ensuremath{\mathsf{MU}}}
\newcommand{\MSp}[0]{\ensuremath{\mathsf{MSp}}}
\newcommand{\MGL}[0]{\ensuremath{\mathsf{MGL}}}
\newcommand{\MSO}[0]{\ensuremath{\mathsf{MSO}}}
\newcommand{\kotop}[0]{\ensuremath{\ko^\mathsf{top}}}
\newcommand{\Mtop}[0]{\ensuremath{M_\mathsf{top}}}
\newcommand{\KOtop}[0]{\ensuremath{\KO^\mathsf{top}}}
\newcommand{\rR}[0]{\ensuremath{r_{\R}}}
\newcommand{\rC}[0]{\ensuremath{r_{\mathbb{C}}}}
\newcommand{\HZ}[0]{\ensuremath{\mathsf{H}\Z}}
\newcommand{\HQ}[0]{\ensuremath{\mathsf{H}\Q}}
\newcommand{\HA}[0]{\ensuremath{\mathsf{HA}}}
\newcommand{\Zmod}[0]{\ensuremath{\Z /2}}
\newcommand{\HZmod}[0]{\ensuremath{\mathsf{H}\Z/2}}
\newcommand{\HZtilde}[0]{\ensuremath{\widetilde{\HZ}}}
\newcommand{\homsh}[3]{\ensuremath{\underline{\pi}_{#1}(#3)_{#2}}}
\newcommand{\calR}[0]{\ensuremath{\mathcal{R}}}
\newcommand{\calJ}[0]{\ensuremath{\mathcal{J}}}
\newcommand{\calC}[0]{\ensuremath{\mathcal{C}}}
\newcommand{\calD}[0]{\ensuremath{\mathcal{D}}}
\newcommand{\calF}[0]{\ensuremath{\mathcal{F}}}
\newcommand{\calG}[0]{\ensuremath{\mathcal{G}}}
\newcommand{\calE}[0]{\ensuremath{\mathcal{E}}}
\newcommand{\Sq}[0]{\ensuremath{\mathsf{Sq}}}
\newcommand{\Sp}{\mathsf{Sp}}
\newcommand{\Fun}[0]{\ensuremath{\mathsf{Fun}}}
\newcommand{\map}[0]{\ensuremath{\mathsf{Map}}}
\newcommand{\Sm}{\ensuremath{\mathsf{Sm}}}
\newcommand{\Spc}[0]{\ensuremath{ \mathsf{Spc}}}
\newcommand{\spaces}[0]{\Spc}
\newcommand{\Pre}{\ensuremath{\mathcal{P}}}
\newcommand{\SHR}[0]{\ensuremath{\mathsf{SH}(\R)}}
\newcommand{\SH}[0]{\ensuremath{\mathsf{SH}}}
\newcommand{\SHk}[0]{\ensuremath{\mathsf{SH}(k)}}
\newcommand{\eff}[0]{\ensuremath{\mathsf{eff}}}
\newcommand{\veff}[0]{\ensuremath{\mathsf{veff}}}
\newcommand{\Fin}[0]{\ensuremath{\mathsf{Fin}}}
\newcommand{\Finstar}[0]{\ensuremath{\mathsf{Fin}_\ast}}
\newcommand{\PrL}[0]{\ensuremath{\mathsf{Pr^L}}}
\newcommand{\Alg}[0]{\ensuremath{\mathsf{Alg}}}
\newcommand{\CAlg}[0]{\ensuremath{\mathsf{CAlg}}}
\newcommand{\Mod}[0]{\ensuremath{\mathsf{Mod}}}
\newcommand{\Span}[0]{\ensuremath{\mathsf{Span}}}
\newcommand{\Cat}[0]{\ensuremath{\mathsf{Cat}_\infty}}
\newcommand{\wgpd}[0]{\ensuremath{{\omega,\simeq}}}
\newcommand{\kgpd}[0]{\ensuremath{{\kappa,\simeq}}}
\newcommand{\longeq}[0]{\ensuremath{\xlongequal{\phantom{\to}}}}
\newcommand{\lcofib}[1]{\ensuremath{\xhookrightarrow{\ #1\ }}}
\newcommand{\lsimeq}[1]{\ensuremath{\xrightarrow[\ #1\ ]{\raisebox{-0.4ex}{\tiny $\simeq$}}}}
\tikzset{
    weq/.style={anchor=south, rotate=90, inner sep=.5mm}
}
\tikzset{curve/.style={settings={#1},to path={(\tikztostart)
    .. controls ($(\tikztostart)!\pv{pos}!(\tikztotarget)!\pv{height}!270:(\tikztotarget)$)
    and ($(\tikztostart)!1-\pv{pos}!(\tikztotarget)!\pv{height}!270:(\tikztotarget)$)
    .. (\tikztotarget)\tikztonodes}},
    settings/.code={\tikzset{quiver/.cd,#1}
        \def\pv##1{\pgfkeysvalueof{/tikz/quiver/##1}}},
    quiver/.cd,pos/.initial=0.35,height/.initial=0}
\tikzset{tail reversed/.code={\pgfsetarrowsstart{tikzcd to}}}
\tikzset{2tail/.code={\pgfsetarrowsstart{Implies[reversed]}}}
\tikzset{2tail reversed/.code={\pgfsetarrowsstart{Implies}}}
\tikzset{no body/.style={/tikz/dash pattern=on 0 off 1mm}}
\def\@tocline#1#2#3#4#5#6#7{\relax
  \ifnum #1>\c@tocdepth % then omit
  \else
    \par \addpenalty\@secpenalty\addvspace{#2}%
    \begingroup \hyphenpenalty\@M
    \@ifempty{#4}{%
      \@tempdima\csname r@tocindent\number#1\endcsname\relax
    }{%
      \@tempdima#4\relax
    }%
    \parindent\z@ \leftskip#3\relax \advance\leftskip\@tempdima\relax
    \rightskip\@pnumwidth plus4em \parfillskip-\@pnumwidth
    #5\leavevmode\hskip-\@tempdima
      \ifcase #1
       \or\or \hskip 2.5em \or \hskip 5em \else \hskip 7.5em \fi%
      #6\nobreak\relax
    \dotfill\hbox to\@pnumwidth{\@tocpagenum{#7}}\par% <---- \dotfill -> \hfill
    \nobreak
    \endgroup
  \fi}
\begin{document}

\title[The real Betti realization of motivic Thom spectra and of $\ko$]{The real Betti realization of motivic Thom spectra and of very effective Hermitian K-theory}
\date{\today}

\author{Julie Bannwart}
\address{Institut für Mathematik, JGU Mainz, Germany}
\email{\href{bannwart.julie@gmail.com}{bannwart.julie@gmail.com}}

\begin{abstract}
    Real Betti realization is a symmetric monoidal functor from the category of motivic spectra to that of topological spectra, extending the functor that associates to a scheme over $\R$ the space of its real points. In this article, we prove some results about the real Betti realizations of certain motivic $\E{1}$- and $\Einfty$-rings. We show that the motivic Thom spectrum functor and the topological one correspond to each other, as symmetric monoidal functors, under real (and complex) realization. In particular, we obtain equivalences of $\Einfty$-rings between the real realizations of the variants $\MGL$, $\MSL$, and $\MSp$ of algebraic cobordism, and the variants $\MO$, $\MSO$, and $\MU$ of topological cobordism, respectively. Using this, we identify the $\E{1}$-ring structure on the real realization of $\ko$, the very effective cover of Hermitian K-theory, by an explicit 2-local fracture square, as being equivalent to $\mathsf{L}(\R)_{\geq 0}$, the connective L-theory spectrum of $\R$.
\end{abstract}
\keywords{Real Betti realization, motivic Thom spectra, motivic colimit functor, very effective Hermitian K-theory}

\maketitle
\tableofcontents

%\newpage
%\makeatletter
%\providecommand\@dotsep{5}
%\makeatother
%\listoftodos\relax

\pagenumbering{arabic}

\newpage
\section*{Introduction}\label{Sect:introduction}

Motivic homotopy theory, mainly introduced by Morel and Voevodsky \cite{MV}, provides a setting to study algebraic geometry and cohomology theories for schemes using the methods of homotopy theory. We work in the categories of motivic spaces $\Spc(S)$ and of motivic spectra $\SH(S)$ over a base scheme $S$. The former is obtained by considering the subcategory of presheaves of spaces on $\Sm_S$ (smooth schemes of finite type over $S$) consisting of those presheaves that are $\Affl$-invariant and are sheaves in the Nisnevich topology. Namely, we impose Nisnevich descent and contractibility of the affine line via a Bousfield localization. In a formula, $\Spc(S)  = \mathsf{L}_{\Affl, \mathsf{Nis}}(\Pre(\Sm_S))$. The category of motivic spectra $\SH(S)  = \Spc(S)_\ast[(\mathbb{P}^1)^{-1}]$ is then obtained from that of motivic spaces by $\mathbb{P}^1$-stabilization, i.e., by making the projective line invertible with respect to the smash product in the category of pointed motivic spaces. We obtain a presentably symmetric monoidal, stable category.\\

Several classical examples of cohomology theories for schemes become representable in the category of motivic spectra: for instance, motivic cohomology with integral coefficients, represented by $\HZ\in\SH(S)$, motivic cohomology with $\Zmod$-coefficients $\HZmod$, ($\Affl$-invariant) algebraic K-theory $\KGL$, ($\Affl$-invariant) Hermitian K-theory $\KO$, algebraic cobordism $\MGL$, and so on. All these examples are not only motivic spectra, but they are also endowed with multiplicative structures, which the corresponding cohomology theories inherit. More precisely, they all admit the structure of commutative algebra objects in $\SH(S)$, i.e., they can be made into motivic $\Einfty$-rings. \\

To investigate motivic phenomena, it is sometimes useful to bring them back into the topological setting. The real Betti realization functor $\rR$ is one of the tools to do so. This functor from the category of motivic spaces over $\R$ (or any subfield $k\lcofib{} \R$) to that of spaces extends the functor of real points (from the category of smooth schemes over $\R$ to that of spaces). It stabilizes to a functor between the category of motivic spectra and that of topological spectra. Similarly, one can define a complex Betti realization functor $\rC$ from motivic spaces over $\mathbb{C}$ (or any subfield) to spaces, which has a stable analog from motivic spectra over $\mathbb{C}$ to topological spectra. For example, the complex realization of $\KGL$ is equivalent to the topological spectrum $\KU$ representing complex topological K-theory \cite[Lem.\ 2.3]{ARO}, while its real realization vanishes \cite[Lem.\ 3.9]{BH}. The motivic spectrum $\KO$ complex realizes to $\KOtop$ \cite[Lem.\ 2.13]{ARO}, representing real topological K-theory, but its real realization is $\KOtop[1/2]$ \cite[Lem.\ 3.9]{BH}. The motivic spectra $\MGL$, $\MSL$, and $\MSp$, representing variants of algebraic cobordism, have complex realizations the spectra $\MU$, $\mathsf{MSU}$, and $\MSp^\mathsf{top}$, respectively, and real realizations $\MO$, $\MSO$, and $\MU$, respectively, all of which represent variants of topological cobordism \cite[Cor.\ 4.7]{BH}. \\

The latter examples of topological spectra also admit the structure of commutative algebra objects, namely they are topological $\Einfty$-rings. Since the Betti realization functors are symmetric monoidal with respect to the smash products on motivic and topological spectra, the realization of any motivic $\Einfty$-ring inherits a structure of topological $\Einfty$-ring. It is therefore natural to wonder whether one can generalize the aforementioned result to the level of $\Einfty$-rings. For example, is the real realization of $\MGL$ equivalent \emph{as an $\Einfty$-ring} to $\MO$ with its usual $\Einfty$-structure? We will answer this question for some examples of classical motivic spectra.

\begin{introthm}[Theorems \ref{Prop:MGLMSLMSp} and \ref{Prop:MGLMSLMSpcomplex}]\label{Prop:introthmMSL} There are equivalences of $\Einfty$-rings
    $$\begin{matrix}
        \rR\mathsf{MGL} \simeq \mathsf{MO},\qquad &\rR\mathsf{MSL} \simeq \mathsf{MSO},\qquad  &\rR\mathsf{MSp}\simeq \mathsf{MU},\phantom{^\mathsf{top}.} \\
    \rC\mathsf{MGL} \simeq \mathsf{MU},\qquad  &\rC\mathsf{MSL} \simeq \mathsf{MSU},\qquad  &\rC\mathsf{MSp}\simeq \mathsf{MSp}^\mathsf{top}.
    \end{matrix}$$
\end{introthm}

\begin{introprop}[Propositions \ref{Prop:rRKO} and \ref{Prop:rRko1/2}]\label{Prop:intromini3} Let $\KW := \KO[\eta^{-1}]$ be the motivic spectrum representing Balmer--Witt K-theory (where $\eta$ is 
    the motivic Hopf map). There is an $\Einfty$-map of spectra
    $$\gamma: \mathsf{L}(\R) \longrightarrow \rR\KW$$
    inducing an equivalence $\mathsf{L}(\R)[1/2] \simeq \rR\KW$. In particular, there are equivalences of $\Einfty$-rings 
    $$\KOtop[1/2] \simeq \rR\KW \simeq \rR\KO.$$
\end{introprop}

The proof of Proposition \ref{Prop:intromini3} relies crucially on the description of the real realization functor as a localization away from $\rho : \calS^0 \to \Gm$ (\cite[Thm.\ 35]{Tom-real}, see Theorem \ref{Prop:rRinvertingrho}), and the relation $\rho\eta \simeq -2$ in $\SH(\R)[\rho^{-1}]$.\\

Theorem \ref{Prop:introthmMSL} is obtained as a corollary of a more general result about Thom spectrum functors. Indeed, the motivic spectra $\MGL$, $\MSL$, and $\MSp$ can be described as elements in the image of the motivic Thom spectrum functor, whose symmetric monoidal structure is used to endow them with an $\Einfty$-structure (see \cite[\S 16.2 and Ex.\ 16.22]{BH-norms}, \cite[below Lemma 4.6]{BH}). Similarly, $\MO$, $\MSO$, etc., are endowed with the structure of $\Einfty$-rings by expressing them as (topological) Thom spectra \cite{Beardsley}. We will show that the motivic and topological Thom spectrum functors correspond to each other under real and complex realization, as symmetric monoidal functors. We focus on the real case, and will prove the complex case in an appendix. To give a more precise description, we first consider the assignment $X\in\Sm_\R \mapsto \mathsf{Pic}(\SH(X))$ (where $\mathsf{Pic}(\SH(X)) \subseteq \SH(X)$ is the groupoid of invertible objects with respect to the smash product) as a presheaf of categories on smooth schemes of finite type over $\R$, with functoriality given by pullback. The motivic Thom spectrum functor takes as input a presheaf $\calF$ of spaces over $\Sm_\R$, together with a morphism of presheaves $\calF \to \mathsf{Pic}(\SH(-))$, and produces a motivic spectrum over $\R$ out of it. We can then take the real realization of such a motivic spectra. But one could also first apply a functor induced by real realization to the morphism $\calF \to \mathsf{Pic}(\SH(-))$, to obtain a map of spaces $\rR(\calF) \to \mathsf{Pic}(\Sp)$, where $\mathsf{Pic}(\Sp)$ is the groupoid of invertible objects in spectra, and then apply the topological Thom spectrum functor to this map. Both constructions yield a spectrum, and even an $\Einfty$-ring if the arrow $\calF \to \mathsf{Pic}(\SH(-))$ was a morphism of commutative algebras to begin with. We show that these constructions agree. More generally, we will consider morphism of presheaves $\calF \to \SH(-)^\wgpd$ where $\SH(X)^\wgpd$ is the maximal subgroupoid in the full subcategory of compact objects $\SH(X)^\omega \subseteq \SH(X)$. On the topological side, we will also consider the subcategory of $\kappa$-compact objects $\Sp^\kappa$ for some cardinal $\kappa$. As we will see, we make these choices in order to deal with set-theoretic size issues. The precise statement reads as follows: 

\begin{introthm}[Theorems \ref{Prop:Thomagreeassmfunctors} and \ref{Prop:MandMtopagreeassmII}]\label{Prop:introthmThom} The real realization of the motivic multiplicative Thom spectrum functor $M : \Pre_\Sigma(\Sm_\R)_{/\SH^\wgpd} \to \SH$ (as constructed in \cite[\S 16.3]{BH-norms}) is equivalent as a symmetric monoidal functor to the topological multiplicative Thom spectrum functor $\Mtop : \spaces_{/\Sp^\kgpd} \to \Sp$ (as constructed in \cite[Thm.\ 1.6]{ABG}). The same statement holds in the complex case. More precisely, if $k=\R$ or $\C$, there is a commutative diagram of symmetric monoidal functors
% https://q.uiver.app/#q=WzAsNCxbMCwwLCJcXFByZV9cXFNpZ21hKFxcU21faylfey9cXFNIXlxcd2dwZH0iXSxbMSwwLCJcXHNwYWNlc197L1xcU3BeXFxrZ3BkfSJdLFsxLDEsIlxcU3AiXSxbMCwxLCJcXFNIKGspIl0sWzMsMiwicl9rIiwyXSxbMSwyLCJcXE10b3AiXSxbMCwzLCJNIiwyXSxbMCwxXV0=
\[\begin{tikzcd}[row sep = 2.5em, column sep = 3em]
   {\Pre_\Sigma(\Sm_k)_{/\SH^\wgpd}} & {\spaces_{/\Sp^\kgpd}} \\
	{\SH(k)} & \Sp
	\arrow[from=1-1, to=1-2]
	\arrow["M"', from=1-1, to=2-1]
	\arrow["\Mtop", from=1-2, to=2-2]
	\arrow["{r_k}"', from=2-1, to=2-2]
    \end{tikzcd}\]
where the top horizontal arrow is also induced by $r_k$ (see Section \ref{Sect:Thom}).
\end{introthm}

The motivic Thom spectrum functor is an instance of the more general motivic colimit functors \cite{BEH}, defined for $\SH$ replaced by another presheaf $\calF$. To prove Theorem \ref{Prop:introthmThom}, we establish a naturality statement for this construction in the presheaf $\calF$. Using this result, we will compare the motivic Thom spectrum functor to another motivic colimit functor $M_\calR$ related to real realization, which takes its values in $\Sp$. Finally, the description of the unstable real realization functor as localization \cite{ret2} will imply that $M_\calR$ factors through $\Spc_{/\Sp^\kgpd}$ (which is the source category of the topological Thom spectrum functor $\Mtop$) via real realization, and that the functor obtained in this way is indeed equivalent to $\Mtop$. This gives us the commutative square in Theorem \ref{Prop:introthmThom}.\\

Theorems \ref{Prop:introthmMSL} and \ref{Prop:intromini3} can in turn be used to compute the real realization of a refinement of $\KO$, namely its very effective cover $\ko$. Very effectiveness is a certain notion of ``connectivity'' for motivic spectra. The very effective cover functor being lax symmetric monoidal, $\ko$ inherits an $\Einfty$-structure from that of $\KO$, and so does its real realization. We first compute the homotopy ring of $\rR\ko$, by relating $\ko$ to other motivic spectra whose real realizations can be computed more easily, see below. This homotopy ring is a polynomial ring over $\Z$ with a single generator $x$ in degree 4, $\Z[x]$. We will then identify the underlying $\E{1}$-ring, by describing it as an explicit pullback square. Indeed, in general, if $p$ is a prime, any spectrum or $\E{n}$-ring can be expressed as a pullback of its localizations away from $p$ and at $(p)$, respectively, over its rationalization. Such a square may be called a $p$-local fracture square. Theorem \ref{Prop:intromini3} and its proof advocate in the favor of inverting 2. Our strategy will then be to identify explicitly the $\E{1}$-rings and maps appearing in the 2-local fracture square for $\rR\ko$. The localization away from 2 will be computed using Theorem \ref{Prop:intromini3}, and the localization at $(2)$ will be identified with a free $\E{1}$-algebra in the category of $\HZ_{(2)}$-modules. In particular, this will require us to produce an $\E{1}$-map of $\HZ_{(2)}$-modules $\HZ_{(2)} \to (\rR\ko)_{(2)}$, and we will also use our computation of the homotopy ring of $\rR\ko$. The existence of $\E{2}$-maps $\HZ_{(2)} \to \MSO_{(2)}$ in $\Sp$ and $\MSL \to \ko$ in $\SH(\R)$, respectively, has been shown in the past (\cite[Cor.\ 3.7]{HLN} and \cite[Thm.\ 10.1]{HJNY} respectively). Using Theorem \ref{Prop:introthmMSL}, which implies in particular that $\rR(\MSL) \simeq \MSO$ as $\E{2}$-rings, we will finally obtain an $\E{2}$-map $\HZ_{(2)} \to (\rR\ko)_{(2)}$, sufficient for our purposes. Combining these results with a careful analysis of the equivalences we have chosen yields the following 2-local fracture square for $\rR\ko$ as an $\E{1}$-ring. 

\begin{introthm}[Theorem \ref{Prop:rRko}]\label{Prop:introthmfracsq} There is a Cartesian square of $\mathcal{E}_1$-rings (and thus also of spectra)
    \[\begin{tikzcd}[row sep = 3em, column sep = 3em]
        {\rR\ko} & {\KOtop_{\geq 0}[1/2]} \\
        {\HZ_{(2)}[t^4]} & {\mathsf{H}\Q[t^4].}
        \arrow["x\mapsto \beta_4/2", from=1-1, to=1-2]
        \arrow["x\mapsto t^4"', from=1-1, to=2-1]
        \arrow["\lrcorner"{anchor=center, pos=0.125}, draw=none, from=1-1, to=2-2]
        \arrow["\mathsf{ch}\,"', "\,\beta_4/2\mapsto t^4", from=1-2, to=2-2]
        \arrow["{t^4\mapsto t^4}"', from=2-1, to=2-2]
    \end{tikzcd}\]
    
    In particular, there is an isomorphism of graded rings $\pi_\ast(\rR\ko) \cong \Z[x]$ where $x$ has degree 4.
    \end{introthm}

Here, the assignments of elements labeling the arrows describe the action of the maps on the fourth homotopy groups; $\KOtop$ is the topological spectrum representing real topological K-theory; and $\mathsf{H}\mathbb{Q}[t^4]$ is the free $\mathcal{E}_1$-$\mathsf{H}\mathbb{Q}$-algebra on one generator $t^4$ in degree 4. It is an $\HQ$-module, with homotopy ring a polynomial ring over $\Q$ with a single generator in degree 4. The definition of $\HZ_{(2)}[t^4]$ is similar.\\

Hebestreit, Land, and Nikolaus exhibited an identical fracture square for the connective L-theory of the real numbers \cite[p.\ 3]{HLN}. Comparing the two results proves:

\begin{introthm}[Theorem \ref{Prop:compwithLtheory}]\label{Prop:introthmcompLth} There is an equivalence of $\E{1}$-rings
    $$\rR\ko \simeq \mathsf{L}(\R)_{\geq 0}$$
between the real realization of the very effective Hermitian K-theory spectrum and the connective cover of the L-theory spectrum of $\R$.
\end{introthm}

As mentioned above, the proof of Theorem \ref{Prop:introthmfracsq} uses the relation of $\ko$ to other motivic spectra whose real realization is easier to compute (in particular to determine the homotopy ring of $\rR\ko$, and to analyze the maps appearing in the fracture square). Namely, $\ko$ is related to $\kgl$, the very effective cover of algebraic K-theory $\KGL$, via a cofiber sequence \cite[Prop.\ 2.11]{ARO}
$$ \Sigma^{1,1}\ko \xrightarrow{\ \eta\ } \ko \longrightarrow \kgl$$ 
where $\eta$ is the motivic Hopf map. We will also use the decomposition of $\ko$ into easier pieces, that is, its very effective slices, which have been computed in \cite{Tom-genslices}, and involve in particular the motivic cohomology spectra $\HZ$ and $\HZmod$, and their variant $\HZtilde$. We state the relevant results below. Although they are not new, they may not appear in this form or with these proofs in the literature.

\begin{introprop}[Proposition \ref{Prop:rRHZHZmod}]\label{Prop:intromini1} Let $\HZ$ and $\HZmod$ be the motivic spectra representing motivic cohomology with $\Z$-coefficients and $\Zmod$-coefficients, respectively. Then, there is an equivalence of $\E{1}$-maps
    $$\rR(\HZ \to \HZmod) \simeq (\HZmod[t^2] \to \HZmod[t])$$ 
    where $\HZ \to \HZmod$ is the projection, and $\HZmod[t^2] \to \HZmod[t]$ is the map of free $\E{1}$-$\HZmod$-algebras (see Definition \ref{Def:freeE1HA}) inducing the inclusion $\Zmod[t^2]\subseteq \Zmod[t]$ in homotopy. 
\end{introprop}

\begin{introprop}[Proposition \ref{Prop:rRkgl}]\label{Prop:intromini2} Let $\KGL$ be the motivic spectrum representing algebraic K-theory, and $\kgl$ be its very effective cover. Then, there is an equivalence of $\E{1}$-maps
    $$\rR(\kgl \to \HZ) \simeq (\HZmod[t^4] \to \HZmod[t^2])$$
where $\kgl \to \kgl/\beta_\KGL \simeq \HZ$ is the projection (with $\beta_\KGL$ the periodicity generator for $\KGL$), and $\HZmod[t^4] \to \HZmod[t^2]$ is the map of free $\E{1}$-$\HZmod$-algebras (see Definition \ref{Def:freeE1HA}) inducing the inclusion $\Zmod[t^4]\subseteq \Zmod[t^2]$ in homotopy. 
\end{introprop}

\begin{introprop}[Proposition \ref{Prop:rRHZtilde}]\label{Prop:intromini4} On the homotopy rings, the real realization of the quotient map $$\HZtilde := f_0\underline{K}^{MW}_\ast \longrightarrow f_0(\underline{K}^{MW}_\ast/\eta) = f_0\underline{K}^{M}_\ast = \HZ$$ identifies with the quotient map $\Z[t^2]/(2t^2) \to \Z[t^2]/2$.
\end{introprop}

\textbf{Organization of the article.} In Section \ref{Sect:preliminaries}, we make some recollections about the notions of (very) effectiveness for motivic spectra, the real realization functor, and free $\E{1}$-$\HA$-algebras in topological spectra. It plays the role of a toolbox whose results will be cited in the remainder of the paper. Section \ref{Sect:Thom} is dedicated to the comparison of the motivic and topological Thom spectrum functors under real realization, i.e., to the proofs of Theorems \ref{Prop:introthmMSL} and \ref{Prop:introthmThom}. The complex case is proven in Appendix \ref{Subsect:rCThom}. In Section \ref{Sect:examplesrealization}, we compute the real realizations of several classical motivic spectra, in particular we prove Propositions \ref{Prop:intromini1}, \ref{Prop:intromini2}, and \ref{Prop:intromini4}, together with Theorem \ref{Prop:intromini3}. These computations will come in handy in Section \ref{Sect:rRko}, where we compute $\rR\ko$ and prove Theorems \ref{Prop:introthmfracsq} and \ref{Prop:introthmcompLth}. We finish off with two appendices \ref{Subsect:dayconvolution} and \ref{Subsect:slices} about the Day convolution symmetric monoidal structure on categories of presheaves; and symmetric monoidal structures on slice categories respectively. At the very end is an \nameref{Sect:indexofnotation}, to which the reader may refer for any unexplained notation.\\

\textbf{Conventions and prerequisites}\hfill
\begin{itemize}
    \item For introductory material regarding the topics discussed in this article, the reader may refer to \cite{Mastersthesis}.
    \item We work in the $\infty$-categorical setting, following \cite{Lurie-HTT} and \cite{Lurie-HA}. We simply say ``category'' for ``$\infty$-category'', and will specify ``1-category'' when relevant. Let $\Cat$ be the $\infty$-category of small $\infty$-categories.
    \item We denote by $\Spc$ the category of spaces and by $\Sp$ the category of spectra.
    \item Unless mentioned otherwise, $k$ denotes a fixed base field, and $S$ a fixed qcqs base scheme. When working with Hermitian K-theory, we assume $\mathsf{char}(k)\neq 2$. Let $\Sm_S$ be the category of smooth schemes of finite type over $S$. We denote by $\Spc(S)$ and $\SH(S)$ the categories of motivic spaces and motivic spectra, respectively, over $S$ (constructed in the setting of model categories in \cite{MV}, and with $\infty$-categories in \cite{Robalo} for example). For a smooth scheme $X$ over $S$ or a space $Y$, we will often also denote by $X$ and $Y$ the corresponding motivic spaces or motivic spectra. Let $T$ be the infinite suspension spectrum of the pointed motivic space $(\mathbb{P}^1,\infty)$.
   \item We denote by $\rR : \SH(\R) \to \Sp$ the real realization functor (see \cite[\S 10]{Tom-real} for example). We also denote the corresponding functors $\Pre(\Sm_\R) \to \Spc$ and $\Spc(\R) \to \Spc$, or their pointed analogs, by $\rR$.
    \item For $p$ a prime, \emph{localization at $(p)$} means inverting every prime different from $p$, whereas \emph{localization away from $p$} means inverting $p$.
\end{itemize}

\vspace{1em}
\section{A few preliminaries}\label{Sect:preliminaries}

This introductory section contains some preliminaries about the notions of effectiveness and very effectiveness for motivic spectra (Subsection \ref{Subsect:eff}), the real realization functor (Subsection \ref{Subsect:rR}), and free $\E{1}$-$\HA$-algebras (in topological spectra), for $\mathsf{A}$ a discrete commutative ring and $\HA$ its Eilenberg-Mac Lane spectrum (Subsection \ref{Subsect:freeE1HA}). The latter will appear in our computations in Sections \ref{Sect:examplesrealization} and \ref{Sect:rRko}.

\vspace{0.2cm}
\subsection{Recollections about (very) effectiveness of motivic spectra}\label{Subsect:eff}\hfill\vspace{0.2cm}

In this subsection, we recall the definitions of the notions of effectiveness \cite[\S 2]{Voevodsky-openproblems} and very effectiveness \cite[Def.\ 5.5]{SO}, essentially to fix notation. The upshot is that these are two different notions of connectivity for motivic spectra. For some justification of these concepts, we refer to the introduction of \cite{Tom-genslices}.

\begin{Def}[\protect{\cite[\S 13]{BH-norms}}]\label{Def:SHkeff} The \emph{category of effective motivic spectra} $\SHk^\eff$ is the subcategory of $\SHk$ generated under colimits by all $\calS^n \Smash \Sigma^\infty_+ X$ for $X\in\Sm_k$ and $n\in\Z$. The \emph{category of very effective motivic spectra} $\SHk^\veff$ is the subcategory of $\SHk$ generated under colimits by all $\calS^n \Smash \Sigma^\infty_+ X$ for $X\in\Sm_k$ and $n\geq 0$. For $n\geq 0$, let $\SHk^\eff(n) := T^{\Smash n} \Smash \SHk^\eff$ be the category of \emph{$n$-effective spectra} and $\SHk^\veff(n) := T^{\Smash n} \Smash \SHk^\veff$ be the category of \emph{very $n$-effective spectra}.
\end{Def}

\begin{Def}[\protect{\cite[Thm. 5.2.3]{Morel-A1}}, \protect{\cite[App.\ B]{BH-norms}}]\label{Def:homotopytstc} The \emph{homotopy t-structure} on $\SHk$ is defined by
    \begin{align*}
        \SHk_{\geq 0} &= \{E\in\SHk \mid \homsh{i}{\ast}{E} = 0 \ \forall i<0\},\\
        \SHk_{\leq 0} &= \{E\in\SHk \mid \homsh{i}{\ast}{E} = 0 \ \forall i>0\}.
    \end{align*}
    Equivalently, $\SHk_{\geq 0}$ is generated under colimits and extensions by $\{\Sigma^{p,q} \Sigma^\infty_+ X \mid X\in\Sm_k, p-q \geq 0\}$.
    \end{Def}

    \begin{Rmk}\label{Rmk:smstconheart} The smash product symmetric monoidal structure on $\SHk$ is compatible with the homotopy t-structure in the sense of \cite[Def.\ A.10]{AN}. To see this, we use the equivalent characterization of the connective part in Definition \ref{Def:homotopytstc}. Then the unit is one of the generators of the connective part. Moreover, the tensor product functor preserves colimits in each variable; since $\SHk$ is stable, this functor is in particular exact in both variables. Therefore, to check that $\SHk_{\geq 0}$ is stable under tensor products we only have to show that the aforementioned collection of generators also is. For any $X,Y\in\Sm_k$ and $p,q,p',q'\in\Z$ with $p-q\geq 0$ and $p'-q'\geq 0$, we have $\Sigma^{p,q} \Sigma^\infty_+ X \Smash \Sigma^{p',q'} \Sigma^\infty_+ Y \simeq \Sigma^{p+p',q+q'}\Sigma^\infty_+ (X\times Y)$ with $p+p' - (q+q') \geq 0$, which is again in the connective part. In particular, $\SHk_{\geq 0}$ is a symmetric monoidal subcategory of $\SHk$, and there is a unique symmetric monoidal structure on the heart $\SHk^\heartsuit$ such that the truncation $\SHk_{\geq 0} \to \SHk^\heartsuit$ is symmetric monoidal \cite[Lem.\ A.12]{AN}.
    \end{Rmk}

\begin{Def}[e.g. \protect{\cite[Prop.\ 4]{Tom-genslices}}]\label{Def:effectivehomotopytstc} The \emph{effective homotopy t-structure} on $\SHk^\eff$ is defined by
\begin{align*}
    \SHk^\eff_{\geq 0} &= \{E\in\SHk^\eff \mid \homsh{i}{0}{E} = 0 \ \forall i<0\},\\
    \SHk^\eff_{\leq 0} &= \{E\in\SHk^\eff \mid \homsh{i}{0}{E} = 0 \ \forall i>0\}.
\end{align*}
For $n\in\Z$, let $\tau_{\geq_e n}$ and $\tau_{\leq_e n}$ be the truncation functors with respect to this t-structure.
\end{Def}

\begin{Prop}\label{Prop:propertiesofSHkeff} \phantom{j}
\begin{enumerate}[label = (\roman*)]
    \item We have $\SHk^\eff_{\geq 0} = \SHk^\veff$.
    \item The smash product symmetric monoidal structure on $\SHk$ restricts to symmetric monoidal structures on both $\SHk^\eff$ and $\SHk^\veff$, such that the inclusion functors are symmetric monoidal.
\end{enumerate}  
\end{Prop}
\begin{proof}
Part $(i)$ appears in \cite[Prop.\ 4]{Tom-genslices}. For part $(ii)$, note that since these two subcategories are full and contain the unit, by \cite[Prop.\ 2.2.1.1 and Rmk.\ 2.2.1.2]{Lurie-HA} we only have to show that they are stable under the smash product. The latter preserves colimits in both variables, so we only have to check the statement for generators. For all $m,n\in\Z$ and $X,Y\in\Sm_k$, we have
$$(\calS^n \Smash \Sigma^\infty_+ X) \Smash (\calS^m \Smash \Sigma^\infty_+ Y) \simeq \calS^{m+n} \Smash \Sigma^{\infty}(X_+ \Smash Y_+) \simeq \calS^{m+n} \Smash \Sigma^{\infty}_+(X\times Y),$$
which is by definition effective, and even very effective when $n,m\in\N$.
\end{proof}

\begin{Lemma}\label{Prop:rn}
    The inclusion functors $\iota_n : \SHk^\eff(n) \xhookrightarrow{} \SHk$ and  $\tilde{\iota}_n : \SHk^\veff(n) \xhookrightarrow{} \SHk$ admit right adjoints $r_n$ and $\tilde{r}_n$ respectively, for all $n\geq 0$. 
\end{Lemma}
\begin{proof} This follows from the adjoint functor theorem \cite[Cor.\ 5.5.2.9]{Lurie-HTT} and presentability of the categories involved \cite[Prop.\ C.12.(2)]{Hoyois-quad}.
\end{proof}

\begin{Def}\label{Def:effcover} We call the compositions $\iota_n\circ r_n =: f_n$ and $\tilde{\iota}_n\circ \tilde{r}_n =: \tilde{f}_n$ the \emph{$n$-effective cover} and \emph{very $n$-effective cover} functors respectively. For all $E\in \SHk$, we obtain by adjunction natural maps $f_{n+1}E \to f_nE$ and $\tilde{f}_{n+1}E \to \tilde{f}_nE$ for all $n\geq 0$. Their cofibers $s_nE := \mathsf{cof}(f_{n+1}E \to f_nE)$ and $\tilde{s}_nE := \tilde{f}_{n+1} \to \tilde{f}_nE$ are called the \emph{$n$-effective slice} and \emph{very $n$-effective slice} of $E$, respectively. By \emph{(very) effective cover}, we mean the (very) $0$-effective cover.
\end{Def}

\begin{Prop}\label{Prop:smheart} The functor $\iota_0: \SHk^\eff \to \SHk$ is right t-exact, and $r_0: \SHk \to \SHk^\eff$ is t-exact and lax symmetric monoidal. It restricts to a lax symmetric monoidal functor $r_0^\heartsuit: \SHk^\heartsuit \to \SHk^{\eff,\heartsuit}$.
\end{Prop}
\begin{proof} The claims about t-exactness are proven in \cite[Prop.\ 4.(3)]{Tom-genslices}. As a right adjoint to the inclusion, which is symmetric monoidal (Proposition \ref{Prop:propertiesofSHkeff}$(ii)$), $r_0$ is also lax symmetric monoidal by the doctrinal adjunction principle (follows from of \cite[Cor.\ 7.3.2.7]{Lurie-HA}). As a t-exact functor, $r_0$ carries the non-negative part to the non-negative part of the t-structures, and the heart to the heart. Its (co)restriction to the symmetric monoidal subcategories $\SH(k)_{\geq 0}$ and $\SHk^\eff_{\geq 0}$ is then also lax symmetric monoidal. Its (co)restriction to the hearts $r_0^\heartsuit$ can be written as the composition 
$$\SHk^\heartsuit \xhookrightarrow{} \SHk_{\geq 0} \xrightarrow{r_0} \SHk^\eff_{\geq 0} \xrightarrow{\pi_0} \SHk^{\eff,\heartsuit}.$$
Here, $\pi_0$ is symmetric monoidal by definition of the tensor product on the heart \cite[Lem.\ A.12]{AN}. The first inclusion is lax symmetric monoidal as a right adjoint to the truncation, which is symmetric monoidal by Remark \ref{Rmk:smstconheart}. Thus, $r_0^\heartsuit$ is lax symmetric monoidal.
\end{proof}

The (very) $n$-effective covers and slices behave well with respect to $\mathbb{P}^1$-suspension:
\begin{Prop}[\protect{\cite[Lem.\ 8]{Tom-genslices}}]\label{Prop:shiftingslices}
For all $E\in\SHk$ and $n\geq 0$, we have $f_nE \Smash T \simeq f_{n+1}(E\Smash T)$. Similar formulas hold for $\tilde{f}_n$, $s_n$ and $\tilde{s}_n$.
\end{Prop}
\vspace{0.2cm}

\subsection{Recollections about the real realization functor}\label{Subsect:rR}\hfill\vspace{0.2cm}

We now state several results about the real realization functor, recalling its construction in the process. In this subsection, we assume that the field $k$ comes with a fixed embedding $k\xhookrightarrow{} \R$. Considering the case $k=\R$ is not really a loss of generality; the real realization functor over the base $k$ is equivalent to the precomposition of the real realization functor over the base $\R$ by the pullback along $\Spec(\R) \to \Spec(k)$. 

\begin{Prop}\label{Prop:rR} The real realization functors $\rR: \Spc(k)_\ast \to \Spc_\ast$ and $\rR:\SH(k) \to \Sp$ are colimit-preserving, symmetric monoidal functors with respect to the smash products.
\end{Prop}
\begin{proof} This follows from the construction of $\rR$, as we now recall (see \cite[\S 10]{Tom-real} for example). The functor of real points $\Sm_k \to \Spc$ preserves finite products and thus left Kan extends to colimit-preserving, symmetric monoidal functors $\Pre(\Sm_k) \to \Spc$ for the Cartesian structures, and $\Pre(\Sm_k)_\ast \to \Spc_\ast$ for the smash products. Passing to the localization, we obtain a colimit preserving, symmetric monoidal functor $\rR: \Spc(k)_\ast \to \Spc_\ast$. The postcomposition $\Spc(k)_\ast \to \Spc_\ast \to \Sp$ by the infinite suspension functor inverts $\mathbb{P}^1$, and thus lifts to a colimit preserving, symmetric monoidal functor $\rR: \SH(k) \to \Sp$.
\end{proof}

Here is one example of why very effectiveness is a reasonable notion of connectivity in $\SH(k)$.

\begin{Lemma}\label{Prop:rRconnective}
    For all $m\geq 0$, the restriction of $\rR$ to $\SHk^\veff(m)$ takes values in $m$-connective spectra. 
\end{Lemma}
\begin{proof}
    Since $\SHk^\veff(m) = T^{\Smash m} \Smash \SHk^\veff$ by definition and $\rR(T^{\Smash m }) = \Sph^m$, it suffices to prove the case $m=0$. By definition, $\SHk^\veff$ is generated under colimits by all objects of the form $\calS^n \Smash \Sigma^\infty_+ X$ for $n\geq 0$ and $X\in \Sm_k$. These all realize to connective spectra because $\rR(\calS^n \Smash \Sigma^\infty_+ X) = \Sph^n \Smash \Sigma^\infty_+ \rR(X)$ is connective for all $n\geq 0$ and $X\in\Sm_k$. This concludes because $\rR$ commutes with colimits, and a colimit of connective spectra is connective (this holds for the non-negative part of any t-structure).  
\end{proof}

The category of spectra $\Sp$ is obtained as a localization of a category of \emph{prespectra}: if one views a spectrum as a sequence of pointed spaces $(c_n)_{n\in\N}$ with equivalences $c_n \to \Omega c_{n+1}$, then prespectra are defined as similar sequences for which the structure maps are not required to be equivalences. More precisely, the category of prespectra is that of reduced functors $\Spc_\ast^\mathsf{fin} \to \Spc$, and $\Sp$ is the full subcategory of reduced excisive functors \cite[Prop.\ 1.4.2.13.]{Lurie-HA}. The latter is in particular a localization of the category of prespectra \cite[Rmk.\ 1.4.2.4.]{Lurie-HA}, and the localization functor is called \emph{spectrification}.

\begin{Lemma}\label{Prop:levelwiserR} Let $E = (E_0,E_1, \dots) \in \SHk$. Then, the real realization of $E$ is computed as the spectrification of the prespectrum $(\rR(E_n))_{n\in\N}$. More explicitly, for all $n\in\Z$,
    $$\rR(E)_n \simeq \colim_m \Omega^m \rR(E_{n+m}).$$
    In particular, $\pi_n(\rR(E)) \cong \colim_m \pi_{n+m}(\rR(E_{m}))$.
    \end{Lemma}
    \begin{proof}
    First note that $(\rR(E_n))_{n\in\N}$ is a prespectrum via the structure maps obtained by real realization of those of $E$. Indeed, a map $T \Smash E_n \to E_{n+1}$ realizes to a map $\Sph^1 \Smash \rR(E_n) \to \rR(E_{n+1})$. The spectrification functor for topological prespectra sends a sequence $(E_n)_{n\in\N}$ to $\colim_n \Sigma^{-n}\Sigma^\infty E_n$ where the maps in the colimit are induced by the structure maps $\Sigma E_n \to E_{n+1}$ \cite[Lem.\ 2.1.3]{Schwede-spectra}. Similarly, for motivic prespectra, spectrification sends a sequence $(E_n)_{n\in\N}$ to $\colim_n \Sigma_T^{-n}\Sigma_T^\infty E_n$ with maps induced by $\Sigma_T E_n \to E_{n+1}$. If $E$ as in the statement is already a spectrum, it is its own spectrification, and therefore we have
    $$\rR(E) \simeq \rR(\colim_n (T^{\Smash (-n)}\Smash \Sigma^\infty_T E_n)) \simeq \colim_n (\Sph^{\Smash (-n)} \Smash \Sigma^\infty_{\Sph^1} \rR(E_n)).$$
    The right-hand side is exactly the spectrification of the sequence $(\rR(E_n))_{n\in\N}$ with structure maps given by the real realizations of those of $E$. For the last claim, we have
    \begin{align*}
        \pi_n(\rR(E)) &\cong \colim_m \pi_{n+m} (\rR(E)_m) \cong \colim_m \pi_{n+m} (\colim_k \Omega^k \rR(E_{m+k})) \\
        &\cong \colim_m\colim_k \pi_{n+m+k} (\rR(E_{m+k})) \cong \colim_\ell \pi_{\ell+n} (\rR(E_{\ell})).
    \end{align*}
    \end{proof}
    
\begin{thm}[\protect{\cite[Thm.\ 35]{Tom-real}}]\label{Prop:rRinvertingrho} Let $S$ be a qcqs scheme, and $\rho: \calS^0 \to \Gm$ in $\SH(S)$ be the inclusion on the points $1$ and $-1$. There is an equivalence of categories
    $$\mathsf{SH}(S)[\rho^{-1}] \lsimeq{} \mathsf{SH}(\mathsf{Shv}(RS))$$
    between the category of $\rho$-local motivic spectra over $S$, and the category of spectra associated with the topos of sheaves of spaces on $RS$, the real spectrum of $S$ \cite[\S 0.4.2]{Scheiderer}. In particular, there is an equivalence $\SHR[\rho^{-1}] \lsimeq{} \Sp$ such that the composite
    $$\SHR \longrightarrow  \SHR[\rho^{-1}] \lsimeq{} \Sp$$
    is naturally equivalent to the real realization functor $\rR$. 
    \end{thm}
    
 \begin{Lemma}\label{Prop:homotopyofrR}
        For any $E\in \SHR$, the homotopy groups of its real realization can be computed as
        $$\forall m\in\Z, \ \ \pi_m(\rR(E)) = \colim_n (\cdots \longrightarrow \underline{\pi}_m(E)_n(\Spec(\R)) \xrightarrow{\,\ \rho\,\ } \underline{\pi}_m(E)_{n+1}(\Spec(\R)) \longrightarrow \cdots).$$
    \end{Lemma}
    \begin{proof}
        Call $F$ the equivalence $\SHR[\rho^{-1}] \to \Sp$ above, and $L_\rho: \SHR \to \SHR[\rho^{-1}]$ the canonical localization functor. Then, for all $m \in\Z$ and $E\in \SHR$, we have
        \begin{align*}
        \pi_m(\rR E) &\cong [\Sph^m,\rR E] \cong [\rR(\calS^m),\rR E] \cong [F(L_\rho(\calS^m)),F(L_\rho(E))] \cong [L_\rho(\calS^m),L_\rho(E)] \\
            & \cong [\calS^m,\colim_n\, E\Smash \Gm^{\Smash n}] \tag{\text{since $L_\rho$ is a left-adjoint}}\\
            & \cong \colim_n\, [\calS^m, E\Smash \Gm^{\Smash n}] \tag{by compactness of $\calS^m$}\\
            & \cong \colim_n\, \homsh{m}{n}{E}(\Spec(\R))
        \end{align*}
        where the last isomorphism follows from the fact that the sections of the homotopy sheaves and presheaves of $E$ agree on $\Spec(\R)$. This holds because sheafification preserves stalks, and the functor of sections over $\Spec(\R)$ is a stalk functor of the Nisnevich site on $\Sm_\R$ (by the description of the points of the Nisnevich site in \cite[p99]{MV}).
    \end{proof}

\begin{Lemma}\label{Prop:rRcommuteswithloc} Let $p$ be a prime. The real realization functor commutes with localization away from $p$, localization at $(p)$, and rationalization.
\end{Lemma}
\begin{proof} These operations are described by colimits of maps given by multiplication by an integer; both colimits and such maps are preserved by $\rR$. Indeed, multiplication by an integer on some object $X$ of a stable category is defined using the diagonal and codiagonal maps $X \to X^{\oplus p} \to X$. Since real realization is an exact functor, it preserves the latter.
\end{proof}

Bachmann's result about real realization of motivic spectra being a localization (Theorem \ref{Prop:rRinvertingrho}) has an unstable counterpart. To describe it, we first need a definition.
\begin{Def}[\protect{\cite[\S 1.2]{Scheiderer}}] The real-étale topology on $\Sm_S$ is the topology generated by real-étale covers, that is, families $\{U_i \to X\}_{i\in I}$ of étale morphisms inducing a surjection $\coprod_{i\in I} R(U_i) \to R(X)$ (here $R(-)$ denotes the real spectrum functor appearing in Theorem \ref{Prop:rRinvertingrho}).
\end{Def}

\begin{thm}[\protect{\cite[Thm.\ 1.1]{ret2}}]\label{Prop:ret2}
    The (unstable) real realization functor $\rR : \Pre(\Sm_\R) \to \spaces$ factors through the category $L_{\Affl, \mathsf{ret}}(\Pre(\Sm_\R))$ of $\Affl$-invariant real-étale sheaves, and the induced functor $L_{\Affl, \mathsf{ret}}(\Pre(\Sm_\R)) \to \spaces$, given by taking sections on $\Spec(\R)$, is an equivalence
    % https://q.uiver.app/#q=WzAsMyxbMCwxLCJMX3tcXEFmZmwsIFxcbWF0aHNme3JldH19KFxcc1ByZShcXFNtX1xcUikpIl0sWzAsMCwiXFxzUHJlKFxcU21fXFxSKSJdLFsxLDAsIlxcc3BhY2VzIl0sWzEsMCwiTF97XFxBZmZsLCBcXG1hdGhzZntyZXR9fSIsMl0sWzEsMl0sWzAsMiwiXFxzaW1lcSIsMix7InN0eWxlIjp7ImJvZHkiOnsibmFtZSI6ImRhc2hlZCJ9fX1dXQ==
\[\begin{tikzcd}
	{\Pre(\Sm_\R)} & \spaces \\
	{L_{\Affl, \mathsf{ret}}(\Pre(\Sm_\R))}
	\arrow[from=1-1, to=1-2]
	\arrow["{L_{\Affl, \mathsf{ret}}}"', from=1-1, to=2-1]
	\arrow["\simeq"', dashed, from=2-1, to=1-2]
\end{tikzcd}\]
whose inverse is given by left Kan extension.
\end{thm}

\vspace{0.2cm}

\subsection{Free \texorpdfstring{$\E{1}$-$\HA$-}{E1-HA-}algebras}\label{Subsect:freeE1HA}\hfill\vspace{0.2cm}

In Sections \ref{Sect:examplesrealization} and \ref{Sect:rRko}, we will encounter $\Einfty$-rings whose homotopy rings are polynomial in one variable. We will identify the underlying $\E{1}$-structures as free $\E{1}$-algebras in the category of $\HA$-modules, where $\HA$ is the Eilenberg-Mac Lane spectrum associated with a discrete commutative ring $\mathsf{A}$.

\begin{Def} Let $R \in \CAlg(\Sp)$. The \emph{category of $\E{k}$-$R$-algebras in $\Sp$} is $\Alg_{\E{k}}(\Mod_R(\Sp))$, the category of $\E{k}$-algebras in $R$-modules.
\end{Def}

\begin{Prop}\label{Prop:freeE1Ralgebra} The forgetful functor $U: \Alg_{\E{1}}(\Mod_R(\Sp)) \to \Sp$ admits a left adjoint $F_{\E{1},R}$, the \emph{free $\E{1}$-$R$-algebra functor}, given on the underlying spectra by $E\mapsto \bigvee_{n\geq 0} E^{\Smash n} \Smash R$. 
\end{Prop}
    \begin{proof} The functor $U$ is the composition of two forgetful functors $\mathsf{Alg}_{\E{1}}(\mathsf{Mod}_R(\Sp)) \to \mathsf{Mod}_R(\Sp)$ and $\mathsf{Mod}_R(\Sp) \to \Sp$. Both admit left adjoints by \cite[Cor.\ 3.1.3.4]{Lurie-HA} and \cite[Prop.\ 4.2.4.2]{Lurie-HA}, respectively; explicit expressions are given in \cite[Cor.\ 4.1.1.18]{Lurie-HA} and \cite[Cor.\ 4.2.4.8]{Lurie-HA}, respectively.
    \end{proof}
    
    \begin{Def}\label{Def:freeE1HA} Let $n\geq 0$, and let $\mathsf{A}$ be a discrete commutative ring. Then $\HA \in \CAlg(\Sp)$ and thus we may define $\HA[t^n] := F_{\E{1},\HA}(\Sigma^n\Sph)$ the \emph{free $\E{1}$-$\HA$-algebra over one generator in degree $n$}.
    \end{Def}
    
    \begin{Prop}\label{Prop:freeHA} Let $n\geq 0$, and let $\mathsf{A}$ be a discrete commutative ring. The free $\E{1}$-$\HA$-algebra $\HA[t^n]$ has the following properties:
    \begin{enumerate}[label = (\roman*)]
        \item The underlying spectrum is given by $\bigvee_{j\geq 0} \Sigma^{nj}\HA$.
        \item The homotopy rings is a polynomial ring on one generator in degree $n$, i.e., $\pi_\ast(\HA[t^n]) \cong A[t^n]$.
        \item Let $E$ be an $\E{2}$-ring spectrum with an $\E{2}$-map $\HA \to E$. Then $E$ can be viewed as an {$\E{1}$-$\HA$-algebra} in a canonical way, and any element $\alpha\in \pi_n(E)$ determines a map $\HA[t^n] \to E$ in $\Alg_{\E{1}}(\Mod_R(\Sp))$, sending $t^n$ to $\alpha$ in homotopy. 
    \end{enumerate}
    \end{Prop}
    \begin{proof} Item $(i)$ is given by Proposition \ref{Prop:freeE1Ralgebra}. In particular, the computation of the homotopy groups follows. To finish the proof of item $(ii)$, we must consider the ring structure. We have to show that multiplication by a generator of the $n$-th homotopy group induces isomorphisms in all degrees, i.e., that the composition
    $$\Sph^n \Smash \HA[t^n] \xrightarrow{t^n \Smash \id{}} \HA[t^n] \Smash \HA[t^n] \longrightarrow \HA[t^n]$$ 
    induces isomorphisms in homotopy in positive degrees. This map is given by the identification 
    $$\Sph^n \Smash \bigvee_{j\geq 0} \Sigma^{nj}\HA \simeq \bigvee_{j\geq 0} (\Sigma^n\Sph \Smash \Sigma^{nj}\HA) \longrightarrow \bigvee_{j\geq 0} \Sigma^{nj}\HA$$
    mapping $\Sigma^n\Sph \Smash \Sigma^{nj}\HA$ to $\Sigma^{n(j+1)}\HA$ identically, so our claim follows. Item $(iii)$, using that there is a forgetful functor $\Alg_{\E{2}}(\Sp)_{\HA/} \to \Alg_{\E{1}}(\Mod_{\HA}(\Sp))$ \cite[Rmk.\ 3.7]{ACB}, is simply rephrasing the fact that $F_{\E{1},\HA}$ is left adjoint to the forgetful functor. 
\end{proof}

\begin{Rmk} Proposition \ref{Prop:freeHA}$(iii)$ may seem a bit artificial. The point is that $\HA[t^n]$ is \emph{not} a free object over $\Sph^n \in \Sp$ in the category $\Alg_{\E{1}}(\Sp)_{\HA/}$; the latter category is a priori \emph{not} equivalent to $\Alg_{\E{1}}(\Mod_{\HA}(\Sp))$ (see \cite[Warning 7.1.3.9]{Lurie-HA} and \cite[Remark 3.7]{ACB}). There is indeed a forgetful functor $\Alg_{\E{1}}(\Sp)_{\HA/} \to \Mod_{\HA}(\Sp)$, but the $\E{1}$-structure of an object on the left-hand side is not compatible with the $\HA$-module structure obtained on the right-hand side. A higher commutativity (for instance $E$ belonging to $\Alg_{\E{2}}(\Sp)_{\HA/}$ as in item $(iii)$) ensures this compatibility. 
\end{Rmk}

\vspace{1em}
\section{Motivic and topological multiplicative Thom spectra}\label{Sect:Thom}

In this section, our main goal is to show that the real realizations of the motivic $\Einfty$-rings $\MGL$, $\MSL$, and $\MSp$, representing, respectively, algebraic cobordism, special linear and symplectic algebraic cobordisms, are equivalent to the topological $\Einfty$-rings $\MO$, $\MSO$, and $\MU$, representing, respectively, cobordism, oriented and complex cobordisms.  Again, this result was already proven at the level of spectra in \cite[Cor.\ 4.7]{BH}, but we want to take into account the $\Einfty$-ring structures. \\

The case of complex realization is deferred to Appendix \ref{Subsect:rCThom}. The arguments are very similar to the real case; however, small modifications are required since our proof in the real case uses that the functor $\rR: \Spc(\R) \to \Spc$ is a localization (\Cref{Prop:ret2}), whereas this is not known in the complex case. \\

All six spectra are endowed with an $\Einfty$-structure by expressing them as \emph{Thom spectra}. Indeed, in both the motivic and topological settings, a symmetric monoidal Thom spectrum functor can be constructed (as we recall in Subsections \ref{Subsect:topThom} and \ref{Subsect:motThom}, respectively) and these spectra are the images of some commutative algebra objects by this functor. Our result will thus be obtained as a corollary of the more general fact that the real realization of the motivic Thom spectrum functor agrees, as a symmetric monoidal functor, with the topological Thom spectrum functor. We will make this statement precise, and provide a proof, in Subsection \ref{Subsect:compThom}.

\vspace{0.2cm}

\subsection{The symmetric monoidal topological Thom spectrum functor}\label{Subsect:topThom}\hfill\vspace{0.2cm}

\begin{Def}\label{Def:Thomspectrum} Let $\mathsf{Pic}(\Sp)\in\Spc$ be the subgroupoid of invertible objects in $\Sp$ (with respect to the tensor product). The (topological) \emph{Thom spectrum functor}
$$\Mtop: \spaces_{/\mathsf{Pic}(\Sp)} \longrightarrow \Sp$$
sends an arrow $X \to \mathsf{Pic}(\Sp)$ to the colimit of the diagram  
$$X \longrightarrow \mathsf{Pic}(\Sp) \xhookrightarrow{\ } \Sp.$$
More precisely, under the equivalence $\spaces_{/\mathsf{Pic}(\Sp)} \simeq \Pre(\mathsf{Pic}(\Sp))$ from Lemma \ref{Prop:sliceofpshsm}, the functor $\Mtop$ is the left Kan extension of the embedding $\mathsf{Pic}(\Sp) \xhookrightarrow{} \Sp$.
\end{Def}

Viewing $\mathsf{Pic}(\Sp) \subseteq \Sp$ as a symmetric monoidal subcategory allows us to endow the functor $\Mtop$ with a symmetric monoidal structure. More precisely, for any symmetric monoidal small subcategory $\mathsf{Pic}(\Sp) \subseteq \calC \subseteq \Sp$, the maximal groupoid in $\calC$, denoted by $\calC^\simeq$, is then an $\Einfty$-algebra in $\Spc$ (the functor $(-)^\simeq : \Cat \to \mathsf{Gpd} \simeq \Spc$ is right adjoint to the forgetful functor, it preserves products, and thus is symmetric monoidal with respect to the Cartesian structures). Thus, \Cref{Prop:smstconslice} gives us a symmetric monoidal structure on the slice category $\Spc_{/\calC^\simeq}$, with the universal property described in \Cref{Prop:dayconvolution}.

\begin{thm}[\protect{\cite[Thm.\ 1.6]{ABG}}]\label{Prop:Mtopissm}
	For any symmetric monoidal small subcategory $\mathsf{Pic}(\Sp) \subseteq \calC \subseteq \Sp$, there is a symmetric monoidal colimit preserving topological Thom spectrum functor 
	$$\Mtop: \spaces_{/\calC^\simeq} \longrightarrow \Sp,$$
	left Kan extended as a symmetric monoidal functor (in the sense of Definition \ref{Def:smLKE}) from the embedding $\calC^\simeq \xhookrightarrow{} \Sp$.
\end{thm}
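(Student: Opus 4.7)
The plan is to assemble the preceding general results on symmetric monoidal structures on slice categories and presheaves, with essentially nothing additional to prove beyond combining their universal properties.

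First, I would unpack the source. Under the equivalence $\spaces_{/\calC^\simeq} \simeq \Pre(\calC^\simeq)$ of \Cref{Prop:sliceofpshsm}, the symmetric monoidal structure on $\spaces_{/\calC^\simeq}$ produced by \Cref{Prop:smstconslice} — using that $\calC^\simeq$ is an $\Einfty$-algebra in $\spaces$, as explained in the paragraph preceding the theorem — is precisely the Day convolution symmetric monoidal structure on the presheaf category. This identification, together with its universal property, is what \Cref{Prop:dayconvolution} records.

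Next, I would identify the input symmetric monoidal functor. The inclusion $\iota : \calC^\simeq \cofib{} \Sp$ is symmetric monoidal, since $\calC$ is a symmetric monoidal subcategory of $\Sp$ and the functor $(-)^\simeq: \Cat \to \spaces$ is symmetric monoidal, hence preserves commutative algebra objects. The universal property from \Cref{Prop:dayconvolution} then produces, from $\iota$, an essentially unique colimit-preserving symmetric monoidal extension $\Pre(\calC^\simeq) \to \Sp$. Transporting along the equivalence of the first step yields the desired functor $\Mtop: \spaces_{/\calC^\simeq} \to \Sp$, which is by construction the symmetric monoidal left Kan extension of $\iota$ in the sense of \Cref{Def:smLKE}.

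Finally, to reconcile with \Cref{Def:Thomspectrum}, I would note that the underlying functor is the ordinary left Kan extension of $\iota$ along the Yoneda embedding; evaluated on a representable it recovers $\iota$, and in general it computes the colimit of the composite $X \to \calC^\simeq \cofib{} \Sp$, matching the formula of \Cref{Def:Thomspectrum}. The main thing to verify in this outline is the compatibility claim of the first paragraph — that the slice symmetric monoidal structure and the Day convolution structure agree under straightening — but this is precisely the statement sitting at the interface of \Cref{Prop:smstconslice} and \Cref{Prop:dayconvolution} and so should already be in hand by the time we invoke this theorem.
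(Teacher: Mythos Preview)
Your proposal is correct and follows essentially the same approach as the paper: the paper does not give its own proof of this theorem but cites \cite[Thm.\ 1.6]{ABG}, adding only the observation that the symmetric monoidal left Kan extension is determined, via \Cref{Prop:dayconvolution}, by its restriction to $\ast_{/\calC^\simeq} \simeq \calC^\simeq$, which is exactly the universal property you invoke. Your write-up is in fact more self-contained than the paper's treatment, since you spell out how \Cref{Prop:sliceofpshsm}, \Cref{Prop:smstconslice}, and \Cref{Prop:dayconvolution} combine to give the construction directly without appealing to \cite{ABG}.
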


The symmetric monoidal left Kan extension described in the theorem agrees with the construction in \cite[Thm.\ 1.6]{ABG} because, by Proposition \ref{Prop:dayconvolution}, it is determined as a symmetric monoidal functor by its restriction to $\ast_{/\calC^\simeq} \simeq \calC^\simeq$.

\begin{Rmk} In \Cref{Prop:Mtopissm}, we consider intermediary subcategories $\mathsf{Pic}(\Sp) \subseteq \calC \subseteq \Sp$ to avoid some set-theoretic issues. Indeed, only small groupoids can be viewed as objects in $\Spc$. Although the Thom spectrum of any arrow $X\to \Sp^\simeq$ could in principle be considered, $\Sp^\simeq$ is not small. Actually, all the examples of arrows $X\to \calC^\simeq$ we will be interested in factor through $\mathsf{Pic}(\Sp)$ (but using only $\mathsf{Pic}(\Sp)$ would cause us difficulties in Subsection \ref{Subsubsect:colimfunctforrealization}, see \Cref{Rmk:whykappa}). 
\end{Rmk}

\begin{Notation}\label{Notation:Mtopkappa} Let $\kappa$ be a fixed uncountable regular cardinal such that $\Spc^\kappa$, the full subcategory of $\kappa$-compact objects in $\Spc$, is closed under finite limits in $\Spc$. Such a cardinal $\kappa$ exists by \cite[Lem.\ 5.4.7.6]{Lurie-HA}, and one may actually choose $\kappa = \aleph_1$ \cite[Rmk.\ 5.4.2.9]{Lurie-HTT}. Note that $\Spc^\omega$ is not closed under limits, for example $\Omega \calS^1$ is a pullback of compact objects, but it is not compact. From now on, $\Mtop$ will denote the Thom spectrum functor of \Cref{Prop:Mtopissm} corresponding to $\calC = \Sp^\kappa$ (by \Cref{Prop:Spkappasm} below, this is indeed a small symmetric monoidal subcategory in $\Sp$). It contains $\mathsf{Pic}(\Sp)$, as an invertible spectrum $E$ is compact ($E$ is the image of the compact object $\Sph$ under the equivalence $-\Smash E$), in particular $\kappa$-compact. The requirement that $Spc^\kappa$ is closed under finite limits will become crucial in Subsection \ref{Subsubsect:colimfunctforrealization}, see \Cref{Rmk:whykappa}.
\end{Notation}

\begin{Ex}[see for instance \cite{Beardsley}]\label{Ex:MSO} Let us see how one can define $\Einfty$-structures on $\mathsf{MO}$, $\MSO$, and $\mathsf{MU}$. The spectrum $\MO$ representing topological cobordism can be recovered as $\Mtop(BO \to \Sp^\kgpd)$ where the map $BO \to \Sp^\kgpd$ is the \emph{j-homomorphism}. To define the latter, following \cite[p.\,2]{HY}, one first considers the symmetric monoidal functor $\coprod_{n\in\N} BO_n \to \Sp$, where the left-hand side is viewed as the maximal $\infty$-groupoid in the 1-category of real vector spaces viewed as an $\infty$-category (or in other terms real vector bundles on the point), and a vector bundle is sent to the infinite suspension of its Thom space (in this case, the one-point compactification of our vector space). This map takes values in $\mathsf{Pic}(\Sp)$, and thus factors through the group completion $\Z\times BO$ of $\coprod_{n\in\N} BO_n$. Restricting to $\{0\} \times BO$ (corresponding to rank 0 virtual vector bundles on the point), we obtain the $j$-homomorphism $BO \to \mathsf{Pic}(\Sp) \xhookrightarrow{} \Sp^\kgpd$. Similarly, we have $\MSO\simeq \Mtop(BSO \to BO \to \Sp^\kgpd)$ and $\mathsf{MU}\simeq\Mtop(BU \to BO \to \Sp^\kgpd)$. By Proposition \ref{Prop:algebrasintheslice}, any map of $\Einfty$-spaces $X\to \Sp^\kgpd$ defines a commutative algebra in $\spaces_{/\Sp^\kgpd}$. We thus have $\Einfty$-algebra structures on $BO \to \Sp^\kgpd$, $BSO \to \Sp^\kgpd$, and $BU \to \Sp^\kgpd$ in the slice category (since the j-homomorphism is an $\Einfty$-map by its construction as a symmetric monoidal functor). The Thom spectrum functor being symmetric monoidal, we obtain $\Einfty$-structures on $\mathsf{MO}$, $\MSO$, and $\mathsf{MU}$. 
\end{Ex}
\vspace{0.2cm}

\subsection{Digression: subcategories of \texorpdfstring{$\tau$}{tau}-compact objects}\hfill\vspace{0.2cm}

Because of the choices in \Cref{Notation:Mtopkappa}, we will need some technical results about subcategories of compact and $\kappa$-compact objects in general, and also more specific results about $\kappa$-compact objects in the slices of the category $\Spc$. This will be useful to us in Subsections \ref{Subsect:motThom} and \ref{Subsubsect:colimfunctforrealization}, where we consider, respectively, the categories $\SH(X)^\omega$ and $\Sp(\Spc_{/\rR X})^\kappa$ for $X\in\Sm_\R$.

\begin{Lemma}\label{Prop:productcompact} Let $\tau$ be a regular cardinal, and let $\{\calC_i\}_{i\in I}$ be a family of categories indexed by a $\tau$-small set $I$. Then, the subcategory of $\tau$-compact objects in their product is the product of their respective subcategories of $\tau$-compact objects, i.e.,
	 $$\left(\prod_{i\in I} \calC_i\right)^\tau \simeq \prod_{i\in I} (\calC_i)^\tau.$$ 
\end{Lemma}
\begin{proof} Let $\calD := \prod_{i\in I} \calC_i$, and for all $i\in I$, let $\pi_i : \calD \to \calC_i$ be the projection. Let $d\in\calD$. We want to show that $d$ is $\tau$-compact in $\calD$ if and only if $\pi_i(d)$ is $\tau$-compact in $\calC_i$ for all $i\in I$. Assume first $d\in\calD^\tau$. Let $i\in I$ and $\colim_{j\in\mathcal{J}} c_j$ be a $\tau$-filtered colimit diagram in $\calC_i$. Note that the $\tau$-filtered colimit of a constant diagram on a certain object is always this object again (it holds by \cite[Prop.\ 4.3.1.12]{Lurie-HTT} for colimits indexed by weakly contractible categories, in particular for filtered categories by \cite[Lem.\ 5.3.1.18]{Lurie-HTT} (and a $\tau$-filtered category is filtered)). By assumption, we have equivalences
\begin{align*}
	\map_{\calC_i}(\pi_i(d),\colim_{j\in\mathcal{J}} c_j) \times &\prod_{\ell \in I\setminus\{i\}} \map_{\calC_\ell}(\pi_\ell(d),\ \colim_{j\in\mathcal{J}} \pi_\ell(d)) \\
	&\simeq \map_{\calD}(d, \colim_{j\in\mathcal{J}} (c_j, (\pi_\ell(d))_{\ell \neq i})) \\
	&\simeq \colim_{j\in\mathcal{J}} \map_\calD(d, (c_j, (\pi_\ell(d))_{\ell \neq i})) \\
	&\simeq \colim_{j\in\mathcal{J}} \left(\map_{\calC_i}(\pi_i(d), c_j) \times \prod_{\ell \in I\setminus\{i\}} \map_{\calC_\ell}(\pi_\ell(d), \pi_\ell(d))\right).
\end{align*}
Now, the natural map $\colim_{j\in\mathcal{J}} \map_{\calC_i}(\pi_i(d), c_j) \to \map_{\calC_i}(\pi_i(d),\colim_{j\in\mathcal{J}} c_j)$ is a retract of this equivalence (pick the identities in all $\map_{\calC_\ell}(\pi_\ell(d), \pi_\ell(d))$ on both sides), so it is an equivalence as well. 

The converse direction is \cite[Lem.\ 5.3.4.10(2)]{Lurie-HTT}.
\end{proof}

\begin{Lemma}\label{Prop:toolscompact} Let $\calC$ be a stable category generated under colimits by an essentially small family of compact objects $E \subseteq \calC$. Then:
	\begin{enumerate}[label = (\roman*)]
		\item The full subcategory $\calC^\omega$ of compact objects in $\calC$ is the thick subcategory of $\calC$ generated by $E$.
		
		\item Let $\calD$ is a stable category with small colimits, and $L: \calC \leftrightarrows \calD : R$ be an adjoint pair. Then $L$ preserves compact objects if and only if $L(E) \subseteq \calD^\omega$.
		
		\item For an uncountable regular cardinal $\tau$, if $E$ generates $\calC$ under filtered colimits, then the full subcategory of $\calC^\tau$ of $\tau$-compact objects in $\calC$ is the full subcategory spanned by retracts of colimits of $\tau$-small $(\omega-)$filtered diagrams with values in $\calC^\omega$. In the situation of item $(ii)$, $L$ preserves $\tau$-compact objects if and only if $L(E) \subseteq \calD^\tau$.
	\end{enumerate}
\end{Lemma}
\begin{proof} Item $(i)$ is \cite[Lem.\ 2.2]{Neeman}. In part $(ii)$, the ``only if'' direction is tautological. For the converse, note that since $L$ preserves colimits and $\calD^\omega$ is thick (by stability of $\calD$ and \cite[Tag \href{https://kerodon.net/tag/064W}{064W}]{Kerodon}), the full subcategory $\calC'' \subseteq \calC$ of objects $c''$ such that $L(c'')\in \calD^\omega$ is thick. If $L(E) \subseteq \calD^\omega$, then $\calC''$ contains $E$ by assumption. Therefore, using part $(i)$, $\calC''$ contains $\calC^\omega$, so $L$ preserves compact objects. 
	
	By the proof of \cite[Prop.\ 5.4.2.11]{Lurie-HTT} (which applies since $\tau \gg \omega$ \cite[Rmk.\ 5.4.2.9]{Lurie-HTT}), we have $\calC \simeq \mathsf{Ind}_\tau(\calC')$, where $\calC' \subseteq \calC$ is the full subcategory spanned by colimits of $\tau$-small $(\omega-)$filtered diagrams with values in $\calC^\omega$. Furthermore, the proof of \cite[Lem.\ 5.4.2.4]{Lurie-HTT} shows that the subcategory of $\tau$-compact objects of $\mathsf{Ind}_\tau(\calC')$ is spanned by retracts of objects in $\calC'$. This proves the first part of item $(iii)$. For the second part, the ``only if'' direction is again tautological. Conversely, assuming that $L(E) \subseteq \calD^\tau$, let $c\in\calC^\tau$. Then, by what we just proved, $L(c)$ is a retract of a colimit of some $\tau$-small $(\omega-)$filtered diagram with values in $L(\calC^\omega)$. By the same argument as in item $(i)$, since $L(E)\subseteq \calD^\tau$ and $\calD^\tau$ is thick, we have $L(\calC^\omega)\subseteq \calD^\tau$. The latter category being closed under $\tau$-small colimits and retracts \cite[Tags \href{https://kerodon.net/tag/064W}{064W} and \href{https://kerodon.net/tag/064Z}{064Z}]{Kerodon}, $L(c)$ is $\tau$-compact, as desired. 
\end{proof}

\begin{Lemma}\label{Prop:kappacompinslice} 
	\begin{enumerate}[label = (\roman*)]
	\item Let $\tau$ be a regular cardinal. For any $B\in \Spc^\tau$, the source functor $\Spc_{/B} \to \Spc$ preserves and detects $\tau$-compact objects.
	\item Let $k=\R$ or $k=\C$. The functor $r_k: \Sm_k \to \Spc$ takes values in $\Spc^\kappa$. In particular, for any $X\in\Sm_k$, an arrow $(E\to r_k X) \in \Spc_{/r_k X}$ defines a $\kappa$-compact object if and only if $E$ is $\kappa$-compact in $\Spc$.
	\end{enumerate}
\end{Lemma}
\begin{proof}
	The source functor $\Spc_{/B} \to \Spc$ admits a right adjoint, sending $E\in\Spc$ to the projection map $(E\times B \to B)\in\Spc_{/B}$. The latter commutes with all colimits in $\Spc$ (it has itself a right adjoint sending $(A\to B)\in\Spc_{/B} \mapsto \map_{\Spc_{/B}}(B,A) \in\Spc$), in particular $\tau$-filtered ones, and thus the source functor preserves $\tau$-compact objects. Conversely, if $f: E \to B$ is a map with $E\in\Spc^\tau$, let $\colim_{j\in\calJ} (g_j: A_j\to B)$ be a $\tau$-filtered colimit diagram in $\Spc_{/B}$. Then, we have:
	\begin{align*}
	\map_{\Spc_{/B}}(E,\colim_{j\in\calJ} A_j) &\simeq \map_{\Spc}(E,\colim_{j\in\calJ} A_j) \times_{\map_{\Spc}(E,B)} \{\colim_{j\in\calJ} g_j\} \\
	&\simeq (\colim_{j\in\calJ} \map_{\Spc}(E, A_j)) \times_{\map_{\Spc}(E,B)} \{\colim_{j\in\calJ} g_j\} \tag{\text{since $E\in\Spc^\tau$}}\\
	&\simeq \colim_{j\in\calJ} (\map_{\Spc}(E, A_j) \times_{\map_{\Spc}(E,B)} \{ g_j\}) \tag{\text{colimits are universal in $\Spc$}}\\
	&\simeq \colim_{j\in\calJ} \map_{\Spc_{/B}}(E, A_j)
	\end{align*} 
	which prove $\tau$-compactness of $(E\to B) \in \Spc_{/B}$.

	For item $(ii)$, note that the real realization of a smooth scheme of finite type is a smooth manifold, and such a manifold admits a countable triangulation (by \cite[\S 2]{Cairns} or \cite[Thm.\ 7 and \S 4]{Whitehead}, and the fact that a manifold is by definition a second-countable topological space). Thus, any smooth manifold defines a $\kappa$-small space ($\kappa$ was chosen uncountable). The complex case is similar. The second part of the claim is then a particular case of item $(i)$.
\end{proof}

\begin{Lemma}\label{Prop:Spkappasm} Let $B\in\Spc^\kappa$. Then $\Sp(\Spc_{/B})^\kappa \subseteq \Sp(\Spc_{/B})$ is a symmetric monoidal subcategory, and it is essentially small. Here, the symmetric monoidal structure on $\Sp(\Spc_{/B})$ is induced by the Cartesian structure on $\Spc_{/B}$ (by Lemma \ref{Prop:sliceofpshsm}, the latter category is equivalent to $\Pre(\ast_{/B})$ and thus admits finite products) by tensoring with $\Sp$ in $\PrL$ (see \cite[Ex.\ 4.8.1.23]{Lurie-HA}). 
\end{Lemma}
\begin{proof}
By \cite[Lem.\ 5.4.2.4]{Lurie-HTT}, it suffices to show that this subcategory is closed under the tensor product and contains the unit. First note that the functor $\Sigma^\infty_+$ preserves $\tau$-compact objects for any cardinal $\tau$, because its right adjoint commutes with filtered colimits, in particular with $\tau$-filtered colimits (in view of the equivalence $\Sp(\Spc_{/B}) \simeq \Fun(B^\op,\Sp)$, this follows from the corresponding result for $\Omega^\infty:\Spc \to \Sp$ (or use the same argument as in \cite[Prop. C.12]{Hoyois-quad})). Thus, since $B\to B$ is $\kappa$-compact in $\Spc_{/B}$ by \Cref{Prop:kappacompinslice}, the unit $\Sigma^\infty_+ (B\to B)$ is $\kappa$-compact in $\Sp(\Spc_{/B})$. 

The category $\Sp(\Spc_{/B})$ is generated under filtered colimits by the (small) set of compact objects $E:=\{ \Sigma^n \Sigma^\infty_+ V \mid n\in\Z, V\in (\Spc_{/B})^\omega\}$ (the category $(\Spc_{/B})^\omega \simeq \Pre(B)^\omega$ is essentially small by \cite[Prop.\ 5.3.4.17]{Lurie-HTT}). In particular, it is itself essentially small.

To conclude the proof, we have to show that the subcategory of $\kappa$-compact objects is stable under the tensor product. Let $X\in \Sp(\Spc_{/B})^\kappa$, we will show that the functor $-\otimes X$ preserves $\kappa$-compact objects. By \Cref{Prop:toolscompact} (and the fact that the tensor product preserves colimits in each variable separately), it is sufficient to prove that it sends any object of the form $\Sigma^\infty_+ Y$ for $Y\in(\Spc_{/B})^\omega$ to a $\kappa$-compact object. Therefore, it suffices to show $-\otimes \Sigma^\infty_+ Y$ preserves $\kappa$-compact objects, and by the same argument, it then suffices to prove that $\Sigma^\infty_+ Y \Smash \Sigma^\infty_+ X \simeq \Sigma^\infty_+(X\times Y)$ is $\kappa$-compact for any $X,Y\in (\Spc_{/B})^\kappa$. By \Cref{Prop:kappacompinslice} below, it suffices to show that the fiber product $X'\times_{B} Y'$ in $\Spc$ is $\kappa$-compact, where $X'$ and $Y'$ are the sources of the arrows $X$ and $Y$, respectively. But by the same Lemma, $X',Y'\in\Spc^\kappa$, and $B\in\Spc^\kappa$ by assumption. Thus, by our choice of $\kappa$, the finite limit $X'\times_{B} Y'$ is again $\kappa$-compact, as needed.
\end{proof}
\vspace{0.2cm}

\subsection{The symmetric monoidal motivic Thom spectrum functor}\label{Subsect:motThom}\hfill\vspace{0.2cm}

In this subsection, we define the motivic analog to the topological Thom spectrum functor $\Mtop$. More precisely, this will be a symmetric monoidal functor $M_S: \Pre_\Sigma(\Sm_S)_{/\SH^\wgpd} \to \SH(S)$, where $\Pre_\Sigma(\Sm_S)$ is the category of spherical presheaves (Definition \ref{Def:sphericalpresheaves}), and $\SH^\wgpd$ is the presheaf sending $X\in\Sm_S$ to the maximal groupoid in the subcategory $\SH(X)^\omega$ of compact objects of $\SH(X)$. Again, the compactness requirement has only the purpose of avoiding set-theoretic issues.

\vspace{0.2cm}

\subsubsection{Functoriality of \texorpdfstring{$\SH (-)$}{SH(-)} and \texorpdfstring{$\SH(-)^\omega$}{the subpresheaf of compact objects}}\label{Subsubsect:functSH}\hfill\vspace{0.2cm}

The definition of the motivic Thom spectrum functor and its symmetric monoidal structure relies on the study of the functoriality in the base scheme $S$ of the whole construction., in particular of the assignments $S \mapsto \SH(S)$ and $S \mapsto \SH(S)^\omega$. This is what this subsection is dedicated to. The functoriality properties of $\SH$ we will study fit into a complete six functors formalism, constructed by Ayoub \cite{Ayoub}. We want to consider $\SH$ as a spherical presheaf (Definition \ref{Def:sphericalpresheaves}) of (not necessarily small) symmetric monoidal categories on $\Sm_S$. 

\begin{Def} Let $\fold$ be the collection of morphisms in $\Sm_S$ containing all finite coproducts of fold maps, i.e., maps of the form $Y:= \coprod_{i\leq n} X_i^{\amalg m_i} \to \coprod_{i\leq n} X_i := X$ for $n\geq 0$ and $m_1,\dots,m_n\geq 0$.
\end{Def}

\begin{thm}[\protect{\cite[\S 1.4.1]{Ayoub}}, \protect{\cite{Hoyois-6op}}]\label{Prop:6FFforSH} Let $S$ be a qcqs scheme and consider a morphism ${f:X\to Y}$ in $\Sm_S$. 
\begin{enumerate}[label = (\roman*)]
    \item For all $A,B\in\Sm_S$, we have $\SH(A\amalg B) \simeq \SH(A) \times \SH(B)$. If $\nabla \in\fold$ is of the form $V:= \coprod_{i\leq n} U_i^{\amalg m_i} \to \coprod_{i\leq n} U_i := U$, let $\nabla_\otimes: \SH(V) \to \SH(U)$ be the map induced under these identifications by $\prod_{i\leq n} \SH(U_i)^{m_i} \to \prod_{i\leq n} \SH(U_i)$ given by the tensor products $\otimes: \SH(U_i)^{m_i} \to \SH(U_i)$ each $i\leq n$. 
     \item There are induced pullback-pushforward adjunctions
% https://q.uiver.app/#q=WzAsNixbMCwxLCJcXFByZShcXFNtX1kpIl0sWzAsMCwiXFxQcmUoXFxTbV9YKSJdLFsxLDEsIlxcU3BjKFkpIl0sWzEsMCwiXFxTcGMoWCkiXSxbMiwxLCJcXFNIKFkpIl0sWzIsMCwiXFxTSChYKSJdLFswLDEsImZeXFxhc3QiLDAseyJvZmZzZXQiOi0yfV0sWzEsMCwiZl9cXGFzdCIsMCx7Im9mZnNldCI6LTJ9XSxbMiwzLCJmXlxcYXN0IiwwLHsib2Zmc2V0IjotMn1dLFszLDIsImZfXFxhc3QiLDAseyJvZmZzZXQiOi0yfV0sWzAsMiwiXFxMbW90IiwxXSxbMSwzLCJcXExtb3QiLDFdLFsyLDQsIlxcU2lnbWFfK15cXGluZnR5IiwxXSxbMyw1LCJcXFNpZ21hXyteXFxpbmZ0eSIsMV0sWzQsNSwiZl5cXGFzdCIsMCx7Im9mZnNldCI6LTJ9XSxbNSw0LCJmX1xcYXN0IiwwLHsib2Zmc2V0IjotMn1dLFs2LDcsIiIsMCx7ImxldmVsIjoxLCJzdHlsZSI6eyJuYW1lIjoiYWRqdW5jdGlvbiJ9fV0sWzgsOSwiIiwyLHsibGV2ZWwiOjEsInN0eWxlIjp7Im5hbWUiOiJhZGp1bmN0aW9uIn19XSxbMTQsMTUsIiIsMSx7ImxldmVsIjoxLCJzdHlsZSI6eyJuYW1lIjoiYWRqdW5jdGlvbiJ9fV1d
\[\begin{tikzcd}[column sep = 5em]
	{\Pre(\Sm_X)} & {\Spc(X)} & {\SH(X)} \\
	{\Pre(\Sm_Y)} & {\Spc(Y)} & {\SH(Y)}
	\arrow["\Lmot"{description}, from=1-1, to=1-2]
	\arrow[""{name=0, anchor=center, inner sep=0}, "{f_\ast}", shift left=2, from=1-1, to=2-1]
	\arrow["{\Sigma_+^\infty}"{description}, from=1-2, to=1-3]
	\arrow[""{name=1, anchor=center, inner sep=0}, "{f_\ast}", shift left=2, from=1-2, to=2-2]
	\arrow[""{name=2, anchor=center, inner sep=0}, "{f_\ast}", shift left=2, from=1-3, to=2-3]
	\arrow[""{name=3, anchor=center, inner sep=0}, "{f^\ast}", shift left=2, from=2-1, to=1-1]
	\arrow["\Lmot"{description}, from=2-1, to=2-2]
	\arrow[""{name=4, anchor=center, inner sep=0}, "{f^\ast}", shift left=2, from=2-2, to=1-2]
	\arrow["{\Sigma_+^\infty}"{description}, from=2-2, to=2-3]
	\arrow[""{name=5, anchor=center, inner sep=0}, "{f^\ast}", shift left=2, from=2-3, to=1-3]
	\arrow["\dashv"{anchor=center}, draw=none, from=3, to=0]
	\arrow["\dashv"{anchor=center}, draw=none, from=4, to=1]
	\arrow["\dashv"{anchor=center}, draw=none, from=5, to=2]
\end{tikzcd}\]
where the squares involving $f^\ast$ commute. The squares formed by $f_\ast$ and the right adjoints $\iota \vdash \Lmot$, $\Omega^\infty \vdash \Sigma^\infty_+$ also commute. 

\item All three functors $f^\ast$ in $(ii)$ are symmetric monoidal, with respect to the Cartesian structure on the categories of presheaves and motivic spaces, and the smash product on the stable motivic categories.

\item If $f$ is moreover smooth, there are induced adjunctions
% https://q.uiver.app/#q=WzAsNixbMCwxLCJcXFByZShcXFNtX1kpIl0sWzAsMCwiXFxQcmUoXFxTbV9YKSJdLFsxLDEsIlxcU3BjKFkpIl0sWzEsMCwiXFxTcGMoWCkiXSxbMiwxLCJcXFNIKFkpIl0sWzIsMCwiXFxTSChYKSJdLFswLDEsImZeXFxhc3QiLDIseyJvZmZzZXQiOjJ9XSxbMSwwLCJmX1xcc2hhcnAiLDIseyJvZmZzZXQiOjJ9XSxbMiwzLCJmXlxcYXN0IiwyLHsib2Zmc2V0IjoyfV0sWzMsMiwiZl9cXHNoYXJwIiwyLHsib2Zmc2V0IjoyfV0sWzAsMiwiXFxMbW90IiwxXSxbMSwzLCJcXExtb3QiLDFdLFsyLDQsIlxcU2lnbWFfK15cXGluZnR5IiwxXSxbMyw1LCJcXFNpZ21hXyteXFxpbmZ0eSIsMV0sWzQsNSwiZl5cXGFzdCIsMix7Im9mZnNldCI6Mn1dLFs1LDQsImZfXFxzaGFycCIsMix7Im9mZnNldCI6Mn1dLFs3LDYsIiIsMix7ImxldmVsIjoxLCJzdHlsZSI6eyJuYW1lIjoiYWRqdW5jdGlvbiJ9fV0sWzksOCwiIiwwLHsibGV2ZWwiOjEsInN0eWxlIjp7Im5hbWUiOiJhZGp1bmN0aW9uIn19XSxbMTUsMTQsIiIsMSx7ImxldmVsIjoxLCJzdHlsZSI6eyJuYW1lIjoiYWRqdW5jdGlvbiJ9fV1d
\[\begin{tikzcd}[ampersand replacement=\&, column sep = 5em]
	{\Pre(\Sm_X)} \& {\Spc(X)} \& {\SH(X)} \\
	{\Pre(\Sm_Y)} \& {\Spc(Y)} \& {\SH(Y)}
	\arrow["\Lmot"{description}, from=1-1, to=1-2]
	\arrow[""{name=0, anchor=center, inner sep=0}, "{f_\sharp}"', shift right=2, from=1-1, to=2-1]
	\arrow["{\Sigma_+^\infty}"{description}, from=1-2, to=1-3]
	\arrow[""{name=1, anchor=center, inner sep=0}, "{f_\sharp}"', shift right=2, from=1-2, to=2-2]
	\arrow[""{name=2, anchor=center, inner sep=0}, "{f_\sharp}"', shift right=2, from=1-3, to=2-3]
	\arrow[""{name=3, anchor=center, inner sep=0}, "{f^\ast}"', shift right=2, from=2-1, to=1-1]
	\arrow["\Lmot"{description}, from=2-1, to=2-2]
	\arrow[""{name=4, anchor=center, inner sep=0}, "{f^\ast}"', shift right=2, from=2-2, to=1-2]
	\arrow["{\Sigma_+^\infty}"{description}, from=2-2, to=2-3]
	\arrow[""{name=5, anchor=center, inner sep=0}, "{f^\ast}"', shift right=2, from=2-3, to=1-3]
	\arrow["\dashv"{anchor=center}, draw=none, from=0, to=3]
	\arrow["\dashv"{anchor=center}, draw=none, from=1, to=4]
	\arrow["\dashv"{anchor=center}, draw=none, from=2, to=5]
\end{tikzcd}\]
where the squares involving either $f^\ast$ or $f_\sharp$ commute.
\item In the situation of $(iv)$, the projection formula holds for all three functors $f_\sharp$, i.e., for every $A$ in the source of $f_\sharp$ and $B$ in its target, we have
$$f_\sharp(A\otimes f^\ast B) \simeq f_\sharp(A) \otimes B.$$
    \item The constructions of $(i)$ to $(iv)$ are functorial in $f$.
    \item For every commutative square in $\Sm_S$
    % https://q.uiver.app/#q=WzAsNCxbMCwxLCJZJyJdLFsxLDAsIlgiXSxbMSwxLCJZIl0sWzAsMCwiWCciXSxbMywxLCJnJyJdLFswLDIsImciXSxbMSwyLCJmIl0sWzMsMCwiZiciLDJdXQ==
\[\begin{tikzcd}
	{X'} & X \\
	{Y'} & Y
	\arrow["{g'}", from=1-1, to=1-2]
	\arrow["{f'}"', from=1-1, to=2-1]
	\arrow["f", from=1-2, to=2-2]
	\arrow["g", from=2-1, to=2-2]
\end{tikzcd}\]
    with $f$ and $f'$ smooth, there is an exchange transformation of functors $\SH(X) \to \SH(Y')$
	$$\mathsf{Ex}_{\sharp}^\ast: f'_\sharp(g')^\ast \Longrightarrow g^\ast f_\sharp$$
	(defined in the proof below). It is an equivalence if the square is Cartesian in $\Sm_S$.
    \item For every diagram in $\Sm_S$ of the form
    % https://q.uiver.app/#q=WzAsNixbMCwxLCJYIl0sWzEsMSwiWSJdLFswLDAsIlciXSxbMSwwLCJZJyJdLFsyLDEsIloiXSxbMiwwLCJaJyJdLFsxLDAsImYiLDJdLFsyLDAsImciLDJdLFszLDUsIlxcbmFibGEnIl0sWzEsNCwiXFxuYWJsYSJdLFs1LDQsInUiXSxbMywxLCJ1JyJdLFszLDJdXQ==
\[\begin{tikzcd}[ampersand replacement=\&]
	W \& {Y'} \& {Z'} \\
	X \& Y \& Z
	\arrow["g"', from=1-1, to=2-1]
	\arrow[from=1-2, to=1-1]
	\arrow["{\nabla'}", from=1-2, to=1-3]
	\arrow["{u'}", from=1-2, to=2-2]
	\arrow["u", from=1-3, to=2-3]
	\arrow["f"', from=2-2, to=2-1]
	\arrow["\nabla", from=2-2, to=2-3]
\end{tikzcd}\]
    with $u$ and $u'$ smooth morphisms and $\nabla, \nabla' \in \fold$, there is a distributivity transformation of functors $\SH(W) \to \SH(Z)$
    $$\mathsf{Dis}_{\sharp\otimes} : u_\sharp\nabla'_\otimes(f')^\ast \Longrightarrow \nabla_\otimes(\pi_Y)_\sharp\pi_W^\ast$$ 
	(defined in the proof below). It is an equivalence when the square on the right-hand side is a pullback and $Z' = R_{Y/Z}(W\times_X Y)$ is the Weil restriction (see \cite[\S 2.3]{BH-norms}, and the proof below for an easier description in our case).
\end{enumerate}
\end{thm}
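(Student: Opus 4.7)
The plan is to treat this theorem as a bookkeeping exercise, bringing together the well-established six-functor formalism for $\SH$ with the few explicit pieces we need to add for the present formulation. Items (ii)--(vi) are essentially a repackaging of the six-functor formalism of Ayoub \cite[\S 1.4.1]{Ayoub} as recast in the $\infty$-categorical language by Hoyois \cite{Hoyois-6op}: the adjunctions $f^\ast \dashv f_\ast$ on presheaves (by precomposition and right Kan extension along the forgetful functor $\Sm_X \to \Sm_Y$), their descent to $\Spc(-)$ and $\SH(-)$ through the $\mathbb{A}^1$-Nisnevich localization and the $\mathbb{P}^1$-stabilization, the symmetric monoidality of $f^\ast$, the extra left adjoint $f_\sharp$ in the smooth case (by left Kan extension), the projection formula, and functoriality in $f$, are all verified in those references. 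The task here is just to check that the diagrams displayed in (ii) and (iv) of the statement correspond precisely to the compatibilities proved there, which is a matter of unpacking the constructions and matching conventions.

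For item (i), I would observe that a smooth scheme over $A \amalg B$ decomposes canonically as a disjoint union of a smooth $A$-scheme and a smooth $B$-scheme, so $\Sm_{A\amalg B} \simeq \Sm_A \amalg \Sm_B$ as $\infty$-categories, and hence $\Pre(\Sm_{A\amalg B}) \simeq \Pre(\Sm_A) \times \Pre(\Sm_B)$. The Nisnevich topology, the $\mathbb{A}^1$-equivalences, and the $\mathbb{P}^1$-stabilization all decompose componentwise, so the equivalence descends to $\SH(A\amalg B) \simeq \SH(A)\times \SH(B)$. The functor $\nabla_\otimes$ is then defined as advertised, using on each factor the iterated tensor product coming from the symmetric monoidal structure on $\SH(U_i)$.

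For item (vii), $\mathsf{Ex}_\sharp^\ast$ is built as the standard mate of the commutation $(g')^\ast f^\ast \simeq (f')^\ast g^\ast$ from (ii) under the adjunctions $f_\sharp \dashv f^\ast$ and $f'_\sharp \dashv (f')^\ast$: namely, the composite of the unit $\operatorname{id} \Rightarrow f^\ast f_\sharp$ inserted in the middle, the above commutation, and the counit $f'_\sharp (f')^\ast \Rightarrow \operatorname{id}$. In the Cartesian case, the resulting map is an equivalence already at the presheaf level, because $f'_\sharp(g')^\ast$ and $g^\ast f_\sharp$ are both computed by straightforward slice-category manipulations (classical base change for left Kan extensions along an opfibration), after which descent to $\Spc(-)$ and $\SH(-)$ is automatic because all four functors preserve the localizing equivalences.

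The hard part will be item (viii), the distributivity transformation, which is genuinely non-formal and is where the norm formalism of Bachmann--Hoyois \cite[\S 2.3]{BH-norms} enters. My strategy would be to use the decomposition in (i) to reduce each fold map $\nabla, \nabla'$ to single tensor-power factors, construct $\mathsf{Dis}_{\sharp\otimes}$ by iterating the projection formula (v) together with the exchange transformation (vii), and then identify the resulting transformation with the distributivity law built in the norm formalism. For the Cartesian case, the identification $Z' = R_{Y/Z}(W\times_X Y)$ again reduces componentwise, via (i), to the statement that over each copy of $Z$ in $\coprod Z_i$, the Weil restriction of the pullback $W \times_X Y \to Y$ is an explicit finite product $\prod_j (W\times_X Y)_j$ indexed by the preimage in $Y$; this makes the comparison map an equivalence pointwise. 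Matching our construction step-by-step with the Bachmann--Hoyois norm and verifying the Weil-restriction characterization in our fold-map setup is where I expect almost all of the technical effort to concentrate.
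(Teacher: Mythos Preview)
Your overall strategy matches the paper's: items (ii)--(vi) are handled by citation to Ayoub and Hoyois, item (i) by the decomposition $\Sm_{A\amalg B}\simeq \Sm_A\amalg\Sm_B$ (the paper cites \cite[proof of Lem.~9.6]{BH-norms} rather than spelling this out), and the exchange transformation in (vii) is defined exactly as you describe, as the mate of the commutation square, with the Cartesian case an equivalence by \cite[Cor.~6.12]{Hoyois-6op}.

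The one place your plan diverges is the construction of $\mathsf{Dis}_{\sharp\otimes}$ in (viii). You propose to build it by iterating the projection formula together with $\mathsf{Ex}_\sharp^\ast$; the paper instead defines it as a single explicit composite
\[
u_\sharp\nabla'_\otimes(f')^\ast \xRightarrow{\mathsf{Ex}_{\sharp,\otimes}} \nabla_\otimes u'_\sharp(f')^\ast \simeq \nabla_\otimes(\pi_Y)_\sharp(f'\times u')_\sharp(f'\times u')^\ast\pi_W^\ast \xRightarrow{\varepsilon} \nabla_\otimes(\pi_Y)_\sharp\pi_W^\ast,
\]
where $\mathsf{Ex}_{\sharp,\otimes}$ is the mate coming from the \emph{symmetric monoidality} of $u^\ast$ (i.e.\ the compatibility $u^\ast\nabla_\otimes\simeq\nabla'_\otimes(u')^\ast$), not from the projection formula. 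Since the statement itself says ``defined in the proof below'', you do need a precise definition here, and the paper's is cleaner and matches the formulation in \cite{BH-norms}. You also overestimate the remaining work: once the transformation is defined this way, the paper simply invokes \cite[Prop.~5.10]{BH-norms} for the equivalence, together with the elementary observation that Weil restrictions along fold maps always exist and are given by the explicit fibered-product formula you sketch. No step-by-step matching is needed.
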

\begin{proof} We give references for the proof; parts for which no reference is mentioned are proven in \cite{Hoyois-6op}. Part $(i)$ is proven in \cite[proof of Lem.\ 9.6]{BH-norms}. In $(vii)$, the exchange transformation is the composition
    $$\mathsf{Ex}_{\sharp}^\ast: f_\sharp'(g')^\ast \xRightarrow{\ \  \eta_f\ \   } f_\sharp'(g')^\ast f^\ast f_\sharp \simeq f_\sharp'f'^\ast g^\ast f_\sharp \xRightarrow{\ \   \varepsilon_{f'}\ \ } g^\ast f_\sharp$$
    where $\eta_f$ and $\varepsilon_{f'}$ are the (co)units for the adjunctions $f_\sharp \dashv f^\ast$ and $f'_\sharp \dashv f'^\ast$, respectively, and the equivalence in the middle witnesses the commutativity of the square in $(vii)$. It is an equivalence for pullback squares by \cite[Cor.\,6.12]{Hoyois-6op}. In $(viii)$, the distributivity transformation is the composition
    $$\mathsf{Dis}_{\sharp\otimes} : u_\sharp\nabla'_\otimes(f')^\ast \xRightarrow{\ \mathsf{Ex}_{\sharp,\otimes}\ } \nabla_\otimes u'_\sharp (f')^\ast \simeq \nabla_\otimes (\pi_Y)_\sharp(f'\times u')_\sharp (f'\times u')^\ast \pi_W^\ast \xRightarrow{\ \varepsilon_{f'\times u'}\ } \nabla_\otimes(\pi_Y)_\sharp\pi_W^\ast$$
    where the equivalence in the middle uses $u' \simeq \pi_Y\circ (f'\times u')$ and $f' \simeq \pi_W \circ (f'\times u')$, and $\mathsf{Ex}_{\sharp,\otimes}$ is the composition
    $$\mathsf{Ex}_{\sharp,\otimes}: u_\sharp\nabla'_\otimes \xRightarrow{\ \eta_{u'}\ } u_\sharp\nabla'_\otimes (u')^\ast u'_\sharp \simeq u_\sharp u^\ast\nabla_\otimes u'_\sharp \xRightarrow{\ \varepsilon_u \ } \nabla_\otimes u'_\sharp$$
    with the equivalence in the middle given by the symmetric monoidality of $u^\ast$ (since the $\nabla_\otimes$ and $\nabla'_\otimes$ are given by the smash product). It is an equivalence when the conditions of statement $(viii)$ are satisfied by \cite[Prop.\ 5.10]{BH-norms}. The latter result assumes that $h$ is quasi-projective, but we do not need it here: this assumption is only used to ensure the existence of the Weil restriction, however the latter always exists for maps in $\fold$. Indeed, the Weil restriction $R_{Y/X}$ (when it exists) is defined by the property that $\mathsf{Sch}_X(U,R_{Y/X}V) \simeq \mathsf{Sch}_Y(U\times_X Y, V)$ for all $X$-scheme $U$ and $Y$-scheme $V$. In the case of a map $(Y = \coprod_{i\leq n} X_i^{\amalg m_i} \to \coprod_{i\leq n} X_i= X)\in \fold$, it is easy to check that $R_{Y/X}V$ exists and is given by
    $$R_{Y/X}V = \coprod_{i\leq n} \left(V_{X_{i,1}} \times_{X_i} \dots \times_{X_i} V_{X_{i,m_i}}\right) \longrightarrow \coprod_{i\leq n} X_i,$$
    where $X_{i,j}$ is the $j$-th component in $X_i^{\amalg m_i}$ and $V_{X_{i,j}}$ is the component of $V$ over $X_{i,j}$, for all $i\leq n$ and $1\leq j\leq m_i$. 
\end{proof}

We now study the functoriality of the subpresheaf of categories of compact objects $\SH^\omega$, associating to each $X\in\Sm_\R$ the full subcategory of $\SH(X)$ spanned by its compact objects.

\begin{thm}[\protect{\cite[Prop. C.12]{Hoyois-quad}}]\label{Prop:compactgenerators} Let $S$ be a qcqs scheme. Then $\SH(S)$ is generated under colimits by the essentially small family of compact objects 
	$$E_S = \{\Sigma^{-2n,-n}\Sigma^\infty_+ X \mid X\in\Sm_S, n\in\N\}.$$
\end{thm}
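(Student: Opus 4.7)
The plan is to address three assertions — essential smallness, compactness of each object in $E_S$, and that $E_S$ generates $\SH(S)$ under colimits — by unwinding the standard construction of $\SH(S)$ as the composite of three left adjoints
\[
\Pre(\Sm_S) \xrightarrow{\Lmot} \Spc(S) \xrightarrow{(-)_+} \Spc(S)_* \xrightarrow{\Sigma^\infty} \SH(S),
\]
where $\Lmot$ is the combined Nisnevich and $\mathbb{A}^1$-localization and $\Sigma^\infty$ is the stabilization inverting $\mathbb{P}^1 \simeq S^{2,1}$. Essential smallness of $E_S$ is immediate: $\Sm_S$ has a small skeleton and $\N$ is countable.

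For generation, by Yoneda the representables $\{X\}_{X \in \Sm_S}$ generate $\Pre(\Sm_S)$ under colimits. Each functor in the chain above is a left adjoint, hence colimit-preserving, so the family $\{\Sigma^\infty_+ X\}_{X \in \Sm_S}$ generates $\SH(S)$ under colimits. Since inverting $\mathbb{P}^1$ makes the bidegree-$(2,1)$ suspension an autoequivalence of $\SH(S)$, the smallest colimit-closed subcategory containing these $\Sigma^\infty_+ X$ automatically contains every $\Sigma^{-2n,-n}\Sigma^\infty_+ X$, so $E_S$ is indeed a generating family.

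For compactness, I would check that each step in the chain preserves compact objects. Representables are compact in $\Pre(\Sm_S)$ by Yoneda. The adjunction $(-)_+ \dashv \mathrm{forget}$ preserves compacts, because the forgetful functor from pointed to unpointed spaces commutes with filtered colimits. The stabilization $\Sigma^\infty$ preserves compacts because $\mathbb{P}^1$ is itself a compact pointed motivic space and the cyclic permutation of $(\mathbb{P}^1)^{\wedge 3}$ is motivically homotopic to the identity; this is the standard criterion implying that $\Omega^\infty$ commutes with filtered colimits. Finally, the autoequivalences $\Sigma^{-2n,-n}$ trivially preserve compact objects.

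The main obstacle is the motivic localization step: one must show that $\Lmot$ preserves compact objects. This follows once $\Lmot$ is exhibited as a Bousfield localization at a small set $W$ of maps between compact objects of $\Pre(\Sm_S)$. The $\mathbb{A}^1$-component is given by the projections $X \times \mathbb{A}^1 \to X$ for $X \in \Sm_S$; the Nisnevich component can be generated by the morphisms associated to elementary distinguished squares of smooth $S$-schemes, each of which is a morphism between compact representable presheaves. Granted such a presentation, the full subcategory $\Spc(S) \subseteq \Pre(\Sm_S)$ of $W$-local objects is stable under filtered colimits, so its right adjoint $\iota$ to $\Lmot$ preserves filtered colimits, and $\Lmot$ therefore preserves compact objects as needed.
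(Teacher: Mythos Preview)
The paper does not prove this statement; it is simply cited from \cite[Prop.~C.12]{Hoyois-quad}. So there is nothing to compare your argument against, only the question of whether it stands on its own.

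Your treatment of essential smallness and of compactness is sound, but the generation paragraph has a genuine gap. The step ``each functor in the chain is a left adjoint, hence colimit-preserving, so $\{\Sigma^\infty_+ X\}$ generates $\SH(S)$ under colimits'' is invalid: a left adjoint carries generators to generators only under extra hypotheses (for instance when its right adjoint is conservative or fully faithful), and $\Omega^\infty$ is neither. The conclusion is in fact false already for ordinary spectra --- suspension spectra are connective, connective spectra are closed under colimits, and $\Sigma^{-1}\mathbb{S}$ is not connective --- and the same obstruction persists in $\SH(S)$. Your next sentence does not rescue this: a colimit-closed subcategory has no reason to be stable under the ambient autoequivalence $\Sigma^{-2,-1}$, so containing the $\Sigma^\infty_+ X$ does not force it to contain their desuspensions. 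The fix is precisely to build in the desuspensions from the start. Either present $\SH(S)$ as the colimit in $\PrL$ of the sequence $\Spc(S)_\ast \to \Spc(S)_\ast \to \cdots$ along $\Sigma_{\mathbb{P}^1}$ and note that the image of the generators of the $n$-th stage is $\{\Sigma^{-2n,-n}\Sigma^\infty_+ X\}$; or, having established compactness, check that the functors $\map(E,-)$ for $E\in E_S$ jointly detect zero and use that in a stable cocomplete category a set of compact objects detecting zero generates under colimits.
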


\begin{Prop}\label{Prop:6FFforSHomega} Let $S$ be a qcqs scheme, and let $f:X\to Y$ be a morphism in $\Sm_S$. 
	\begin{enumerate}[label = (\roman*)]
		\item The full subcategory of compact objects $\SH(X)^\omega \subseteq \SH(X)$ is the thick subcategory generated by the family $E_X$ from Theorem \ref{Prop:compactgenerators}. It is a symmetric monoidal subcategory, and is essentially small.
		\item The pullback $f^\ast: \SH(Y) \to \SH(X)$ from Theorem \ref{Prop:6FFforSH}$(ii)$ restricts to $f^\ast: \SH(Y)^\omega \to \SH(X)^\omega$.
		\item If furthermore $f$ is smooth, the map $f_\sharp : \SH(X)\to \SH(Y)$ from Theorem \ref{Prop:6FFforSH}$(iv)$ restricts to $f_\sharp : \SH(X)^\omega\to \SH(Y)^\omega$.
		\item Let $A,B\in\Sm_S$. The equivalence $\SH(A\amalg B) \simeq \SH(A) \times \SH(B)$ from Theorem \ref{Prop:6FFforSH}$(i)$ restricts to an equivalence $\SH(A\amalg B)^\omega \simeq \SH(A)^\omega \times \SH(B)^\omega$.
	\end{enumerate}
\end{Prop}
\begin{proof}
The first part of $(i)$ follows from the fact that, by Lemma \ref{Prop:toolscompact}$(i)$ and Theorem \ref{Prop:compactgenerators}, $\SH(X)^\omega$ is the thick subcategory generated by the essentially small family $E_X$. To prove that $\SH(X)^\omega \subseteq \SH(X)$ is a symmetric monoidal subcategory, by \cite[Lem.\ 5.4.2.4]{Lurie-HTT} it suffices to show that it is closed under the tensor product and contains the unit. For the latter, note that the unit even belongs to $E_X$. The former follows from the description $\SH(X)^\omega = \mathsf{thick}(E_X)$, the fact that $E_X$ is closed under the tensor product, and the fact that the tensor product preserves colimits in both variables. This proves part $(i)$.

To prove $(ii)$, by Lemma \ref{Prop:toolscompact}, it suffices to show that $f^\ast$ maps the compact generators of Theorem \ref{Prop:compactgenerators} for $\SH(Y)$ to compact objects in $\SH(X)$. For any $U\in \Sm_Y$, by Theorem \ref{Prop:6FFforSH}$(ii)$-$(iii)$ we have $f^\ast((\mathbb{P}_Y^1)^{\Smash -n} \Smash \Sigma^\infty_+ U) \simeq (\mathbb{P}_X^1)^{\Smash -n} \Smash \Sigma^\infty_+ f^\ast(U) \in E\subseteq \SH(X)^\omega$, as needed. 

Part $(iii)$ is proven similarly: for any $U\in \Sm_X$, by Theorem \ref{Prop:6FFforSH}$(iv)$-$(v)$, we have 
$$f_\sharp((\mathbb{P}_X^1)^{\Smash -n} \Smash \Sigma^\infty_+ U) \simeq f_\sharp((f^\ast\mathbb{P}_Y^1)^{\Smash -n} \Smash \Sigma^\infty_+ U) \simeq (\mathbb{P}_Y^1)^{\Smash -n} \Smash \Sigma^\infty_+ f_\sharp U.$$

Finally, part $(iv)$ is a particular case of Lemma \ref{Prop:productcompact}. 
\end{proof}

\vspace{0.2cm}

\subsubsection{Construction of the motivic Thom spectrum functor}\label{Subsubsect:constrmotThom}\hfill\vspace{0.2cm}

As mentioned before, we want to view $\SH^\omega$ as a spherical presheaf of essentially small symmetric monoidal categories, inducing a spherical presheaf of spaces $\SH^\wgpd: \Sm_S^\op \to \CAlg(\Spc^\times)$. One could perform the same construction with $\SH^\kappa$ instead (also see \Cref{Rmk:whykappa} below).

\begin{Def}[\protect{\cite[Def.\ 5.5.8.8]{Lurie-HTT}}]\label{Def:sphericalpresheaves} Let $\mathcal{C}$ be an essentially small category with finite coproducts. Then, the category of \emph{spherical presheaves (of spaces) on $\mathcal{C}$}, denoted by $\Pre_\Sigma(\mathcal{C})$, is the category of functors $\mathcal{C}^\op \to \spaces$ preserving finite products (i.e., sending finite coproducts in $\mathcal{C}$ to products in $\spaces$). This construction defines a functor $\Pre_\Sigma(\bullet)$ from essentially small categories to sifted-cocomplete categories, which has a partial right adjoint given by the inclusion of essentially small sifted-cocomplete categories. In other terms, $\Pre_\Sigma(\mathcal{C})$ is the free sifted-cocomplete category generated by $\mathcal{C}$ \cite[Prop.\ 5.5.8.15]{Lurie-HTT}, and every spherical presheaf is a sifted colimit of representable presheaves \cite[Lem.\ 5.5.8.14]{Lurie-HTT}, which are in particular spherical. 
\end{Def} 

The motivic Thom spectrum functor will be defined as a particular instance of the more general motivic colimit functors. Given $\calF$ a spherical presheaf of essentially small symmetric monoidal categories on $\Sm_{S}$, with good functoriality properties inspired by the ones of $\SH$ studied in the previous subsection, we will construct symmetric monoidal functors $\Pre_\Sigma(\Sm_{S'})_{/\calF^\simeq} \to \calF(S')$ for all $S'\in\Sm_{S}$. The construction of these motivic colimit functors is itself functorial in $S'$, in the sense that we will view both the source and target categories as presheaves of (not necessarily small) symmetric monoidal categories on $\Sm_{S}$, and construct a natural symmetric monoidal transformation between these presheaves. 

Applying the construction to $\calF = \SH^\omega$ produces the motivic Thom spectrum functor, but we will also apply the construction to another presheaf $\calR$ related to real realization (Subsection \ref{Subsubsect:colimfunctforrealization}) in order to compare the motivic Thom spectrum functor with the topological one (Subsection \ref{Subsubsect:comp}). 

Before proceeding to the construction itself, we need another point of view on spherical presheaves of symmetric monoidal categories (on $\Sm_S$). As we now explain, the latter may be viewed as functors $\Span(\Sm_S, \all, \fold) \to \Cat$ on a certain category of spans, that preserve finite products. Recall that the objects in $\Cat$ are \emph{essentially small} categories.

\begin{Def}[\protect{\cite[Section 5]{Barwick}}]\label{Def:Span} Let $\calC$ be a category and let $\mathsf{left}$ and $\mathsf{right}$ be two classes of edges in $\calC$, both containing equivalences and closed under pullback along one another. Then, there is a category $\Span(\mathcal{C},\mathsf{left}, \mathsf{right})$ with vertices the objects of $\mathcal{C}$, and for any $X,Z\in\calC$, the edges from $X$ to $Z$ are given by all spans $X\leftarrow Y \rightarrow Z$ where $Y\in\calC$, and $(Y\to X)\in\mathsf{left}$, $(Y\to Z)\in\mathsf{right}$. The composition of two spans $V \leftarrow W \to X$ and $X \leftarrow Y \to Z$ is the span $V \leftarrow W\times_X Y \to Z$.
\end{Def}

\begin{Def}\label{Def:finstar} Let $\Fin$ be the category of finite sets, and $\Finstar$ be the category of finite pointed sets and pointed maps. For all $n\in\N$, let $\finstar{n} := \{\ast, 1,\dots,n\} \in\Finstar$, and $\fin{n} := \{1,\dots,n\} \in\Fin$.
\end{Def}

\begin{Prop}[\protect{\cite[Prop.\ C.1]{BH-norms}}]\label{Prop:presheavesandspans} Let $\calD$ be a category with finite products, and let $\all$ be the class of all edges in $\calD$. Consider the functor $\theta: \Fun(\Span(\Fin,\all,\all),\calD) \longrightarrow \Fun(\Finstar,\calD)$ induced by restriction along the functor $\Finstar \to \Span(\Fin, \all, \all)$ sending $\finstar{n}$ to  $\fin{n}$, and a map $(f: \finstar{n} \to \finstar{m})$ to $(\fin{n} \xhookrightarrow{} f^{-1}(\fin{m}) \to \fin{m})$. Then, $\theta$ restricts to an equivalence of categories
$$ \Fun^\times(\Span(\Fin,\all,\all),\calD) \lsimeq{} \CAlg(\calD^\times).$$
\end{Prop}

This statement generalizes to presheaves with values in $\CAlg(\calD^\times)$:

\begin{Prop}[\protect{\cite[Prop.\ C.5]{BH-norms}}]\label{Prop:spanvspsh} For an extensive category $\mathcal{C}$ (e.g.\ $\Sm_S$, see \cite[Def.\ 2.3]{BH-norms}), consider the functor
\begin{align*}
\chi: \mathcal{C}^\op \times \Span(\mathsf{Fin},\all,\all) &\longrightarrow \Span(\mathcal{C},\all, \fold) \\
(c,\fin{n}) &\longmapsto c^{\amalg n}\\
(c'\xrightarrow{f^\op} c, \fin{n}\xleftarrow{a} \fin{m} \xrightarrow{b} \fin{k}) &\longmapsto (c'^{\amalg n} \leftarrow c^{\amalg m} \to c^{\amalg k})
\end{align*}
where the restriction of $c^{\amalg m} \to c^{\amalg k}$ to the $i$-th component is the inclusion as the $b(i)$-th component, and the restriction of $c^{\amalg m} \to (c')^{\amalg }$ to the $i$-th component is given by $f$ followed by the inclusion as the $a(i)$-th component. Then, for $\mathcal{D}$ a category with finite products, restriction along $\chi$ induces a functor $\Theta: \mathsf{Fun}(\Span(\mathcal{C},\all, \fold), \mathcal{D}) \to \mathsf{Fun}(\mathcal{C}^\op \times \Span(\mathsf{Fin},\all,\all), \mathcal{D})$, which restricts to an equivalence
$$\mathsf{Fun}^\times(\Span(\mathcal{C},\all, \fold), \mathcal{D}) \lsimeq{} \Fun^\times(\mathcal{C}^\op,\CAlg(\mathcal{D}^\times)).$$
\end{Prop}

Using this point of view, we are finally ready for the construction of our motivic colimit functors.

\begin{Notation}\label{Notation:Span} When working over a fixed base scheme $S$, denote $\Span := \Span(\Sm_S,\all,\fold)$.
\end{Notation}

\begin{thm}[\protect{\cite[\S 16.3]{BH-norms}}]\label{Prop:motiviccolim} Let $\calF: \Span \longrightarrow \Cat$ be a functor preserving finite products, such that:
\begin{enumerate}[label = (\roman*)]
    \item for any $f:Y\to X$ in $\Sm_S$ \emph{smooth}, $f^\ast := \calF(X\leftarrow Y \xlongequal{\ } Y) : \calF(X) \to \calF(Y)$ admits a left adjoint, denoted by $f_\sharp$.
    \item for any Cartesian square in $\Sm_S$
    % https://q.uiver.app/#q=WzAsNCxbMCwxLCJZJyJdLFsxLDAsIlgiXSxbMSwxLCJZIl0sWzAsMCwiWCciXSxbMywxLCJnJyJdLFswLDIsImciXSxbMSwyLCJmIl0sWzMsMCwiZiciLDJdLFszLDIsIiIsMix7InN0eWxlIjp7Im5hbWUiOiJjb3JuZXIifX1dXQ==
\[\begin{tikzcd}[ampersand replacement=\&]
	{X'} \& X \\
	{Y'} \& Y
	\arrow["{g'}", from=1-1, to=1-2]
	\arrow["{f'}"', from=1-1, to=2-1]
	\arrow["\lrcorner"{anchor=center, pos=0.125}, draw=none, from=1-1, to=2-2]
	\arrow["f", from=1-2, to=2-2]
	\arrow["g", from=2-1, to=2-2]
\end{tikzcd}\]
    with $f$ and $f'$ smooth morphisms, the exchange transformation $\mathsf{Ex}_{\sharp}^\ast: f'_\sharp(g')^\ast \Longrightarrow g^\ast f_\sharp$ of functors $\calF(X) \to \calF(Y')$ (defined as in Theorem \ref{Prop:6FFforSH}) is an equivalence.
    \item Given $(\nabla: Y \to Z)\in\fold$, let $\nabla_\otimes := \calF(Y\xlongequal{\ } Y \to Z) : \calF(Y) \to \calF(Z)$. Then $\nabla$ encodes the tensor product on the symmetric monoidal category $\calF(Z)$ (Proposition \ref{Prop:spanvspsh}). For every diagram in $\Sm_S$ of the form
% https://q.uiver.app/#q=WzAsNixbMCwxLCJYIl0sWzEsMSwiWSJdLFswLDAsIlciXSxbMSwwLCJSX3tZL1h9KFdcXHRpbWVzX1ggWSlcXHRpbWVzX1ogWSJdLFsyLDEsIloiXSxbMiwwLCJSX3tZL1h9KFdcXHRpbWVzX1ggWSkiXSxbMSwwLCJmIiwyXSxbMiwwLCJnIiwyXSxbMyw1LCJcXG5hYmxhJyJdLFsxLDQsIlxcbmFibGEiXSxbNSw0LCJ1Il0sWzMsMSwidSciXSxbMywyLCJmJyIsMl0sWzMsNCwiIiwwLHsic3R5bGUiOnsibmFtZSI6ImNvcm5lciJ9fV1d
\[\begin{tikzcd}
	W & {R_{Y/X}(W\times_X Y)\times_Z Y} & {R_{Y/X}(W\times_X Y)} \\
	X & Y & Z
	\arrow["g"', from=1-1, to=2-1]
	\arrow["{f'}"', from=1-2, to=1-1]
	\arrow["{\nabla'}", from=1-2, to=1-3]
	\arrow["{u'}", from=1-2, to=2-2]
	\arrow["\lrcorner"{anchor=center, pos=0.125}, draw=none, from=1-2, to=2-3]
	\arrow["u", from=1-3, to=2-3]
	\arrow["f"', from=2-2, to=2-1]
	\arrow["\nabla", from=2-2, to=2-3]
\end{tikzcd}\]
    with $u$ and $u'$ smooth morphisms and $\nabla, \nabla' \in \fold$, the distributivity transformation 
	$${\mathsf{Dis}_{\sharp\otimes} : u_\sharp\nabla'_\otimes(f')^\ast \Longrightarrow \nabla_\otimes(\pi_Y)_\sharp\pi_W^\ast}$$
	of functors $\calF(W) \to \calF(Z)$ (defined as in Theorem \ref{Prop:6FFforSH}) is an equivalence. 
\end{enumerate}
\vspace{0.2cm}
Then, there exists a natural transformation $M_\calF: (\Sm_\bullet)_{/{\calF}^\simeq} \to \calF$ of finite products preserving functors $\Span \to \Cat$  (i.e., of spherical presheaves of symmetric monoidal categories). 

Assume furthermore that $\calF$ is a subfunctor of $\calG : \Span \to \widehat{\mathsf{Cat}}_\infty$, a spherical presheaf of non-necessarily small symmetric monoidal categories. Then, if $\calG$ lifts to the category of (not necessarily small) sifted-cocomplete categories and functors preserving sifted colimits, then we obtain a natural transformation $M_\calF: \Pre_\Sigma(\Sm_\bullet)_{/\calF^\simeq} \to \calG$ of spherical of presheaves of (not necessarily small) sifted-cocomplete symmetric monoidal categories. 
\end{thm}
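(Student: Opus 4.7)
The plan is to construct, for each $S' \in \Sm_S$, a symmetric monoidal functor $M_{S'}\colon (\Sm_{S'})_{/\calF^\simeq} \to \calF(S')$ and then assemble these into a natural transformation of spherical presheaves $\Span \to \Cat$. An object of the source (formed inside $\Pre(\Sm_{S'})$) is a pair $(p\colon X \to S',\, f\colon X \to \calF^\simeq)$, where by the Yoneda lemma the map $f$ corresponds to an object $E_f \in \calF(X)^\simeq$. Since $p$ is smooth in $\Sm_S$, hypothesis $(i)$ makes the left adjoint $p_\sharp\colon \calF(X) \to \calF(S')$ available, and I would set
\[
M_{S'}(p, f) := p_\sharp(E_f),
\]
extending to morphisms in the evident way using the units and counits of the $p_\sharp \dashv p^\ast$ adjunctions.

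Symmetric monoidality of $M_{S'}$ is the statement that it intertwines the Cartesian tensor product on the slice --- in which $(p,f)\otimes(q,g) = (X \times_{S'} Y \to S',\ \pi_X^\ast E_f \otimes \pi_Y^\ast E_g)$ --- with the symmetric monoidal structure on $\calF(S')$. The required equivalence $(p \pi_X)_\sharp (\pi_X^\ast E_f \otimes \pi_Y^\ast E_g) \simeq p_\sharp(E_f) \otimes q_\sharp(E_g)$ is a consequence of the projection formula together with smooth base change, which in the present abstract setup come out of axioms $(ii)$ and $(iii)$. For the naturality in $S'$ along the span category, one checks two separate things: for a smooth morphism $h\colon T \to S'$ in $\Sm_S$, the commutation $h^\ast p_\sharp \simeq p'_\sharp (h')^\ast$ for the Cartesian base-change square is exactly axiom $(ii)$; and for a fold map $\nabla$, the corresponding naturality encodes the distributivity of $p_\sharp$ through the tensor product on $\calF$ and is exactly axiom $(iii)$.

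The hardest step will be to package all of this --- the pointwise functor, its symmetric monoidality, and its naturality in the span category --- into a single honest natural transformation $M_\calF$ of finite-product-preserving functors $\Span \to \Cat$. One does this by working with the appropriate (co)Cartesian fibrations over $\Span$ so that the pointwise exchange and distributivity equivalences given by axioms $(ii)$ and $(iii)$ can be upgraded to coherent natural isomorphisms; this is the technical content of the construction in \cite[\S 16.3]{BH-norms}, which we would cite rather than reproduce.

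Finally, to extend to $\Pre_\Sigma(\Sm_\bullet)_{/\calF^\simeq}$ when $\calF \subseteq \calG$ with each value $\calG(S')$ sifted-cocomplete and each transition functor sifted-colimit-preserving, I would invoke the universal property of $\Pre_\Sigma$ as the free sifted-cocompletion \cite[Prop.\ 5.5.8.15]{Lurie-HTT}: the composite $(\Sm_{S'})_{/\calF^\simeq} \xrightarrow{M_{S'}} \calF(S') \hookrightarrow \calG(S')$ extends uniquely, along the Yoneda embedding $(\Sm_{S'})_{/\calF^\simeq} \hookrightarrow \Pre_\Sigma(\Sm_{S'})_{/\calF^\simeq}$, to a sifted-colimit-preserving symmetric monoidal functor $\Pre_\Sigma(\Sm_{S'})_{/\calF^\simeq} \to \calG(S')$. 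The sifted-colimit-preservation hypothesis on $\calG$ then ensures that these extensions are compatible under the span functoriality already established for $M_\calF$, producing the desired natural transformation of spherical presheaves of (not necessarily small) sifted-cocomplete symmetric monoidal categories.
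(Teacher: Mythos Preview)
Your pointwise formula $M_{S'}(p,E)=p_\sharp E$ is correct and is exactly what the paper confirms a posteriori (Proposition~\ref{Prop:Masacolimit}), the roles you assign to axioms $(i)$--$(iii)$ are the right ones, and your treatment of the extension to $\Pre_\Sigma$ via the free sifted-cocompletion is precisely the paper's Step~5. But the proposal is an outline rather than a proof: you explicitly defer the only substantive step --- upgrading the pointwise data to a coherent natural transformation of functors $\Span\to\Cat$ --- to a citation. The paper instead carries this out. It unstraightens $\calF$ to a Cartesian fibration $F\colon\calE\to\Span^\op$, introduces $\Fun_{\mathsf{sm}}(\Delta^1,\Span)$ with source/target maps $s,t$, and proves: (a) $s^\op$ is a Cartesian fibration, with coCartesian lifts built explicitly from Weil restrictions; (b) the evaluation map $\phi\colon s^\ast\calE\to t^\ast\calE$ admits a \emph{relative} left adjoint $\psi$, fiberwise given by $f_\sharp$ (this is where $(i)$ enters); and (c) the composite $t^\ast\calE\xrightarrow{\psi}s^\ast\calE\to\calE$ preserves Cartesian edges over $\Span^\op$, which is exactly where the exchange and distributivity equivalences $(ii)$, $(iii)$ are invoked. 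Straightening this morphism of Cartesian fibrations yields $M_\calF$ together with its symmetric monoidality, all at once.

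One point in your sketch needs correction. You take the tensor product on $(\Sm_{S'})_{/\calF^\simeq}$ to be $(p,E)\otimes(q,F)=(X\times_{S'}Y,\ \pi_X^\ast E\otimes\pi_Y^\ast F)$ and then check $M_{S'}$ is symmetric monoidal against that. In the paper the symmetric monoidal structure on the slice is not posited in advance but is \emph{produced} by the fibration construction (and denoted $\otimes'$); whether it agrees with the Day-convolution structure $\otimes$ you have in mind is left as an open question (Question~\ref{QuestionA}), with only the agreement of binary tensors on objects established (Proposition~\ref{Prop:tensoragree}). So a pointwise verification of symmetric monoidality, even if carried out, would not by itself give a symmetric monoidal functor for the structure the theorem actually constructs; the fibration packaging is doing real work here, not just bookkeeping.
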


\begin{Rmk}\label{Rmk:actualslice}
    In the notation $\Pre_\Sigma(\Sm_\bullet)_{/\calF^\simeq}$, $\calF^\simeq$ is viewed as a functor associating to $(u: X\to S)\in\Sm_S$ a presheaf of spaces on $\Sm_X$, given by 
    $$\Sm_X^\op \xhookrightarrow{\, u_\sharp \,} \Sm_S^\op \xrightarrow{\, \calF\, } \Cat \xrightarrow{(-)^\simeq} \spaces.$$
    Thus, $\Pre_\Sigma(\Sm_\bullet)_{/\calF^\simeq}$ associates to $X$ the slice category $\Pre_\Sigma(\Sm_X)_{/\calF^\simeq}$, viewing $\calF^\simeq \in \Pre_\Sigma(\Sm_X)$. It is a priori unclear whether the symmetric monoidal structure on the slice category induced by this construction agrees with the one in Proposition \ref{Prop:smstconslice}. We will come back to this question in Subsection \ref{Subsubsect:comp}.
\end{Rmk}

\begin{Notation}\label{Notation:smstc}
    We denote the symmetric monoidal structure on $\Pre_\Sigma(\Sm_\R)_{/\SH^\wgpd}$ induced by Theorem \ref{Prop:motiviccolim} by $\Pre_\Sigma(\Sm_\R)^{\otimes'}_{/\SH^\wgpd}$. By Proposition \ref{Prop:smstconslice}, the underlying slice category admits another symmetric monoidal structure. We denote this second monoidal structure by $\Pre_\Sigma(\Sm_\R)^{\otimes}_{/\SH^\wgpd}$. A priori, we do not know if they agree; we will come back to this question right after the proof of Theorem \ref{Prop:motiviccolim}.
\end{Notation}

\begin{proof}[Proof of Theorem \ref{Prop:motiviccolim}] For completeness, we repeat the construction in \cite[\S 16.3]{BH-norms} in our more general context. By straightening-unstraightening \cite[\S 3.2]{Lurie-HTT}, the functor $\calF$ classifies a Cartesian fibration $F: \calE \to \Span^\op$. Let $\Fun_{\Sm}(\Delta^1,\Span)$ be the full subcategory of $\Fun(\Delta^1,\Span)$ consisting of functors whose image is a span of the form $X\leftarrow Y \xlongequal{\ } Y$ with $Y \to X$ a smooth morphism in $\Sm_S$. The composition
$$\Fun_{\Sm}(\Delta^1,\Span)\times \Delta^1 \xrightarrow{\text{ev}} \Span \xrightarrow{\calF} \Cat$$ 
where the first functor is evaluation, can be viewed as a natural transformation between the functors $\calF\circ s$ and $\calF\circ t$, where $s$ (resp.\ $t$) is the source (resp.\ target) functor $\Fun_{\Sm}(\Delta^1,\Span)\to \Fun_{\Sm}(\Delta^0,\Span)\simeq \Span$ induced by the inclusion $\{0\} \to \Delta^1$ (resp.\ $\{1\} \to \Delta^1$). Since precomposition of functors corresponds to pullbacks of Cartesian fibrations \cite[Def. 3.3.2.2]{Lurie-HTT}, the functors $\calF\circ s$ and $\calF\circ t$ classify the Cartesian fibrations $s^\ast F$ and $t^\ast F$, respectively, defined as the pullbacks
% https://q.uiver.app/#q=WzAsOCxbMCwxLCJcXEZ1bl9cXFNtKFxcRGVsdGFeMSxcXFNwYW4pXlxcb3AiXSxbMSwxLCJcXFNwYW5eXFxvcCJdLFszLDEsIlxcRnVuX1xcU20oXFxEZWx0YV4xLFxcU3BhbileXFxvcCJdLFs0LDEsIlxcU3Bhbl5cXG9wIl0sWzEsMCwiXFxjYWxFIl0sWzQsMCwiXFxjYWxFIl0sWzAsMCwic15cXGFzdFxcY2FsRSJdLFszLDAsInReXFxhc3RcXGNhbEUiXSxbMCwxLCJzXlxcb3AiXSxbMiwzLCJ0Xlxcb3AiXSxbNCwxLCJGIl0sWzUsMywiRiJdLFs2LDRdLFs2LDAsInNeXFxhc3QgRiIsMl0sWzYsMSwiIiwxLHsic3R5bGUiOnsibmFtZSI6ImNvcm5lciJ9fV0sWzcsMywiIiwyLHsic3R5bGUiOnsibmFtZSI6ImNvcm5lciJ9fV0sWzcsMiwidF5cXGFzdCBGIiwyXSxbNyw1XV0=
\[\begin{tikzcd}
	{s^\ast\calE} & \calE && {t^\ast\calE} & \calE \\
	{\Fun_{\Sm}(\Delta^1,\Span)^\op} & {\Span^\op} && {\Fun_{\Sm}(\Delta^1,\Span)^\op} & {\Span^\op,}
	\arrow[from=1-1, to=1-2]
	\arrow["{s^\ast F}"', from=1-1, to=2-1]
	\arrow["\lrcorner"{anchor=center, pos=0.125}, draw=none, from=1-1, to=2-2]
	\arrow["F", from=1-2, to=2-2]
	\arrow[from=1-4, to=1-5]
	\arrow["{t^\ast F}"', from=1-4, to=2-4]
	\arrow["\lrcorner"{anchor=center, pos=0.125}, draw=none, from=1-4, to=2-5]
	\arrow["F", from=1-5, to=2-5]
	\arrow["{s^\op}", from=2-1, to=2-2]
	\arrow["{t^\op}", from=2-4, to=2-5]
\end{tikzcd}\]
and the natural transformation $\calF\circ s \Rightarrow \calF\circ t$ gives a morphism of Cartesian fibrations $\phi: s^\ast\calE \to t^\ast\calE$ (i.e., a functor over $\Fun_{\Sm}(\Delta^1,\Span)^\op$ that preserves Cartesian edges). We have a commutative diagram
% https://q.uiver.app/#q=WzAsNSxbMSwxLCJcXEZ1bl9cXFNtKFxcRGVsdGFeMSxcXFNwYW4pXlxcb3AiXSxbMiwxLCJcXFNwYW5eXFxvcCJdLFsyLDAsIlxcY2FsRSJdLFsxLDAsInNeXFxhc3RcXGNhbEUiXSxbMCwwLCJ0XlxcYXN0XFxjYWxFIl0sWzAsMSwic15cXG9wIl0sWzIsMSwiRiJdLFszLDIsIlxcY2hpIl0sWzMsMCwic15cXGFzdCBGIiwyXSxbNCwwLCJ0XlxcYXN0IEYiLDJdLFszLDQsIlxccGhpIiwyXV0=
\[\begin{tikzcd}[ampersand replacement=\&]
	{t^\ast\calE} \& {s^\ast\calE} \& \calE \\
	\& {\Fun_{\Sm}(\Delta^1,\Span)^\op} \& {\Span^\op.}
	\arrow["{t^\ast F}"', from=1-1, to=2-2]
	\arrow["\phi"', from=1-2, to=1-1]
	\arrow["\chi", from=1-2, to=1-3]
	\arrow["{s^\ast F}"', from=1-2, to=2-2]
	\arrow["F", from=1-3, to=2-3]
	\arrow["{s^\op}", from=2-2, to=2-3]
\end{tikzcd}\]

We now pause to give an outline of the rest of the proof. Our goal is to show that the composition $s^\op \circ t^\ast F: t^\ast\calE \to \Fun_{\Sm}(\Delta^1,\Span)^\op \to \Span^\op$ is a Cartesian fibration, classified by a presheaf of symmetric monoidal categories $(\Sm_\bullet)_{\sslash\calF}$, of which $(\Sm_\bullet)_{/\calF}$ is a subfunctor; and to construct a morphism of Cartesian fibrations $t^\ast\calE \to \calE$. The latter corresponds to the natural transformation $M_\calF$ we are looking for. Here are the steps we will follow:
\begin{enumerate}
    \item Show that $s^\op$ is a Cartesian fibration, so that the composition $s^\op \circ t^\ast F: t^\ast\calE \to \Span^\op$ is a Cartesian fibration (note that $t^\ast F$ is a Cartesian fibration because it is a pullback of $F$).
    \item Define a \emph{relative left adjoint} $\psi$ to $\phi$, which will in particular be a map $t^\ast\calE \to s^\ast\calE$.
    \item Show that the composition $\chi\circ \psi$ is a morphism of Cartesian fibrations over $\Span^\op$, and thus corresponds to a natural transformation between the functors classifying these Cartesian fibrations.
    \item Show that $s^\op \circ t^\ast F$ is indeed classified by some presheaf $(\Sm_\bullet)_{\sslash\calF}$ admitting $(\Sm_\bullet)_{/\calF}$ as a subfunctor, and deduce the first part of the statement of the theorem.
    \item Prove the second part of the statement, i.e., the extension to sifted-cocompletions.
\end{enumerate}
\vspace{0.2cm}

\textbf{Step 1:} \emph{The functor $s^\op$ is a Cartesian fibration.} Equivalently, we show that $s$ is a coCartesian fibration. Firstly, it is an inner fibration because the source functor $\Fun(\Delta^1,\Span) \to \Span$ is an inner fibration by \cite[Cor.\ 2.4.7.11]{Lurie-HTT}, and the restriction of an inner fibration to a full subcategory is still an inner fibration by \cite[\href{https://kerodon.net/tag/01CU}{Tag 01CU}]{Kerodon}. Secondly, we claim that a coCartesian edge over $X\xleftarrow{f} Y \xrightarrow{\nabla} Z$ in $\Span$ with source $X \xleftarrow{j} W \xlongequal{\ } W$ is given by the edge in $\Fun_{\Sm}(\Delta^1,\Span)$ (i.e., natural transformation)
% https://q.uiver.app/#q=WzAsOSxbMCwyLCJYIl0sWzEsMiwiWSJdLFsyLDIsIloiXSxbMCwxLCJXIl0sWzAsMCwiVyJdLFsxLDEsIlJfe1kvWn0oV1xcdGltZXNfWCBZKVxcdGltZXNfWiBZIl0sWzEsMCwiUl97WS9afShXXFx0aW1lc19YIFkpXFx0aW1lc19aIFkiXSxbMiwxLCJSX3tZL1p9KFdcXHRpbWVzX1ggWSkiXSxbMiwwLCJSX3tZL1p9KFdcXHRpbWVzX1ggWSkiXSxbMSwwLCJmIiwyXSxbMSwyLCJcXG5hYmxhIl0sWzMsMCwiaiIsMl0sWzQsMywiIiwyLHsibGV2ZWwiOjIsInN0eWxlIjp7ImhlYWQiOnsibmFtZSI6Im5vbmUifX19XSxbNiw1LCIiLDIseyJsZXZlbCI6Miwic3R5bGUiOnsiaGVhZCI6eyJuYW1lIjoibm9uZSJ9fX1dLFs3LDJdLFs1LDcsIlxcbmFibGEnIl0sWzUsMV0sWzUsMiwiIiwxLHsic3R5bGUiOnsibmFtZSI6ImNvcm5lciJ9fV0sWzYsOF0sWzgsNywiIiwwLHsibGV2ZWwiOjIsInN0eWxlIjp7ImhlYWQiOnsibmFtZSI6Im5vbmUifX19XSxbNiw0XSxbNSwzLCJmJyIsMl0sWzUsNCwiIiwwLHsic3R5bGUiOnsibmFtZSI6ImNvcm5lciJ9fV1d
\[\begin{tikzcd}[ampersand replacement=\&]
	W \& {R_{Y/Z}(W\times_X Y)\times_Z Y} \& {R_{Y/Z}(W\times_X Y)} \\
	W \& {R_{Y/Z}(W\times_X Y)\times_Z Y} \& {R_{Y/Z}(W\times_X Y)} \\
	X \& Y \& Z,
	\arrow[equals, from=1-1, to=2-1]
	\arrow[from=1-2, to=1-1]
	\arrow[from=1-2, to=1-3]
	\arrow[equals, from=1-2, to=2-2]
	\arrow[equals, from=1-3, to=2-3]
	\arrow["j"', from=2-1, to=3-1]
	\arrow["\lrcorner"{anchor=center, pos=0.125, rotate=180}, draw=none, from=2-2, to=1-1]
	\arrow["{f'}"', from=2-2, to=2-1]
	\arrow["{\nabla'}", from=2-2, to=2-3]
	\arrow[from=2-2, to=3-2]
	\arrow["\lrcorner"{anchor=center, pos=0.125}, draw=none, from=2-2, to=3-3]
	\arrow[from=2-3, to=3-3]
	\arrow["f"', from=3-2, to=3-1]
	\arrow["\nabla", from=3-2, to=3-3]
\end{tikzcd}\]
which we call $e$ (for a grid like this to define a natural transformation between the vertical spans on the left and right-hand sides, we need the top left and bottom right squares to be pullbacks, which is the case here). Indeed, given another edge $e'$ with the same source
% https://q.uiver.app/#q=WzAsOSxbMCwyLCJYIl0sWzEsMiwiQSJdLFsyLDIsIkMiXSxbMCwxLCJXIl0sWzAsMCwiVyJdLFsxLDEsIkIiXSxbMSwwLCJCIl0sWzIsMSwiRCJdLFsyLDAsIkQiXSxbMSwwLCJnIiwyXSxbMSwyLCJcXHdpZGV0aWxkZXtcXG5hYmxhfSJdLFszLDAsImoiLDJdLFs0LDMsIiIsMix7ImxldmVsIjoyLCJzdHlsZSI6eyJoZWFkIjp7Im5hbWUiOiJub25lIn19fV0sWzYsNSwiIiwyLHsibGV2ZWwiOjIsInN0eWxlIjp7ImhlYWQiOnsibmFtZSI6Im5vbmUifX19XSxbNywyXSxbNSw3LCJcXHdpZGV0aWxkZXtcXG5hYmxhfSciXSxbNSwxXSxbNSwyLCIiLDEseyJzdHlsZSI6eyJuYW1lIjoiY29ybmVyIn19XSxbNiw4XSxbOCw3LCIiLDAseyJsZXZlbCI6Miwic3R5bGUiOnsiaGVhZCI6eyJuYW1lIjoibm9uZSJ9fX1dLFs2LDRdLFs1LDMsImcnIiwyXSxbNSw0LCIiLDAseyJzdHlsZSI6eyJuYW1lIjoiY29ybmVyIn19XV0=
\[\begin{tikzcd}
	W & B & D \\
	W & B & D \\
	X & A & C
	\arrow[equals, from=1-1, to=2-1]
	\arrow[from=1-2, to=1-1]
	\arrow[from=1-2, to=1-3]
	\arrow[equals, from=1-2, to=2-2]
	\arrow[equals, from=1-3, to=2-3]
	\arrow["j"', from=2-1, to=3-1]
	\arrow["\lrcorner"{anchor=center, pos=0.125, rotate=180}, draw=none, from=2-2, to=1-1]
	\arrow["{g'}"', from=2-2, to=2-1]
	\arrow["{\widetilde{\nabla}'}", from=2-2, to=2-3]
	\arrow[from=2-2, to=3-2]
	\arrow["\lrcorner"{anchor=center, pos=0.125}, draw=none, from=2-2, to=3-3]
	\arrow[from=2-3, to=3-3]
	\arrow["g"', from=3-2, to=3-1]
	\arrow["{\widetilde{\nabla}}", from=3-2, to=3-3]
\end{tikzcd}\]
and a commutative diagram in $\Span$
% https://q.uiver.app/#q=WzAsNixbMCwwLCJYIl0sWzEsMCwiWSJdLFsyLDAsIloiXSxbMSwxLCJBIl0sWzIsMiwiQyJdLFsyLDEsIkUiXSxbMSwwLCJmIiwyXSxbMSwyLCJcXG5hYmxhIl0sWzMsMCwiZyJdLFszLDQsIlxcd2lkZXRpbGRle1xcbmFibGF9IiwyXSxbNSwyLCJoIiwyXSxbNSw0LCJcXG92ZXJsaW5le1xcbmFibGF9Il1d
\[\begin{tikzcd}
	X & Y & Z \\
	& A & E \\
	&& C
	\arrow["f"', from=1-2, to=1-1]
	\arrow["\nabla", from=1-2, to=1-3]
	\arrow["g", from=2-2, to=1-1]
	\arrow["{\widetilde{\nabla}}"', from=2-2, to=3-3]
	\arrow["h"', from=2-3, to=1-3]
	\arrow["{\overline{\nabla}}", from=2-3, to=3-3]
\end{tikzcd}\]
(in particular $A\simeq Y\times_Z E$), the following edge $e''$ in $\Fun_{\Sm}(\Delta^1,\Span)$ lifts $Z\leftarrow E\to C$ and provides a commutative diagram $e''\circ e \simeq e'$
% https://q.uiver.app/#q=WzAsOSxbMCwyLCJaIl0sWzIsMiwiQyJdLFsxLDIsIkUiXSxbMSwxLCJEXFx0aW1lc19DIEUiXSxbMiwxLCJEIl0sWzIsMCwiRCJdLFswLDEsIlJfe1kvWn0oV1xcdGltZXNfWCBZKSJdLFswLDAsIlJfe1kvWn0oV1xcdGltZXNfWCBZKSJdLFsxLDAsIkRcXHRpbWVzX0MgRSJdLFsyLDAsImgiLDJdLFsyLDEsIlxcb3ZlcmxpbmV7XFxuYWJsYX0iXSxbNSw0LCIiLDAseyJsZXZlbCI6Miwic3R5bGUiOnsiaGVhZCI6eyJuYW1lIjoibm9uZSJ9fX1dLFs0LDFdLFs3LDYsIiIsMSx7ImxldmVsIjoyLCJzdHlsZSI6eyJoZWFkIjp7Im5hbWUiOiJub25lIn19fV0sWzYsMF0sWzMsMl0sWzMsNF0sWzMsNiwiaCciLDJdLFszLDEsIiIsMSx7InN0eWxlIjp7Im5hbWUiOiJjb3JuZXIifX1dLFs4LDddLFs4LDVdLFs4LDMsIiIsMSx7ImxldmVsIjoyLCJzdHlsZSI6eyJoZWFkIjp7Im5hbWUiOiJub25lIn19fV1d
\[\begin{tikzcd}
	{R_{Y/Z}(W\times_X Y)} & {D\times_C E} & D \\
	{R_{Y/Z}(W\times_X Y)} & {D\times_C E} & D \\
	Z & E & C.
	\arrow[equals, from=1-1, to=2-1]
	\arrow[from=1-2, to=1-1]
	\arrow[from=1-2, to=1-3]
	\arrow[equals, from=1-2, to=2-2]
	\arrow[equals, from=1-3, to=2-3]
	\arrow[from=2-1, to=3-1]
	\arrow["{h'}"', from=2-2, to=2-1]
	\arrow[from=2-2, to=2-3]
	\arrow[from=2-2, to=3-2]
	\arrow["\lrcorner"{anchor=center, pos=0.125}, draw=none, from=2-2, to=3-3]
	\arrow[from=2-3, to=3-3]
	\arrow["h"', from=3-2, to=3-1]
	\arrow["{\overline{\nabla}}", from=3-2, to=3-3]
\end{tikzcd}\]
To construct $h'$, note that by definition of the Weil restriction, there are equivalences
\begin{align*}
    \mathsf{Sch}_Z(D\times_C E,R_{Y/Z}(W\times_X Y)) &\simeq \mathsf{Sch}_Y((D\times_C E)\times_Z Y,W\times_X Y)\\
    &\simeq \mathsf{Sch}_Y(D\times_C A,W\times_X Y)\\
    &\simeq \mathsf{Sch}_Y(B,W\times_X Y).
\end{align*}
Then, under these equivalences, $h'$ corresponds to the map in $\mathsf{Sch}_Y(B,W\times_X Y)$ induced by $g':B\to W$. The bottom right square in $e''$ is determined by the requirement of it being a pullback, and the map $h'$ is determined, under the equivalences we just saw, by the condition $e''\circ e = e'$, so that the lift $e''$ is essentially unique.\\

\textbf{Step 2:} \emph{The functor $\phi$ has a relative left adjoint.} This means that there exists a functor $\psi$ in the other direction, and a transformation $\varepsilon: \psi\phi \to \id{}$ exhibiting $\psi$ as a left adjoint to $\phi$, and moreover $\psi$ commutes with the structure maps, in the sense that $s^\ast F \circ \psi \simeq t^\ast F \circ \phi \circ \psi \Rightarrow t^\ast F$ is an equivalence \cite[Def. 7.3.2.2]{Lurie-HA}. By \cite[Prop.\ 7.3.2.6]{Lurie-HA}, since $\phi$ is a morphism of Cartesian fibrations, it suffices to show that $\phi$ has fiberwise a left-adjoint. Over $(X\leftarrow Y \xlongequal{\ } Y)\in\Fun_{\Sm}(\Delta^1,\Span)$ (so $f:Y\to X$ is smooth), the functor $\phi$ is by definition the corresponding component of the natural transformation $\calF \circ s \to \calF\circ t$ it encodes. Therefore, it is given by $f^\ast: \calF(X) \to \calF(Y)$, which admits a left adjoint $f_\sharp$ by assumption $(i)$. We thus obtain a relative left adjoint $\psi: t^\ast \calE \to s^\ast \calE$.\\

 \textbf{Step 3:} \emph{The composition $\chi \circ \psi: t^\ast\calE \to s^\ast\calE \to \calE$ is a morphism of Cartesian fibrations over $\Span^\op$.} Compatibility with the structure maps down to $\Span^\op$ holds by construction. So we have to show that $\chi\circ \psi$ preserves Cartesian edges. By \cite[\href{https://kerodon.net/tag/01UL}{Tag 01UL}]{Kerodon}, for Cartesian fibrations $p:\calE\to \calC$ and $q:\calC\to\calD$, an edge $e$ in $\calE$ is $(q\circ p)$-Cartesian if and only if $e$ is $p$-Cartesian and $p(e)$ is $q$-Cartesian. In our situation, if $e$ is an $(s^\op\circ t^\ast F)$-Cartesian edge, $e$ is $(t^\ast F)$-Cartesian and $(t^\ast F)(e)$ is $s^\op$-Cartesian. It follows that $(t^\ast F)(e)$ is the opposite of an edge in $\Fun_{\Sm}(\Delta^1,\Span)$ of the form
% https://q.uiver.app/#q=WzAsOSxbMCwyLCJYIl0sWzEsMiwiWSJdLFsyLDIsIloiXSxbMCwxLCJXIl0sWzAsMCwiVyJdLFsxLDEsIlknIl0sWzEsMCwiWSciXSxbMiwxLCJaJyJdLFsyLDAsIlonIl0sWzEsMCwiZiJdLFsxLDIsIlxcbmFibGEiLDJdLFszLDAsImciLDJdLFs0LDMsIiIsMix7ImxldmVsIjoyLCJzdHlsZSI6eyJoZWFkIjp7Im5hbWUiOiJub25lIn19fV0sWzYsNSwiIiwyLHsibGV2ZWwiOjIsInN0eWxlIjp7ImhlYWQiOnsibmFtZSI6Im5vbmUifX19XSxbNywyLCJ1Il0sWzUsNywiXFxuYWJsYSciXSxbNSwxLCJ1JyJdLFs1LDIsIiIsMSx7InN0eWxlIjp7Im5hbWUiOiJjb3JuZXIifX1dLFs2LDhdLFs4LDcsIiIsMCx7ImxldmVsIjoyLCJzdHlsZSI6eyJoZWFkIjp7Im5hbWUiOiJub25lIn19fV0sWzYsNF0sWzUsMywiZiciLDJdXQ==
\begin{equation}\label{Diagram:edgeinFunsm}\begin{tikzcd}
	W & {Y'} & {Z'} \\
	W & {Y'} & {Z'} \\
	X & Y & Z
	\arrow[Rightarrow, no head, from=1-1, to=2-1]
	\arrow[from=1-2, to=1-1]
	\arrow[from=1-2, to=1-3]
	\arrow[Rightarrow, no head, from=1-2, to=2-2]
	\arrow[Rightarrow, no head, from=1-3, to=2-3]
	\arrow["g"', from=2-1, to=3-1]
	\arrow["{f'}"', from=2-2, to=2-1]
	\arrow["{\nabla'}", from=2-2, to=2-3]
	\arrow["{u'}", from=2-2, to=3-2]
	\arrow["\lrcorner"{anchor=center, pos=0.125}, draw=none, from=2-2, to=3-3]
	\arrow["u", from=2-3, to=3-3]
	\arrow["f", from=3-2, to=3-1]
	\arrow["\nabla"', from=3-2, to=3-3]
\end{tikzcd}\end{equation}
where $Y' = R_{Y/Z}(W\times_X Y)\times_Z Y$ and $Z'= R_{Y/Z}(W\times_X Y)$. Here $g$, $u$, and $u'$ are smooth. Since $e$ is a $(t^\ast F)$-Cartesian lift of this edge, it is of the form $\alpha : (Z \leftarrow Z' \xlongequal{\ } Z', H) \to (X\leftarrow W\xlongequal{\ } W, E)$ with $H\in\calF(Z')$, $E\in \calF(W)$, and $\alpha$ consists in the data of $(t^\ast F)(e)$ (the diagram above) together with an equivalence $H \lsimeq{} \nabla'_\otimes f'^\ast E$. Then, $\chi\psi(e)$ is an edge $(Z,u_\sharp(H)) \to (X,g_\sharp(E))$ given by the morphisms $(X\leftarrow Y \to Z)^\op$ and $u_\sharp H \to \nabla_\otimes f^\ast g_\sharp E$, where the latter is the composite
$$u_\sharp H \xrightarrow{\qquad} u_\sharp \nabla'_\otimes f'^\ast E \xrightarrow{\ \mathsf{Dis}_{\sharp,\otimes}\ } \nabla_\otimes (\pi_Y)_\sharp \pi_W^\ast E \xrightarrow{\ \ \mathsf{Ex}_\sharp^\ast\ \ } \nabla_\otimes f^\ast g_\sharp E,$$
with $\pi_Y: Y\times_X W \to Y$ and $\pi_W: Y\times_X W \to W$ the projections. We saw above that the first map in the composition was an equivalence, and the two other ones are equivalences by assumptions $(iii)$ and $(ii)$ in the statement respectively. Therefore, $\chi\psi(e)$ is $F$-Cartesian, as desired. \\

\textbf{Step 4:} \emph{We deduce the first part of the statement.} The morphism of Cartesian fibrations $\chi \psi$ corresponds to a natural transformation $M_\calF$ of functors $\Span \to \Cat$ between the functors classified by $s^\op \circ t^\ast F$ and $F$, respectively. The former associates to $X\in\Sm_S$ the fiber of $s^\op \circ t^\ast F$ over $X$. The fiber of $s^\op$ over $X$ can be identified with $\Sm_X$; and over an object $Y$ in $\Sm_X$, the fiber of $t^\ast F$ is by definition $\calF(Y)$. Therefore, the functor classified by $s^\op \circ t^\ast F$ may be denoted by $(\Sm_\bullet)_{\sslash\calF}$: it associates to $X\in\Sm_S$ the category $(\Sm_X)_{\sslash\calF}$ of pairs $(U,E)$ with $U\in\Sm_X$ and $E\in \calF(U)$, and a morphism $(U,E) \to (U',E')$ is the data of $f: U\to U'$ in $\Sm_X$ and a map $E \to \calF(f^\op)(E')$ in $\calF(U)$ (\emph{not} required to be an equivalence). These categories splice into a presheaf as follows. Given a morphism $X\xleftarrow{f} Y \xrightarrow{\nabla} Z$ in $\Span$, consider the Cartesian lift built in Step 1. Then $X\leftarrow Y \rightarrow Z$ is mapped to the functor $(\Sm_X)_{\sslash\calF} \to (\Sm_Z)_{\sslash\calF}$ sending a pair $(W,E)$ with $W\in\Sm_X$ and $E\in\calF(X')$ to the pair consisting of $R_{Y/Z}(X'\times_X Y)\in\Sm_Z$ and $\nabla'_\otimes f'^\ast E \in\calF(R_{Y/Z}(X'\times_X Y))$. The slice category $(\Sm_X)_{/\calF^\simeq}$ is viewed as the wide subcategory of $(\Sm_X)_{\sslash\calF}$ where the maps $E \to \calF(f^\op)(E')$ in $\calF(U)$ in the description above are required to be equivalences. Then, $(\Sm_\bullet)_{/\calF^\simeq}$ forms a subpresheaf of $(\Sm_\bullet)_{\sslash\calF}$, to which the natural transformation $M_\calF$ restricts. This proves the first part of the statement. \\

\textbf{Step 5:} \emph{We prove the second part of the statement.} If $\calF$ embeds into a presheaf of not necessarily small symmetric monoidal categories $\calG$ that lifts (as a functor with source category $\Span$) to sifted-cocomplete categories, we have a composite morphism of presheaves (which can also be viewed as a natural transformation of functors on $\Span$) $(\Sm_\bullet)_{/\calF^\simeq} \to \calF \to \calG$. Since $\calG$ lifts to sifted-cocomplete categories, by the universal property of the sifted-cocompletion (see Definition \ref{Def:sphericalpresheaves}), this morphism left Kan extends to a transformation $\Pre_\Sigma((\Sm_\bullet)_{/\calF^\simeq}) \simeq \Pre_\Sigma((\Sm_\bullet))_{/\calF^\simeq} \to \calF \to \calG$ (the equivalence on the left hand-side is Proposition \ref{Prop:sliceofpsh}), where $\Pre_\Sigma((\Sm_\bullet)_{/\calF^\simeq})$ corresponds to the composition
% https://q.uiver.app/#q=WzAsMyxbMSwwLCJcXENhdCJdLFswLDAsIlxcU3BhbiJdLFsyLDAsIlxcd2lkZWhhdHtcXG1hdGhzZntDYXR9fV9cXGluZnR5XlxcbWF0aHNme3NpZnR9LiJdLFsxLDAsIihcXFNtX1xcYnVsbGV0KV97L1xcY2FsRl5cXHNpbWVxfSJdLFswLDIsIlxcUHJlX1xcU2lnbWEoXFxidWxsZXQpIl1d
\[\begin{tikzcd}[column sep = 4em]
	\Span & \Cat & {\widehat{\mathsf{Cat}}_\infty^\mathsf{sift}.}
	\arrow["{(\Sm_\bullet)_{/\calF^\simeq}}", from=1-1, to=1-2]
	\arrow["{\Pre_\Sigma(\bullet)}", from=1-2, to=1-3]
\end{tikzcd}\]
This is the desired morphism of presheaves of symmetric monoidal categories $M_\calF: \Pre_\Sigma(\Sm_\bullet)_{/\calF^\simeq} \to \calG$. 
\end{proof}

\begin{Def}\label{Def:motiviccolim} Let $\calF$ be a presheaf on $\Sm_S$ as in Theorem \ref{Prop:motiviccolim}. The \emph{motivic colimit functor associated with $\calF$} is the $S$-component of the transformation $M_\calF$. Abusing notations, we also denote it by $M_\calF$.
\end{Def}

Notation \ref{Notation:smstc} gives rise to the following question.
\begin{Question}\label{QuestionA}
    In the situation of Theorem \ref{Prop:motiviccolim}, do the symmetric monoidal structures $\Pre_\Sigma(\Sm_S)^{\otimes'}_{/\calF}$ (obtained from the construction of Theorem \ref{Prop:motiviccolim}, i.e., from \cite[\S 16.3]{BH-norms}) and $\Pre_\Sigma(\Sm_S)^{\otimes}_{/\calF}$ (described in Proposition \ref{Prop:smstconslice}) agree? 
\end{Question}
We conjecture that this question can be answered by the affirmative, but we were unable to prove it. We prove a weaker result in Theorem \ref{Prop:Thomagreeassmfunctors}. Since both symmetric monoidal structures are by construction Day convolution with respect to some symmetric monoidal structures on the slice $(\Sm_S)_{/\calF}$, it would suffice to prove that the structures $\otimes$ and $\otimes'$ agree at this level. Here is a hint in the direction of a positive answer to Question \ref{QuestionA}:
\begin{Prop}\label{Prop:tensoragree}
    In the situation of Theorem \ref{Prop:motiviccolim}, the tensor product of any two objects for the symmetric monoidal structures $\Pre_\Sigma(\Sm_S)^{\otimes'}_{/\calF}$ and $\Pre_\Sigma(\Sm_S)^{\otimes}_{/\calF}$ agree.
\end{Prop}
\begin{proof}
    Let $g:X \to \calF$ and $h:Y \to \calF$ be two objects in $(\Sm_S)_{/\calF}$, represented by $x\in\calF(X)$ and $y\in\calF(Y)$, respectively. By the informal description of the tensor product in $\Pre_\Sigma(\Sm_S)^{\otimes}_{/\calF}$ in Proposition \ref{Prop:smstconslice}, we have
    $$(X \to \calF) \otimes (Y \to \calF) = X \times Y \xrightarrow {\ g\times h\ }\calF\times \calF \xrightarrow{\ \ \mu\ \ } \calF$$
    where $\mu$ gives the multiplicative structure on the presheaf of symmetric monoidal categories $\calF$. The resulting map $X \times Y \longrightarrow \calF$ is represented by the element $\calF(\pi_X^\op)(x) \otimes_{X\times Y} \calF(\pi_Y^\op)(y) \in \calF(X\times Y)$, where for $Z\in\Sm_\R$, the functor $\otimes_{Z}$ is the tensor product on $\calF(Z)$, namely $\calF(Z^{\amalg 2}\longeq{} Z^{\amalg 2} \longrightarrow Z)$ (viewing $\calF$ as a functor $\Span \to \Cat$ preserving finite products).

    On the other hand, recall that the presheaf of symmetric monoidal categories $\Pre_\Sigma(\Sm_\bullet)^{\otimes'}_{/\calF}$ is classified by the Cartesian fibration $s^\op \circ t^\ast F : t^\ast \calE \to \Span^\op$ (in the notation of the proof of Theorem \ref{Prop:motiviccolim}). Thus, the tensor product of $g$ and $h$ is the source of an $(s^\op \circ t^\ast F)$-Cartesian lift for $(S^{\amalg {2}} \xlongequal{\ } S^{\amalg {2}} \to S)^\op$ with target $(X\amalg Y, (x,y) \in\calF(X\amalg Y) \simeq \calF(X)\times \calF(Y)) \in t^\ast\calE$. We have seen in the proof of Theorem \ref{Prop:motiviccolim} that an $s^\op$-Cartesian lift for such an edge was given by (the opposite of)
    % https://q.uiver.app/#q=WzAsNixbMCwxLCJTXntcXGFtYWxnIDJ9Il0sWzEsMSwiU157XFxhbWFsZyAyfSJdLFsyLDEsIlMiXSxbMCwwLCJYXFxhbWFsZyBZIl0sWzIsMCwiWFxcdGltZXMgWSJdLFsxLDAsIihYXFx0aW1lcyBZKV57XFxhbWFsZyAyfSJdLFswLDEsIiIsMCx7ImxldmVsIjoyLCJzdHlsZSI6eyJoZWFkIjp7Im5hbWUiOiJub25lIn19fV0sWzEsMiwiXFxuYWJsYSJdLFs1LDNdLFs1LDQsIlxcbmFibGEnIl0sWzQsMl0sWzUsMV0sWzMsMF1d
\[\begin{tikzcd}
	{X\amalg Y} & {(X\times Y)^{\amalg 2}} & {X\times Y=R_{S^{\amalg 2}/S}(X\amalg Y)} \\
	{S^{\amalg 2}} & {S^{\amalg 2}} & S
	\arrow[from=1-1, to=2-1]
	\arrow[from=1-2, to=1-1]
	\arrow["{\nabla'}", from=1-2, to=1-3]
	\arrow[from=1-2, to=2-2]
	\arrow[from=1-3, to=2-3]
	\arrow[equals, from=2-1, to=2-2]
	\arrow["\nabla", from=2-2, to=2-3]
\end{tikzcd}\]
and thus the source of the $(s^\op \circ t^\ast F)$-Cartesian lift we are looking for is 
$$\big(X\times Y, \calF(X\amalg Y \longleftarrow (X\times Y)^{\amalg 2} \longrightarrow X\times Y)(x,y)\big),$$
which is exactly $\calF(\pi_X^\op)(x) \otimes_{X\times Y} \calF(\pi_Y^\op)(y) \in \calF(X\times Y)$ by construction.
\end{proof}

\begin{Prop}\label{Prop:Masacolimit} Let $\calF$ be as in Theorem \ref{Prop:motiviccolim}. Then the associated motivic colimit functor $M_\calF$ is a symmetric monoidal, colimit-preserving functor sending the arrow $u: y(X)\to \calF^\simeq$ (where $X\in \Sm_S$ has structure map $f:X\to S$) to $f_\sharp(E)$, where $E = u(X)(\id{X}) \in \calF^\simeq(X)$ is classified by $u$. More generally, the image of an arrow $\gamma :\calG\to \calF^\simeq$ in $\Pre(\Sm_S)_{/\calF^\simeq}$ is given by
$$M_\calF \simeq \colim_{(x,X) \in (\Sm_S)_{\sslash\mathcal{G}}} (p_X)_\sharp \gamma(x).$$     
\end{Prop}
\begin{proof}
    The last assertion is \cite[Rmk.\ 16.5]{BH-norms} or \cite[Rmk.\ 2.6]{BEH}, and is deduced from the expression of an element of the slice as a colimit of representable objects, and the fact that $M_\calF$ preserves colimits (or viewing $M_\calF$ as a left Kan extension).
\end{proof}

To obtain the motivic Thom spectrum functor, we now apply Definition \ref{Def:motiviccolim} to the functor $\SH^\omega$.
\begin{Prop}\label{Prop:motivicThom} The functor $\SH^\omega: \Sm_\R^\op \to \CAlg(\Cat^\times)$ satisfies all assumptions of Theorem \ref{Prop:motiviccolim}; in the statement of the latter, we may choose $\calF= \SH^\omega$ and $\calG = \SH$. In particular, there is a motivic colimit functor $(\Sm_\R)_{/\SH^\wgpd} \to \SH^\omega(\R)$, extending to a symmetric monoidal functor
$$M := M_{\SH^\wgpd} : \Pre_\Sigma(\Sm_\R)_{/\SH^\wgpd} \longrightarrow \SHR,$$
which we call the \emph{motivic Thom spectrum functor}.
\end{Prop}

\begin{proof}[Proof of Proposition \ref{Prop:motivicThom}]
By Theorem \ref{Prop:6FFforSH}, the functor $\SH$ defines a spherical presheaf of symmetric monoidal categories that satisfies assumptions $(i)$ to $(iii)$ in Theorem \ref{Prop:motiviccolim}. Then, by Proposition \ref{Prop:6FFforSHomega}, $\SH^\omega$ also satisfies these assumptions. Therefore, the statement follows as soon as we show that $\SH$, viewed as a functor with source category $\Span$ with values in categories, lifts to sifted-cocomplete categories. For all $X\in\Sm_\R$, $\SH(X)$ is cocomplete, in particular sifted-cocomplete. Moreover, if $(X\xleftarrow{f} Y \xrightarrow{\nabla}Z)$ is a morphism in $\Span$, then its image $\nabla_\otimes\circ f^\ast :\SH(X) \to \SH(Z)$ by $\SH$ preserves sifted colimits. Indeed, $f^\ast$ preserves all colimits as a left adjoint, and $\nabla$ is induced by the tensor product functor, which preserves sifted colimits since it preserves colimits in both variables separately. More precisely, if $\nabla: Z\amalg Z \to Z$ is the simplest fold map, then $\nabla_\otimes : \SH(Z) \times \SH(Z) \to \SH(Z)$ is the tensor product (Theorem \ref{Prop:motiviccolim}$(iii)$). In this situation, for any sifted diagram $\calG: \mathcal{D} \to \SH(Z) \times \SH(Z)$, we have
\begin{align*}
\nabla_\otimes(\colim_{\mathcal{D}}\ \calG) &\simeq \nabla_\otimes(\colim_{\mathcal{D}}\ \pi_1\calG, \colim_{\mathcal{D}}\ \pi_2\calG)\\
&\simeq \left(\colim_{\mathcal{D}}\ \pi_1\calG\right) \otimes\left( \colim_{\mathcal{D}}\ \pi_2\calG\right)\\
&\simeq \colim_{\mathcal{D}\times \mathcal{D}}\ (\pi_1\calG\pi_1 \otimes \pi_2\calG\pi_2)\tag{\text{$\otimes$ preserves colimits in both variables separately}}\\
&\simeq \colim_{\mathcal{D}}\  (\pi_1\calG\otimes \pi_2\calG) \tag{by definition of a sifted category}\\
&\simeq \colim_{\mathcal{D}}\  \nabla_\otimes\calG
\end{align*}
where $\pi_1$ and $\pi_2$ are the canonical projections for the products $\SH(Z)\times \SH(Z)$ and $\mathcal{D}\times\mathcal{D}$. The same holds for more general fold maps. This finishes the proof.
\end{proof}

\begin{Rmk}
    By Proposition \ref{Prop:Masacolimit}, the motivic Thom spectrum functor is informally given by the formula
$$M(\calG\to \SH^\simeq) \simeq \colim_{(x,X) \in (\Sm_\R)_{\sslash\mathcal{G}}} (p_X)_\sharp \gamma(x),$$
where $p_X$ is the structure map $X\to \R$, and the colimit is taken in $\SH(\R)$.
\end{Rmk}

\begin{Ex}[\protect{\cite[\S 16.2 and Ex.\ 16.22]{BH-norms}}, \protect{\cite[below Lemma 4.6]{BH}}]\label{Ex:MGLMSLMSp} Paralleling Example \ref{Ex:MSO}, we now explain how to define $\Einfty$-structures on the motivic cobordism spectra using the motivic Thom spectrum functor.
    The motivic $\Einfty$-ring spectrum $\mathsf{MGL}$ is the image by $M$ of the motivic $j$-homomorphism $j: K^\circ \longrightarrow \SH^\wgpd$, where $K^\circ$ is the rank $0$ summand of algebraic K-theory. As we will now see, $j$ is constructed as a morphism of presheaves of symmetric monoidal categories, and thus acquires by Proposition \ref{Prop:CAlginslice} below the structure of a commutative algebra object in the source category of the functor $M$. For any $X\in\Sm_\R$, there is a symmetric monoidal category $\mathsf{Vect}(X)$ of algebraic vector bundles on $X$, where the tensor product is the direct sum of vector bundles. Pullback of vector bundles makes the construction contravariantly functorial in $X$. There is a symmetric monoidal functor $\mathsf{Vect}(X) \to \mathsf{Pic}(\SH(X))$ sending a vector bundle $\xi$ to $\Sigma^\infty(\xi/(\xi\setminus\{0\}))$, where $\mathsf{Pic}(\SH(X))$ is the subgroupoid of $\SH(X)^\simeq$ spanned by the invertible objects. It extends to a morphism of presheaves of $\Einfty$-spaces $\mathsf{Vect}^\simeq\to \mathsf{Pic}(\SH(-))$, which factors through the group completions $\mathsf{Vect}(X)^{\simeq,\mathsf{gp}}$ because $\mathsf{Pic}(\SH(X))$ is already group-like by construction. Thus, we obtain a morphism $K \to \mathsf{Pic}(\SH(-)) \to \SH^\wgpd$ of spherical presheaves of group-like $\Einfty$-spaces. Furthermore, the rank map $\mathsf{Vect}(X)^\simeq \to \N$ is a morphism of monoids, and taking group completions yields a morphism $K \to \Z$ of spherical presheaves of group-like $\Einfty$-spaces, whose fiber is the rank zero part $K^\circ$. Restricting our morphism $K \to \SH^\wgpd$ to $K^\circ$, we obtain the motivic $j$-homomorphism $j: K^\circ \longrightarrow \SH^\wgpd$.

    Similarly, the motivic spectra $\MSL$ and $\mathsf{MSp}$ are respectively given by $M(KSL^\circ \to K^\circ \xrightarrow{j} \SH^\wgpd)$ and $M(KSp^\circ \to K^\circ \xrightarrow{j} \SH^\wgpd)$, where $KSL^\circ$ and $KSp^\circ$ are defined similarly as $K^\circ$, but with respect to even-dimensional oriented bundles and even-dimensional symplectic bundles, respectively. They can be described by $K^\circ \simeq \Lmot BGL \simeq \Lmot \colim_{n\in\N} BGL_n$, $KSL^\circ \simeq \Lmot BSL$, and $KSp^\circ \simeq \Lmot BSp$, respectively (\cite[after Lem.\ 4.6]{BH} and \cite[before Thm.\ 16.13]{BH-norms}).
\end{Ex}

\begin{Prop}\label{Prop:CAlginslice} There is a functor
$$\Fun^\times(\Span(\Sm_S,\all,\fold), \spaces)_{/\SH^\omega} \longrightarrow \CAlg\left(\Pre_\Sigma(\Sm_S)^{\otimes '}_{/\SH^\wgpd}\right),$$
where $\Fun^\times$ denotes the category of functors preserving finite products (and compatible natural transformations). In particular, a morphism of spherical presheaves $\Einfty$-spaces $A \to \SH^\wgpd$ defines a commutative algebra object in the slice $\Pre_\Sigma(\Sm_S)^{\otimes'}_{/\SH^\wgpd}$.
\end{Prop}
\begin{proof}
    This follows from the last displayed composition of functors in \cite[proof of Prop.\ 16.17 and Rmk.\ 16.18]{BH-norms}. 
\end{proof}

\begin{Rmk}
    This is a partial analog to Proposition \ref{Prop:algebrasintheslice}. Indeed, by Proposition \ref{Prop:algebrasintheslice}, a morphism of commutative algebras in $\Pre_\Sigma(\Sm_S)$ with target $\SH^\wgpd$ yields a commutative algebra in the slice $\Pre_\Sigma(\Sm_S)^{\otimes}_{/\SH^\wgpd}$. We don't know if the $\otimes$ and $\otimes'$ structures agree, but Proposition \ref{Prop:CAlginslice} tells us that, at least, we may construct some commutative algebras with respect to both structures in the same way.
\end{Rmk}

\vspace{0.2cm}

\subsection{Comparison of the topological and motivic Thom spectrum functors}\label{Subsect:compThom}\hfill\vspace{0.2cm}

In this subsection, we aim to show that the motivic Thom spectrum functor corresponds under real realization to the topological Thom spectrum functor, in the appropriate sense. To do so, we will consider another motivic colimit functor that already incorporates real realization in its definition (Subsection \ref{Subsubsect:colimfunctforrealization}). In order to compare the latter with the motivic Thom spectrum functor (Subsection \ref{Subsubsect:comp}), we first study in Subsection \ref{Subsubsect:naturalitymotThom} naturality in $\calF$ of the construction from Definition \ref{Def:motiviccolim}. Finally, in Subsection \ref{Subsubsect:MSL}, we use this to compute the real realizations of $\MSL$, $\MGL$ and $\MSp$.

\subsubsection{Naturality of the motivic Thom spectrum functor in the presheaf \texorpdfstring{$\calF$}{F}}\label{Subsubsect:naturalitymotThom}\hfill\vspace{0.2cm}

\begin{Prop}\label{Prop:naturalityofmotiviccolim} Let $\calF,\calG: \Span \to \Cat$ be functors satisfying the assumptions of Theorem \ref{Prop:motiviccolim}. Assume that $\tau: \calF \to \calG$ is a natural transformation such that, in the notation of assumption $(i)$ in Theorem \ref{Prop:motiviccolim}, for any smooth morphism $f: Y\to X$ in $\Sm_\R$, the canonical transformation $f_\sharp \tau_Y \Rightarrow \tau_X f_\sharp$ is an equivalence (see Remark \ref{Rmk:premotivicadjunction} below). Then, postcomposition with $\tau$ induces a natural transformation $\tau_\sharp: (\Sm_\bullet)_{/\calF^\simeq} \to (\Sm_\bullet)_{/\calG^\simeq}$, fitting into a commutative diagram of transformations of functors $\Span \to \Cat$ preserving finite products:
% https://q.uiver.app/#q=WzAsNCxbMSwwLCIoXFxTbV9cXGJ1bGxldClfey9cXGNhbEd9Il0sWzAsMCwiKFxcU21fXFxidWxsZXQpX3svXFxjYWxGfSJdLFswLDEsIlxcY2FsRiJdLFsxLDEsIlxcY2FsRyJdLFsxLDAsIlxcdGF1X1xcc2hhcnAiXSxbMSwyLCJNX1xcY2FsRiIsMl0sWzAsMywiTV9cXGNhbEciXSxbMiwzLCJcXHRhdSIsMl1d
\begin{equation}\label{Diagram:naturality}
\begin{tikzcd}
	{(\Sm_\bullet)_{/\calF}^\simeq} & {(\Sm_\bullet)_{/\calG}^\simeq} \\
	\calF & \calG.
	\arrow["{\tau_\sharp}", from=1-1, to=1-2]
	\arrow["{M_\calF}"', from=1-1, to=2-1]
	\arrow["{M_\calG}", from=1-2, to=2-2]
	\arrow["\tau"', from=2-1, to=2-2]
\end{tikzcd}
\end{equation}

Moreover, if $\calF$ and $\calG$ are subpresheaves of some spherical presheaves of symmetric monoidal categories that lift to sifted-cocomplete categories, denoted by $\calF'$ and $\calG'$ respectively, and $\tau$ extends to a transformation $\calF' \Rightarrow \calG'$ of functors $\Span \to \widehat{\mathsf{Cat}}_\infty^\mathsf{sift}$, then the statement holds for $M_\calF$ and $M_\calG$ replaced with $\Pre_\Sigma(\Sm_\bullet)_{/\calF^\simeq} \to \calF'$ and $\Pre_\Sigma(\Sm_\bullet)_{/\calG^\simeq} \to \calG'$, respectively.
\end{Prop}

\begin{Rmk}\label{Rmk:premotivicadjunction} In the situation of Proposition \ref{Prop:naturalityofmotiviccolim}, $\calF$ and $\calG$ are in particular \emph{$\Sm$-premotivic categories over $\Sm_\R$} \cite[\S 2.2]{EK}. That is, they are functors $\Sm_\R^\op \to \Cat$ such that for each $(f:Y\to X)\in \Sm$ a smooth morphism in $\Sm_\R$, $f^\ast: \calF(X) \to \calF(Y)$ has a left adjoint $f_\sharp$ (respectively, for $\calG$). If each component of $\tau$ admits a left adjoint, our assumption on $\tau$ gives exactly the notion of a \emph{premotivic adjunction}. Notice further that a transformation $\tau$ as in Proposition \ref{Prop:naturalityofmotiviccolim} automatically also preserves the units and counits of the adjunctions $f_\sharp \dashv f^\ast$ for any smooth morphism $f$. Indeed, applying $\tau$ to the unit and counit for an adjunction $f_\sharp : \calF(Y) \leftrightarrows \calF(X) : f^\ast$ yields transformations which exhibit $f_\sharp : \calG(Y) \leftrightarrows \calG(X) : f^\ast$ as an adjunction; they must therefore be equivalent to the unit and counit we were originally given for this adjunction \cite[\href{https://kerodon.net/tag/02F4}{Tag 02F4}]{Kerodon}. This holds very generally and may also be seen by viewing $\tau_Y$ as a morphism between the biCartesian fibrations over $\Delta^1$ representing these adjunctions \cite[\S 5.2.2]{Lurie-HTT}.
\end{Rmk}

\begin{proof}[Proof of Proposition \ref{Prop:naturalityofmotiviccolim}]
   Firstly, to construct $\tau_\sharp$, we have to produce a morphism between the Cartesian fibrations classified by $(\Sm_\bullet)_{/\calF^\simeq}$ and $(\Sm_\bullet)_{/\calG^\simeq}$. We use the notation of the proof of Theorem \ref{Prop:motiviccolim}. The transformation $\tau$ induces a morphism between the Cartesian fibrations $F: \calE \to \Span^\op$ and $G: \mathcal{H} \to \Span^\op$ classified by $\calF$ and $\calG$, respectively. It pulls back to morphisms of Cartesian fibrations $s^\ast \tau : s^\ast \calE \to s^\ast\mathcal{H}$ and $t^\ast\tau: t^\ast\calE \to t^\ast\mathcal{H}$ over $\Fun_{\Sm}(\Delta^1,\Span)^\op$. The latter defines a morphism of Cartesian fibrations over $\Span^\op$ between $s^\op \circ t^\ast F$ and $s^\op \circ t^\ast G$. Indeed, assume $e$ is an $(s^\op \circ t^\ast F)$-Cartesian edge in $t^\ast \calE$. Then, by \cite[\href{https://kerodon.net/tag/01UL}{Tag 01UL}]{Kerodon}, $e$ is $t^\ast F$-Cartesian and therefore $(t^\ast\tau) (e)$ is $(t^\ast G)$-Cartesian. By \emph{loc.\ cit.}, $(t^\ast G)((t^\ast\tau) (e)) = (t^\ast F)(e)$ is $s^\op$-Cartesian, and so $t^\ast\tau(e)$ is $(s^\op \circ t^\ast G)$-Cartesian.\\

   Secondly, we show the commutativity of Diagram (\ref{Diagram:naturality}). We have a diagram
% https://q.uiver.app/#q=WzAsNyxbMSwxLCJ0XlxcYXN0XFxtYXRoY2Fse0h9Il0sWzUsMSwiXFxtYXRoY2Fse0h9Il0sWzAsMCwidF5cXGFzdFxcY2FsRSJdLFs0LDAsIlxcY2FsRSJdLFs1LDQsIlxcU3Bhbl5cXG9wIl0sWzMsMSwic15cXGFzdCBcXG1hdGhjYWx7SH0iXSxbMiwwLCJzXlxcYXN0XFxjYWxFIl0sWzMsMSwiXFx0YXUiLDFdLFsyLDAsInReXFxhc3RcXHRhdSIsMV0sWzEsNCwiRyIsMV0sWzAsNCwidF5cXGFzdCBHIiwxXSxbMyw0LCJGIiwxLHsic3R5bGUiOnsiYm9keSI6eyJuYW1lIjoiZGFzaGVkIn19fV0sWzIsNCwidF5cXGFzdCBGIiwxLHsic3R5bGUiOnsiYm9keSI6eyJuYW1lIjoiZGFzaGVkIn19fV0sWzAsNSwiXFxwc2lfXFxjYWxHIiwxXSxbNSwxLCJcXGNoaV9cXGNhbEciLDFdLFsyLDYsIlxccHNpX1xcY2FsRiIsMV0sWzYsMywiXFxjaGlfXFxjYWxGIiwxXSxbNiw1LCJzXlxcYXN0XFx0YXUiLDFdXQ==
\[\begin{tikzcd}[column sep = 4em, row sep = 1em]
	{t^\ast\calE} && {s^\ast\calE} && \calE \\
	& {t^\ast\mathcal{H}} && {s^\ast \mathcal{H}} && {\mathcal{H}} \\
	\\
	\\
	&&&&& {\Span^\op}
	\arrow["{\psi_\calF}"{description}, from=1-1, to=1-3]
	\arrow["{t^\ast\tau}"{description}, from=1-1, to=2-2]
	\arrow["{t^\ast F}"{description},curve={height=5pt}, dashed, from=1-1, to=5-6]
	\arrow["{\chi_\calF}"{description}, from=1-3, to=1-5]
	\arrow["{s^\ast\tau}"{description}, from=1-3, to=2-4]
	\arrow["\tau"{description}, from=1-5, to=2-6]
	\arrow["F"{description}, dashed, from=1-5, to=5-6]
	\arrow["{\psi_\calG}"{description}, from=2-2, to=2-4]
	\arrow["{t^\ast G}"{description}, curve={height=10pt}, from=2-2, to=5-6]
	\arrow["{\chi_\calG}"{description}, from=2-4, to=2-6]
	\arrow["G"{description}, from=2-6, to=5-6]
\end{tikzcd}\]
of morphisms of Cartesian fibrations over $\Span^\op$. We have to show that the top rectangle commutes. Since the square on its right-hand side commutes by construction of $s^\ast\tau$, it suffices to show the commutativity of the square on its left-hand side. Recall that $\psi_\calF$ was obtained as a relative left adjoint to the morphism of Cartesian fibrations $\phi_\calF : s^\ast\calE \to t^\ast\calE$ corresponding to the natural transformation $\Fun_\Sm(\Delta^1,\Span) \times \Delta^1 \to \Span \to \Cat$ (postcomposition of the evaluation map with $F$) and similarly for $\calG$. The diagram 
% https://q.uiver.app/#q=WzAsNCxbMSwxLCJ0XlxcYXN0XFxtYXRoY2Fse0h9Il0sWzEsMCwidF5cXGFzdFxcY2FsRSJdLFswLDEsInNeXFxhc3QgXFxtYXRoY2Fse0h9Il0sWzAsMCwic15cXGFzdFxcY2FsRSJdLFsxLDAsInReXFxhc3RcXHRhdSIsMV0sWzIsMCwiXFxwaGlfXFxjYWxHIiwxXSxbMywxLCJcXHBoaV9cXGNhbEYiLDFdLFszLDIsInNeXFxhc3RcXHRhdSIsMV1d
\[\begin{tikzcd}[row sep = 3em, column sep = 3em]
	{s^\ast\calE} & {t^\ast\calE} \\
	{s^\ast \mathcal{H}} & {t^\ast\mathcal{H}}
	\arrow["{\phi_\calF}"{description}, from=1-1, to=1-2]
	\arrow["{s^\ast\tau}"{description}, from=1-1, to=2-1]
	\arrow["{t^\ast\tau}"{description}, from=1-2, to=2-2]
	\arrow["{\phi_\calG}"{description}, from=2-1, to=2-2]
\end{tikzcd}\]
involving the left adjoints $\phi_\calF \dashv \psi_\calF$ and $\phi_\calG \dashv \psi_\calG$ is commutative by construction, since both composites correspond to the natural transformation
% https://q.uiver.app/#q=WzAsMyxbMCwwLCJcXEZ1bl9cXFNtKFxcRGVsdGHCuSxcXFNwYW4pICJdLFsxLDAsIlxcU3BhbiJdLFsyLDAsIlxcQ2F0Il0sWzEsMiwiRiIsMCx7ImN1cnZlIjotMn1dLFsxLDIsIkciLDIseyJjdXJ2ZSI6Mn1dLFswLDEsInMiLDAseyJjdXJ2ZSI6LTJ9XSxbMCwxLCJ0IiwyLHsiY3VydmUiOjJ9XSxbNSw2LCJcXG1hdGhzZntldn0iLDAseyJzaG9ydGVuIjp7InNvdXJjZSI6MjAsInRhcmdldCI6MjB9fV0sWzMsNCwiXFx0YXUiLDAseyJzaG9ydGVuIjp7InNvdXJjZSI6MjAsInRhcmdldCI6MjB9fV1d
\[\begin{tikzcd}[column sep = 7em]
	{\Fun_\Sm(\Delta^1,\Span) } & \Span & \Cat.
	\arrow[""{name=0, anchor=center, inner sep=0}, "s", curve={height=-12pt}, from=1-1, to=1-2]
	\arrow[""{name=1, anchor=center, inner sep=0}, "t"', curve={height=12pt}, from=1-1, to=1-2]
	\arrow[""{name=2, anchor=center, inner sep=0}, "F", curve={height=-12pt}, from=1-2, to=1-3]
	\arrow[""{name=3, anchor=center, inner sep=0}, "G"', curve={height=12pt}, from=1-2, to=1-3]
	\arrow["{\mathsf{ev}}", shorten <=3pt, shorten >=3pt, Rightarrow, from=0, to=1]
	\arrow["\tau", shorten <=3pt, shorten >=3pt, Rightarrow, from=2, to=3]
\end{tikzcd}\]
In particular, there is a canonical exchange transformation $\psi_\calG \circ t^\ast\tau \Rightarrow s^\ast\tau\circ\psi_\calF$ given by the composition 
$$\psi_\calG \circ t^\ast\tau \Rightarrow \psi_\calG \circ t^\ast\tau \circ \phi_\calF \circ \psi_\calF \simeq \psi_\calG \circ \phi_\calG \circ s^\ast\tau \circ \psi_\calF \Rightarrow s^\ast\tau\circ \psi_\calF.$$
Consider an edge in $t^\ast \calE$, it takes the form of a diagram similar to (\ref{Diagram:edgeinFunsm}) in Step 3 of the proof of Theorem \ref{Prop:motiviccolim}, together with objects $H\in\calF(Z')$, $E\in\calF(W)$, and a morphism $\kappa: H \to \nabla'_\otimes f'^\ast E$ in $\calF(Z')$. The difference compared to the aforementioned Step 3 is that $Y'$ and $Z'$ are not required to be Weil restrictions, and $\kappa$ need not be an equivalence. On the one hand, the composite $s^\ast\tau \circ \psi_\calF$ maps this edge to the edge consisting in the data of the same diagram and the morphism
$$\tau_Z\bigg(u_\sharp H \xrightarrow{u_\sharp\kappa} u_\sharp \nabla'_\otimes f'^\ast E \xrightarrow{\mathsf{Dis}_{\sharp,\otimes}} \nabla_\otimes (\pi_Y)_\sharp\pi_W^\ast E \xrightarrow{\mathsf{Ex}_\sharp^\ast} \nabla_\otimes f^\ast g_\sharp E\bigg)$$
in $\calG(Z)$, where the $(-)_\sharp$, $(-)_\otimes$, and $(-)^\ast$ refer to the functoriality of $\calF$. On the other hand, $\psi_\calG \circ t^\ast \tau$ maps it to the edge consisting in the data of the same diagram and the morphism
$$u_\sharp \tau_{Z'}(H) \xrightarrow{\tau_{Z'}(\kappa)} u_\sharp \tau_W(\nabla'_\otimes f'^\ast (E)) \simeq u_\sharp \nabla'_\otimes f'^\ast \tau_W(E) \xrightarrow{\mathsf{Dis}_{\sharp,\otimes}} \nabla_\otimes (\pi_Y)_\sharp\pi_W^\ast \tau_W(E) \xrightarrow{\mathsf{Ex}_\sharp^\ast} \nabla_\otimes f^\ast g_\sharp \tau_W(E),$$
where the $(-)_\sharp$, $(-)_\otimes$, and $(-)^\ast$ now refer to the functoriality of $\calG$. By assumption and by Remark \ref{Rmk:premotivicadjunction}, $\tau$ is compatible with the exchange transformation $\mathsf{Ex}_\sharp^\ast$ and with $u_\sharp$. For compatibility with the distributivity transformation, note that it is by definition the composition of the exchange transformation $\mathsf{Ex}_{\sharp,\otimes}$ with the counit of the adjunction $(-)_\sharp \dashv (-)^\ast$. The transformation $\tau$ is compatible with them by assumption and Remark \ref{Rmk:premotivicadjunction}, since compatibility of $\tau$ with $(-)_\otimes$ is given by it being a morphism of presheaves of symmetric monoidal categories (naturality for forward morphisms in $\Span$, i.e., spans consisting of one identity and one fold map). This shows that our exchange transformation $\psi_\calG \circ t^\ast\tau \Rightarrow s^\ast\tau\circ\psi_\calF$ is an equivalence, and, as explained above, this allows us to conclude that Diagram (\ref{Diagram:naturality}) commutes.\\
  
Finally, since $t^\ast\tau$ is a pullback, the induced transformation $(\Sm_\bullet)_{/\calF^\simeq} \to (\Sm_\bullet)_{/\calG^\simeq}$ is indeed given over $X\in\Sm_\R$ by the functor $(\Sm_X)_{/\calF^\simeq} \to (\Sm_X)_{/\calG^\simeq}$ induced by post-composition by $\tau$ (in the sense that a pair $(Y\in\Sm_X, E\in\calF(Y))$ is sent to $(Y\in\Sm_X, \tau_Y(E)\in\calG(Y))$). This justifies the notation $\tau_\sharp$ in Diagram $(\ref{Diagram:naturality})$. When $\calF$ and $\calG$ are subpresheaves of $\calF'$ and $\calG'$ lifting to sifted-cocomplete categories, the argument is the same as in Step 5 of the proof of Theorem \ref{Prop:motiviccolim}.
\end{proof}
\vspace{0.2cm}

\subsubsection{Motivic colimit functor associated with the real realization functor}\label{Subsubsect:colimfunctforrealization}\hfill\vspace{0.2cm}

We now construct a functor $\calR^\kappa: \Span \to \Cat$ satisfying the assumptions of Theorem \ref{Prop:motiviccolim}, which provides us with a motivic colimit functor related to real realization. Using the naturality result of the previous subsection, we compare it to the motivic Thom spectrum functor. Then, in Subsection \ref{Subsubsect:comp}, we will compare our new motivic colimit functor to the topological Thom spectrum functor.

\begin{Def}\label{Def:calR} Let $\calR$ be the presheaf of (not necessarily small) symmetric monoidal categories on $\Sm_\R$, defined by $X \in \Sm_\R \mapsto \Sp(\spaces_{/\rR(X)})$, where functoriality is given by pullback (see \Cref{Prop:6FFforR} below). Let $\calR^\kappa: \Span = \Span(\Sm_\R, \all, \fold) \to \Cat$ be the spherical presheaf of essentially small symmetric monoidal categories defined by $X \in \Sm_\R \mapsto \Sp(\spaces_{/\rR(X)})^\kappa$ (recall from \Cref{Notation:Mtopkappa} that $\kappa$ is an uncountable regular cardinal such that $\Spc^\kappa$ is closed under finite limits in $\Spc$). Here $\Sp(\spaces_{/\rR(X)})^\kappa$ is viewed as a (small) symmetric monoidal subcategory of $\Sp(\spaces_{/\rR(X)})$ (by \Cref{Prop:Spkappasm} for $B=\ast$).
\end{Def}

\begin{Rmk}\label{Rmk:whykappa} Unlike the case of $\SH$ in subsection \ref{Subsect:motThom}, $\calR^\omega$ is a priori not a subpresheaf of $\calR$. Indeed, its functoriality is given by the pullback functors, which a priori do not preserve compact objects in the case of $\calR$. However, as we will see in \Cref{Prop:6FFforR}, they do preserve $\kappa$-compact objects. This is the reason for our choice of the cardinal $\kappa$. One could also want to use the subcategories of invertible objects $\mathsf{Pic}(\SH(X))$ and $\mathsf{Pic}(\calR(X))$ instead, in which case we do obtain subpresheaves, but then it is not clear that the functors $f_\sharp$ (for $f$ a smooth morphism in $\Sm_\R$) restrict to these subcategories, as $f_\sharp$ is not symmetric monoidal. 
\end{Rmk}

To apply Definition \ref{Def:motiviccolim} to $\calF = \calR^\kappa$, we show the following result.
\begin{Prop}\label{Prop:6FFforR} The functor $\calR$ from Definition \ref{Def:calR} satisfies all the assumptions of Theorem \ref{Prop:motiviccolim} except for essential smallness of the categories in its image, and lifts to sifted-cocomplete categories. It also satisfies a projection formula as in Theorem \ref{Prop:6FFforSH}$(v)$. The functor $\calR^\kappa$ satisfies all the assumptions of Theorem \ref{Prop:motiviccolim}; in the statement of the latter, we may choose $\calF= \calR^\kappa$ and $\calG = \calR$. In particular, we obtain a symmetric monoidal motivic colimit functor
 $$M_\calR : \Pre_\Sigma(\Sm_\R)_{/\calR^\kgpd} \longrightarrow \calR(\R) \simeq \Sp.$$  
\end{Prop}
\begin{proof}
\textbf{Step 1:} \emph{We prove that $\calR$ is a spherical presheaf of symmetric monoidal categories and satisfies assumption $(i)$.} For any morphism $f:X \to Y$ in $\Sm_\R$, we have a diagram
% https://q.uiver.app/#q=WzAsNCxbMCwwLCJcXHNwYWNlc197L1xcclIoWCl9Il0sWzAsMSwiXFxzcGFjZXNfey9cXHJSKFkpfSJdLFsxLDAsIlxcU3BcXGxlZnQoXFxzcGFjZXNfey9cXHJSKFgpfVxccmlnaHQpIl0sWzEsMSwiXFxTcFxcbGVmdChcXHNwYWNlc197L1xcclIoWSl9XFxyaWdodCkiXSxbMCwxLCJmX1xcc2hhcnAiLDIseyJvZmZzZXQiOjV9XSxbMSwwLCJmXlxcYXN0IiwxXSxbMCwxLCJmX1xcYXN0IiwwLHsib2Zmc2V0IjotNX1dLFsxLDMsIlxcU2lnbWFeXFxpbmZ0eV8rIl0sWzAsMiwiXFxTaWdtYV5cXGluZnR5XysiLDJdLFsyLDMsImZfXFxzaGFycCIsMix7Im9mZnNldCI6NH1dLFszLDIsImZeXFxhc3QiLDIseyJvZmZzZXQiOjR9XSxbNCw1LCIiLDAseyJsZXZlbCI6MSwic3R5bGUiOnsibmFtZSI6ImFkanVuY3Rpb24ifX1dLFs1LDYsIiIsMSx7ImxldmVsIjoxLCJzdHlsZSI6eyJuYW1lIjoiYWRqdW5jdGlvbiJ9fV0sWzksMTAsIiIsMix7ImxldmVsIjoxLCJzdHlsZSI6eyJuYW1lIjoiYWRqdW5jdGlvbiJ9fV1d
\[\begin{tikzcd}[column sep = 7em, row sep = 2.5 em]
	{\spaces_{/\rR(X)}} & {\Sp\left(\spaces_{/\rR(X)}\right)} \\
	{\spaces_{/\rR(Y)}} & {\Sp\left(\spaces_{/\rR(Y)}\right)}
	\arrow["{\Sigma^\infty_+}"', from=1-1, to=1-2]
	\arrow[""{name=0, anchor=center, inner sep=0}, "{f_\sharp}"', shift right=7, from=1-1, to=2-1]
	\arrow[""{name=1, anchor=center, inner sep=0}, "{f_\ast}", shift left=7, from=1-1, to=2-1]
	\arrow[""{name=2, anchor=center, inner sep=0}, "{f_\sharp}"', shift right=4, from=1-2, to=2-2]
	\arrow[""{name=3, anchor=center, inner sep=0}, "{f^\ast}"{description}, from=2-1, to=1-1]
	\arrow["{\Sigma^\infty_+}", from=2-1, to=2-2]
	\arrow[""{name=4, anchor=center, inner sep=0}, "{f^\ast}"', shift right=4, from=2-2, to=1-2]
	\arrow["\dashv"{anchor=center}, draw=none, from=0, to=3]
	\arrow["\dashv"{anchor=center}, draw=none, from=2, to=4]
	\arrow["\dashv"{anchor=center}, draw=none, from=3, to=1]
\end{tikzcd}\]
The adjunction $f_\sharp \dashv f^\ast$ is built as in the case of schemes, where $f^\ast$ is the pullback functor and $f_\sharp$ is induced by post-composition of structure maps. In particular, $f^\ast$ preserves products because it is a right adjoint, so it is symmetric monoidal with respect to the Cartesian symmetric monoidal structures on both categories. The right adjoint $f_\ast$ to $f^\ast$ exists because pullbacks commute with colimits in spaces. Since both $f_\sharp$ and $f^\ast$ are left adjoints, they preserve colimits and thus descend to an adjunction on the stabilization of both categories (see \cite[Rmk.\ 2.2]{Volpe}, using that stabilization is tensoring with $\Sp$ in $\PrL$ by \cite[Ex.\ 4.8.1.23]{Lurie-HA}). The induced functor $f^\ast$ becomes symmetric monoidal for the smash product. Moreover, we have, for all $X,Y\in\Sm_\R$
\begin{align*}
	\calR(X\amalg Y) &\simeq \Sp(\spaces_{/\rR(X \amalg Y)}) \simeq \Sp(\spaces_{/\rR(X)} \times \spaces_{/\rR(Y)}) \\
	&\simeq \Sp(\spaces_{/\rR(X)}) \times \Sp(\spaces_{/\rR(Y)}) \simeq \calR(X) \times \calR(Y).
\end{align*}
This proves assumption $(i)$ and establishes $\calR$ as a spherical presheaf of symmetric monoidal categories. \\

\textbf{Step 2:} \emph{We prove that $\calR$ satisfies assumption $(ii)$.} Consider a square as in the statement of assumption $(ii)$. We first prove a projection formula, i.e., that for all $S\in\calR(X)$ and $T\in\calR(Y)$, we have $f_\sharp(S \Smash f^\ast T) \simeq f_\sharp(S) \Smash T$ (via the canonical map). Since both sides preserve colimits in $S$ and $T$ separately, it suffices to show that the formula holds for the infinite suspension spectra of $S\in\spaces_{/\rR(X)}$ and $T\in\spaces_{/\rR(Y)}$. We have equivalences
\begin{align*}
f_\sharp(\Sigma^\infty_+ S \Smash f^\ast \Sigma^\infty_+T) &\simeq f_\sharp(\Sigma^\infty_+ S \Smash \Sigma^\infty_+ f^\ast T) \simeq f_\sharp(\Sigma^\infty_+ (S \times_{\rR(X)} f^\ast T))\simeq \Sigma^\infty_+ f_\sharp(S \times_{\rR(X)} f^\ast T)\\
    &\simeq \Sigma^\infty_+ (f_\sharp(S) \times_{\rR(Y)} T) \tag{pasting law for pullbacks, \cite[Lem.\ 4.4.2.1]{Lurie-HTT}}\\
    &\simeq f_\sharp(\Sigma^\infty_+ S) \Smash \Sigma^\infty_+ T
\end{align*}
induced by the canonical map (as can be checked at the level of spaces, because it is already the case for the equivalence $f_\sharp(S \times_{\rR(X)} f^\ast T) \simeq f_\sharp(S) \times_{\rR(Y)} T$).

To prove that the exchange transformation is an equivalence, it again suffices to show it before stabilization. For any arrow $(S\to \rR(X)) \spaces_{/\rR(X)}$ with structure map $x: S\to \rR(X)$, we have
\begin{align*}
    g^\ast f_\sharp(S\to \rR(X)) &\simeq g^\ast( S \to \rR(X) \to \rR(Y)) \simeq (S \times_{\rR(Y)} \rR(Y') \to \rR(Y'))\\
    &\simeq f'_\sharp(S \times_{\rR(X)} \rR(X') \to \rR(X')) \tag{$\star$}\\
    &\simeq f'_\sharp g'^\ast (S\to \rR(X))
\end{align*}
where $(\star)$ follows from the pasting law for pullbacks in the diagram
% https://q.uiver.app/#q=WzAsNixbMSwwLCJcXHJSKFgnKSJdLFsxLDEsIlxcclIoWCkiXSxbMywxLCJcXHJSKFkpIl0sWzMsMCwiXFxyUihZJykiXSxbMCwwLCJTXFx0aW1lc197XFxyUihZKX0gXFxyUihZJykiXSxbMCwxLCJTIl0sWzMsMiwiZyIsMV0sWzEsMiwiZiJdLFswLDMsImYnIiwyXSxbMCwxLCJnJyIsMl0sWzQsNV0sWzUsMSwieCJdLFs0LDBdLFswLDIsIiIsMCx7InN0eWxlIjp7Im5hbWUiOiJjb3JuZXIifX1dXQ==
\[\begin{tikzcd}
	{S\times_{\rR(Y)} \rR(Y')} & {\rR(X')} && {\rR(Y')} \\
	S & {\rR(X)} && {\rR(Y)}
	\arrow[from=1-1, to=1-2]
	\arrow[from=1-1, to=2-1]
	\arrow["{f'}"', from=1-2, to=1-4]
	\arrow["{g'}"', from=1-2, to=2-2]
	\arrow["\lrcorner"{anchor=center, pos=0.125}, draw=none, from=1-2, to=2-4]
	\arrow["g"{description}, from=1-4, to=2-4]
	\arrow["x", from=2-1, to=2-2]
	\arrow["f", from=2-2, to=2-4]
\end{tikzcd}\]
Thus the exchange transformation is an equivalence.\\

\textbf{Step 3:} \emph{We prove that $\calR$ satisfies assumption $(iii)$.} Consider a diagram as in the statement of assumption $(iii)$. To lighten notation we deal only with the case $Y = Z^{\amalg n}$; the case of general maps in $\fold$ is similar. Write $U := W\times_X Y = \coprod_{i\leq n} U_i$, where $U_i$ lives over the $i$-th copy $Z_i$ of $Z$ in $Y$. Let $e: Y':= R_{Y/X}(U)\times_Z Y \to U$ be the natural map; it is given over $Z_i$ by the projection on the $i$-th component $\prod_{j\leq n} U_j \to U_i$. Then, for any $(A \to \rR(W)) \in \Spc_{/\rR(W)}$, let $B = (\pi_W)^\ast(A)$, which lives over $\rR(U)$. Write $B_i$ for the component of $B$ over $\rR(U_i)$. Now, $e^\ast(B)$, as a space over $\rR(Y') = \coprod_{i\leq n} (\rR(U_1) \times_{\rR(Z)} \dots \times_{\rR(Z)} \rR(U_n))$, is given by $$\coprod_{i\leq n} (\rR(U_1) \times_{\rR(Z)} \dots \times_{\rR(Z)} B_i \times_{\rR(Z)} \dots \times_{\rR(Z)} \rR(U_n)).$$
In the remainder of Step 3, unless specified otherwise, all products are fibered over $\rR(Z)$. Then $u_\sharp\nabla'_\otimes f'^\ast (A) = u_\sharp\nabla'_\otimes e^\ast (B)$ is given by the fiber product over $\rR(U_1) \times \dots \times \rR(U_n)$ of the factors $\rR(U_1) \times \dots \times B_i \times \dots \times \rR(U_n)$ for $1\leq i\leq n$. That is, $u_\sharp\nabla'_\otimes e^\ast (B) = B_1 \times \dots \times B_n \longrightarrow \rR(Z)$, which coincides with $\nabla_\otimes(\pi_Y)_\sharp(\pi_W)^\ast(A) = \nabla_\otimes(\pi_Y)_\sharp(B)$.
Now $\nabla_\otimes u'_\sharp f'^\ast (A) = \nabla_\otimes u'_\sharp e^\ast (B)$ is given by
$$\prod_{i\leq n} (\rR(U_1) \times \dots \times B_i \times \dots \times \rR(U_n)) \longrightarrow \rR(Z).$$
The distributivity transformation is now the map
$$B_1 \times \dots \times B_n  \longrightarrow \prod_{i\leq n} (\rR(U_1) \times \dots \times B_i \times \dots \times \rR(U_n)) \longrightarrow B_1 \times \dots \times B_n,$$
which is seen to be the identity by chasing through the construction. Therefore, in our situation, the distributivity transformation is an equivalence.\\

\textbf{Step 4:} \emph{We show that $\calR$ lifts to sifted-cocomplete categories.} For all $X\in\Sm_\R$, the slice $\spaces_{/\rR(X)}$ is presentable (as a particular case of Proposition \ref{Prop:sliceofpshsm}, it is equivalent to $\Pre(\ast_{/\rR(X)})$). Then, using \cite[Prop.\ 1.4.4.4]{Lurie-HA}, the category $\Sp(\spaces_{/\rR(X)})$ is cocomplete. Moreover, for any map $f$ in $\Sm_\R$, the functor $f^\ast$ preserves colimits; and for any fold map $\nabla$, the functor $\nabla_\otimes$ preserves sifted colimits, by the same argument as in the proof of Proposition \ref{Prop:motivicThom}. Since the operations $f^\ast$ and $\nabla_\otimes$ describe the functoriality of $\calR$ (viewed as a functor on the category $\Span$), the latter lifts to sifted-cocomplete categories.\\

\textbf{Step 5:} \emph{We prove that $\calR^\kappa$ satisfies the assumptions of Theorem \ref{Prop:motiviccolim}.} By \Cref{Prop:kappacompinslice}, for any $X\in\Sm_\R$, $\rR X \in \Spc^\kappa$. Then, by \Cref{Prop:Spkappasm}, $\calR(X)^\kappa \subseteq \calR(X)$ is indeed a small symmetric monoidal subcategory. We are left to show that for any $f:X\to Y$ in $\Sm_\R$, the functors $f^\ast: \calR(Y) \to \calR(X)$ and $f_\sharp: \calR(X) \to \calR(Y)$ preserve $\kappa$-compact objects. For the latter, it simply follows from the fact that its right adjoint $f^\ast$ preserves all colimits, in particular $\kappa$-filtered ones. To show that $f^\ast$ preserves $\kappa$-compact objects, we use \Cref{Prop:toolscompact}. Recall from the proof of \Cref{Prop:Spkappasm} that $\calR(Y)$ is generated under filtered colimits by the (small) set of compact objects $\{ \Sigma^n \Sigma^\infty_+ V \mid n\in\Z, V\in (\Spc_{/\rR Y})^\omega\}$. Thus, we have to show that $f^\ast$ maps any object of the form $\Sigma^n \Sigma^\infty_+ V \in \calR(Y)$ with $n\in\Z$ and $V\in (\Spc_{/\rR Y})^\omega$ to a $\kappa$-compact object. Since $f^\ast$ commutes with $\Sigma^n \Sigma^\infty_+$ by construction, and this latter functor preserves $\kappa$-compactness in $\calR(Y)$ (as in the proof of \Cref{Prop:Spkappasm}), we only have to show that $f^\ast V \in (\Spc_{/\rR X})^\kappa$. Since $f^\ast$ is defined by pullback, by \Cref{Prop:kappacompinslice} it suffices to show that $V' \times_{\rR X} \rR Y \in \Spc^\kappa$, where $V = (V'\to \rR Y)$. This holds by choice of $\kappa$, since this is a finite limit of $\kappa$-compact spaces by \Cref{Prop:kappacompinslice} again (note that $V$ is in particular $\kappa$-compact in $\Spc_{/\rR Y}$). 
\end{proof}

We now want to apply Proposition \ref{Prop:naturalityofmotiviccolim} to a transformation $\alpha: \SH^\omega \to \calR^\kappa$ of functors $\Span \to \Cat$. We first define it at the level of $\SH$ and $\calR$. To do so, we use the universal property of $\SH$ as a presheaf of symmetric monoidal categories. Indeed, as we saw in the construction of the real realization functor, to specify a symmetric monoidal colimit preserving functor $\SH(S) \to \calC$ where $\calC$ is a symmetric monoidal stable category, it suffices to define a symmetric monoidal functor $\Sm_S^\times \to \calC$ which is suitable $\Affl$-invariant and well-behaved with respect to the Nisnevich topology, and inverts $\mathbb{P}^1$. A more precise statement is proven in \cite{Robalo}. This universal property was made into one of the presheaf $\SH$ in \cite{DG}. Before stating it, we need a definition that axiomatizes some functoriality properties of $\SH$ we saw in Theorem \ref{Prop:6FFforSH}.

\begin{Def}[\cite{DG}] Let $\calC$ be an essentially small 1-category with finite limits and a terminal object, and $P$ be a collection of maps in $\calC$ stable under pullback and equivalences. A \emph{$P$-pullback formalism over $\calC$} is a functor $C: \calC^\op \to \CAlg(\widehat{\mathsf{Cat}}_\infty)$ such that:
\begin{itemize}
\item for any $f\in P$, $f^\ast:=C(f)$ has a left adjoint $f_\sharp$,
\item the exchange transformation is an equivalence for pullback squares with two parallel sides in $P$ (see Theorem \ref{Prop:6FFforSH}$(vii)$),
\item $C$ satisfies the projection formula (see Theorem \ref{Prop:6FFforSH}$(v)$).
\end{itemize}
A \emph{morphism of $P$-pullback formalisms over $\calC$} is a natural transformation $\tau:C\to C'$ such that the canonical transformation $f^{C'}_\sharp \tau \Rightarrow \tau f^C_\sharp$ is an equivalence for any $f\in P$. This can be organized into a category $\mathsf{PB}(\calC, P)$.
\end{Def}

\begin{Def}[\cite{DG}]\label{Def:pullbackformalism} Let $\mathsf{PB}(\Sm_\R, \mathsf{Sm})^{c,pt}_{\Lmot, (\mathbb{P}^1,\infty)}$ be the subcategory of $\mathsf{PB}(\Sm_\R, \mathsf{Sm})$ consisting of \emph{cocomplete pointed $\Affl$-local Nisnevich-local $\Sm$-pullback formalisms with $\mathbb{P}^1$ invertible}, that is, of objects $C$ such that:
\begin{itemize}
\item for any $X\in\Sm_\R$, $C(X)$ is a cocomplete and pointed category,
\item $C$ satisfies Nisnevich excision,
\item for any $X\in \Sm_\R$, the projection induces a fully faithful functor $C(X) \to C(X\times \Affl)$,
\item the essentially unique morphism of pointed cocomplete pullback formalisms $\Spc(-)_\ast \to C$ sends $(\mathbb{P}^1,\infty) \in \Spc(\R)_\ast$ to an invertible object in $C(\R)$. 
\end{itemize}
\end{Def}

\begin{thm}[\protect{\cite[Cor.\ 6.35 and Rmk.\ 7.11]{DG}}]\label{Prop:universalpropSH} The functor $\SH$ is an initial object in the category $\mathsf{PB}(\mathsf{Sm}_\R, \mathsf{Sm})^{c,pt}_{\Lmot, (\mathbb{P}^1,\infty)}$ (see Definition \ref{Def:pullbackformalism}).
\end{thm}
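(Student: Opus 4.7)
The plan is to deduce this universal property by matching each of the four conditions defining the subcategory $\mathsf{PB}(\Sm_\R, \Sm)^{c,pt}_{\Lmot, (\mathbb{P}^1,\infty)}$ with a universal construction, applied to a suitable initial pullback formalism. Concretely, I would filter $\SH$ as the result of a chain of universal constructions
\[
\Pre(\Sm_\bullet) \longrightarrow \Pre(\Sm_\bullet)_\ast \longrightarrow \Spc(\bullet)_\ast \longrightarrow \SH,
\]
where each arrow is an initial morphism of pullback formalisms satisfying an additional property (pointedness, $\Affl$-local Nisnevich descent, invertibility of $(\mathbb{P}^1,\infty)$). The goal is to verify that each step commutes with the $\Sm$-pullback structure (i.e.\ with the $f^\ast$ and $f_\sharp$ functors and their coherences), so that the induced universal property in each $\SH(X)$ assembles into one for the presheaf $\SH$.

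First I would show that $X \mapsto \Pre(\Sm_X)$ is the initial cocomplete $\Sm$-pullback formalism on $\Sm_\R$: pullback and post-composition along smooth maps make the presheaf formalism precisely the free cocompletion of the tautological functor $X \mapsto \Sm_X$, and the exchange and projection formulas are formal consequences of the fact that $\Sm$ is stable under pullback in $\Sm_\R$. Next, I would pass to pointed objects using the universal property of $(-)_\ast = (-)_{\ast/}$ applied objectwise; since $f^\ast$ preserves terminal objects, the pointed functor still forms a pullback formalism and is initial among pointed ones.

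Then I would impose $\Affl$-invariance and Nisnevich descent simultaneously by a symmetric monoidal Bousfield localization. The main technical input is that the pullback functors $f^\ast$ preserve the class of $\mathbb{A}^1$-equivalences and of Nisnevich hypercovers (because $f$ is a map in $\Sm_\R$, so it preserves the whole structure of the Nisnevich and $\mathbb{A}^1$ sites), which ensures that the local objects form a sub-pullback-formalism; combined with the universal property of a reflective localization one deduces initiality of $\Spc(-)_\ast$ in the subcategory of pointed cocomplete $\Affl$-local Nisnevich-local pullback formalisms. Finally, inverting $(\mathbb{P}^1,\infty)$ is carried out as in \cite{Robalo}, using that $f^\ast$ is symmetric monoidal and maps $(\mathbb{P}^1_Y,\infty)$ to $(\mathbb{P}^1_X,\infty)$, so the universal property of $\otimes$-inversion in $\mathrm{Pr}^{\mathrm{L}}$ globalizes to a universal property of $\SH$ as a presheaf of symmetric monoidal categories.

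The main obstacle is coherence: at each step, one must check that the universal property in $\CAlg(\widehat{\mathsf{Cat}}_\infty)$, which is pointwise in $X\in\Sm_\R$, upgrades to a universal property in $\mathsf{PB}(\Sm_\R,\Sm)$. This requires tracking that the exchange transformation for $f_\sharp$ and $f^\ast$, as well as the projection formula, are preserved by reflection into the pointed/$\Affl$-local/Nisnevich-local/$\mathbb{P}^1$-stable subcategories. The cleanest way to organize this is to straighten the formalism to a coCartesian fibration over $\Sm_\R^\op$ (as in Step 1 of the proof of Theorem \ref{Prop:motiviccolim}) and perform the localizations on the total category, checking that they preserve coCartesian edges; the explicit fiberwise results of Robalo then imply the parameterized statement via \cite{DG}.
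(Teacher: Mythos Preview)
The paper does not supply its own proof of this statement: it is quoted verbatim as a black box from \cite[Cor.\ 6.35 and Rmk.\ 7.11]{DG}. So there is nothing in the paper to compare against beyond the citation itself.

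That said, your sketch is essentially the strategy of the cited reference: one filters $\SH$ through the chain $\Pre(\Sm_\bullet) \to \Pre(\Sm_\bullet)_\ast \to \Spc(\bullet)_\ast \to \SH$, verifying at each stage that the construction is universal among pullback formalisms with one more property, and that the exchange and projection formulas survive. Your identification of the main obstacle---upgrading the pointwise universal properties in $\CAlg(\widehat{\mathsf{Cat}}_\infty)$ to ones in $\mathsf{PB}(\Sm_\R,\Sm)$, i.e.\ checking that the localizations are compatible with $f_\sharp$---is exactly right, and is what most of \cite{DG} is devoted to. One small correction: the initial object among cocomplete pullback formalisms is not quite $\Pre(\Sm_\bullet)$ but rather the functor $X\mapsto \Pre(\Sm_X)$ equipped with its Cartesian symmetric monoidal structure (you need the symmetric monoidal structure from the start, since morphisms in $\mathsf{PB}$ are natural transformations valued in $\CAlg(\widehat{\mathsf{Cat}}_\infty)$); and the projection formula and base change for this initial object are not entirely formal---they use that pullbacks of smooth morphisms along arbitrary morphisms exist in $\Sm_\R$ and are again smooth. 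But modulo these points, your outline is the correct one, and ``see \cite{DG}'' is the honest proof here.
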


\begin{Lemma}\label{Prop:calRPBformalism} The presheaf of symmetric monoidal categories $\calR$ is a cocomplete pointed $\Affl$-local Nisnevich-local $\Sm$-pullback formalism over $\Sm_\R$ with $\mathbb{P}^1$ invertible. 
\end{Lemma}
\begin{proof}
It follows from Proposition \ref{Prop:6FFforR} that $\calR$ is a pullback formalism. Moreover, for any $X\in\Sm_\R$, $\Sp(\Spc_{/\rR(X)}) = \calR(X)$ is pointed and cocomplete. The pullback formalism $\calR$ is furthermore $\Affl$-local, because $\rR(X) \simeq \rR(X\times \Affl)$ for any $X\in\Sm_R$ via the projection, so that $\calR(X) \to \calR(X\times \Affl)$ is in particular fully faithful. To check that $\calR$ has Nisnevich excision, let $\iota: U\xhookrightarrow{} X$ and $p:Y\to X$ form an elementary distinguished Nisnevich square (i.e. $\iota$ is an open immersion, and $p$ is an étale map inducing an isomorphism between the complement of $U$ and its preimage by $p$). Then the corresponding pullback square in $\Sm_\R$ becomes a pushout square in $\Spc(\R)$; since $\rR$ preserves colimits we have 
$$\Spc_{/\rR(X)} \simeq \Spc_{/\rR(U) \amalg_{\rR(U\times_X Y)} \rR(Y)} \simeq \Spc_{/\rR(U)} \times_{\Spc_{/\rR(U\times_X Y)}} \Spc_{/\rR(Y)}$$
by the descent property in $\Spc$. Finally, we have 
$$\calR(X) \simeq \Sp(\Spc_{/\rR(X)}) \simeq \Sp(\Spc_{/\rR(U)}) \times_{\Sp(\Spc_{/\rR(U\times_X Y)})} \Sp(\Spc_{/\rR(Y)}) \simeq \calR(U) \times_{\calR(U\times_X Y)} \calR(Y)$$
because stabilization preserves limits. Indeed, it is described by tensoring with $\Sp$ in $\PrL$ \cite[Ex.\ 4.8.1.23]{Lurie-HA}, but $\Sp$ is dualizable \cite[Prop.\ D.7.2.3]{Lurie-SAG}, thus $-\otimes \Sp$ preserves all limits and colimits; moreover limits in $\PrL$ are computed on the underlying categories \cite[Prop.\ 5.5.3.13]{Lurie-HTT}. Finally, the essentially unique morphism of pointed cocomplete pullback formalisms $\chi: \Spc(-)_\ast \to \calR$ sends $(\mathbb{P}^1,\infty) \in \Spc(\R)_\ast$ to $\Sigma^\infty ( (\rR(\mathbb{P}^1) \to \rR(\R),\infty)) \simeq \Sigma^\infty(\Sph^1,\ast) \in \Sp$, which is invertible (see Construction \ref{Constr:alpha} below for the description of $\chi$). This verifies all the axioms. 
\end{proof}

\begin{Construction}\label{Constr:alpha} Let $\alpha: \SH \to \calR$ be the unique morphism in $\mathsf{PB}(\Sm_\R, \mathsf{Sm})^{c,pt}_{\Lmot, (\mathbb{P}^1,\infty)}$ from $\SH$ to $\calR$ (using Theorem \ref{Prop:universalpropSH} and Lemma \ref{Prop:calRPBformalism}). Unraveling the constructions in \cite[Thm.\ 6.3 and Rmk.\ 7.11]{DG}, we can give an explicit description of the components of $\alpha$. Given $S\in\Sm_\R$, real realization induces a finite-product-preserving functor $\Sm_S \to \Spc_{/\rR(S)}$ sending $X\in\Sm_S$ to the real realization of its structure map $\rR(X) \to \rR(S)$ (in the terminology of \cite{DG}, this is a morphism of pre-pullback formalisms, for which $\Sm_\bullet$ is initial). The left Kan extension of this functor inverts motivic equivalences, and factors through $\Spc(S)$. The corresponding functor on pointed objects can be post-composed with the infinite suspension functor $(\Spc_{/\rR(S)})_\ast \to \calR(S)$. There is therefore an induced symmetric monoidal functor $\Spc(S)_\ast \to \calR(S)$ that preserves colimits, and inverts $\mathbb{P}^1$. Finally, this functor factors through $\SH(S)$, and we obtain $\alpha_S: \SH(S) \to \calR(S)$. 
\end{Construction}

As promised, we now apply Proposition \ref{Prop:naturalityofmotiviccolim} to the transformation $\alpha$ from Construction \ref{Constr:alpha}.
\begin{Prop}\label{Prop:compMSHMR} The transformation $\alpha$ from Construction \ref{Constr:alpha} restricts to a natural transformation $\alpha : \SH^\omega\to \calR^\kappa$, inducing a commutative diagram of symmetric monoidal categories and symmetric monoidal functors
% https://q.uiver.app/#q=WzAsNCxbMCwxLCJcXFNIKFxcUikiXSxbMSwxLCJcXFNwXFxsZWZ0KFxcc3BhY2VzX3svXFxyUihcXFIpfVxccmlnaHQpIFxcc2ltZXEgXFxTcCJdLFswLDAsIlxcc1ByZShcXFNtX1IpX3svXFxTSF5cXHNpbWVxfSJdLFsxLDAsIlxcc1ByZShcXFNtX1xcUilfey9cXG1hdGhjYWx7Un1eXFxzaW1lcX0iXSxbMCwxLCJcXGFscGhhX1xcUiJdLFsyLDAsIk0iLDJdLFszLDEsIk1fXFxtYXRoY2Fse1J9Il0sWzIsMywiXFxhbHBoYV9cXHNoYXJwIl1d
\[\begin{tikzcd}
	{\Pre_\Sigma(\Sm_\R)_{/\SH^\wgpd}} & {\Pre_\Sigma(\Sm_\R)_{/\calR^\kgpd}} \\
	{\SH(\R)} & {\Sp\left(\spaces_{/\rR(\R)}\right) \simeq \Sp.}
	\arrow["{\alpha_\sharp}", from=1-1, to=1-2]
	\arrow["M"', from=1-1, to=2-1]
	\arrow["{M_{\calR^\kappa}}", from=1-2, to=2-2]
	\arrow["{\alpha_\R}", from=2-1, to=2-2]
\end{tikzcd}\]
\end{Prop}
\begin{proof}
 We begin by proving the first claim. By Lemma \ref{Prop:toolscompact}, for all $X\in\Sm_R$, $\SH(X)^\omega$ is the thick subcategory generated by the family $E_X$ of \Cref{Prop:compactgenerators}. By stability of $\calR(X)$, and the fact that $\kappa$-compact objects are stable under finite colimits \cite[Tag \href{https://kerodon.net/tag/064W}{064W}]{Kerodon}, $\calR(X)^\kappa \subseteq \calR(X)$ is itself a thick subcategory. Since $\alpha_X : \SH(X) \to \calR(X)$ preserves colimits, it suffices to show that for any $n\in \Z$ and $Y\in\Sm_X$, we have $\alpha_X(\Sigma^{-2n,-n}\Sigma^\infty_+ Y) \in \calR(X)^\kappa$. This object is equivalent to $\Sigma^\infty_+(\rR Y \to \rR X)$, which is $\kappa$-compact because $\Sigma^\infty_+$ preserves $\kappa$-compact objects as we saw before, and $\rR Y \to \rR X$ is $\kappa$-compact in $\calR(X)$ by \Cref{Prop:kappacompinslice}. 

To prove the remainder of the statement, it suffices to check that $\alpha : \SH \to \calR$ satisfies the assumption of Proposition \ref{Prop:naturalityofmotiviccolim} (compatibility with the left adjoints $(-)_\sharp$), because then so does its restriction to the subcategories of compact and $\kappa$-compact objects respectively. But $\alpha$ is by construction a morphism of pullback formalisms, and so the assumption holds by definition. 
\end{proof}
\vspace{0.2cm}

\subsubsection{Comparing the motivic colimit functor $M_{\calR^\kappa}$ and the topological Thom spectrum functor}\label{Subsubsect:comp}\hfill\vspace{0.2cm}

As mentioned at the beginning of the previous subsection, we now compare $M_{\calR^\kappa}$ with the topological Thom spectrum functor. We saw in Theorem \ref{Prop:ret2} that the unstable real realization functor $\rR: \Pre(\Sm_\R) \to \Spc$ was a localization functor, inducing an equivalence between the category of $\Affl$-invariant real-étale sheaves and that of spaces. We now apply this localization to the category $\Pre(\Sm_\R)_{/\calR^\kgpd}$.

\begin{Lemma}\label{Prop:Mtop'}
The functor $M_{\calR^\kappa} : \Pre_\Sigma(\Sm_\R)_{/\calR^\kgpd} \to \Sp$ from Proposition \ref{Prop:6FFforR} factors through the localization $L_{\Affl, \mathsf{ret}}(\Pre(\Sm_\R))_{/\calR^\kgpd}$ of $\Affl$-invariant real-étale sheaves over $\calR^\kgpd$. In particular, there is a commutative diagram of symmetric monoidal functors

% https://q.uiver.app/#q=WzAsNixbMCwxLCJcXFNIKFxcUikiXSxbMSwxLCJcXG1hdGhjYWx7Un0oXFxSKSBcXHNpbWVxIFxcU3AiXSxbMCwwLCJcXFByZV9cXFNpZ21hKFxcU21fXFxSKV97L1xcU0heXFx3Z3BkfSJdLFsxLDAsIlxcUHJlX1xcU2lnbWEoXFxTbV9cXFIpX3svXFxtYXRoY2Fse1J9Xlxca2dwZH0iXSxbMiwwLCJMX3tcXEFmZmwsIFxcbWF0aHNme3JldH19KFxcUHJlKFxcU21fXFxSKSlfey9cXG1hdGhjYWx7Un1eXFxrZ3BkfSJdLFszLDAsIlxcc3BhY2VzX3svXFxTcF5cXGtncGR9Il0sWzAsMSwiXFxyUiJdLFsyLDAsIk0iLDJdLFszLDEsIk1fe1xcY2FsUl5cXGthcHBhfSJdLFsyLDMsIlxcYWxwaGFfXFxzaGFycCJdLFszLDQsIkxfe1xcQWZmbCwgXFxtYXRoc2Z7cmV0fX0iLDFdLFs0LDUsIlxcc2ltZXEiLDFdLFs0LDEsIiIsMSx7InN0eWxlIjp7ImJvZHkiOnsibmFtZSI6ImRhc2hlZCJ9fX1dLFs1LDEsIlxcTXRvcCciLDAseyJzdHlsZSI6eyJib2R5Ijp7Im5hbWUiOiJkYXNoZWQifX19XSxbMyw1LCJcXHJSIiwwLHsiY3VydmUiOi0zfV1d
\begin{equation}\label{Diag:Mtop'}\begin{tikzcd}[column sep = 4em]
	{\Pre_\Sigma(\Sm_\R)_{/\SH^\wgpd}} & {\Pre_\Sigma(\Sm_\R)_{/\mathcal{R}^\kgpd}} & {L_{\Affl, \mathsf{ret}}(\Pre(\Sm_\R))_{/\mathcal{R}^\kgpd}} & {\spaces_{/\Sp^\kgpd}} \\
	{\SH(\R)} & {\mathcal{R}(\R) \simeq \Sp}
	\arrow["{\alpha_\sharp}", from=1-1, to=1-2]
	\arrow["M"', from=1-1, to=2-1]
	\arrow["{L_{\Affl, \mathsf{ret}}}"{description}, from=1-2, to=1-3]
	\arrow["\rR", curve={height=-18pt}, from=1-2, to=1-4]
	\arrow["{M_{\calR^\kappa}}", from=1-2, to=2-2]
	\arrow["\simeq"{description}, from=1-3, to=1-4]
	\arrow[dashed, from=1-3, to=2-2]
	\arrow["{\Mtop'}", dashed, from=1-4, to=2-2]
	\arrow["\rR", from=2-1, to=2-2]
\end{tikzcd}
\end{equation}
\end{Lemma}
\begin{proof} By \cite[Prop.\ 2.11]{BEH}, to show that the functor $M_{\calR^\kappa} : \Pre_\Sigma(\Sm_\R)_{/\calR^\kgpd} \to \Sp$ factors through $L_{\Affl, \mathsf{ret}}(\Pre_\Sigma(\Sm_\R))_{/\calR^\kgpd}$, it suffices to show that $\calR^\kgpd \in L_{\Affl, \mathsf{ret}}(\Pre_\Sigma(\Sm_\R))$ is an $\Affl$-invariant real-étale sheaf. We will prove this at the end. Let us first see how this implies the statement of the lemma. Note first that we may ignore the sphericity condition, i.e., the inclusion $L_{\Affl, \mathsf{ret}}(\Pre_\Sigma(\Sm_\R)) \subseteq L_{\Affl, \mathsf{ret}}(\Pre(\Sm_\R))$ is an equivalence, since any real-étale sheaf is in particular spherical (for any $U,V\in\Sm_\R$, the inclusions of $U$ and $V$ in $U\amalg V$ form a real-étale cover of the latter). Moreover, Theorem \ref{Prop:ret2} gives an equivalence $L_{\Affl, \mathsf{ret}}(\Pre(\Sm_\R))_{/\calR^\kgpd} \simeq \Spc_{/\rR(\calR^\kgpd)}$ induced by real realization, which for $\Affl$-invariant real-étale sheaves corresponds to taking the real sections. Then, $\rR(\calR^\kgpd) \simeq \Sp(\Spc_{/\rR(\R)})^\kgpd \simeq \Sp^\kgpd$, and we obtain Diagram (\ref{Diag:Mtop'}), where the symmetric monoidal structure on $\spaces_{/\Sp^\kgpd}$ is induced by that of $L_{\Affl, \mathsf{ret}}(\Pre(\Sm_\R))_{/\calR^\kgpd}$ (described in Lemma \ref{Prop:otimes'smstc} below). 
	
To conclude the proof, we are left with showing that $\calR^\kgpd$ is an $\Affl$-invariant real-étale sheaf. We have already proven that $\calR$ was an $\Affl$-invariant Nisnevich sheaf (Proposition \ref{Prop:calRPBformalism}). The argument for the real-étale topology is the same, using the fact that the unstable real realization functor inverts the real-étale equivalence by Theorem \ref{Prop:ret2}. Then, the $\Affl$-invariance of $\calR^\kgpd$ follows. We defer to Lemma \ref{Prop:calRomegasheaf} just below the proof that $\calR^\kappa$ is a real-étale sheaf. Then, $\calR^\kgpd$ is also a real étale sheaf: it is obtained from $\calR^\kappa$ by postcomposition with the functor $(-)^\simeq : \Cat \to \Spc$, which preserves limits as it is right adjoint to the inclusion, and being a sheaf is a limit condition.
\end{proof}

\begin{Lemma}\label{Prop:calRomegasheaf} The subpresheaf of $\kappa$-compact objects $\calR^\kappa \subseteq \calR$ is a real-étale sheaf on $\Sm_\R$.
\end{Lemma}
\begin{proof}
	Recall from the proof of Lemma \ref{Prop:Mtop'} that $\calR$ is a real-étale sheaf on $\Sm_\R$. Thus, it suffices to show that for any real-étale cover $\{f_i:U_i \to X\}$ in $\Sm_\R$, if $a\in\calR(X)$ satisfies $f_i^\ast(a) \in \calR(U_i)^\kappa$, then also $a\in\calR(X)^\kappa$. Let $\colim_{j\in\calJ} a_j$ be a $\kappa$-filtered colimit diagram in $\calR(X)$. We want to show that 
	\begin{align*} \colim_{j\in\calJ}\map_{\calR(X)}(a, a_j)\tag{$\star$} \lsimeq{} \map_{\calR(X)}(a,\colim_{j\in\calJ} a_j). \end{align*}
	We will first construct ``mapping sheaves'' and consider analog of $(\star)$ for these. Since by definition all $f_i$'s are étale, we work over $\mathsf{Et}_X$, the small étale site of $X$. Let $a,b\in\calR(X)$. We can construct a ``mapping presheaf'' $\underline{\map}_\calR(a,b)$ sending $(f:U\to X) \in \mathsf{Et}_X$ to the space $\map_{\calR(U)}(f^\ast a, f^\ast b)$ \cite[Cor.\ 3.8]{Tom-stccomm}. Since $\calR$ is in particular a real-étale sheaf on $\mathsf{Et}_X$, $\underline{\map}_\calR(a,b)$ is also a sheaf. Indeed, if $\mathcal{V} \to Y$ is a real-étale cover in $\mathsf{Et}_X$, then $\calR(Y)$ is equivalent to the limit over the \v Cech complex $\lim \calR(\check{C}_\bullet(\mathcal{V}))$, so that 
	$$\underline{\map}_\calR(a,b)(Y) \simeq \map_{\lim \calR(\check{C}_\bullet(\mathcal{V}))}(a,b) \simeq \lim \map_{\calR(\check{C}_\bullet(\mathcal{V}))}( a, b) = \lim \underline{\map}_{\calR}(a,b)(\check{C}_\bullet(\mathcal{V})),$$
	where in the second-to-last term, the pullback of $a$ and $b$ to the terms of the \v Cech complex is implicit.
	There is a natural map of presheaves $\xi: \colim^{\Pre(\mathsf{Et}_X)}_{j\in\calJ} \underline{\map}_\calR(a,a_j) \to \underline{\map}_\calR(a,\colim_{j\in\calJ} a_j)$ inducing equivalences after restriction to each $U_i$. This holds because colimits are computed sectionwise in presheaves, and for any morphism $f:V \to U_i$, the restriction $f^\ast a \in \calR(V)^\kappa$ is $\kappa$-compact (by assumption $f_i^\ast a \in \calR(U_i)^\kappa$ is $\kappa$-compact, and $f^\ast$ preserves $\kappa$-compact objects by Proposition \ref{Prop:6FFforR}). The right-hand side is already a sheaf, and thus $\xi$ induces an equivalence between the sections on $X$ of the sheafification of the left-hand side, which is equivalent to $\colim^{\mathsf{Shv}(\mathsf{Et}_X)}_{j\in\calJ} \underline{\map}_\calR(a,a_j)$, and that of the right-hand side. Therefore, we have equivalences
	\begin{align*} 
		\colim_{j\in\calJ} \map_{\calR(X)}(a,a_j) &=: \colim_{j\in\calJ} \left(\underline{\map}_\calR(a,a_j)(X)\right) \overset{(\star\star)}\simeq \left(\colim^{\mathsf{Shv}(\mathsf{Et}_X)}_{j\in\calJ} \underline{\map}_\calR(a,a_j)\right)(X) \\
		&\simeq \underline{\map}_{\calR}(a,\colim_{j\in\calJ} a_j)(X) := \map_{\calR(X)}(a,\colim_{j\in\calJ} a_j),
	\end{align*}
which gives $(\star)$. Here, $(\star\star)$ is obtained by viewing global sections as corepresented by $y(X)$, and using that the (sheafification of the) latter is a compact object in $\mathsf{Shv}(\mathsf{Et}_X)$. To see this, note that it is the terminal object in $\mathsf{Shv}(\mathsf{Et}_X) \simeq \mathsf{Shv}(RX)$ \cite[Thm.\ B.10]{ES}, so it suffices to check that the terminal object in $\mathsf{Shv}(RX)$ is compact. This follows from the fact that $RX$ is a coherent topological space \cite[Prop.\ 4.1]{ret2} and the end of the proof of \cite[Prop.\ 6.5.4.4]{Lurie-HTT}.
\end{proof} 

\begin{Lemma}\label{Prop:otimes'smstc}
	The localization $L' : \Pre(\Sm_\R)_{/\calR^\kgpd} \to L_{\Affl, \mathsf{ret}}(\Pre(\Sm_\R))_{/\calR^\kgpd}$ from Lemma \ref{Prop:Mtop'} is symmetric monoidal, making the right-hand side into a presentably symmetric monoidal subcategory of the left-hand side via the fully faithful right adjoint of the localization $L'$.
\end{Lemma}
\begin{Notation}\label{Notation:smstc2}
	As in Notation \ref{Notation:smstc}, we denote the corresponding symmetric monoidal structure on $\spaces_{/\Sp^\kgpd}$ obtained via the equivalence with $L_{\Affl, \mathsf{ret}}(\Pre(\Sm_\R))_{/\calR^\kgpd}$ by $\spaces_{/\Sp^\kgpd}^{\otimes'}$, to distinguish it from the one in Proposition \ref{Prop:smstconslice}, which we denote by $\spaces_{/\Sp^\kgpd}^{\otimes}$.
\end{Notation}
\begin{proof}
	To lighten notation, let $\calC := \Pre(\Sm_\R)_{/\calR^\kgpd}$ and $\calD :=L_{\Affl, \mathsf{ret}}(\Pre(\Sm_\R))_{/\calR^\kgpd}$. Recall that there is a source functor (or forgetful functor) $\calC\to \Pre(\Sm_\R)$ remembering only the domain of an arrow. We first use the criterion of \cite[Lem.\ 3.4]{GGN}: if we show that local equivalences are preserved by the functors $ - \otimes X$ for all $X\in \calC$, then this shows that there is an induced symmetric monoidal structure on $\calD$, such that $L'$ is symmetric monoidal and its right-adjoint $\iota$ is lax symmetric monoidal. This holds because local equivalences in $\calC$ are those maps that are equivalences of presheaves after applying the source functor and the $\Affl$-real-étale localization $L:\Pre(\Sm_\R) \to L_{\Affl, \mathsf{ret}}(\Pre(\Sm_\R))$. On the sources, the tensor product of $\calC$ is the categorical product (Proposition \ref{Prop:tensoragree}). Since $L$ commutes with products (since real realization does), we are done.

	As an accessible localization of $\calC$ in the sense of \cite[\S 5.5.4]{Lurie-HTT}, $\calD$ is presentable. By symmetric monoidality of $L'$, for any $X,Y\in\calD$, we have $X\otimes_\calD Y = L'(\iota X \otimes_\calC \iota Y)$. Thus, to show that $\calD$ is actually a symmetric monoidal subcategory of $\calC$, it suffices to show that such tensor products $\iota X \otimes_\calC \iota Y$ are already local, and that so is the unit. The latter is clear, and for the former, we use again that being local is checked after applying the source functor; and that $\otimes_\calC$ is the categorical product on the sources. Since $L$ commutes with products, the product of local objects is local, and this is what we needed. 
	
	Finally, we show that the symmetric monoidal structure is presentable. Let $X\in\calD$, we have to show that $-\otimes_\calD X: \calD\to\calD$ preserves colimits. Let $\colim_i Y_i$ be a colimit diagram in $\calD$. Using that both $L'$ and $\iota$ preserve the tensor products, and $L'$ preserves colimits, we have:
    \begin{align*}
        \left(\colim_i Y_i\right) \otimes_\calD X &\simeq \left(\colim_i L'\iota Y_i\right) \otimes_\calD X \simeq L'\left(\colim_i \iota Y_i\right) \otimes_\calD L'\iota X \simeq L'\left( \left(\colim_i \iota Y_i\right) \otimes_\calC \iota X\right)\\
        &\simeq L'\left( \colim_i \left(\iota Y_i\otimes_\calC \iota X\right) \right) \tag{$\calC$ is presentably symmetric monoidal}\\
        &\simeq \colim_i L'\left( \iota(Y_i\otimes_\calD X)\right) \simeq \colim_i (Y_i \otimes_\calD \calG).
    \end{align*}
This finishes the proof.
\end{proof}

The functor $\Mtop'$ can be viewed as the real realization of the motivic Thom spectrum functor. We can now show that it is equivalent to the topological Thom spectrum functor. 

\begin{thm}\label{Prop:Thomagreeassmfunctors} The symmetric monoidal structures $\spaces^{\otimes'}_{/\Sp^\kgpd}$ and $\spaces^\otimes_{/\Sp^\kgpd}$ agree (see Notation \ref{Notation:smstc2}). The functors $\Mtop'$ (from Diagram (\ref{Diag:Mtop'})) and $\Mtop$ (from Theorem \ref{Prop:Mtopissm}) agree as symmetric monoidal functors.
\end{thm}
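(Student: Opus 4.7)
The plan is to split the statement into two steps: first show that $\Mtop$ and $\Mtop'$ agree as colimit-preserving functors $\spaces_{/\Sp^\kgpd} \to \Sp$, and then show $\otimes \simeq \otimes'$, which upgrades the first equivalence to one of symmetric monoidal functors.

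For the first step, both $\Mtop$ and $\Mtop'$ preserve small colimits: $\Mtop$ by Theorem \ref{Prop:Mtopissm}, and $\Mtop'$ as a composition of colimit-preserving functors along Diagram (\ref{Diag:Mtop'}). Under Lemma \ref{Prop:sliceofpshsm}, $\spaces_{/\Sp^\kgpd} \simeq \Pre(\ast_{/\Sp^\kgpd}) \simeq \Pre(\Sp^\kgpd)$, so any colimit-preserving functor out of it is determined by its restriction to the representable subcategory $\Sp^\kgpd$. By construction, $\Mtop$ restricts there to the inclusion $\Sp^\kgpd \hookrightarrow \Sp$. For $\Mtop'$, trace a representable $y(E) \in \spaces_{/\Sp^\kgpd}$ through the equivalence $L_{\Affl,\mathsf{ret}}(\Pre(\Sm_\R))_{/\calR^\kgpd} \simeq \spaces_{/\Sp^\kgpd}$: it becomes the representable $y(\R) \to \calR^\kgpd$ classifying $E \in \calR(\R)^\kgpd = \Sp^\kgpd$. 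By Proposition \ref{Prop:Masacolimit} applied to $\calF = \calR^\kappa$, the motivic colimit functor sends this to $(\id{\R})_\sharp E = E$, so $\Mtop'$ also restricts to the inclusion on representables. Hence $\Mtop \simeq \Mtop'$ as colimit-preserving functors.

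For the second step, by Proposition \ref{Prop:dayconvolution} the structure $\otimes$ is the Day convolution on $\Pre(\Sp^\kgpd)$, characterized up to equivalence as the unique presentably symmetric monoidal structure on $\spaces_{/\Sp^\kgpd}$ for which the Yoneda embedding $\Sp^\kgpd \hookrightarrow \spaces_{/\Sp^\kgpd}$ is symmetric monoidal (with $\Sp^\kgpd$ carrying its natural sm structure inherited from $\Sp$). It therefore suffices to verify that $\otimes'$ enjoys the same two properties. Presentability is Lemma \ref{Prop:otimes'smstc}. For symmetric monoidality of the Yoneda embedding, Proposition \ref{Prop:tensoragree} applied to $\calF = \calR^\kappa$ (valid by Proposition \ref{Prop:6FFforR}) shows that the $\otimes$- and $\otimes'$-tensor products of any two objects in $\Pre_\Sigma(\Sm_\R)_{/\calR^\kgpd}$ agree; since both localizations down to $\spaces_{/\Sp^\kgpd}$ are symmetric monoidal (Proposition \ref{Prop:smstconslice} and Lemma \ref{Prop:otimes'smstc}), this descends. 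Combined with the identification of units as $y(\Sph)$ on both sides, it gives symmetric monoidality of the Yoneda embedding for $\otimes'$, hence $\otimes \simeq \otimes'$. Together with step one, this yields $\Mtop \simeq \Mtop'$ as symmetric monoidal functors.

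The hardest point will be upgrading the objectwise equivalence of tensor products from Proposition \ref{Prop:tensoragree} into a full symmetric monoidal equivalence, concretely verifying that $\Sp^\kgpd \hookrightarrow \spaces_{/\Sp^\kgpd}^{\otimes'}$ is symmetric monoidal with matching associativity and commutativity coherences. The explicit formulas in the proof of Proposition \ref{Prop:tensoragree} make naturality in both variables fairly transparent, but extracting the higher coherences requires careful bookkeeping through the spans-based construction in Theorem \ref{Prop:motiviccolim} and the symmetric monoidal localization in Lemma \ref{Prop:otimes'smstc}.
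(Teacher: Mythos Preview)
Your proposal has a genuine gap in the second step. You correctly identify that the crux is showing the Yoneda embedding $\Sp^\kgpd \hookrightarrow \spaces_{/\Sp^\kgpd}$ is symmetric monoidal for $\otimes'$ (with the standard smash product on the source), but your proposed argument does not establish this. Proposition~\ref{Prop:tensoragree} only gives an objectwise equivalence $A\otimes B \simeq A\otimes' B$; it provides no naturality in $A$ and $B$, let alone the associativity and symmetry coherences needed for a symmetric monoidal functor. You acknowledge this at the end, but ``careful bookkeeping'' is not a proof: the paper explicitly flags in Question~\ref{QuestionA} that upgrading Proposition~\ref{Prop:tensoragree} to a full symmetric monoidal comparison is open. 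Your argument as written therefore assumes exactly what needs to be proved.

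The paper avoids this trap by a bootstrapping trick you are missing. It first shows, via the unit $\eta$ of the adjunction $\Pre(-)\dashv U$ applied to the functor $\Finstar\to\Cat$ classifying the $\otimes'$-structure, that the Yoneda embedding $y_2:(\Sm_\R)_{/\calR^\kgpd}\hookrightarrow\Pre((\Sm_\R)_{/\calR^\kgpd})$ is symmetric monoidal with all coherences for free. Combined with Lemma~\ref{Prop:otimes'smstc}, this makes $y_1:\Sp^\kgpd\hookrightarrow\spaces_{/\Sp^\kgpd}^{\otimes'}$ symmetric monoidal for \emph{some} structure $\otimes'$ on $\Sp^\kgpd$, a priori unknown. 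The key move is then to compose with $\Mtop'$, which is already symmetric monoidal by construction (Theorem~\ref{Prop:motiviccolim}): the composite $\Mtop'\circ y_1:\Sp^{\kgpd,\otimes'}\to\Sp^\otimes$ is symmetric monoidal and its underlying functor is the inclusion, which forces $\otimes'$ on $\Sp^\kgpd$ to be the one induced from $\Sp$. In other words, the paper uses the symmetric monoidality of $\Mtop'$ to identify the monoidal structures, whereas you try to identify them first and then deduce symmetric monoidality of $\Mtop'$---but your identification step is exactly where the argument breaks.

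A secondary issue: even granting $\otimes\simeq\otimes'$, your step one only shows $\Mtop\simeq\Mtop'$ as plain functors. To conclude they agree as symmetric monoidal functors you still need Proposition~\ref{Prop:dayconvolution}, comparing their restrictions to $\Sp^\kgpd$ \emph{as symmetric monoidal functors}; this is immediate once the paper's argument is in place, but you have not stated it.
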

\begin{proof} By Proposition \ref{Prop:sliceofpshsm}, the symmetric monoidal structure $\spaces^\otimes_{/\Sp^\kgpd}$ from Proposition \ref{Prop:smstconslice} is equivalently the Day convolution structure on $\Pre(\ast_{/\Sp^\kgpd}) \simeq \Pre(\Sp^\kgpd)$, where $\Sp^\kgpd$ is viewed as a symmetric monoidal subcategory of $\Sp$ with the usual smash product. On the other hand, the underlying category of $\spaces^{\otimes'}_{/\Sp^\kgpd}$ is the same as that of $\spaces^{\otimes}_{/\Sp^\kgpd}$, namely $\Pre(\Sp^\kgpd)$. Then, by Proposition \ref{Prop:dayconvolutionpsh}, to show that the symmetric monoidal structure is equivalent to the Day convolution one, it suffices to show that:
\begin{enumerate}
    \item the tensor product $\otimes'$ preserves colimits in both variables,
    \item the Yoneda embedding $\Sp^\kgpd \to \Pre(\Sp^\kgpd)$ admits a symmetric monoidal structure with respect to the smash product on the left-hand side and the structure $\spaces^{\otimes'}_{/\Sp^\kgpd}$ on the right-hand side.
\end{enumerate}

Part $(1)$ was proved in Proposition \ref{Prop:otimes'smstc}. To show $(2)$, consider the commutative diagram of fully faithful embeddings
% https://q.uiver.app/#q=WzAsNCxbMCwxLCJcXFNwXlxcd2dwZCBcXHNpbWVxIFxcYXN0X3svXFxTcF5cXHdncGR9Il0sWzEsMCwiXFxzUHJlKFxcU21fXFxSKV97L1xcbWF0aGNhbHtSfV5cXHdncGR9IFxcc2ltZXEgXFxQcmVcXGJpZygoXFxTbV9cXFIpX3svXFxtYXRoY2Fse1J9Xlxcd2dwZH1cXGJpZykiXSxbMCwwLCJcXFByZShcXGFzdF97L1xcU3BeXFx3Z3BkfSkgXFxzaW1lcSBcXHNwYWNlc197L1xcU3BeXFx3Z3BkfSJdLFsxLDEsIihcXFNtX1xcUilfey9cXG1hdGhjYWx7Un1eXFx3Z3BkfSJdLFswLDIsInlfMSIsMCx7InN0eWxlIjp7InRhaWwiOnsibmFtZSI6Imhvb2siLCJzaWRlIjoidG9wIn19fV0sWzIsMSwiXFxpb3RhIiwwLHsic3R5bGUiOnsidGFpbCI6eyJuYW1lIjoiaG9vayIsInNpZGUiOiJ0b3AifX19XSxbMywxLCJ5XzIiLDAseyJzdHlsZSI6eyJ0YWlsIjp7Im5hbWUiOiJob29rIiwic2lkZSI6InRvcCJ9fX1dLFswLDMsIiIsMSx7InN0eWxlIjp7InRhaWwiOnsibmFtZSI6Imhvb2siLCJzaWRlIjoidG9wIn0sImJvZHkiOnsibmFtZSI6ImRhc2hlZCJ9fX1dXQ==
\[\begin{tikzcd}
	{\Pre(\ast_{/\Sp^\kgpd}) \simeq \spaces_{/\Sp^\kgpd}} & {\Pre(\Sm_\R)_{/\mathcal{R}^\kgpd} \simeq \Pre\big((\Sm_\R)_{/\mathcal{R}^\kgpd}\big)} \\
	{\Sp^\kgpd \simeq \ast_{/\Sp^\kgpd}} & {(\Sm_\R)_{/\mathcal{R}^\kgpd}.}
	\arrow["\iota", hook, from=1-1, to=1-2]
	\arrow["{y_1}", hook, from=2-1, to=1-1]
	\arrow[dashed, hook, from=2-1, to=2-2]
	\arrow["{y_2}", hook, from=2-2, to=1-2]
\end{tikzcd}\]

The dashed arrow exists because the composition $\iota\circ y_1$ factors through the subcategory of representables (its image only contains arrows with source $y(\R) \in \Pre(\Sm_\R)$). We claim that $y_2$ is symmetric monoidal with respect to the monoidal structures introduced in Theorem \ref{Prop:motiviccolim}. Indeed, the coCartesian fibration $\Pre(\Sm_\R)_{/\mathcal{R}^\kgpd}^{\otimes'} \to \Finstar$ representing the symmetric monoidal structure is classified by a composition 
$$\Finstar \xrightarrow{\ \ H\ \ } \Cat \xrightarrow{\, \Pre(-)\, }\PrL \xrightarrow{\ \ U\ \ } \widehat{\mathsf{Cat}}_\infty$$
where $H$ classifies the coCartesian fibration $(\Sm_\R)_{/\mathcal{R}^\kgpd}^{\otimes'} \longrightarrow \Finstar$ encoding the symmetric monoidal structure constructed in the proof of Theorem \ref{Prop:motiviccolim}, and $U$ is the forgetful functor. Let $\eta$ be the unit of the adjunction $\Pre(-)\dashv U$. It induces a natural transformation
% https://q.uiver.app/#q=WzAsNCxbMCwwLCJcXEZpbnN0YXIiXSxbMSwwLCJcXENhdCJdLFszLDAsIlxcQ2F0Il0sWzIsMSwiXFxQckwiXSxbMSwyLCIiLDAseyJsZXZlbCI6Miwic3R5bGUiOnsiaGVhZCI6eyJuYW1lIjoibm9uZSJ9fX1dLFswLDEsIkgiXSxbMywyLCJVIiwyXSxbMSwzLCJcXFByZSgtKSIsMl0sWzEsMiwiVVxcY2lyY1xcUHJlKC0pIiwyLHsiY3VydmUiOjN9XSxbNCw4LCJcXGV0YSIsMCx7InNob3J0ZW4iOnsic290.7cmNlIjoyMCwidGFyZ2V0IjoyMH19XV0=
\[\begin{tikzcd}[column sep = 4em, row sep = 4em]
	\Finstar && \Cat && \widehat{\mathsf{Cat}}_\infty \\
	&&& \PrL
	\arrow["H", from=1-1, to=1-3]
	\arrow[""{name=0, anchor=center, inner sep=0}, equals, from=1-3, to=1-5]
	\arrow[""{name=1, anchor=center, inner sep=0}, "{U\circ\Pre(-)}"', curve={height=23pt}, from=1-3, to=1-5]
	\arrow["{\Pre(-)}"', from=1-3, to=2-4]
	\arrow["U"', from=2-4, to=1-5]
	\arrow["\eta", shorten <=2pt, shorten >=2pt, Rightarrow, from=0, to=1]
\end{tikzcd}\]
between the functors $\Finstar \longrightarrow \widehat{\mathsf{Cat}}_\infty$ classifying $(\Sm_\R)_{/\mathcal{R}^\kgpd}^{\otimes'}$ and $\Pre(\Sm_\R)_{/\mathcal{R}^\kgpd}^{\otimes'}$, respectively. This induces a symmetric monoidal functor between these categories, whose underlying functor is by construction the Yoneda embedding $y_2$, as claimed. By Lemma \ref{Prop:otimes'smstc}, the embedding $\iota$ is symmetric monoidal too. Then also $y_1$ is symmetric monoidal, with respect to the structure $\Sp^{\kgpd,\otimes'}$ (induced by that of $(\Sm_\R)_{/\calR^\kgpd}$). Then, the restriction of $\Mtop'$ to $\Sp^\kgpd$ is a symmetric monoidal functor $\Sp^{\kappa,\simeq,\otimes'} \longrightarrow \Sp^\otimes$, where the symmetric monoidal structure on $\Sp$ is the usual one. But we also know that the underlying functor is the inclusion. Therefore, $\Sp^{\kappa,\simeq,\otimes'}$ is endowed with the symmetric monoidal structure induced from that of $\Sp$. This proves $(2)$. 

Thus, the categories $\spaces^{\otimes'}_{/\Sp^\kgpd}$ and $\spaces^{\otimes'}_{/\Sp^\kgpd}$ are both equivalent as symmetric monoidal categories to $\Pre(\Sp^\kgpd)$ with the Day convolution, where $\Sp^\kgpd$ is viewed as a symmetric monoidal subcategory of $\Sp$ with the usual smash product. By Proposition \ref{Prop:dayconvolution}, to show that $\Mtop$ and $\Mtop'$ agree as symmetric monoidal functors, it then suffices to show that their restrictions to $\Sp^\kgpd$ agree as symmetric monoidal functors. In both cases, this restriction is the embedding of the symmetric monoidal subcategory $\Sp^\kgpd \xhookrightarrow{} \Sp$ (see Definition \ref{Def:Thomspectrum} and Proposition \ref{Prop:Masacolimit}). This concludes the proof.
\end{proof}
\vspace{0.2cm}

\subsubsection{The realizations of the algebraic cobordism spectra \texorpdfstring{$\MGL$}{MGL}, \texorpdfstring{$\MSL$}{MSL}, and \texorpdfstring{$\MSp$}{MSp}}\label{Subsubsect:MSL}\hfill\vspace{0.2cm}

\begin{thm}\label{Prop:MGLMSLMSp} The equivalence $\rR \circ M \simeq \Mtop \circ \rR \circ \alpha_\sharp$ from Diagram (\ref{Diag:Mtop'}) and Theorem \ref{Prop:Thomagreeassmfunctors} induces equivalences of $\Einfty$-rings
    $$\rR\mathsf{MGL} \simeq \mathsf{MO}, \qquad \rR\mathsf{MSL} \simeq \mathsf{MSO}, \qquad \rR\mathsf{MSp}\simeq \mathsf{MU}.$$
\end{thm}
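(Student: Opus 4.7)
The strategy is to specialize the natural symmetric monoidal equivalence $\rR \circ M \simeq \Mtop \circ \rR \circ \alpha_\sharp$ (Diagram (\ref{Diag:Mtop'}) and Theorem \ref{Prop:Thomagreeassmfunctors}) to the $\Einfty$-morphisms $j: K^\circ \to \SH^\wgpd$, $KSL^\circ \to \SH^\wgpd$, and $KSp^\circ \to \SH^\wgpd$ of Example \ref{Ex:MGLMSLMSp} that define $\MGL$, $\MSL$, and $\MSp$ as motivic Thom spectra, and then to identify the outputs of $\rR \circ \alpha_\sharp$ with the $\Einfty$-morphisms $BO \to \Sp^\kgpd$, $BSO \to \Sp^\kgpd$, and $BU \to \Sp^\kgpd$ of Example \ref{Ex:MSO} that produce $\MO$, $\MSO$, and $\MU$ via $\Mtop$. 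Because all the functors involved are symmetric monoidal and the inputs are commutative algebra objects in the respective slices (Proposition \ref{Prop:CAlginslice}), the resulting equivalences at the level of spectra will automatically be equivalences of $\Einfty$-rings.

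First, I would compute the real realizations of the sources. Using $K^\circ \simeq \Lmot \colim_n BGL_n$, $KSL^\circ \simeq \Lmot BSL$, and $KSp^\circ \simeq \Lmot BSp$ from Example \ref{Ex:MGLMSLMSp}, and the fact that $\rR$ preserves colimits and inverts motivic equivalences (so in particular commutes with $\Lmot$), one obtains $\rR(K^\circ) \simeq \colim_n B(GL_n(\R))$, $\rR(KSL^\circ) \simeq \colim_n B(SL_n(\R))$, and $\rR(KSp^\circ) \simeq \colim_n B(Sp_{2n}(\R))$. The classical deformation retractions $GL_n(\R) \simeq O(n)$, $SL_n(\R) \simeq SO(n)$, and $Sp_{2n}(\R)\simeq U(n)$ onto maximal compact subgroups then give $\rR(K^\circ) \simeq BO$, $\rR(KSL^\circ) \simeq BSO$, and $\rR(KSp^\circ) \simeq BU$; moreover, under these identifications, the forgetful maps $KSL^\circ \to K^\circ$ and $KSp^\circ \to K^\circ$ realize to the standard maps $BSO \to BO$ and $BU \to BO$ (the latter induced by $U(n) \hookrightarrow O(2n)$).

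The hard part, and the main obstacle, is to verify that the induced composite $\rR(K^\circ) \simeq BO \to \rR(\calR^\kgpd) \simeq \Sp^\kgpd$ agrees, as a morphism of $\Einfty$-spaces, with the topological $j$-homomorphism of Example \ref{Ex:MSO}. Both maps are constructed from symmetric monoidal functors out of the groupoid of (algebraic or topological) real vector bundles sending a bundle to its Thom object, followed by group completion and restriction to the rank-$0$ summand, so it suffices to produce a symmetric monoidal natural equivalence at the pre-group-completed level. Unwinding Construction \ref{Constr:alpha} at $S=\R$, the functor $\alpha_\R$ sends $\Sigma^\infty_+ X$ to $\Sigma^\infty_+ \rR X$ for $X \in \Sm_\R$, and consequently sends the motivic Thom object $\Sigma^\infty(\xi/(\xi\setminus 0))$ of an algebraic vector bundle $\xi$ to $\Sigma^\infty(\rR\xi/(\rR\xi\setminus 0))$. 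A canonical homotopy equivalence between this quotient and the one-point compactification $\rR(\xi)^+$ of the underlying topological real bundle identifies the resulting object with the Thom spectrum used on the topological side; this identification is symmetric monoidal in $\xi$ (the smash product of Thom objects corresponds to direct sum of the underlying bundles, compatibly on both sides) and is verified on the trivial bundles $\R^n$, which land on $\Sigma^\infty S^n$ in either construction.

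Combining the three ingredients yields
$$\rR(\MGL) \simeq \Mtop(BO \to \Sp^\kgpd) \simeq \MO, \quad \rR(\MSL) \simeq \Mtop(BSO \to \Sp^\kgpd) \simeq \MSO, \quad \rR(\MSp) \simeq \Mtop(BU \to \Sp^\kgpd) \simeq \MU$$
as $\Einfty$-rings, as desired.
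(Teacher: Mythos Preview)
Your proposal is correct and follows essentially the same approach as the paper's proof: both specialize the equivalence $\rR \circ M \simeq \Mtop \circ \rR \circ \alpha_\sharp$ to the motivic $j$-homomorphisms, identify $\rR(K^\circ)\simeq BO$, $\rR(KSL^\circ)\simeq BSO$, $\rR(KSp^\circ)\simeq BU$, and then argue that the realized motivic $j$-homomorphism agrees with the topological one as an $\Einfty$-map by matching the vector-bundle-to-Thom-object constructions step by step through group completion. Your write-up is somewhat more explicit about the Thom-object comparison and the maximal compact retractions, but the structure and key ideas are the same.
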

\begin{proof}
    We saw in Example \ref{Ex:MGLMSLMSp} that $\rR\mathsf{MGL} = \rR(M(K^\circ \xrightarrow{j} \SH^\wgpd))$. By Diagram (\ref{Diag:Mtop'}) and Theorem \ref{Prop:Thomagreeassmfunctors}, this is equivalent as an $\Einfty$-ring to $\Mtop(\rR(K^\circ \to \SH^\wgpd \to \calR^\kgpd))$. Recall from Example \ref{Ex:MSO} that $\mathsf{MO} = \Mtop(BO \xrightarrow{j} \Sp^\kgpd)$. So what we have to show is that $\rR(K^\circ \to \SH^\wgpd \to \calR^\kgpd) \simeq (BO \xrightarrow{j} \Sp^\kgpd)$ as commutative algebras in $\spaces^\otimes_{/\Sp^\kgpd}$, or equivalently, as $\Einfty$-maps with target $\Sp^\kgpd$ (by Proposition \ref{Prop:algebrasintheslice}). Recall that $K^\circ \to \SH^\wgpd \to \calR^\kgpd$ is a morphism of spherical presheaves of $\Einfty$-spaces, viewed as a commutative algebra in $\Pre_\Sigma(\Sm_\R)^{\otimes'}_{/\calR^\simeq}$ using Proposition \ref{Prop:CAlginslice}. This is how its real realization is viewed as a map of $\Einfty$-spaces. In other terms, we have to show that the real realization of the motivic $j$-homomorphism, followed by $\alpha: \SH^\simeq \to \calR^\simeq$, is the topological $j$-homomorphism as a map of $\Einfty$-spaces.
    
    This follows from the constructions of the topological and motivic $j$-homomorphisms as symmetric monoidal functors in Examples \ref{Ex:MSO} and \ref{Ex:MGLMSLMSp}. Indeed, the various steps of the construction, starting from ($\infty$-)groupoids of vector bundles and then extending to the group completion, correspond to each other under real realization. To see this, recall that $\rR(BGL_n) \simeq BO_n$, and that $K^\circ \simeq \Lmot BGL$ by Example \ref{Ex:MGLMSLMSp}, so that $\rR(K^\circ) \simeq BO$ by similar computations as in Lemma \ref{Prop:rRKGL}.
    
    The proof follows in the exact same way for $\mathsf{MSL}$ and $\mathsf{MSp}$, using Example \ref{Ex:MGLMSLMSp} and the fact that $\rR(KSL^\circ) = BSO$ and $\rR(KSp^\circ) = BU$ (with the same proof as in the case of $K^\circ$).
    \end{proof}

\vspace{1em}
\section{The real realizations of some classical motivic spectra}\label{Sect:examplesrealization}

In this section, we compute the real realizations of several classical motivic spectra, either at the level of spectra, $\E{1}$- or $\Einfty$-rings : algebraic K-theory $\KGL$ and its very effective cover $\kgl$, motivic cohomology $\HZ$ and $\HZmod$, their variant $\HZtilde$, and Hermitian K-theory $\KO$. All of them will appear in our computations with $\ko$ in Section \ref{Subsect:rRKO}. The complex realizations of these motivic spectra behave ``as expected'': for example, $\rC(\KGL) \simeq \KU$ and $\rC(\kgl) \simeq \KU_{\geq 0}$ \cite[Lem.\ 2.3]{ARO}, $\rC(\HZ) \simeq \HZ$ \cite[in particular Thm.\ 5.5]{Levine-comparison}, and $\rC(\KO) \simeq \KOtop$ \cite[Lem.\ 2.13]{ARO}. As we will see, real realization holds more surprises for us. In the whole section, let $k \xhookrightarrow{} \R$ be a fixed subfield (in order to define a real realization functor on $\SH(k)$).

\vspace{0.2cm}
\subsection{The real realizations of \texorpdfstring{$\HZmod$}{HZ/2} and \texorpdfstring{$\HZ$}{HZ}}\label{Subsect:rRHZHZmod}\hfill\vspace{0.2cm}

The motivic spectra $\HZ$ and $\HZmod$ appear as very effective slices of $\ko$, whose real realizations we will need in Section \ref{Sect:rRko}.

\begin{Def}\label{Def:HZ} The \emph{motivic Eilenberg-Mac Lane spectrum} $\HZ\in\SH(k)$, is equivalently defined as:
\begin{enumerate}[label = (\roman*)]
    \item The motivic spectrum representing the motivic cohomology theory in $\SHk$, i.e., for all $X\in\Sm_k$ and $p,q\in\Z$, we have $H^{p,q}(X,\Z) \cong [\Sigma^{\infty}_+X,\Sigma^{p,q}\HZ]$.
    \item The zeroth effective slice of the sphere spectrum in $\SHk$.
    \item The effective cover of \emph{Milnor K-theory}: $\HZ = f_0\underline{K}^M_\ast$ \cite[\S 6.3]{Morel-A1}.
\end{enumerate}
Definitions $(i)$ and $(ii)$ are equivalent by \cite{Levine-coniveau}, and definition $(iii)$ is equivalent to $(i)$ and $(ii)$ by \cite[Lem.\ 12]{Tom-genslices}. The \emph{motivic Eilenberg-Mac Lane spectrum} $\HZmod$ is defined as the cofiber of the multiplication by 2 map on $\HZ$ (it represents mod 2 motivic cohomology and is equivalent to $f_0(\underline{K}^M_\ast/2)$).
\end{Def}

\begin{Rmk}\label{Rmk:HZHZmodEinfty} Definition $(iii)$ allows us to define an $\Einfty$-structure on $\HZ$ and $\HZmod$. Indeed, since the functor $f_0 : \SH(k)^\heartsuit \to \SH(k)^\eff$ is lax symmetric monoidal by Proposition \ref{Prop:smheart}, we only have to show $\underline{K}^M_\ast, \underline{K}^M_\ast/2 \in \mathsf{CAlg}(\SHk^\heartsuit)$. By \cite[Rmk.\ 1.2.1.12]{Lurie-HA}, the heart of a $t$-structure is a 1-category, and therefore its commutative algebras are the commutative algebras in the 1-categorical sense. By \cite[after Thm.\ 6.4.7]{Morel-A1}, we have $\underline{K}^{MW}_\ast \simeq \homsh{0}{\ast}{\calS}\in\SHk^\heartsuit$. Since the truncation functor $\SHk_{\geq 0} \to \SHk^\heartsuit$ is symmetric monoidal \cite[Lem.\ A.12]{AN}, the image $\homsh{0}{\ast}{\calS}$ of $\calS \in \CAlg(\SHk_{\geq 0})$ inherits the structure of a commutative algebra object. Because $\underline{K}^{M}_\ast$ and $\underline{K}^{M}_\ast/2$ are quotients of $\underline{K}^{MW}_\ast$, they also define commutative algebras.
\end{Rmk}

\begin{Prop}\label{Prop:HZHZmodveryeff}
    The motivic spectra $\HZ$, $\HZmod$, and $\HZtilde := f_0\underline{K}_\ast^{MW}$ are very effective. 
\end{Prop}
\begin{proof} Since the functor $r_0 : \SHk \to \SHk^\eff$ is t-exact (Proposition \ref{Prop:smheart}), and $\SHk^\eff_{\geq 0} = \SHk^\veff$ by Proposition \ref{Prop:propertiesofSHkeff}$(i)$, it suffices to show that $\underline{K}_\ast^{MW}$, $\underline{K}^{M}_\ast$, and $\underline{K}^{M}_\ast/2$ belong to the connective part in the homotopy t-structure on $\SHk$. Actually, they even belong to the heart: by \cite[Lem.\ 6 and below]{Tom-genslices}, they are effective homotopy modules (i.e., they belong to the image of $\iota_0^\heartsuit: \SH(k)^{\eff,\heartsuit} \to \SH(k)^\heartsuit$).
\end{proof}

When trying to compute real realizations, the case of $\HZmod$ is easier to deal with than that of $\HZ$, because the global sections of the homotopy sheaves of $\HZmod$ are known.

\begin{Def} Let $\tau \in H^{0,1}(k,\Zmod) \cong \mu_2(k) \cong \Zmod$ be a generator ($\mu_2$ denotes the square roots of unity), i.e., corresponding to $-1\in\mu_2(k)$.
\end{Def}

\begin{thm}[\cite{Voevodsky-Milnor}]\label{Prop:Milnorsconjecture} There are isomorphism of graded rings $$\underline{K}^M_\ast(k)/2 \cong H^\ast_\text{ét} (k, \Zmod) \cong \homsh{0}{\ast}{\HZmod}(k).$$ 
Moreover, $\underline{\pi}_{\ast,\ast}(\HZmod)(\R) \simeq \Zmod[\tau,\rho]$, where $\tau$ has bidegree $(0,-1)$ and $\rho$ has bidegree $(-1,-1)$.
\end{thm}

\begin{proof} Milnor's conjecture, proven by Voevodsky \cite{Voevodsky-Milnor}, states in particular that for a field $k'$ of characteristic not 2, we have $\underline{K}^M_\ast(k')/2 \cong H^\ast_\text{ét} (k', \Zmod)$. In words, the Milnor K-theory $\mathsf{mod}\, 2$ of $k'$ is equal to the étale cohomology $\mathsf{mod}\, 2$ of $k'$, or equivalently the group cohomology of the absolute Galois group of $k'$ with coefficients in $\Zmod$. By Voevodsky's proof we also have $\underline{K}^M_n(k')/2 \cong H^{n,n}(k',\Zmod)$ (where the right hand-side is motivic cohomology mod 2, so it is $\underline{\pi}_{-n,-n}(\HZmod)(k)$ if $k=k'$), and that cup product with $\tau\in H^{0,1}(k,\Zmod)$, induces isomorphisms $H^{p,q}(k',\Zmod) \cong H^{p,q+1}(k',\Zmod)$ when $0\leq p \leq q$ (see for example \cite[Lem.\ 6.1 and the end of the proof of Lemma 6.9]{RO}, or \cite[(7.1)]{KRO}). 
	
In particular, if $k=k'$, we get $\underline{K}^M_n(k)/2 \cong [\Spec(k),\Sigma^{n,n}\HZmod] \cong \underline{\pi}_{-n,-n}(\HZmod)(k)$. If $k =\R$, by the above $\underline{K}^M_\ast(\R)/2$ is the group cohomology $\mathsf{mod}\, 2$ of $\Zmod$ (for the trivial action). By \cite[\S III.1, Ex.\ 2]{Brown}, it is thus equal to $\Zmod$ in every non-negative degree (and trivial otherwise), and as a ring it is polynomial on one generator, namely $\rho$. Finally, the last claim follows from the statement about multiplication by $\tau \in \underline{\pi}_{0,-1}(\HZmod)(\R)$.
\end{proof}

\begin{Prop}\label{Prop:rRHZHZmod} There is an equivalence of $\E{1}$-maps
    $$\rR(\HZ \to \HZmod) \simeq (\HZmod[t^2] \to \HZmod[t])$$ 
    where $\HZ \to \HZmod$ is the projection, and $\HZmod[t^2] \to \HZmod[t]$ is the map of free $\E{1}$-$\HZmod$-algebras (see Definition \ref{Def:freeE1HA}) inducing the inclusion $\Zmod[t^2]\subseteq \Zmod[t]$ in homotopy.
\end{Prop}
\begin{proof}
    We proceed in three steps. We first identify the homotopy groups of the real realization of $\HZmod$ and their structure as a graded ring. Then, using this and a theorem of Hopkins and Mahowald, we identify $\rR(\HZmod)$ as a free $\E{1}$-$\HZmod$-algebra. Finally, we deduce the homotopy ring and the $\E{1}$-structure of $\rR(\HZ)$ from the cofiber sequence $\HZ \xrightarrow{\cdot 2} \HZ \to \HZmod$.\\

    \textbf{Step 1:} \emph{Computation of the homotopy ring of $\rR(\HZmod)$.} By Lemma \ref{Prop:homotopyofrR}, $\pi_k(\rR\HZmod)$ is obtained as the colimit $\colim_n\ \homsh{k}{n}{\HZmod}(\R) = \colim_n\ \pi_{k-n,-n}(\HZmod)(\R)$. By Theorem \ref{Prop:Milnorsconjecture}, for $k\geq 0$ and $n\geq k$ this can be rewritten as
    $$\colim_n H^{n-k,n}(\R,\Zmod) \cong \colim_n \tau^k H^{n-k,n-k}(\R,\Zmod) \cong \colim_n \tau^k\underline{K}^M_{n-k}(\R) \cong \colim_n \Zmod\{\tau^k\rho^{n-k}\}.$$
    
    This colimit is $\Zmod$ with generator $(\tau\rho^{-1})^k \in \homsh{k}{0}{\HZmod}(\R)$. It follows that $\pi_\ast(\rR\HZmod) \cong \Zmod[t]$ is a polynomial ring on a single generator $t:=\rR(\tau\rho^{-1})$ in degree 1.\\
    
     \textbf{Step 2:} \emph{Structure of $\rR(\HZmod)$ as an $\E{1}$-ring.} We will show that there exists a map of $\E{2}$-rings $\HZmod \to \rR(\HZmod)$. Here, on the left-hand side, $\HZmod$ is the topological Eilenberg-Mac Lane spectrum, whereas on the right-hand side it is the motivic Eilenberg-Mac Lane spectrum from Definition \ref{Def:HZ}. By Proposition \ref{Prop:freeHA}$(iii)$, we obtain a map $\HZmod[t] \to \rR(\HZmod)$ sending $t$ on the left hand-side to the element $t\in \pi_1(\rR(\HZmod))$ we had above. Then, since by Proposition \ref{Prop:freeHA}$(ii)$, $\pi_\ast(\HZmod[t]) \cong \Zmod[t]$ as rings, and $\pi_\ast(\HZmod[t]) \to \pi_\ast(\rR(\HZ))$ is a ring homomorphism, we deduce that the latter is an isomorphism, so $\HZmod[t] \to \rR(\HZmod)$ is an equivalence of $\E{1}$-rings. 
    
    To produce the aforementioned $\E{2}$-map $\HZmod \to \rR(\HZmod)$, we appeal to the so-called Hopkins--Mahowald theorem (\cite{Mahowald}, reformulated in \cite[Thm.\ 4.18]{MNN}): the free $\E{2}$-ring with $2=0$ is $\HZmod$, i.e., there is a pushout diagram of $\E{2}$-rings
    % https://q.uiver.app/#q=WzAsNCxbMCwwLCJGX3tcXEV7Mn19KFxcU3BoXjApIl0sWzEsMCwiXFxTcGgiXSxbMCwxLCJcXFNwaCJdLFsxLDEsIlxcSFptb2QiXSxbMCwyLCIwIiwyXSxbMCwxLCJcXGNkb3QgMiJdLFsyLDNdLFsxLDNdLFszLDAsIiIsMSx7InN0eWxlIjp7Im5hbWUiOiJjb3JuZXIifX1dXQ==
\[\begin{tikzcd}
	{F_{\E{2}}(\Sph^0)} & \Sph \\
	\Sph & \HZmod .
	\arrow["{2}", from=1-1, to=1-2]
	\arrow["0"', from=1-1, to=2-1]
	\arrow[from=1-2, to=2-2]
	\arrow[from=2-1, to=2-2]
	\arrow["\lrcorner"{anchor=center, pos=0.125, rotate=180}, draw=none, from=2-2, to=1-1]
\end{tikzcd}\]

Since $2 = 0 \in \pi_\ast(\rR\HZmod)$ by the previous computation, we obtain an $\E{2}$-map $\HZmod \to \rR(\HZmod)$.\\

    \textbf{Step 3:} \emph{Realization of $\HZ$, and of the quotient map to $\HZmod$.} Consider once more the cofiber sequence $\HZ \xrightarrow{\cdot 2} \HZ \to \HZmod$. We claim that $\rR(\cdot 2) = \cdot 2 = 0$ on $\rR\HZ$. By definition, we have $\HZ = f_0\underline{K}^M_\ast$. However, in Milnor K-theory, $2\rho = 0$. Indeed, $\rho \in [\calS^0,\Gm] \cong \homsh{0}{1}{\calS}(k) \cong K^{MW}_1(k)$ corresponds to $[-1]$ in the Milnor-Witt K-theory of $k$ \cite[Thm.\ 6.3.3 and 6.4.1]{Morel-A1}. But then in Milnor K-theory, we obtain $2\rho = [-1] + [-1] = [(-1)^2] = [1] = 0$. Thus, after inverting $\rho$, the multiplication by 2 map becomes the zero map. We obtain a long exact sequence (here $n\geq 0$)
% https://q.uiver.app/#q=WzAsOCxbMywxLCJcXHBpX24oXFxyUlxcSFptb2QpIFxcY29uZyBcXFptb2QiXSxbNCwxLCJcXGRvdHMiXSxbMiwxLCJcXHBpX24oXFxyUlxcSFopIl0sWzEsMSwiXFxwaV9uKFxcclJcXEhaKSJdLFswLDAsIlxcZG90cyJdLFsxLDAsIlxccGlfe24rMX0oXFxyUlxcSFopIl0sWzIsMCwiXFxwaV97bisxfShcXHJSXFxIWikiXSxbMywwLCJcXHBpX3tuKzF9KFxcclJcXEhabW9kKSBcXGNvbmcgXFxabW9kIl0sWzAsMV0sWzMsMiwiXFxjZG90IDIgPSAwIiwyXSxbMiwwLCJxX24iLDIseyJzdHlsZSI6eyJ0YWlsIjp7Im5hbWUiOiJob29rIiwic2lkZSI6InRvcCJ9fX1dLFs0LDVdLFs1LDYsIlxcY2RvdCAyID0gMCJdLFs2LDcsInFfe24rMX0iLDAseyJzdHlsZSI6eyJ0YWlsIjp7Im5hbWUiOiJob29rIiwic2lkZSI6InRvcCJ9fX1dLFs3LDMsIlxccGFydGlhbF9uIiwxLHsic3R5bGUiOnsiaGVhZCI6eyJuYW1lIjoiZXBpIn19fV1d
\[\begin{tikzcd}
	\cdots & {\pi_{n+1}(\rR\HZ)} & {\pi_{n+1}(\rR\HZ)} & {\pi_{n+1}(\rR\HZmod) \cong \Zmod} \\
	& {\pi_n(\rR\HZ)} & {\pi_n(\rR\HZ)} & {\pi_n(\rR\HZmod) \cong \Zmod} & \cdots .
	\arrow[from=1-1, to=1-2]
	\arrow["{\cdot 2 = 0}", from=1-2, to=1-3]
	\arrow["{q_{n+1}}", hook, from=1-3, to=1-4]
	\arrow["{\partial_n}"{description}, two heads, from=1-4, to=2-2]
	\arrow["{\cdot 2 = 0}"', from=2-2, to=2-3]
	\arrow["{q_n}"', hook, from=2-3, to=2-4]
	\arrow[from=2-4, to=2-5]
\end{tikzcd}\]
By Lemma \ref{Prop:rRconnective}, all spectra involved are connective (since $\HZ$ and $\HZmod$ are very effective by Proposition \ref{Prop:HZHZmodveryeff}). Therefore, $q_0$ is also surjective; it is an isomorphism. Then $\pi_0(\rR \HZ) \cong \Zmod$ and $\partial_0$ is a surjection $\Zmod \to \Zmod$, namely an isomorphism. Thus, the injective map $q_1$ has trivial image, and so $\pi_1(\rR \HZ) = 0$. We are back to the original situation: the long exact sequence repeats in exactly the same way. Therefore, $\pi_n(\rR \HZ)$ is $\Zmod$ if $n$ is even and non-negative and $0$ else. Let $u$ be a generator of $\pi_2(\rR \HZ)\cong \Zmod$. Since $q_2$ is an isomorphism, the real realization of the quotient map sends $u$ to the generator $t^2$ of $\pi_2(\rR \HZmod)$. Since this quotient map induces after real realization a ring homomorphism on homotopy, and an isomorphism in even degrees, we obtain $q_{2n}(u^n)=t^{2n}$, so $u^n$ generates $\pi_{2n}(\rR \HZ)$. This proves that $\pi_\ast(\rR\HZ) \cong \Zmod[t^2]$ as rings. The identification with $\HZmod[t^2]$ as $\E{1}$-rings follows as in Step 2 from the Hopkins--Mahowald theorem.
\end{proof}
\vspace{0.2cm}
\subsection{The real realizations of \texorpdfstring{$\KGL$}{KGL} and \texorpdfstring{$\kgl$}{kgl}}\label{Subsect:rRKGLkgl}\hfill\vspace{0.2cm}

The algebraic K-theory spectrum $\KGL$ (as defined in \cite[\S 3.2]{BH} for example) appears in our computations with $\KO$ because the two of them are related by the Wood cofiber sequence $\Sigma^{1,1}\KO \xrightarrow{\eta}\KO \xrightarrow{c} \KGL$ where $\eta: \Gm \to \unit$ is the motivic Hopf map and $c$ is the forgetful map (Hermitian K-theory is defined using bundles with a symmetric form, and algebraic K-theory all algebraic bundles, the forgetful map ignores the form) \cite[Thm.\ 3.4]{RO}.

\begin{Lemma}[\protect{\cite[Lem.\ 3.9]{BH}}]\label{Prop:rRKGL} We have $\rR\KGL\simeq 0$.
\end{Lemma}
\begin{proof}
 Let us show that $\pi_n(\rR \KGL) = 0$ for all $n\in \Z$. We want to use Lemma \ref{Prop:levelwiserR} about computing real realization of spectra levelwise. Since $\KGL = (K,K,\dots)$ where $K$ is the motivic localization of $\Z\times BGL$ \cite{MV}, we first compute
 \begin{align*}
     \rR(K) &= \rR(\mathsf{L_{mot}}(\Z\times BGL)) \simeq \rR\left(\bigsqcup_{\Z} BGL\right) \simeq \bigsqcup_{\Z} \rR(BGL) \underset{(\star)}{\simeq} \Z \times B\left(\rR\left(\bigcup_{n\geq 1} GL_n\right)\right) \\
     &\simeq \Z \times B\left(\bigcup_{n\geq 1} GL_n(\R)\right) \underset{(\star\star)}{\simeq} \Z \times B\left(\bigcup_{n\geq 1} O_n\right) \simeq \Z \times BO.
 \end{align*}
Here $(\star\star)$ follows from the fact that $GL_n(\R)$ deformation retracts onto the orthogonal group $O_n(\R)$ by Gram-Schmidt orthogonalization, and $(\star)$ from the fact that $\rR$ commutes with colimits and finite products, in particular with geometric realizations and the bar construction. Then, by Lemma \ref{Prop:levelwiserR}, $\pi_n(\rR\KGL) \cong \colim_m\, \pi_{n+m}(\Z\times BO)$ for all $n\in\Z$. The homotopy groups of $\Z\times BO$ are given by $\Z,\Zmod,\Zmod,0,\Z,0,0,0,\dots$ repeating with period 8 \cite{Bott}. In particular, there is a cofinal subsequence of zeroes appearing in the colimit, so all homotopy groups of $\rR\KGL$ vanish.
\end{proof}

\begin{Prop}\label{Prop:rRkgl} There is an equivalence of $\E{1}$-maps
    $$\rR(\kgl \to \HZ) \simeq (\HZmod[t^4] \to \HZmod[t^2])$$
where $\kgl \to \HZ$ is the cofiber of the map $\beta_\kgl: \mathbb{P}^1\Smash \kgl \to \kgl$ induced by the periodicity generator $\beta_\KGL$ for $\KGL$, and $\HZmod[t^4] \to \HZmod[t^2]$ is the map of free $\E{1}$-$\HZmod$-algebras (see Definition \ref{Def:freeE1HA}) inducing the inclusion $\Zmod[t^4]\subseteq \Zmod[t^2]$ in homotopy.
\end{Prop}
\begin{proof}
We consider the cofiber sequence $T\Smash \kgl \to \kgl \to \HZ$ of the statement \cite[Prop.\ 2.7]{ARO}. By Proposition \ref{Prop:rRHZHZmod}, $\rR(\HZ) \simeq \HZmod[t^2]$. We defer to Lemma \ref{Prop:rRbetakgl} below the proof that $\rR(\beta_\kgl)=0$. Using this, the long exact sequence induced in homotopy after real realization splits as short exact sequences
% https://q.uiver.app/#q=WzAsNSxbMywwLCJcXHBpX3tuLTJ9KFxcclJcXGtnbCkiXSxbMSwwLCJcXHBpX3tufShcXHJSXFxrZ2wpIl0sWzIsMCwiXFxwaV9uKFxcclJcXEhaKSJdLFswLDAsIjAiXSxbNCwwLCIwIl0sWzEsMl0sWzIsMF0sWzMsMV0sWzAsNF1d
\[\begin{tikzcd}
	0 & {\pi_{n}(\rR\kgl)} & {\pi_n(\rR\HZ)} & {\pi_{n-2}(\rR\kgl)} & 0
	\arrow[from=1-1, to=1-2]
	\arrow[from=1-2, to=1-3]
	\arrow[from=1-3, to=1-4]
	\arrow[from=1-4, to=1-5]
\end{tikzcd}\]
for all $n\in \Z$. For $n$ odd, the middle term vanishes, so all odd homotopy groups of $\rR\kgl$ are trivial. By Lemma \ref{Prop:rRconnective}, $\rR\kgl$ is connective, so for $n=0$, we obtain an isomorphism $\pi_0(\rR\kgl) \to \pi_0(\rR\HZ) \cong \Zmod$. For $n=2$, the map on the right of the sequence is therefore a surjection $\Zmod \to \Zmod$, so it is an isomorphism, and $\pi_2(\rR\kgl)=0$. For $n=4$ we obtain an isomorphism $\pi_n(\rR\kgl) \to \pi_n(\rR\HZ) \cong \Zmod$. Continuing this argument inductively, $\kgl \to \HZ$ induces after real realization isomorphisms in homotopy in degrees divisible by 4, and the other homotopy groups of $\rR\kgl$ vanish. The fact that $\rR\kgl \simeq \HZmod[t^4]$ as $\E{1}$-rings is then proven exactly as in Proposition \ref{Prop:rRHZHZmod} for $\rR\HZ$. In this construction, using the Hopkins--Mahowald theorem, the realization of the $\Einfty$-map $\kgl \to \HZ$ becomes a map of $\E{2}$-rings under $\HZmod$, in particular a map of $\E{1}$-$\HZmod$-algebras (by the proof of Proposition \ref{Prop:freeE1Ralgebra}$(iii)$). Then, $\HZmod[t^4]$ being a free object, the real realization of $\kgl \to \HZ$ is exactly the map of $\E{1}$-$\HZmod$-algebras in the statement. 
\end{proof}

\begin{Lemma}\label{Prop:rRbetakgl} In the Notation of Proposition \ref{Prop:rRkgl}, we have $\rR(\beta_\kgl) = 0$.
\end{Lemma}
\begin{proof}
Let us abbreviate $\beta:= \beta_\kgl$. After real realization, the map $T\Smash \kgl \to \kgl$ induces multiplication by $\rR(\beta)$ in homotopy, viewing $\rR(\beta)$ as an element in $\pi_1(\rR\kgl)$ (because $\beta : T \to \kgl$ and $\rR(T) = \Sph^1$). We want to show that $\rR(\beta) \cdot 1 = 0$ where $1\in \pi_0(\rR\kgl)$, i.e., that the map $\pi_0(\rR\kgl) \to \pi_1(\rR\kgl)$ induced by $\beta$ is the zero map. Consider the cofiber sequence appearing as the last row in the following diagram (obtained by the octahedral axiom)
% https://q.uiver.app/#q=WzAsOSxbMCwwLCJcXFNpZ21hXns0LDJ9XFxrZ2wiXSxbMCwxLCJcXFNpZ21hXnsyLDF9XFxrZ2wiXSxbMCwyLCJcXFNpZ21hXnsyLDF9XFxIWiJdLFsxLDAsIlxcU2lnbWFeezQsMn1cXGtnbCJdLFsxLDEsIlxca2dsIl0sWzIsMCwiXFxTaWdtYV57MiwxfVxca2dsIl0sWzIsMSwiXFxrZ2wiXSxbMSwyLCJcXGtnbC9cXGJldGFeMiJdLFsyLDIsIlxcSFouIl0sWzAsMywiIiwwLHsibGV2ZWwiOjIsInN0eWxlIjp7ImhlYWQiOnsibmFtZSI6Im5vbmUifX19XSxbMyw0LCJcXGJldGFeMiIsMl0sWzAsMSwiXFxTaWdtYV57MiwxfVxcYmV0YSIsMl0sWzUsNiwiXFxiZXRhIiwyXSxbMyw1LCJcXFNpZ21hXnsyLDF9XFxiZXRhIl0sWzQsNiwiIiwwLHsibGV2ZWwiOjIsInN0eWxlIjp7ImhlYWQiOnsibmFtZSI6Im5vbmUifX19XSxbMSw0LCJcXGJldGEiXSxbMSwyLCJcXFNpZ21hXnsyLDF9XFxnYW1tYSIsMl0sWzQsN10sWzYsOCwiXFxnYW1tYSIsMl0sWzIsNywiIiwxLHsic3R5bGUiOnsiYm9keSI6eyJuYW1lIjoiZGFzaGVkIn19fV0sWzcsOCwiIiwxLHsic3R5bGUiOnsiYm9keSI6eyJuYW1lIjoiZGFzaGVkIn19fV1d
\[\begin{tikzcd}[column sep = 2em, row sep = 2em]
	{\Sigma^{4,2}\kgl} & {\Sigma^{4,2}\kgl} & {\Sigma^{2,1}\kgl} \\
	{\Sigma^{2,1}\kgl} & \kgl & \kgl \\
	{\Sigma^{2,1}\HZ} & {\kgl/\beta^2} & {\HZ.}
	\arrow[equals, from=1-1, to=1-2]
	\arrow["{\Sigma^{2,1}\beta}"', from=1-1, to=2-1]
	\arrow["{\Sigma^{2,1}\beta}", from=1-2, to=1-3]
	\arrow["{\beta^2}"', from=1-2, to=2-2]
	\arrow["\beta"', from=1-3, to=2-3]
	\arrow["\beta", from=2-1, to=2-2]
	\arrow["{\Sigma^{2,1}\gamma}"', from=2-1, to=3-1]
	\arrow[equals, from=2-2, to=2-3]
	\arrow[from=2-2, to=3-2]
	\arrow["\gamma"', from=2-3, to=3-3]
	\arrow[dashed, from=3-1, to=3-2]
	\arrow[dashed, from=3-2, to=3-3]
\end{tikzcd}\]
Then we have a commutative diagram
% https://q.uiver.app/#q=WzAsNyxbMSwxLCJcXHBpXzIoXFxyUihcXGtnbC9cXGJldGEpKSJdLFsyLDEsIlxccGlfMChcXHJSKFxca2dsL1xcYmV0YSkpIl0sWzMsMSwiXFxwaV8xKFxcclIoXFxrZ2wvXFxiZXRhXjIpKSJdLFs0LDEsIlxcZG90cyJdLFsyLDAsIlxccGlfMChcXHJSXFxrZ2wpIl0sWzMsMCwiXFxwaV8xKFxcclJcXGtnbCkiXSxbMCwxLCJcXGRvdHMiXSxbMCwxLCJcXHBhcnRpYWwiXSxbMSwyXSxbMiwzXSxbNCw1LCJcXGJldGEiXSxbNSwyLCJcXGNvbmciXSxbNCwxLCJcXGNvbmciXSxbNiwwXV0=
\begin{equation}\label{Diag:rRbetakgl}\begin{tikzcd}[column sep = 2em, row sep = 2em]
	&& {\pi_0(\rR\kgl)} & {\pi_1(\rR\kgl)} \\
	\cdots & {\pi_2(\rR(\kgl/\beta))} & {\pi_0(\rR(\kgl/\beta))} & {\pi_1(\rR(\kgl/\beta^2))} & \cdots
	\arrow["\beta", from=1-3, to=1-4]
	\arrow[from=1-3, to=2-3]
	\arrow[from=1-4, to=2-4]
	\arrow[from=2-1, to=2-2]
	\arrow["\partial", from=2-2, to=2-3]
	\arrow[from=2-3, to=2-4]
	\arrow[from=2-4, to=2-5]
\end{tikzcd}\end{equation}
where the bottom row is the long exact sequence induced by this cofiber sequence after realization, and the square comes from the left bottom square in the previous diagram.\\

\textbf{Step 1:} \emph{It suffices to show that $\partial$ is surjective; in other terms, that it hits $1 \in \pi_0(\kgl/\beta) \cong \Zmod$.} Indeed, note that the proof of Proposition \ref{Prop:rRkgl} did not use that $\rR(\beta)=0$ to obtain the isomorphisms $\pi_{0}(\rR\kgl) \cong \pi_{0}(\rR(\kgl/\beta)) \cong \pi_{0}(\rR(\HZ))\cong \Zmod$. This fact implies that the vertical map on the left-hand side in Diagram (\ref{Diag:rRbetakgl}) is an isomorphism. The vertical map on the right-hand side is an isomorphism, because in the long exact sequence induced by the real realization of the cofiber sequence $\Sigma^{4,2}\kgl \xrightarrow{\beta^2} \kgl \to \kgl/\beta^2$, the groups $\pi_i(\rR(\Sigma^{4,2}\kgl)) \cong \pi_{i-2}(\rR\kgl)$ vanish for $i=0,1$ by connectivity of $\rR\kgl$. Thus, in Diagram (\ref{Diag:rRbetakgl}), by exactness, if $\partial$ is surjective then $\beta$ is the zero map.\\
    
\textbf{Step 2:} \emph{Reduction to a computation in the homotopy of $\HZmod \in\SHk$.} By Step 1, it suffices to show that $\partial(t^2)=1$, where $t^2$ is viewed as an element in $\pi_\ast(\rR(\kgl/\beta)) \cong \pi_\ast(\rR(\HZ)) \cong \Zmod[t^2]$. By Proposition \ref{Prop:rRHZHZmod}, the quotient map $\HZ \to \HZmod$ induces isomorphisms in even homotopy groups after real realization. Let $d: \HZ \to \Sigma^{3,1}\HZ$ be the boundary map representing $\partial$ before real realization. We claim: $(1)$ that the diagram
    % https://q.uiver.app/#q=WzAsNCxbMCwwLCJcXEhaIl0sWzEsMCwiXFxTaWdtYV57MywxfVxcSFoiXSxbMCwxLCJcXEhabW9kIl0sWzEsMSwiXFxTaWdtYV57MywxfVxcSFptb2QiXSxbMCwxLCJkIl0sWzAsMiwiXFxtYXRoc2Z7bW9kfVxcLCAyIiwyXSxbMSwzLCJcXG1hdGhzZnttb2R9XFwsIDIiXSxbMiwzLCJcXFNxXjMiXV0=
\[\begin{tikzcd}
	\HZ & {\Sigma^{3,1}\HZ} \\
	\HZmod & {\Sigma^{3,1}\HZmod}
	\arrow["d", from=1-1, to=1-2]
	\arrow["{\mathsf{mod}\, 2}"', from=1-1, to=2-1]
	\arrow["{\mathsf{mod}\, 2}", from=1-2, to=2-2]
	\arrow["{\Sq^3}", from=2-1, to=2-2]
\end{tikzcd}\]
is commutative, where $\Sq^3$ is the third motivic Steenrod square (see \cite{Voevodksy-reducedpowers}); and $(2)$ that $\Sq^3(\tau^2) = \rho^3$. Claims $(1)$ and $(2)$ imply that $\partial(t^2)=1$. Indeed, $t^2 \in \pi_2(\rR\HZmod) \cong \colim_n\ \homsh{2}{n}{\HZmod}(\R)$ corresponds to $\rho^{-2}\tau^2 \in \homsh{2}{0}{\HZmod}(\R)$ (see Theorem \ref{Prop:Milnorsconjecture}), or equivalently, in the colimit, to $\tau^2 \in \homsh{2}{2}{\HZmod}(\R)$. It is therefore mapped to $\rho^3 \in \homsh{0}{3}{\HZmod}(\R)$, corresponding in $\colim_n\ \homsh{0}{n}{\HZmod}(\R) \cong \pi_0(\rR\HZmod)$ to $1\in \homsh{0}{0}{\HZmod}(\R)$ and thus a generator of $\Zmod \cong \pi_0(\rR\HZmod)$. This suffices because in the commutative square of Claim $(1)$, the vertical maps induce isomorphisms on the even homotopy groups after real realization.\\

    \textbf{Step 3:} \emph{We prove Claim $(1)$.} By \cite[Lem.\ A.4]{RO}, we have 
    $$[\HZ,\Sigma^{3,1}\HZmod] = H^{0,0}(\Spec(\R),\Zmod)\{\Sq^3 \circ (\HZ\to\HZmod)\}= \Zmod\{\Sq^3 \circ (\HZ\to\HZmod)\}$$
    because $H^{0,0}(\Spec(\R),\Zmod) \cong \underline{K}_0^M/2(\R) \cong \Zmod$ by Theorem \ref{Prop:Milnorsconjecture}. Then, $(\mathsf{mod}\,2) \circ d$ is either $\Sq^3 \circ (\mathsf{mod}\,2)$ or $0$. To distinguish between them, we take complex realizations. The complex realization of the sequence $\Sigma^{2,1}\kgl \to \kgl \xrightarrow{\gamma} \HZ$ is the sequence $\Sigma^2\mathsf{ku} \to \mathsf{ku} \to \HZ$ (by \cite[in particular Thm.\ 5.5]{Levine-comparison} for $\HZ$ and \cite[Lem.\ 2.3]{ARO} for $\kgl$). Therefore, the complex realization of $d$ is exactly the suspension of the map $\Sigma^{-1}\HZ \to \Sigma^2\HZ$ called the first $k$-invariant of $\mathsf{ku}$. This map is the non-zero operation $Q_1$ (whose reduction $\mathsf{mod}\, 2$ is non-zero as well), see \cite[Section 2]{Bruner} for a proof and more details.\\
    
    \textbf{Step 4:} \emph{We prove Claim $(2)$.} By \cite{Voevodksy-reducedpowers}, we have the following properties of the motivic Steenrod squares: firstly, $\Sq^2(\tau)=0$ since $\Sq^{2n}(u) = 0$ for any $u$ of bidegree $(p,q)$ with $n>p-q$ and $n\geq q$, and then $\Sq^3(\tau) = \Sq^1\Sq^2(\tau) =0$; secondly, $\Sq^1=\beta$ and $\beta \tau = \rho$, and finally, we have the motivic Cartan formula
    $$ \Sq^{2i+1}(u\smile v) = \sum_{r=0}^{2i+1} \Sq^r(u) \smile \Sq^{2i+1-r}(v) + \rho\sum_{r=0}^{i-1} \Sq^{2r+1}(u) \smile \Sq^{2i-2r-1}(v). $$
    Using these facts, we compute
    \begin{align*}
        \Sq^3(\tau^2) &= \Sq^0(\tau)\Sq^3(\tau) + \Sq^1(\tau)\Sq^2(\tau) + \Sq^2(\tau)\Sq^1(\tau) + \Sq^3(\tau)\Sq^0(\tau) + \rho \Sq^1(\tau)\Sq^1(\tau) \\
        &= \rho (\beta\tau)^2 = \rho^3,
    \end{align*}
    which proves Claim $(2)$.
\end{proof}

\begin{Rmk} The long exact sequence argument in the proof of Lemma \ref{Prop:rRbetakgl} could have been replaced by a spectral sequence argument using the real realization of the very effective slice filtration on $\kgl$ and its multiplicative structure, as we will do later in the proof of Lemma \ref{Prop:rRbetako} for $\ko$ instead. Then, computing the differential $d^1: E^1_{2,0} = \pi_2(\rR\tilde{s}_0\kgl) \to \pi_1(\rR\tilde{s}_1\kgl) = E^1_{1,1}$ boils down to our proof of Lemma \ref{Prop:rRbetakgl} (actually $d^1=(\pi_2\circ\rR)(d)$ where $d:\HZ \to \Sigma^{3,1}\HZ$ is the connecting map in the proof of the latter). 
\end{Rmk}
\vspace{0.2cm}
\subsection{The real realization of \texorpdfstring{$\HZtilde$}{HZ tilde}}\label{Subsect:rRHZtilde}\hfill\vspace{0.2cm}

The motivic spectrum $\HZtilde := f_0\underline{K}^{MW}_\ast$ appears in the computation of the very effective slices of $\ko$, which we will need for the proof of our main result about $\rR(\ko)$ in Section \ref{Sect:rRko}. It is endowed with the structure of a motivic $\Einfty$-ring in the same way as $\HZ$ and $\HZmod$ in Remark \ref{Rmk:HZHZmodEinfty}.

\begin{Prop}\label{Prop:rRHZtilde} On the homotopy rings, the real realization of the quotient map $$\HZtilde := f_0\underline{K}^{MW}_\ast \longrightarrow f_0(\underline{K}^{MW}_\ast/\eta) = f_0\underline{K}^{M}_\ast = \HZ$$ identifies with the quotient map $\Z[t^2]/(2t^2) \to \Z[t^2]/2$.
\end{Prop}
\begin{proof}
   The homotopy ring of $\rR\HZ$ was computed in Proposition \ref{Prop:rRHZHZmod}. By \cite[Lem.\ 17]{Tom-genslices}, the quotient map in the statement induces isomorphisms on homotopy sheaves $\underline{\pi}_n(-)_\ast$ for all $n\neq 0$. In particular, for all $n\neq 0$, $\pi_n(\rR \HZtilde) \cong \colim_m \underline{\pi}_n(\HZtilde)_m(\R) \cong \colim_m \underline{\pi}_n(\HZ)_m(\R) \cong \pi_n(\rR\HZ)$. Since $\HZtilde$ is very effective by Proposition \ref{Prop:HZHZmodveryeff}, its real realization is connective (Lemma \ref{Prop:rRconnective}), and so we only have to deal with the zeroth homotopy group. Consider the unit map $\unit \to \HZtilde$. By \cite[Lem.\ 17]{Tom-genslices} and \cite[after Thm.\ 6.4.7]{Morel-A1}, this map induces isomorphisms on the homotopy sheaves $\underline{\pi}_0(-)_\ast$, which agree with those of the sphere spectrum. In particular, arguing as above, $\pi_0(\rR\HZtilde) \cong \pi_0(\rR\underline{K}^{MW}_\ast) \cong \pi_0(\rR\calS) = \pi_0(\Sph) = \Z$. We therefore have in degree 0 a ring map $\Z \to \Zmod$, which must be the quotient by 2 map. This suffices to conclude our proof, because we have determined $\pi_\ast(\rR \HZtilde) \to \pi_\ast(\rR\HZ)$ in every degree, we know the ring structure on the right hand-side, and we know that the map is a ring homomorphism.
\end{proof}
\vspace{0.2cm}
\subsection{The real realization of \texorpdfstring{$\KO$}{KO}}\label{Subsect:rRKO}\hfill\vspace{0.2cm}

We now compute the real realization of $\KO$ (as defined in \cite[\S 3.2]{BH} for example) as an $\Einfty$-ring. The result was already proven at the level of spectra (see \cite[Lem.\ 3.9]{BH}); the proof here is very similar, with more care to the multiplicative structure. In the proof, we will encounter the motivic $\Einfty$-ring $\KW := \KO[\eta^{-1}]$ representing Balmer--Witt K-theory, see for example \cite[Appendix A]{Hornbostel}.

\begin{Prop}\label{Prop:rRKO} There is an $\Einfty$-map of spectra
    $$\gamma: \mathsf{L}(\R) \longrightarrow \rR\KW,$$
    where $\mathsf{L}$ denotes the L-theory spectrum, inducing an equivalence of $\Einfty$-rings $\mathsf{L}(\R)[1/2] \simeq \rR\KW$. In particular, there is an equivalence of $\Einfty$-rings 
    $$\rR\KO \simeq \rR\KO[1/2] \simeq \rR\KW \simeq \KOtop[1/2].$$
\end{Prop}
\begin{proof} By \cite[Cor.\ 8.1.8]{CHN}, we have $\Gamma(\R,\KW) \simeq \mathsf{L}(\R)$ as $\Einfty$-rings, where $\Gamma(\R,-)$ is the global sections functor, if we view a motivic spectrum as a presheaf of topological spectra (see for example \cite{CHR}). There is a slight technicality, in that we have to check that the $\Einfty$-ring $\KW_\R$ from \cite[Def.\ 8.1.1]{CHN} is indeed the same as the one we were previously considering. By \cite{AKR}, the motivic $\Einfty$-ring $\mathsf{KQ}_\R$ constructed in \cite[Def.\ 8.1.1]{CHN} is equivalent to the $\Einfty$-ring $\KO$ we are considering. Therefore we want to show that the map of $\mathsf{KQ}_\R$-modules $w_\R: \mathsf{KQ}_\R \to \KW_\R$ from \cite[below Cor.\ 8.1.8]{CHN} is indeed given by $\eta$-localization, which can be described as the natural map to the colimit of the telescope for multiplication by $\eta$. The map $w_\R$ is obtained by applying the lax symmetric monoidal, colimit-preserving functor $\mathcal{M}^s_\R$ \cite[Lem.\ 7.4.10]{CHN} to the map of $\mathbb{GW}$-modules $\mathbb{GW} \to \mathbb{L}$ between stabilized Grothendieck-Witt theory and stabilized L-theory. By \cite[\S 3]{Schlichting-2025}, the latter is also given by $\eta'$-localization, where $\eta'$ is the image of the unit in $GW^{[0]}_0(\Z)$ by the boundary map from \cite[Rmk.\ 8.1.11]{CHN}. It is shown there that after applying the functor $\mathcal{M}^s_\R$, this boundary map is the usual $\eta$, namely the motivic Hopf map. The desired conclusion follows. 

Then, since $\KW[1/\rho]\simeq \KW[1/2]$ (see the proof of Proposition \ref{Prop:rRKO}), by Theorem \ref{Prop:rRinvertingrho}, we have $\rR\KW \simeq \Gamma(\R,\KW[1/\rho]) \simeq \Gamma(\R,\KW[1/2]) \simeq \Gamma(\R,\KW)[1/2] \simeq \mathsf{L}(\R)[1/2]$. By \cite[Ex.\ 9.2]{LNS}, there is an $\Einfty$-map $\tau_\R: \kotop \to L(\mathbb{R})_{\geq 0}$, inducing an equivalence after inverting 2. Moreover, on the eighth homotopy groups, this maps sends $\lambda_8$ (a generator for $\pi_8(\kotop) \cong \Z$) to $16b^2$ (where $b$ is a generator of $\pi_{4}(\mathsf{L}(\R)) \cong \Z$). Therefore, there is an equivalence 
$$\kotop[1/2][\lambda_8^{-1}] \simeq \mathsf{L}(\R)_{\geq 0}[1/2][(16b^2)^{-1}] \simeq \mathsf{L}(\R)_{\geq 0}[1/2][b^{-1}].$$
The left-hand side is equivalent to $\KOtop[1/2]$ since $\kotop[\lambda_8^{-1}] \simeq \KOtop$ by periodicity (see for example \cite[Cor.\ 5.1]{LN}), and similarly the right-hand side is $\mathsf{L}(\R)[1/2]$ because $\mathsf{L}(\R)_{\geq 0}[b^{-1}] \simeq \mathsf{L}(\R)$ (same reference).
	
This proves the last equivalence in the statement. To prove the first equivalence, consider the Wood cofiber sequence $\Sigma^{1,1}\KO \xrightarrow{\ \eta\ } \KO \longrightarrow \KGL$ \cite[Thm.\ 3.4]{RO}. In Milnor--Witt K-theory, we have $h\rho^2 = 0$. Indeed, by \cite[Cor.\ 3.8]{Morel-A1overfield}, Milnor--Witt K-theory is ``$(1-h)$-graded commutative'', so $\rho^2 = (1-h)\rho^2$. Therefore, after inverting $\rho$, or equivalently, after real realization (Theorem \ref{Prop:rRinvertingrho}), we have $0 = h = 2 + \rho\eta$. Then the real realization of $\eta$ is equivalent to multiplication by $-2$. But also, we have seen in Lemma \ref{Prop:rRKGL} that $\rR\KGL\simeq 0$. This means that the cofiber of the multiplication by 2 map on $\rR\KO$ is zero, in other terms $\cdot 2$ is an equivalence, and thus $\rR\KO \simeq \rR(\KO[1/2])$ via the canonical localization map. Also, the above implies that $\rR(\KO[1/2])$ identifies with $\KO[\rho^{-1},1/2] \simeq \KO[\rho^{-1},\eta^{-1}] \simeq \KW[\rho^{-1}]$ under the equivalence $\SHR[\rho^{-1}]\simeq \Sp$. Then $\rR(\KO[1/2]) \simeq \rR(\KW)$. This establishes the first and second equivalences in the statement.
\end{proof}    

Let $\kw := \KW_{\geq 0}$ be the connective cover of $\KW$ in the homotopy t-structure. It inherits the structure of a motivic $\Einfty$-ring (Remark \ref{Rmk:smstconheart}). Also let $\kotop := \KOtop_{\geq 0}$ be the connective cover of the real topological K-theory spectrum. 

\begin{Prop}\label{Prop:rRko1/2} The equivalences $\rR\KO[1/2] \simeq \rR\KW \simeq \KOtop[1/2]$ of Proposition \ref{Prop:rRKO} induce equivalences of $\Einfty$-rings 
   $$\kotop[1/2] \simeq \rR\kw \simeq \rR\ko[1/2].$$
\end{Prop}
\begin{proof}
By \cite[Lem.\ 6.9]{BH}, the composition $\ko\to\KO\to\KO[\eta^{-1}] =: \KW$ induces an equivalence $\ko[\eta^{-1}] \simeq \KW_{\geq 0} =: \kw$. Then, as in the proof of Proposition \ref{Prop:rRKO}, localization away from $\eta$ and away from 2 induce equivalences $\rR\kw \simeq \rR\kw[1/2] \simeq \rR\ko[1/2]$. Note that the functor $\rR$ is $t$-exact: by Definition \ref{Def:homotopytstc}, $E\in\SHk$ belongs to $\SHk_{\geq 0}$ if and only if $\homsh{i}{\ast}{E}=0$ for all $i<0$ (respectively, $\SHk_{\leq 0}$ and $i>0$), and by Lemma \ref{Prop:homotopyofrR} we have $\pi_i(\rR(E)) \cong \colim_n \homsh{i}{n}{E}(\R)$ for all $i\in\Z$. Therefore, $\rR\kw \simeq \rR(\KW)_{\geq 0} \simeq \KOtop[1/2]_{\geq 0} \simeq \kotop[1/2]$ by Proposition \ref{Prop:rRKO}. 
\end{proof}  
\vspace{0.2cm}

\vspace{1em}
\section{The real realization of \texorpdfstring{$\ko$}{ko}}\label{Sect:rRko}

The motivic spectrum $\ko$ inherits an $\Einfty$-ring structure from that of $\KO$, because the very effective cover functor is lax symmetric monoidal by Proposition \ref{Prop:smheart}. Thus, $\rR\ko$ is also an $\Einfty$-ring. In this section, we identify the underlying $\E{1}$-structure by an explicit 2-local fracture square. In Subsection \ref{Subsect:pistarrRko}, we will compute the homotopy ring of $\rR\ko$, thanks to the Wood cofiber sequence $\Sigma^{1,1}\ko \to \ko \to \kgl$ \cite[Prop.\ 2.11]{ARO} and the knowledge of the very effective slices of $\ko$ (computed in \cite{Tom-genslices}, see Theorem \ref{Prop:veffslicesko} below), which involves only motivic spectra whose real realizations we have already computed in Section \ref{Sect:examplesrealization}. This homotopy ring is a polynomial ring over $\Z$ with a single generator in degree 4.\\

But this is not enough to identify $\rR\ko$ as an $\E{1}$-ring. Indeed, by Proposition \ref{Prop:freeE1Ralgebra}, the free $\E{1}$-$\HZ$-algebra $\HZ[t^4]$ has the same homotopy ring, but they are not equivalent (see Remark \ref{Rmk:kinvariants}). Inverting 2 makes things easier; we have already identified $\rR(\ko)[1/2]$ in Proposition \ref{Prop:rRko1/2}. This might be a good reason to consider a 2-local fracture square, expressing $\rR\ko$ as the pullback of $\rR(\ko)[1/2]$ and $\rR(\ko)_{(2)}$ over $\rR(\ko)_\Q$. Our approach is to identify these terms and the maps in the square more explicitly, taking into account the $\E{1}$-structures. This will use the description of free $\E{1}$-$\HZ_{(2)}$-algebras and their homotopy rings from Proposition \ref{Prop:freeHA}; this is one reason for which we identify the $\E{1}$-structure only (and not the $\Einfty$-one). The resulting 2-local fracture square coincides with the one obtained in \cite{HLN} for the connective L-theory spectrum of $\R$. As a corollary, we get an equivalence of $\E{1}$-rings $\rR(\ko)\simeq\mathsf{L}(\R)_{\geq 0}$.
\vspace{0.2cm}

\subsection{The homotopy ring of \texorpdfstring{$\rR(\ko)$}{rR(ko)}}\label{Subsect:pistarrRko}\hfill\vspace{0.2cm}

\begin{Prop}\label{Prop:homgprRko} The homotopy ring of $\rR\ko$ is the polynomial ring $\pi_\ast(\rR\ko) \cong \Z[x]$, where $x$ is a generator in degree 4, mapping to a generator of $\pi_4(\rR\kgl)\cong \Zmod$ (see Proposition \ref{Prop:rRkgl}) in the real realization of the Wood cofiber sequence $\Sigma^{1,1}\ko \to \ko \to \kgl$.
\end{Prop}

\begin{proof}
    We will proceed in four steps:
\begin{enumerate}
    \item We first show that these homotopy groups are finitely generated in every degree, using the description of the very effective slices (Definition \ref{Def:effcover}) of $\ko$.
    \item Using the Wood cofiber sequence, we show that these groups have no 2-torsion.
    \item We then show that they have no torsion at the other primes, and that they are actually free abelian groups of rank 1 in every non-negative degree divisible by 4, and 0 else. To do so, we reduce to a statement after inverting 2, and use the computation of $\rR\ko[1/2]$ in Proposition \ref{Prop:rRko1/2}.
    \item Finally, we describe the ring structure, studying again the Wood cofiber sequence. 
\end{enumerate}
\vspace{0.2cm}
    \textbf{Step 1:} \emph{The homotopy groups of $\rR\ko$ are finitely generated abelian groups.} We claim that: $(\star)$ the real realizations of the very effective slices of $\ko$ have degreewise finitely generated homotopy groups. Then, we show by induction on $m$ the following statement: for all $n\in \N$, $\pi_{n+m}(\rR(\tilde{f}_n \KO))$ is finitely generated. This holds for $m\leq -1$ since real realization on $\SHR^\veff(n)$ takes values in $n$-connective spectra as seen in Lemma \ref{Prop:rRconnective}. This implies our result because for $n=0$, we get that for all $m\in\N$, $\pi_m(\rR(\tilde{f}_0 \KO)) = \pi_m(\rR\ko)$ is finitely generated. Assume that the induction hypothesis holds for some fixed $m\geq -1$. Then for all $n\in \N$, the cofiber sequence $\tilde{f}_{n+1}\KO \to \tilde{f}_{n}\KO \to \tilde{s}_{n}\KO$ defining the very effective slices induces a long exact sequence
% https://q.uiver.app/#q=WzAsNyxbMSwwLCJcXHBpX3tuKzErbX0oXFxyUihcXHRpbGRle2Z9X3tuKzF9IFxcS08gKSkiXSxbMiwwLCJcXHBpX3tuK20rMX0oXFxyUihcXHRpbGRle2Z9X24gXFxLTykpIl0sWzMsMCwiXFxwaV97bittKzF9KFxcclIoXFx0aWxkZXtzfV9uIFxcS08pKSJdLFswLDAsIlxcZG90cyJdLFsyLDEsIlxccGlfe24rbX0oXFxyUihcXHRpbGRle2Z9X3tufSBcXEtPICkpIl0sWzEsMSwiXFxwaV97bittfShcXHJSKFxcdGlsZGV7Zn1fe24rMX0gXFxLTyApKSJdLFszLDEsIlxcZG90cyJdLFswLDFdLFsxLDJdLFsyLDUsImIiLDFdLFszLDAsImEiXSxbNSw0XSxbNCw2XV0=
\[\begin{tikzcd}[ampersand replacement=\&]
	\cdots \& {\pi_{n+1+m}(\rR(\tilde{f}_{n+1} \KO ))} \& {\pi_{n+m+1}(\rR(\tilde{f}_n \KO))} \& {\pi_{n+m+1}(\rR(\tilde{s}_n \KO))} \\
	\& {\pi_{n+m}(\rR(\tilde{f}_{n+1} \KO ))} \& {\pi_{n+m}(\rR(\tilde{f}_{n} \KO ))} \& \cdots
	\arrow["a", from=1-1, to=1-2]
	\arrow[from=1-2, to=1-3]
	\arrow[from=1-3, to=1-4]
	\arrow["b"{description}, from=1-4, to=2-2]
	\arrow[from=2-2, to=2-3]
	\arrow[from=2-3, to=2-4]
\end{tikzcd}\]
and therefore $\pi_{n+m+1}(\rR(\tilde{f}_{n}\KO))$ is an extension of $\mathsf{ker}(b)$ and $\mathsf{coker}(a)$. The former is finitely generated because it is a subgroup of $\pi_{n+m+1}(\rR(\tilde{s}_n \KO))$, which is finitely generated by $(\star)$. The latter is also finitely generated since it is a quotient of $\pi_{n+1+m}(\rR(\tilde{f}_{n+1} \KO) )$, which is finitely generated by the induction hypothesis. Therefore, $\pi_{n+m+1}(\rR(\tilde{f}_{n}\KO))$ is finitely generated, finishing our proof by induction. To finish Step 1, it remains to prove $(\star)$.

\begin{thm}[\protect{\cite[Thm.\ 16]{Tom-genslices}}]\label{Prop:veffslicesko}
The very effective slices of $\KO$ are given by $\tilde{s}_n\KO \simeq \Sigma^{2n,n}\tilde{S}_{n\, \mathsf{mod}\, 4}$ where
    $$\tilde{S}_{0} = \tilde{s}_0\KO,\quad\tilde{S}_{1} = \HZmod,\quad\tilde{S}_2 =\HZ, \quad\tilde{S}_3 = 0,$$
and there is a cofiber sequence 
$$ \Sigma^{1,0}\HZmod \longrightarrow \tilde{s}_0\KO \longrightarrow \HZtilde.$$
\end{thm} 

By the computation of the real realizations of $\HZ$, $\HZmod$ and $\HZtilde$ in Subsections \ref{Subsect:rRHZHZmod} and \ref{Subsect:rRHZtilde}, we know that their homotopy groups are finitely generated in every degree. Thus, that of the realizations of very effective slices of $\ko$ are too (for $\tilde{s}_0$, use the same inductive argument as we just did).\\

    \textbf{Step 2:} \emph{There is no 2-torsion in $\pi_\ast(\rR\ko)$.} The Wood cofiber sequence seen in the proof of Proposition \ref{Prop:rRKO} refines to one on the very effective covers: by \cite[Prop.\ 2.11]{ARO}, there is a cofiber sequence $\Sigma^{1,1}\ko \xrightarrow{\eta} \ko \xrightarrow{c} \kgl$, where $\eta$ is induced by the Hopf map and $c$ is induced by the forgetful map from Hermitian K-theory to algebraic K-theory. As we computed in the case of $\KO$ (Proposition \ref{Prop:rRKO}), the real realization of $\eta$ is the multiplication by 2 map on the sphere spectrum, and by Proposition \ref{Prop:rRKGL}, the $\E{1}$-ring $\rR\kgl$ is the free $\E{1}$-$\HZmod$-algebra $\HZmod[t^4]$. Assume for a contradiction that $\pi_m(\rR\ko)$ has 2-torsion for some $m\geq 0$. Then, in the long exact sequence induced by the Wood cofiber sequence
% https://q.uiver.app/#q=WzAsNixbMSwwLCJcXHBpX3tuKzF9KFxcclJcXGtnbCApIl0sWzIsMCwiXFxwaV97bn0oXFxyUlxca28pIl0sWzMsMCwiXFxwaV97bn0oXFxyUlxca28pIl0sWzAsMCwiXFxkb3RzIl0sWzUsMCwiXFxkb3RzIl0sWzQsMCwiXFxwaV97bn0oXFxyUlxca2dsKSJdLFswLDEsImEiXSxbMiw1LCJjIl0sWzMsMF0sWzUsNF0sWzEsMiwiXFxjZG90IDIgPSBiIl1d
\[\begin{tikzcd}
	\cdots & {\pi_{m+1}(\rR\kgl )} & {\pi_{m}(\rR\ko)} & {\pi_{m}(\rR\ko)} & {\pi_{m}(\rR\kgl)} & \cdots,
	\arrow[from=1-1, to=1-2]
	\arrow["a", from=1-2, to=1-3]
	\arrow["{\cdot 2 = b}", from=1-3, to=1-4]
	\arrow["c", from=1-4, to=1-5]
	\arrow[from=1-5, to=1-6]
\end{tikzcd}\]
the kernel of the map $\cdot 2 = b$ is non-trivial, so $a$ is non-zero and $\pi_{m+1}(\rR\kgl) \neq 0$, i.e., $k\equiv 3 \mod 4$. Then $\pi_{m}(\rR\kgl) = 0$, meaning that multiplication by 2 is surjective on $\pi_{m}(\rR\ko)$. In Step 1, we saw that this group was finitely generated abelian. Using the classification of such groups, we see that this is impossible for non-zero groups with 2-torsion.\\
    
    \textbf{Step 3:} \emph{The groups $\pi_m(\rR\ko)$ are torsion-free of rank 1 in degrees $m\in 4\N$, and zero otherwise.} By Proposition \ref{Prop:rRko1/2} and since real realization and homotopy groups commutes with localization away from $2$ (Lemma \ref{Prop:rRcommuteswithloc}), we have isomorphisms of graded abelian groups
    \begin{align*}
        \pi_\ast(\rR\ko)[1/2] &\cong \pi_\ast((\rR\ko)[1/2]) \cong \pi_\ast(\rR(\ko[1/2])) \cong \pi_\ast(\kotop[1/2]) \\
        &\cong \left(\factor{\Z[\alpha_1,\beta_4,\lambda_8]}{(\alpha_1^3, 2\alpha_1,\alpha_1\beta_4, \beta_4^2 - 4\lambda_8)}\right)[1/2] \cong \Z[1/2][\beta_4]
    \end{align*}
    (where $|\alpha_1|=1$, $|\beta_4|=4$, and $|\lambda_8|=8$) by a classical result in topology, see for instance \cite[\S 1.1]{Strickland}. In particular, the homotopy groups of $\rR\ko$ cannot have any $p$-torsion for $p\neq 2$ either. Otherwise, their localizations away from 2 would still have torsion, but $\Z[1/2]$ does not. Therefore, $\pi_\ast(\rR\ko)$ is degreewise free. Comparing with the localization away from 2, we deduce that it consists in a copy of $\Z$ in every degree divisible by 4, and the trivial group else. Indeed, if $\Z^{\oplus m}[1/2] \cong \Z[1/2]$ for some $m\in\N$, then the quotients by the $\Z$-module map $\cdot 3$ on both sides are isomorphic as well. They are respectively $(\Z/3)^{\oplus m}$ and $\Z/3$, whence $m=1$ for cardinality reasons.\\
    
    \textbf{Step 4:} \emph{We describe the ring structure.} The Wood cofiber sequence induced a long exact sequence
 % https://q.uiver.app/#q=WzAsNSxbMSwwLCJcXHBpX3s0bX0oXFxyUlxca28gKSJdLFsyLDAsIlxccGlfezRtfShcXHJSXFxrZ2wpIFxcY29uZyBcXFptb2QiXSxbMywwLCJcXHBpX3s0bS0xfShcXHJSXFxrbyk9MCJdLFswLDAsIlxcZG90cyJdLFs0LDAsIlxcZG90cyJdLFswLDEsImNfezRtfSJdLFsyLDRdLFszLDBdLFsxLDJdXQ==
\[\begin{tikzcd}
	\cdots & {\pi_{4m}(\rR\ko )} & {\pi_{4m}(\rR\kgl) \cong \Zmod} & {\pi_{4m-1}(\rR\ko)=0} & \cdots.
	\arrow[from=1-1, to=1-2]
	\arrow["{c_{4m}}", from=1-2, to=1-3]
	\arrow[from=1-3, to=1-4]
	\arrow[from=1-4, to=1-5]
\end{tikzcd}\]
Let $x$ be a generator of $\pi_4(\rR\ko)\cong \Z$. Then since $c_4$ is surjective, $c_4(x)$ generates $\pi_{4}(\rR\kgl) \cong \Z/2$. The forgetful map $c: \ko \to \kgl$ in the Wood cofiber sequence is a map of $\Einfty$-rings, and thus induces a ring homomorphism in homotopy. Therefore, for $m\geq 0$, $c_{4m}(x^m) = c_4(x)^m$ generates $\pi_{4m}(\rR\kgl) \cong \Z/2$ as well (since $\pi_\ast(\rR\kgl)$ is a polynomial ring by Proposition \ref{Prop:rRkgl}). We claim that $x^m$ generates $\pi_{4m}(\rR\ko)$. Indeed, write $x^m = a\cdot g_m$ with $a\in \Z$ and $g_m$ is a generator of $\pi_{4m}(\rR\ko)$. By Proposition \ref{Prop:rRko1/2}, we have ring isomorphisms $\pi_\ast(\rR\ko) \otimes \Z[1/2] \cong \pi_\ast(\kotop[1/2]) \cong \Z[1/2][\beta_4]$. In degree 4, since $x$ generates $\pi_4(\rR\ko)$, under this isomorphism $x$ must be sent to a generator of $\Z[1/2]\{\beta_4\}$ as a $\Z[1/2]$-module, i.e., $c\beta_4$ with $c\in\Z[1/2]^\times$. In particular, $x^m$ corresponds to a generator $c^m\beta_4^m$ of the degree $4m$ part. Therefore, there exists $b\in\Z[1/2]$ with $g_m = b\cdot x^m = ba\cdot g_m$, so $a\in\Z$ is a unit in $\Z[1/2]$. Thus, if $a\neq \pm 1$, then it must be even. In the latter case, we would have $c_{4m}(x^m) = a \cdot c_{4m}(g_m) = 0 \in \Z/2$, which is a contradiction. We conclude that $a=\pm 1$, whence our claim follows.
\end{proof}
\vspace{0.2cm}

\subsection{A 2-local fracture square for the \texorpdfstring{$\E{1}$}{E1}-ring structure on \texorpdfstring{$\rR(\ko)$}{rR(ko)}}\label{Subsect:E1ringstc}

\subsubsection{Identifying the localization at \texorpdfstring{$(2)$}{(2)}}\label{SubSubsect:rRko(2)}\hfill\vspace{0.2cm}

Since $\pi_\ast(\rR\ko) \cong \Z[x]$ by Proposition \ref{Prop:homgprRko}, we have $\pi_\ast((\rR\ko)_{(2)}) \cong \pi_\ast(\rR\ko) \otimes_\Z \Z_{(2)} \cong \Z_{(2)}[x]$. We want to use this to identify $(\rR\ko)_{(2)}$ with the free $\E{1}$-$\mathsf{H}\Z_{(2)}$-algebra on one generator in degree 4. Inspired by the computation of a 2-local fracture square for $\mathsf{L}(\R)_{\geq 0}$ in \cite[p.\ 3]{HLN}, to prove this we use the spectrum $\MSL$ as an intermediate step. This is where Theorem \ref{Prop:MGLMSLMSp} comes in handy, since we have shown that $\rR(\MSL) \simeq \MSO$ as $\Einfty$-rings.

\begin{Prop}\label{Prop:rRko(2)} There is an equivalence of $\E{1}$-rings
$$\HZ_{(2)}[t^4] \lsimeq{} (\rR\ko)_{(2)}$$
sending $t^4\in\pi_4(\HZ_{(2)}[t^4])$ to (the image in the localization of) the generator $x \in \pi_4(\rR\ko)$.
\end{Prop}

\begin{proof}
    By \cite[Thm.\ 10.1]{HJNY}, there is an $\Einfty$-ring morphism $\mathsf{MSL} \to \ko$. Its real realization is an $\Einfty$-map $\MSO \simeq \rR(\MSL) \to \rR\ko$ (Theorem \ref{Prop:MGLMSLMSp}). Now, by \cite[Cor.\ 3.7]{HLN}, $\MSO$ admits after localization at $(2)$ a map of $\E{2}$-rings $\HZ_{(2)} \to \mathsf{MSO}_{(2)}$. Therefore, by composition, we obtain an $\E{2}$-map $\HZ_{(2)} \to (\rR\ko)_{(2)}$. By Proposition \ref{Prop:freeHA}$(iii)$, to obtain a map of $\E{1}$-rings $\HZ_{(2)}[t] \to (\rR\ko)_{(2)}$, it then suffices to specify an element of $\pi_4((\rR\ko)_{(2)})$. We choose the image in the localization of $x\in \pi_4(\rR\ko)$. By Proposition \ref{Prop:freeHA}$(ii)$, and since $\pi_\ast((\rR\ko)_{(2)}) \cong \pi_\ast(\rR\ko)\otimes_\Z \Z_{(2)} \cong \Z_{(2)}[x]$, this map induces isomorphisms on the homotopy groups (note that $\Z_{(2)}$ is a module over itself in a unique way), whence the desired equivalence $\HZ_{(2)}[t^4]\lsimeq{} (\rR\ko)_{(2)}$.  
\end{proof}

\begin{Rmk}\label{Rmk:kinvariants}
    Localization at $(2)$ is crucial here, because $\mathsf{MSO}$ is not an $\HZ$-module, since it has non-trivial $k$-invariants. The $(n-1)$-th $k$-invariant of $E\in\Sp$ is the horizontal composition in the diagram
% https://q.uiver.app/#q=WzAsNyxbMSwwLCJFX3tcXGxlcSBuKzJ9Il0sWzEsMSwiRV97XFxsZXEgbisxfSJdLFsxLDIsIkVfe1xcbGVxIG59Il0sWzIsMiwiXFxTaWdtYV57bisyfVxcbWF0aHNme0h9KFxccGlfe24rMX1FKSIsWzExNSw3OCwzNiwxXV0sWzAsMSwiXFxTaWdtYV57bisxfVxcbWF0aHNme0h9KFxccGlfe24rMX1FKSIsWzExNSw3OCwzNiwxXV0sWzAsMCwiXFxTaWdtYV57bisyfVxcbWF0aHNme0h9KFxccGlfe24rMn1FKSIsWzI0MCw2MCw2MCwxXV0sWzIsMSwiXFxTaWdtYV57biszfVxcbWF0aHNme0h9KFxccGlfe24rMn1FKSIsWzI0MCw2MCw2MCwxXV0sWzEsMiwiIiwwLHsiY29sb3VyIjpbMTE1LDc4LDM2XX1dLFswLDEsIiIsMCx7ImNvbG91ciI6WzI0MCw2MCw2MF19XSxbMiwzLCIiLDAseyJjb2xvdXIiOlsxMTUsNzgsMzZdfV0sWzQsMSwiIiwwLHsiY29sb3VyIjpbMTE1LDc4LDM2XX1dLFs1LDAsIiIsMCx7ImNvbG91ciI6WzI0MCw2MCw2MF19XSxbMSw2LCIiLDAseyJjb2xvdXIiOlsyNDAsNjAsNjBdfV1d
\[\begin{tikzcd}
	{\Sigma^{n+2}\mathsf{H}(\pi_{n+2}E)} & {E_{\leq n+2}} \\
	{\Sigma^{n+1}\mathsf{H}(\pi_{n+1}E)} & {E_{\leq n+1}} & {\Sigma^{n+3}\mathsf{H}(\pi_{n+2}E)} \\
	& {E_{\leq n}} & {\Sigma^{n+2}\mathsf{H}(\pi_{n+1}E),}
	\arrow[BrickRed, dashed, from=1-1, to=1-2]
	\arrow[BrickRed, dashed, from=1-2, to=2-2]
	\arrow[MyGreen, from=2-1, to=2-2]
	\arrow[BrickRed, dashed, from=2-2, to=2-3]
	\arrow[MyGreen,from=2-2, to=3-2]
	\arrow[MyGreen,from=3-2, to=3-3]
\end{tikzcd}\]
    where the right-down-right zigzags (one showed in dashed arrows, the other one in solid arrows) are distinguished triangles, or the maps defined in the same way using the Whitehead tower instead of the Postnikov tower. In particular, the strategy above cannot be used to identify $\rR\ko$ as an $\E{1}$-ring with $\HZ[t^4]$ directly. Actually, we can even prove that this is wrong. Otherwise, we would have $\kotop[1/2] \simeq \rR\ko[1/2] \simeq \HZ[1/2][t^4]$ as $\E{1}$-rings. We would then have $\ku[1/2] \Smash \HZ \simeq \kotop[1/2]\Smash \HZ \oplus \Sigma^2\kotop[1/2]\Smash \HZ$ (using the proof of \cite[Thm.\ 9.3]{LNS}), and thus $\ku[1/2] \Smash \HZ \simeq \HZ[1/2][t^4]\Smash \HZ \oplus \Sigma^2\HZ[1/2][t^4]\Smash \HZ$. Reducing $\mathsf{mod}\, 3$, this yields $ \ku[1/2] \Smash \HZ/3 \simeq \HZ/3[t^4]\Smash \HZ\oplus \Sigma^2\HZ/3[t^4]\Smash \HZ$. Using formal group laws, one shows that the left-hand side is trivial, while the right-hand side is not (see for example \cite{GS}).
\end{Rmk}
\vspace{0.2cm}

\subsubsection{Identifying the rationalization}\label{SubSubsect:rRkoQ}\hfill\vspace{0.2cm}

Using our knowledge of the localization at $(2)$ of $\rR\ko$, we can identify its rationalization.

\begin{Prop}\label{Prop:rRkoQ} There is an equivalence of $\E{1}$-rings
$$\HQ[t^4] \lsimeq{} (\rR\ko)_\Q$$
sending $t^4\in\pi_4(\HQ[t^4])$ to (the image in the localization of) the generator $x \in \pi_4(\rR\ko)$.
\end{Prop}
\begin{proof} By Proposition \ref{Prop:rRko(2)}, we have $(\rR\ko)_\Q \simeq (\rR\ko)_{(2)}[1/2] \cong \HZ_{(2)}[t^4][1/2]$. We claim that the latter is equivalent to the free $\E{1}$-$\HQ$-algebra $\HQ[t^4]$. Indeed, the unique ring map $\Z_{(2)} \to \Q$ (which induces an $\Einfty$-map $\HZ_{(2)} \to \HQ$ and thus a map of $\HZ_{(2)}$-modules $\HZ_{(2)} \to \HQ[t^4]$) and the element $t^4\in\pi_4(\HQ[t^4])$ induce a map of $\E{1}$-$\HZ_{(2)}$-algebras $\HZ_{(2)}[t^4] \to \HQ[t^4]$. Comparing the homotopy rings (Proposition \ref{Prop:freeHA}$(ii)$), we see that after localization away from 2, this map induces isomorphisms in homotopy, as desired.
\end{proof}
\vspace{0.2cm}

\subsubsection{Main result: identifying the maps and assembling the fracture square}\label{SubSubsect:mapsinsquare}\hfill\vspace{0.2cm}

As the title indicates, we now combine the results of the previous subsection to identify the advertised explicit 2-local fracture square for $\rR\ko$. We first define the Chern character map; it will appear in our fracture square.

\begin{Def}\label{Def:Cherncharacter} Let $\HQ[t^4]$ and $\HQ[u^2]$ be the free $\E{1}$-$\HQ$-algebras on one generator in degrees 4 and 2 respectively. We call \emph{Chern character} any of the following:
\begin{itemize}
    \item the map $\mathsf{ku} := \mathsf{KU}_{\geq 0} \to \HQ[u^2]$ given by the (connective cover of the) classical Chern character as defined in \cite[\S 9.1]{Borel-Hirzebruch}. In homotopy, it induces the inclusion $\Z[u^2] \to \Q[u^2]$ where $u^2 \in \pi_2(\Z\times BU)\cong \Z$ is a generator.
    \item the map $\kotop \to \HQ[t^4]$ obtained by pre-composition of the previous map with the connective cover the complexification map $\KOtop \to \mathsf{KU}$ (induced by complexification of vector bundles) (their composition factors through the inclusion $\HQ[t^4] \xhookrightarrow{} \HQ[u^2] $ given by $t^4\mapsto (u^2)^2$). In homotopy, it induces the map $\Z[\alpha_1,\beta_4,\lambda_8]/ (\alpha_1^3, 2\alpha_1,\alpha_1\beta_4, \beta_4^2 - 4\lambda_8) \to \Z[u^2]$ sending $\alpha_1$ to 0, $\beta_4$ to $2(u^2)^2$, and thus $\lambda_8$ to $(u^2)^4$ (see for example \cite[\S 5.3]{Rognes}).
    \item the maps induced by the two previous ones after localization away from 2 of the source.
\end{itemize}
\end{Def}

\begin{Rmk}\label{Rmk:chQ} The Chern character $\mathsf{ch}:\kotop \to \HQ[t^4]$ factors through the rationalization, and the map $\mathsf{ch}_\Q: \kotop_\Q \to \HQ[t^4]$ obtained in this way is an equivalence. Indeed, $\pi_\ast(\kotop_\Q) \cong \pi_\ast(\kotop)\otimes_\Z \Q \cong \Q[\beta_4]$ and $\pi_\ast(\HQ[t^4]) \cong \Q[t^4]$ by Proposition \ref{Prop:freeHA}$(ii)$, with $\mathsf{ch}_\Q$ sending $\beta_4$ to $2t^4$: it induces isomorphisms in homotopy. 
\end{Rmk}

\begin{thm}\label{Prop:rRko} There is a Cartesian square of $\mathcal{E}_1$-rings (and thus also of spectra)
    \[\begin{tikzcd}[row sep = 3em, column sep = 3em]
        {\rR\ko} & {\ko^\mathsf{top}[1/2]} \\
        {\HZ_{(2)}[t^4]} & {\mathsf{H}\Q[t^4].}
        \arrow["x\mapsto \beta_4/2", from=1-1, to=1-2]
        \arrow["x\mapsto t^4"', from=1-1, to=2-1]
        \arrow["\lrcorner"{anchor=center, pos=0.125}, draw=none, from=1-1, to=2-2]
        \arrow["{\mathsf{ch}}"', "\beta_4/2\mapsto t^4", from=1-2, to=2-2]
        \arrow["{t^4\mapsto t^4}"', from=2-1, to=2-2]
    \end{tikzcd}\]
    where $\HZ_{(2)}[t^4]$ is the free $\E{1}$-$\HZ_{(2)}$-algebra on a generator of degree 4 (Definition \ref{Def:freeE1HA}), and similarly for $\mathsf{H}\Q[t^4]$; and the map $\mathsf{ch}$ is the \emph{Chern character} from Definition \ref{Def:Cherncharacter}. The assignments labeling the arrows describe the maps induced on the fourth homotopy groups.
\end{thm}

\begin{proof}
    As mentioned in the introduction to this section, the 2-local fracture square of $\rR\ko$ reads as the pullback square of $\E{1}$-rings
    \[\begin{tikzcd}
	{\rR\ko} & {(\rR\ko)[1/2]} \\
	{(\rR\ko)_{(2)}} & {(\rR\ko)_\Q.}
	\arrow[from=1-1, to=1-2]
	\arrow[from=1-1, to=2-1]
    \arrow["\lrcorner"{anchor=center, pos=0.125}, draw=none, from=1-1, to=2-2]
	\arrow[from=1-2, to=2-2]
	\arrow[from=2-1, to=2-2]
\end{tikzcd}\]
This holds in general for $\rR\ko$ replaced with any spectrum or $\E{n}$-ring for $1\leq n\leq \infty$, and 2 replaced by any prime $p$ (to check that such a square is a pullback, it suffices to show that it is the case after inverting $p$, and also after moding out $p$. In both cases, the squares are trivial pullbacks because two parallel sides are identity maps). 

Using the descriptions of the $\E{1}$-rings $(\rR\ko)[1/2]$, $(\rR\ko)_{(2)}$, and $(\rR\ko)_\Q$ in Propositions \ref{Prop:rRko1/2}, \ref{Prop:rRko(2)}, and \ref{Prop:rRkoQ} respectively, we obtain a commutative diagram
% https://q.uiver.app/#q=WzAsOCxbMCwwLCJcXHJSXFxrbyJdLFsyLDAsIlxca29eXFxtYXRoc2Z7dG9wfVsxLzJdIl0sWzAsMiwiXFxIWl97KDIpfVt0XjRdIl0sWzAsMSwiKFxcclJcXGtvKV97KDIpfSJdLFsxLDAsIihcXHJSXFxrbylbMS8yXSJdLFsxLDEsIihcXHJSXFxrbylfXFxRIl0sWzIsMSwiKFxca29eXFxtYXRoc2Z7dG9wfVsxLzJdKV9cXFEiXSxbMywxLCJcXEhRW3TigbRdIl0sWzAsNF0sWzAsM10sWzMsNV0sWzQsNV0sWzIsMywiZiJdLFs0LDEsImciLDJdLFs1LDYsImdfXFxRIiwyXSxbMSw2XSxbNiw3LCJcXG1hdGhzZntjaH1fXFxRIiwyXSxbMSw3LCJcXG1hdGhzZntjaH0iLDFdLFsyLDcsImsiLDEseyJjdXJ2ZSI6Miwic3R5bGUiOnsiYm9keSI6eyJuYW1lIjoiZGFzaGVkIn19fV1d
\[\begin{tikzcd}
	{\rR\ko} & {(\rR\ko)[1/2]} & {\ko^\mathsf{top}[1/2]} \\
	{(\rR\ko)_{(2)}} & {(\rR\ko)_\Q} & {(\ko^\mathsf{top}[1/2])_\Q} & {\HQ[t^4]} \\
	{\HZ_{(2)}[t^4]}
	\arrow[from=1-1, to=1-2]
	\arrow[from=1-1, to=2-1]
	\arrow["g"', "\simeq", from=1-2, to=1-3]
	\arrow[from=1-2, to=2-2]
	\arrow[from=1-3, to=2-3]
	\arrow["{\mathsf{ch}}"{description}, from=1-3, to=2-4]
	\arrow[from=2-1, to=2-2]
	\arrow["{g_\Q}"',"\simeq", from=2-2, to=2-3]
	\arrow["{\mathsf{ch}_\Q}"',"\simeq", from=2-3, to=2-4]
	\arrow["f", "\simeq"', from=3-1, to=2-1]
	\arrow["k"{description}, curve={height=20pt}, dashed, from=3-1, to=2-4]
\end{tikzcd}\]
where $f$ is the equivalence of Proposition \ref{Prop:rRko(2)}, $g$ is the equivalence of Proposition \ref{Prop:rRko1/2}, $\mathsf{ch}_\Q$ is the map from Remark \ref{Rmk:chQ}, and $k$ is chosen so that the diagram commutes. In particular, the outer quadrilateral provides a pullback square
\[\begin{tikzcd}
        {\rR\ko} & {\ko^\mathsf{top}[1/2]} \\
        {\HZ_{(2)}[t^4]} & {\mathsf{H}\Q[t^4].}
        \arrow["x\mapsto g(x)", from=1-1, to=1-2]
        \arrow["x\mapsto t^4"', from=1-1, to=2-1]
        \arrow["\lrcorner"{anchor=center, pos=0.125}, draw=none, from=1-1, to=2-2]
        \arrow["{\mathsf{ch}}", from=1-2, to=2-2]
        \arrow["{t^4\mapsto k(t^4)}"', from=2-1, to=2-2]
    \end{tikzcd}\]
We thus have to compute $k(t^4) = \mathsf{ch}(g(x))$. Looking back at the proof of Proposition \ref{Prop:rRko1/2}, $g$ is the inverse of the map $g'$ induced on the connective covers by the top row in the diagram
% https://q.uiver.app/#q=WzAsNixbMCwwLCJcXEtPdG9wWzEvMl0iXSxbMSwwLCJcXG1hdGhzZntMfShcXFIpWzEvMl0gXFxzaW1lcSBcXEdhbW1hKFxcUixcXEtXWzEvMl0pIl0sWzIsMCwiXFxHYW1tYShcXFIsXFxLV1sxL1xccmhvLDEvMl0pIFxcc2ltZXEgXFxyUlxcS09bMS8yXSJdLFsyLDEsIlxcR2FtbWEoXFxSLFxcS1dbMS9cXHJob10pIFxcc2ltZXEgXFxyUlxcS1ciXSxbMSwxLCJcXG1hdGhzZntMfShcXFIpIFxcc2ltZXEgXFxHYW1tYShcXFIsXFxLVykiXSxbMCwxLCJcXEtPdG9wIl0sWzAsMSwiXFx0YXVfXFxSIl0sWzEsMl0sWzMsMiwiXFx3ciIsMl0sWzQsM10sWzQsMV0sWzUsNCwiXFx0YXVfXFxSIl0sWzUsMF1d
\[\begin{tikzcd}[column sep = 3em]
	{\KOtop[1/2]} & {\mathsf{L}(\R)[1/2] \simeq \Gamma(\R,\KW[1/2])} & {\Gamma(\R,\KW[1/\rho,1/2]) \simeq \rR\KO[1/2]} \\
	\KOtop & {\mathsf{L}(\R) \simeq \Gamma(\R,\KW)} & {\Gamma(\R,\KW[1/\rho]) \simeq \rR\KW}
	\arrow["{\tau_\R}", from=1-1, to=1-2]
	\arrow[from=1-2, to=1-3]
	\arrow[from=2-1, to=1-1]
	\arrow["{\tau_\R}", from=2-1, to=2-2]
	\arrow[from=2-2, to=1-2]
	\arrow[from=2-2, to=2-3]
	\arrow["\simeq"', from=2-3, to=1-3]
\end{tikzcd}\]
where all maps in the square on the right-hand side are induced by the localizations (away from 2 or $\rho$), and the map $\tau_\R$ is defined in \cite[Thm.\ A and Ex.\ 9.2]{LNS}, where it is also proven that it sends $\beta_4 \in \pi_4(\KOtop)$ to $8b\in\pi_4(\mathsf{L}(\R)) \simeq \Z$. We claim that $b=\pm\eta^{-4}\beta_\KO \in \pi_4(\Gamma(\R,\KW)) \cong \pi_4(\mathsf{L}(\R))$. Indeed, $\KW = \KO[\eta^{-1}]$ is $\beta_\KO$- and $\eta$-periodic, and thus multiplication by $\eta^{-4}\beta_\KO$ is an equivalence $\Sigma^4\KW \lsimeq{} \KW$, and induces an isomorphism 
$$\Z \cong \pi_0(\Gamma(\R,\KW)) \cong [\calS^4,\Sigma^4\KW] \xrightarrow{\ \ \cong\ \ } [\calS^4,\KW] \cong \pi_4(\Gamma(\R,\KW)) \cong \Z\{b\}.$$
So $\eta^{-4}\beta_\KO = \pm b$. Using the fact that $\rR(\eta) = -2$ (see the proof of Proposition \ref{Prop:rRKO}), it follows that $\rR(b) = \pm\rR(\beta_\KO)/16 \in \pi_4(\Gamma(\R,\KW[1/\rho]))$. We defer to Proposition \ref{Prop:rRbetako} below the proof that $\rR(\beta_\ko) = \pm 16x$. It follows that $g'(\beta_4) = \pm\rR(8b) = \pm 8x$, and thus $g(x) = \pm\beta_4/8$. Then $k(x) = \mathsf{ch}(g(x)) = \pm t^4/4$ (see Remark \ref{Rmk:chQ}).

In order to be able to compare our fracture square with the one in \cite[p.\ 3]{HLN} (Theorem \ref{Prop:compwithLtheory}), let us replace $g: \rR\ko[1/2] \to \kotop[1/2]$ by $g'':= \psi^2 \circ g$, where $\psi^2: \kotop[1/2] \to \kotop[1/2]$ is the second Adams operation. By \cite[Section IV, 7.13, 7.19 and 7.25]{Karoubi-Ktheory}, the latter is an $\Einfty$-map, and it induces multiplication by 4 on the fourth homotopy groups. Then, repeating the same proof, we obtain $g''(x) = \psi^2(\pm\beta_4/8) = \pm\beta_4/2$ and $k(t^4) = \pm t^4$. We can further replace $g''$ by $-g''$ if needed to obtain $g''(x) = \beta_4/2$ and $k(t^4) = t^4$. Thus, the lower horizontal map $k$ in our pullback square is the map of $\E{1}$-$\HZ_{(2)}$-algebras $\HZ_{(2)}[t^4] \to \HQ[t^4]$ sending $t^4$ to $t^4$ in the fourth homotopy groups, as desired.
\end{proof}

\begin{Prop}\label{Prop:rRbetako} In the notation of Proposition \ref{Prop:homgprRko}, we have $\rR(\beta_\ko) = \pm 16x \in \pi_4(\rR\ko)$.
\end{Prop}
\begin{proof}
    The proof is divided in several lemmas proven in Subsection \ref{Subsubsect:usedinproof}. We explain here how they combine to prove the proposition.\\

    \textbf{Step 1:} \emph{Reducing the problem to computing the cardinality of a certain homotopy group.} We want to understand the action of $\rR\beta_\ko$ on the homotopy groups of $\rR\ko$, by considering a cofiber sequence where this map appears together with only a small number of easily computable groups. We will thus consider slices. By Lemma \ref{Prop:aiscardinality} below, if we write $\rR(\beta_\ko) = ax$ with $a\in\Z$ (recall that $x$ generates $\pi_4(\rR\ko) \cong \Z$), then $a=\pm|\pi_4(\rR C)|$, where $C$ is the cofiber of the natural map $\tilde{f}_4\ko \to \ko$. This result holds because, as we will see in its proof, the natural map $\tilde{f}_4\ko \to \ko$ corresponds to $\beta_\ko$ under the identification $\tilde{f}_4\ko \simeq \tilde{f}_4\KO \simeq \tilde{f}_4(\Sigma^{8,4}\KO) \simeq T^{\Smash 4} \Smash \tilde{f}_0\KO \simeq T^{\Smash 4} \Smash \ko$, where the first equivalence is induced by $\beta_\KO$ (periodicity of $\KO$) and the second one is Proposition \ref{Prop:shiftingslices}.\\
    
    \textbf{Step 2:} \emph{Setting up a spectral sequence that computes $|\pi_4(\rR C)|$.}  Under sufficiently nice assumptions, any (co)filtered object in a stable category has an associated strongly convergent spectral sequence computing the values of a fixed homological functor on this object. We recall the general theory in Subsection \ref{Subsect:multss}. We will consider the very effective filtration on $C = \ko/\tilde{f}_4\ko$, and after applying real realization, this will give us a filtration on $\rR C$. The $E^1$-page of such a spectral sequence involves the homotopy groups of the successive cofibers of the maps in the filtered object. In our case, these will be the homotopy groups of the real realizations of the very effective slices of $C$, which are the first four very effective slices of $\ko$. As we saw in Theorem \ref{Prop:veffslicesko}, these slices have already been computed in \cite[Thm.\ 16]{Tom-genslices}.\\
            
    \textbf{Step 3:} \emph{Making explicit computations with this spectral sequence.} We computed the realizations of all the motivic spectra appearing as slices of $\ko$ in Section \ref{Sect:examplesrealization}, except for $\tilde{s}_0\ko$. The only grasp we have on the latter is given by its decomposition as a cofiber sequence in Theorem \ref{Prop:veffslicesko}. To compute the homotopy groups of $\rR(\tilde{s}_0\ko)$, we will use another spectral sequence, this time with respect to the filtration given by the effective homotopy $t$-structure. This is the content of Lemma \ref{Prop:pistilde0ko}. This allows us to prove $|\pi_4(\rR C)|=16$ in Lemma \ref{Prop:pi4C}, using the first spectral sequence we mentioned.
\end{proof}
\vspace{0.2cm}

\subsubsection{Digression: spectral sequences with a multiplicative structure}\label{Subsect:multss}\hfill\vspace{0.2cm}

The proofs of Lemmas \ref{Prop:pistilde0ko} and \ref{Prop:pi4C} needed for Proposition \ref{Prop:rRbetako} will require us to use spectral sequences arguments. We therefore recall the necessary machinery.

\begin{thm}\label{Prop:spectralsequence} Let $X := \cdots \to X_{-1} \to X_0 \to X_1 \to \cdots$ be a tower in $\Sp$. Then, there is a spectral sequence $E^1_{p,q} = \pi_p(\mathsf{cof}(X_{-q-1} \to X_{-q}))$ converging conditionally to $\pi_p(\mathsf{cof}(\lim_n X_n \to \colim_n X_n))$. Moreover, if the spectral sequence is concentrated in the upper half-plane and each term is the source of only finitely many non-trivial differentials, then it converges strongly. Namely, for all $p,q\in\Z$, we have $E^\infty_{p,q} = \mathsf{coker}(\pi_p(X_{-q}) \to \pi_p(X_{-q+1}))$ and $\pi_p(\colim_n X_n) = \colim_q\, \mathsf{coker}(\pi_p(X_{-q}) \to \pi_p(\colim_n X_n))$.

We call the cofibers $\mathsf{cof}(X_{-q-1} \to X_{-q})$ the \emph{subquotients} associated with the cofiltered object $(X_n)_{n\in\Z}$.
\end{thm}
\begin{proof} Such a spectral sequence for cofiltered objects in a general stable category $\calC$ is defined in \cite[Construction 1.2.2.6]{Lurie-HA}. The conditional convergence is proven in \cite[Lem.\ 6.16]{BH}. Then, the statement about strong convergence is \cite[Remark below Thm.\ 7.1]{Boardman}.
\end{proof}

If the tower of spectra we started with admits some kind of multiplicative structure, the spectral sequence also inherits a multiplicative structure, which can help to determine certain differentials in such spectral sequences. More precisely, assume that $(X_n)_{n\in\Z}$ admits a pairing taking the form of compatible maps $X_n \Smash X_m \to X_{n+m}$ for all $n,m\in \Z$. This descends to a pairing on the subquotients, given by the maps 
$\gamma_{n,m}$ induced between the cofibers of $\alpha$ and the zero map in the bottom face of the following diagram (where $S_n := \mathsf{cof}(X_{n-1}\to X_n)$ for all $n\in\Z$):

%https://q.uiver.app/#q=WzAsMTQsWzQsNiwiU19tIFxcd2VkZ2UgU19uIl0sWzIsNSwiWF9tIFxcd2VkZ2UgU19uIl0sWzAsNCwiWF97bS0xfSBcXHdlZGdlIFNfbiJdLFs4LDYsIlNfe20rbn0iXSxbNiw1LCJTX3ttK259Il0sWzQsNCwiU197bStuLTF9Il0sWzQsMiwiWF97bStuLTF9Il0sWzQsMCwiWF97bStuLTJ9Il0sWzYsMSwiWF97bStuLTF9Il0sWzYsMywiWF97bStufSJdLFsyLDMsIlhfbSBcXHdlZGdlIFhfbiJdLFswLDIsIlhfe20tMX0gXFx3ZWRnZSBYX24iXSxbMCwwLCJYX3ttLTF9IFxcd2VkZ2UgWF97bi0xfSJdLFsyLDEsIlhfe219IFxcd2VkZ2UgWF97bi0xfSJdLFsyLDUsIlxcbGFtYmRhX3ttLTEsbn0iLDAseyJsYWJlbF9wb3NpdGlvbiI6NzAsImNvbG91ciI6WzI0MCw2MCw2MF0sInN0eWxlIjp7ImJvZHkiOnsibmFtZSI6ImRhc2hlZCJ9fX0sWzI0MCw2MCw2MCwxXV0sWzEsNCwiXFxsYW1iZGFfe20sbn0iLDAseyJjb2xvdXIiOlsyNDAsNjAsNjBdfSxbMjQwLDYwLDYwLDFdXSxbMCwzLCJcXGdhbW1hX3ttLG59Il0sWzIsMSwiXFxhbHBoYSIsMSx7ImNvbG91ciI6WzI0MCw2MCw2MF19LFsyNDAsNjAsNjAsMV1dLFs1LDQsIjAiLDEseyJjb2xvdXIiOlsyNDAsNjAsNjBdLCJzdHlsZSI6eyJib2R5Ijp7Im5hbWUiOiJkYXNoZWQifX19LFsyNDAsNjAsNjAsMV1dLFs2LDksIiIsMSx7InN0eWxlIjp7ImJvZHkiOnsibmFtZSI6ImRhc2hlZCJ9fX1dLFs3LDhdLFs4LDldLFs5LDRdLFs3LDYsIiIsMSx7InN0eWxlIjp7ImJvZHkiOnsibmFtZSI6ImRhc2hlZCJ9fX1dLFs2LDUsIiIsMSx7InN0eWxlIjp7ImJvZHkiOnsibmFtZSI6ImRhc2hlZCJ9fX1dLFsxMiw3XSxbMTIsMTNdLFsxMiwxMV0sWzExLDIsIlxcYWxwaGEnIiwxXSxbMTAsMV0sWzExLDEwXSxbMTMsMTBdLFsxMSw2LCIiLDEseyJzdHlsZSI6eyJib2R5Ijp7Im5hbWUiOiJkYXNoZWQifX19XSxbMTAsOV0sWzEzLDhdLFsxLDBdLFs0LDNdXQ==
\begin{equation}\label{Diagram:pairing}
    \begin{tikzcd}[column sep = 1em, row sep = 0.8em]
	{X_{m-1} \wedge X_{n-1}} &&&& {X_{m+n-2}} \\
	&& {X_{m} \wedge X_{n-1}} &&&& {X_{m+n-1}} \\
	{X_{m-1} \wedge X_n} &&&& {X_{m+n-1}} \\
	&& {X_m \wedge X_n} &&&& {X_{m+n}} \\
	{X_{m-1} \wedge S_n} &&&& {S_{m+n-1}} \\
	&& {X_m \wedge S_n} &&&& {S_{m+n}} \\
	&&&& {S_m \wedge S_n} &&&& {S_{m+n}.}
	\arrow[from=1-1, to=1-5]
	\arrow[from=1-1, to=2-3]
	\arrow[from=1-1, to=3-1]
	\arrow[from=1-5, to=2-7]
	\arrow[dashed, from=1-5, to=3-5]
	\arrow[from=2-3, to=2-7]
	\arrow[from=2-3, to=4-3]
	\arrow[from=2-7, to=4-7]
	\arrow[dashed, from=3-1, to=3-5]
	\arrow[from=3-1, to=4-3]
	\arrow["{\alpha'}"{description}, from=3-1, to=5-1]
	\arrow[dashed, from=3-5, to=4-7]
	\arrow[dashed, from=3-5, to=5-5]
	\arrow[from=4-3, to=4-7]
	\arrow[from=4-3, to=6-3]
	\arrow[from=4-7, to=6-7]
	\arrow[MyGreen, "{\lambda_{m-1,n}}"{pos=0.7}, dashed, from=5-1, to=5-5]
	\arrow[MyGreen, "\alpha"{description}, from=5-1, to=6-3]
	\arrow[MyGreen, "0"{description}, dashed, from=5-5, to=6-7]
	\arrow[MyGreen, "{\lambda_{m,n}}", from=6-3, to=6-7]
	\arrow[from=6-3, to=7-5]
	\arrow[from=6-7, to=7-9]
	\arrow["{\gamma_{m,n}}", from=7-5, to=7-9]
\end{tikzcd}
\end{equation}
All vertical composites are cofiber sequences; the maps $\alpha$, $\lambda_{m-1,n}$, $\lambda_{m,n}$, and $0$ in the bottom face are the maps induced on the cofibers by the top two layers of horizontal maps.

\begin{Prop}[\protect{\cite[\S 6, Thm.\ 6.2]{Dugger}}]\label{Prop:multss} In the situation described above, the pairing on the subquotients itself descends to a pairing $E^r_{p,q} \otimes E^r_{s,t} \to E^r_{p+s,q+t}$ (for all $r\geq 1$ and $p,q,s,t\in\Z$) on the spectral sequence; and the differentials $d^r$ satisfy the (graded) Leibniz rule with respect to this pairing.
\end{Prop}
\vspace{0.2cm}

\subsubsection{Results used in the proof of Proposition \ref{Prop:rRbetako}}\label{Subsubsect:usedinproof}\hfill\vspace{0.2cm}

\begin{Lemma}\label{Prop:aiscardinality} Let $a\in\Z$ with $\rR(\beta_\ko) = ax$ (where $x$ generates $\pi_4(\rR\ko)\cong\Z$). Then $a=\pm|\pi_4(\rR C)|$, where $C$ is the cofiber of the natural map $\tilde{f}_4\ko \to \ko$.
\end{Lemma}
\begin{proof}
    The cofiber sequence $\tilde{f}_4\ko \to \ko \to C$ induces a long exact sequence
% https://q.uiver.app/#q=WzAsNixbMSwwLCJcXHBpXzQoXFxyUlxcdGlsZGV7Zn1fNFxca28pIl0sWzIsMCwiXFxwaV80KFxcclJcXGtvKSJdLFszLDAsIlxccGlfNChcXHJSQykiXSxbNCwwLCJcXHBpXzMoXFxyUlxcdGlsZGV7Zn1fNFxca28pIl0sWzUsMCwiXFxkb3RzIl0sWzAsMCwiXFxkb3RzIl0sWzUsMF0sWzAsMV0sWzEsMl0sWzIsM10sWzMsNF1d
\[\begin{tikzcd}
	\cdots & {\pi_4(\rR\tilde{f}_4\ko)} & {\pi_4(\rR\ko)} & {\pi_4(\rR C)} & {\pi_3(\rR\tilde{f}_4\ko)} & \cdots.
	\arrow[from=1-1, to=1-2]
	\arrow[from=1-2, to=1-3]
	\arrow[from=1-3, to=1-4]
	\arrow[from=1-4, to=1-5]
	\arrow[from=1-5, to=1-6]
\end{tikzcd}\]
Since by definition $\Sigma^{8,4}\ko \in \SH(k)^\veff(4)$, there is a lift
% https://q.uiver.app/#q=WzAsMyxbMSwwLCJcXHRpbGRle2Z9XzQoXFxrbykiXSxbMSwxLCJcXGtvIl0sWzAsMSwiXFxTaWdtYV57OCw0fVxca28iXSxbMCwxXSxbMiwxLCJcXGJldGFfXFxrbyIsMl0sWzIsMCwiXFxhbHBoYSIsMCx7InN0eWxlIjp7ImJvZHkiOnsibmFtZSI6ImRhc2hlZCJ9fX1dXQ==
\[\begin{tikzcd}
	& {\tilde{f}_4\ko} \\
	{\Sigma^{8,4}\ko} & \ko
	\arrow[from=1-2, to=2-2]
	\arrow["\alpha", dashed, from=2-1, to=1-2]
	\arrow["{\beta_\ko}"', from=2-1, to=2-2]
\end{tikzcd}\]
where $\tilde{f}_4\ko \to \ko$ is the natural map. We claim that $\alpha$ is an equivalence. In particular, by Proposition \ref{Prop:homgprRko}, applying real realization and taking the fourth homotopy groups, the map ${\pi_4(\rR\tilde{f}_4\ko)} \to {\pi_4(\rR\ko)}$ is a morphism $\Z\to \Z$ given by multiplication by $a$. Since $\rR\tilde{f}_4\ko$ is 4-connective by Lemma \ref{Prop:rRconnective}, in the long exact sequence above we have ${\pi_3(\rR\tilde{f}_4\ko)}\cong 0$ and thus $\pi_4(\rR C) \cong \Z/a$. Therefore, to finish the proof we only have to show that $\alpha$ is an equivalence. In the following diagram
% https://q.uiver.app/#q=WzAsNyxbMSwxLCJcXFNpZ21hXns4LDR9XFxLTyJdLFswLDEsIlxcU2lnbWFeezgsNH1cXGtvIl0sWzIsMSwiXFxLTyJdLFsyLDAsIlxcdGlsZGV7Zn1fNChcXEtPKSJdLFsxLDAsIlxcdGlsZGV7Zn1fNChcXFNpZ21hXns4LDR9XFxLTykiXSxbMywxLCJcXGtvIl0sWzMsMCwiXFx0aWxkZXtmfV80KFxca28pIl0sWzEsMF0sWzAsMiwiXFxiZXRhX1xcS08iLDJdLFszLDJdLFs0LDMsIlxcdGlsZGV7Zn1fNChcXGJldGFfXFxLTykiLDJdLFsxLDQsIlxcYWxwaGEnIiwwLHsic3R5bGUiOnsiYm9keSI6eyJuYW1lIjoiZGFzaGVkIn19fV0sWzQsMF0sWzMsNiwiXFxzaW0iXSxbNiw1XSxbNSwyXSxbMSw2LCJcXGFscGhhIiwxLHsic3R5bGUiOnsiYm9keSI6eyJuYW1lIjoiZGFzaGVkIn19fV0sWzEsNSwiXFxiZXRhX1xca28iLDIseyJjdXJ2ZSI6M31dXQ==
\[\begin{tikzcd}[row sep = 3em, column sep = 3em]
	& {\tilde{f}_4(\Sigma^{8,4}\KO)} & {\tilde{f}_4(\KO)} & {\tilde{f}_4(\ko)} \\
	{\Sigma^{8,4}\ko} & {\Sigma^{8,4}\KO} & \KO & \ko
	\arrow["{\tilde{f}_4(\beta_\KO)}"',"\sim", from=1-2, to=1-3]
	\arrow[from=1-2, to=2-2]
	\arrow["\sim", from=1-3, to=1-4]
	\arrow[from=1-3, to=2-3]
	\arrow[from=1-4, to=2-4]
	\arrow["{\alpha'}", dashed, from=2-1, to=1-2]
	\arrow["\alpha"{description}, dashed, from=2-1, to=1-4]
	\arrow[from=2-1, to=2-2]
	\arrow["{\beta_\ko}"', curve={height=18pt}, from=2-1, to=2-4]
	\arrow["{\beta_\KO}"',"\sim", from=2-2, to=2-3]
	\arrow[from=2-4, to=2-3]
\end{tikzcd}\]
the map $\alpha$ factors through the lift $\alpha'$ also obtained by very 4-effectiveness of $\Sigma^{8,4}\ko$. But $\alpha'$ is exactly the equivalence $T^{\Smash 4} \Smash \tilde{f}_0 \KO \simeq \tilde{f}_4(T^{\Smash 4} \Smash \KO)$ of Proposition \ref{Prop:shiftingslices}. From the commutativity of the diagram, we deduce that $\alpha'$ is a composition of three equivalences, and thus an equivalence, as needed. 
\end{proof}

\begin{Lemma}\label{Prop:pistilde0ko} The homotopy groups of $\rR(\tilde{s}_0\ko)$ are given for $k\geq 0$ by
$$\pi_k(\rR\tilde{s}_0\ko) = \begin{cases} \Z &\text{ if }k=0 \\ \Zmod &\text{ if }k\equiv 2\, \mathsf{mod}\, 4 \text{ or }k\equiv 3\, \mathsf{mod}\, 4 \\
0 &\text{ if } k\equiv 1\, \mathsf{mod}\, 4 \\
\end{cases}$$
and in positive degrees divisible by 4, $\pi_k(\rR\tilde{s}_0\ko)$ is an extension of two copies of $\Zmod$.
\end{Lemma}
\begin{Rmk}
A posteriori, using the proof of Lemma \ref{Prop:pi4C}, we get that $\pi_4(\rR(\tilde{s}_0\ko))$ is at the same time a quotient of a subgroup of $\pi_4(\rR(C)) \cong \Z/16$ and an extension of two copies of $\Z/2$. Therefore, we have $\pi_4(\rR(\tilde{s}_0\ko)) \simeq \Z/4$.
\end{Rmk}
\begin{proof}
    We apply Theorem \ref{Prop:spectralsequence} with $X_n =\rR((\tilde{s}_0\ko)_{\geq_e -n})$ for all $n\in\Z$. We will see in Step 1 below that the spectral sequence obtained satisfies the additional assumptions for the strong convergence. The colimit $\colim_n X_n$ is eventually constant with value $\rR(\tilde{s}_0\ko)$, whereas the limit $\lim_n X_n$ is trivial because $X_n$ is $-n$ connective for all $n\leq 0$. Indeed, $$\pi_r(X_n) = \pi_r(\rR((\tilde{s}_0\ko)_{\geq_e -n})) = \pi_r(\Sigma^{-n}\rR((\Sigma^n\tilde{s}_0\ko)_{\geq_e 0})) = 0$$ for all $r<-n$ (because $(\Sigma^r\tilde{s}_0\ko)_{\geq_e 0} \in \SH(\R)^\eff_{\geq 0} = \SH(k)^\veff$ by Proposition \ref{Prop:propertiesofSHkeff}, and very effective spectra real realize to connective ones by Lemma \ref{Prop:rRconnective}). Since the limit of a tower of $k$-connective spectra is $(k-1)$-connective \cite[Prop.\ VI.2.15]{GJ}, $\lim_n X_n$ is $\infty$-connective and thus trivial. Thus, the spectral sequence converges strongly to $\pi_p(\rR(\tilde{s}_0\ko))$.\\
    
    \textbf{Step 1:} \emph{In the $E^1$-page, only the zeroth and first line have non-trivial terms, i.e., only the zeroth and first subquotients are non-trivial.} Indeed, we have by definition $\tilde{s}_0\ko \in \SHR^\veff = \SHR^\eff_{\geq 0}$ (by Proposition \ref{Prop:propertiesofSHkeff}$(i)$), so that $(\tilde{s}_0\ko)_{\geq_e 0} = \tilde{s}_0\ko$. By Claim (1) in the proof of \cite[Lem.\ 11]{Tom-genslices}, we have $(\tilde{s}_0\ko)_{\geq_e 1} \simeq s_0(\ko_{\geq 1})$, and the latter is the first term in the cofiber sequence of Theorem \ref{Prop:veffslicesko}, namely $\Sigma^{1,0}\HZmod$. The natural map $(\tilde{s}_0\ko)_{\geq_e 1} \to \tilde{s}_0\ko$ is exactly the map $\Sigma^{1,0}\HZmod \to \tilde{s}_0$ appearing in this cofiber sequence. In particular, the $0$-th subquotient is nothing but the third term of this cofiber sequence, $\HZtilde$. Let us denote $s'_1\ko := (\tilde{s}_0\ko)_{\geq_e 1} \simeq \Sigma^{1,0}\HZmod$ and $s'_0\ko := (\tilde{s}_0\ko)/((\tilde{s}_0\ko)_{\geq_e 1}) \simeq \HZtilde$. The first subquotient is actually $s'_1\ko$ since $(\tilde{s}_0\ko)_{\geq_e k} \simeq 0$ for all $k\geq 2$. Indeed, in the cofiber sequence
$$ (\Sigma^{1,0}\HZmod)_{\geq_e k} \to (\tilde{s}_0\ko)_{\geq_e k} \to (\HZtilde)_{\geq_e k},$$
the two outer terms are zero, because by Proposition \ref{Prop:HZHZmodveryeff} we have $\HZmod, \HZtilde \in \SHR^{\eff, \heartsuit}$, in particular $(\HZtilde)_{\geq_e k} \simeq 0$ for all $k\geq 1$ and $(\Sigma^{1,0}\HZmod)_{\geq_e k} \simeq \Sigma^{1,0}(\HZmod_{\geq_e k-1}) \simeq 0$ for all $k\geq 2$.\\

\textbf{Step 2:} \emph{We write down the $E^1$-page.} By Propositions \ref{Prop:rRHZHZmod} and \ref{Prop:rRHZtilde}, we know the homotopy groups of $\rR(s'_0\ko)$ and $\rR(s'_1\ko)$. Therefore, the $E^1$-page of our spectral sequence for $\pi_\ast(\rR\tilde{s}_0\ko)$ reads as follows:

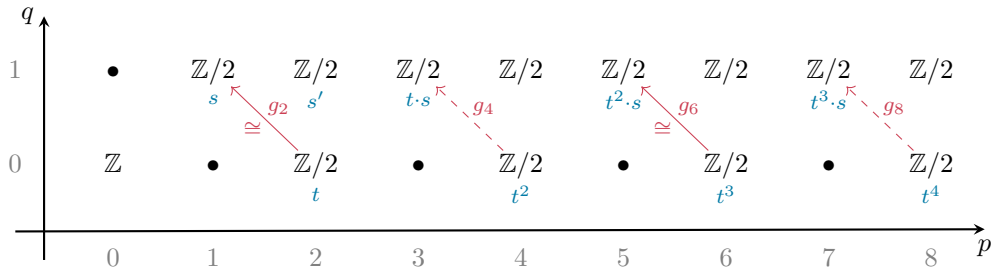
\begin{figure}[h]
\centering
\begin{tikzpicture}
  \matrix (m) [matrix of math nodes,
    nodes in empty cells,nodes={minimum width=5ex,
    minimum height=5ex,outer sep=-5pt},
    column sep=3.5ex,row sep=3ex]{
          {\color{gray} 1}     &  \bullet &  \Zmod  & \Zmod & \Zmod & \Zmod &  \Zmod  & \Zmod & \Zmod & \Zmod \\
          {\color{gray} 0}    &  \Z  & \bullet &  \Zmod  & \bullet& \Zmod  & \bullet &  \Zmod  & \bullet & \Zmod \\
    \quad\strut &  {\color{gray} 0}  &  {\color{gray} 1}  &  {\color{gray} 2}  & {\color{gray} 3} & {\color{gray} 4}  &  {\color{gray} 5}  &  {\color{gray} 6}  & {\color{gray} 7} & {\color{gray} 8} \\};
    \draw [draw= BrickRed, thick, -stealth] (m-2-4.north west) -- (m-1-3.south east) node[midway,inner sep = 1pt,above right] {\footnotesize \color{BrickRed} $g_2$} node[midway,inner sep = 1pt,below left] {\footnotesize \color{BrickRed} $\cong$};
    \draw [draw= BrickRed, thick, dashed, -stealth] (m-2-6.north west) -- (m-1-5.south east) node[midway,inner sep = 1pt,above right] {\footnotesize \color{BrickRed} $g_4$};
    \draw [draw= BrickRed, thick, -stealth] (m-2-8.north west) -- (m-1-7.south east) node[midway,inner sep = 1pt,above right] {\footnotesize \color{BrickRed} $g_6$} node[midway,inner sep = 1pt,below left] {\footnotesize \color{BrickRed} $\cong$};
    \draw [draw= BrickRed, thick, dashed, -stealth] (m-2-10.north west) -- (m-1-9.south east) node[midway,inner sep = 1pt,above right] {\footnotesize \color{BrickRed} $g_8$};
    \draw ([yshift=-5pt]m-1-3.south) node{\footnotesize \color{Cerulean} $s$};
    \draw ([yshift=-5pt]m-1-4.south) node{\footnotesize \color{Cerulean} $s'$};
    \draw ([yshift=-5pt]m-1-5.south) node{\footnotesize \color{Cerulean} $t\smash\cdot\smash s$};
    \draw ([yshift=-5pt]m-1-7.south) node{\footnotesize \color{Cerulean} $t^2\smash\cdot \smash s$};
    \draw ([yshift=-5pt]m-1-9.south) node{\footnotesize \color{Cerulean} $t^3\smash\cdot \smash s$};
    \draw ([yshift=-5pt]m-2-4.south) node{\footnotesize \color{Cerulean} $t$};
    \draw ([yshift=-5pt]m-2-6.south) node{\footnotesize \color{Cerulean} $t^2$};
    \draw ([yshift=-5pt]m-2-8.south) node{\footnotesize \color{Cerulean} $t^3$};
    \draw ([yshift=-5pt]m-2-10.south) node{\footnotesize \color{Cerulean} $t^4$};
\draw[thick, -stealth] ([xshift=5pt]m-3-1.east) -- ([xshift=5pt,yshift=20pt]m-1-1.east) node [left] {$q$} ;
\draw[thick, -stealth] ([yshift=5pt]m-3-1.north) -- ([xshift=20pt,yshift=5pt]m-3-10.north) node [below] {$p$} ;
\end{tikzpicture}
\caption{\footnotesize $E^1$-page for the spectral sequence for $\rR\tilde{s}_0\ko$ with respect to the effective homotopy $t$-structure.}\label{Fig:smallsseq}
\end{figure}

The bullets ``$\bullet$'' represent trivial groups. The arrows are some $d^1$ differentials, we will explain how to determine them just below. Dotted arrows mean that the differential is trivial. The elements written under some groups in the $E^1$-page are generators of these groups; we will see in Step 4 why it is indeed the case.\\

\textbf{Step 3:} \emph{We identify the differential $g_2$.} To begin with, note that we must have $\pi_1(\rR\tilde{s}_0\ko) \cong 0$. Indeed, this group is the $E^1_{1,0}$-term in the spectral sequence given by the theorem for $X'_n = \rR(\tilde{f}_{-n}\ko)$ (by convention $\tilde{f}_{n}\ko = \ko$ for $n<0$) (the assumptions are the theorem are verified in this case by the same arguments as in the beginning of this proof). We obtain a spectral sequence concentrated in the first quadrant with $E^1_{p,q} = \pi_p(\tilde{s}_q\ko)$ for all $p,q\in\N$, converging strongly to $\pi_p(\rR\ko)$. In particular, the $E^1_{1,0}$-term will never receive a non-trivial differential. Moreover, since the first column of this same spectral contains only zeroes except in position $(0,0)$ (we have $\pi_0(\rR\tilde{s}_i\ko) \cong 0$ for all $i\geq 1$), it will never be the source of a non-trivial differential either, and will survive to the $E^\infty$-page. But since $\pi_1(\rR\ko) \cong 0$ by Proposition \ref{Prop:homgprRko}, nothing in the first column can survive. This means that $\pi_1(\rR\tilde{s}_0\ko) \cong 0$. In particular, in the spectral sequence for $\rR(\tilde{s}_0\ko)$ and the effective homotopy t-structure, $E^1_{1,1} \cong \Zmod$ must eventually die. Since it cannot be the source of any non-trivial differential, the only possibility is that it receives a non-trivial differential $g_2$ from $E^1_{2,0} \cong \Zmod$, which must then be an isomorphism.\\

\textbf{Step 4:} \emph{We use the multiplicative structure on this spectral sequence to compute the next differentials.} In the situation of Proposition \ref{Prop:multss}, let $E = \tilde{s}_0\ko$ and $X_n := \rR(E_{\geq_e -n})$ for all $n\in\Z$. The pairings $X_{-n} \Smash X_{-m} \to X_{-n-m}$ are given by the real realizations of the compositions $ E_{\geq_e n} \Smash E_{\geq_e m} \to E\Smash E \to E$ of the natural maps with the multiplication on $E$ (the latter is itself induced by the pairing in the tower for the very effective slice filtration on $\ko$, as we will see below), which lifts to $E_{\geq_e n+m}$. Indeed, we have
$$\SHR^\eff_{\geq n} \Smash \SHR^\eff_{\geq m} = \Sigma^{n,0}\SHR_{\geq 0}^\eff \Smash \Sigma^{m,0}\SHR_{\geq 0}^\eff \subseteq \Sigma^{m+n,0}\SHR_{\geq 0}^\eff = \SHR^\eff_{\geq m+n}$$ because the effective homotopy t-structure is compatible with the symmetric monoidal structure (in the sense of \cite[Def. A.10]{AN}). In particular, we have pairings 
\begin{align*}\HZtilde \Smash \HZtilde &\simeq s'_0\ko \Smash s'_0\ko \to s'_0\ko = \HZtilde,\\
\HZtilde \Smash \Sigma^{1,0}\HZmod &\simeq s'_0\ko \Smash s'_1\ko \to s'_1\ko = \Sigma^{1,0}\HZmod.
\end{align*}
We defer to Lemma \ref{Prop:pairings} below the proof that these are exactly the pairings induced, respectively, by the ring structure on $\HZtilde$ and the composition $\HZtilde \Smash \HZmod \to \HZ \Smash \HZmod \to \HZmod$ induced by the natural map from Milnor--Witt K-theory to Milnor K-theory and the $\HZ$-algebra structure on $\HZmod$. Then, the pairing $\rR(s'_0\ko) \Smash \rR(s'_0\ko) \to \rR(s'_0\ko)$ on the subquotients is given in homotopy by the ring structure of $\pi_\ast(\rR\HZtilde) = \Z[t^2]/(2t^2)$ (Proposition \ref{Prop:rRHZtilde}), with $\HZtilde \to \HZmod$ killing $2$. The pairing $\rR(s'_0\ko) \Smash \rR(s'_1\ko) \to \rR(s'_1\ko)$ is given in homotopy, after desuspension, by $\Z[t^2]/(2t^2) \otimes \Z/2[t] \to \Z/2[t^2] \otimes \Z/2[t] \to \Z/2[t]$, where the first map is the quotient and the second one is multiplication (viewing $\Z/2[t^2]\subseteq \Z/2[t]$ by Proposition \ref{Prop:rRHZHZmod}). This explains why the generators are the ones displayed in the spectral sequence in Figure \ref{Fig:smallsseq}. Since $g_2$ is an isomorphism, we have $d^1(t)=s$. It follows that $g_{2k}$ is zero for $k$ even and an isomorphism for $k$ odd. Indeed, by the Leibniz rule $g_{2k}(t^k) = d^1(t^k)=kt^{k-1}d^1(t) \in \Zmod$ is 0 if $k$ is even and equals $t^{k-1}\cdot s$ otherwise (i.e., a generator in the target). \\

\textbf{Step 5:} \emph{We conclude.} The previous steps show that the $E^1$-page of our spectral sequence has the form displayed in Figure \ref{Fig:smallsseq}. Therefore, the $E^2$-page reads as follows:
\[ \begin{tikzpicture}
      \matrix (m) [matrix of math nodes,
        nodes in empty cells,nodes={minimum width=5ex,
        minimum height=5ex,outer sep=-5pt},
        column sep=3.5ex,row sep=3ex]{
              {\color{gray} 1}     &  \bullet &  \bullet  & \Zmod & \Zmod & \Zmod &  \bullet  & \Zmod & \Zmod & \Zmod \\
              {\color{gray} 0}    &  \Z  & \bullet &  \bullet  & \bullet& \Zmod  & \bullet &  \bullet  & \bullet & \Zmod \\
        \quad\strut &  {\color{gray} 0}  &  {\color{gray} 1}  &  {\color{gray} 2}  & {\color{gray} 3} & {\color{gray} 4}  &  {\color{gray} 5}  &  {\color{gray} 6}  & {\color{gray} 7} & {\color{gray} 8} \\};
        \draw ([yshift=-5pt]m-1-4.south) node{\footnotesize \color{Cerulean} $s'$};
        \draw ([yshift=-5pt]m-1-5.south) node{\footnotesize \color{Cerulean} $t\smash\cdot\smash s$};
        \draw ([yshift=-5pt]m-1-9.south) node{\footnotesize \color{Cerulean} $t^3\smash\cdot \smash s$};
        \draw ([yshift=-5pt]m-2-6.south) node{\footnotesize \color{Cerulean} $t^2$};
        \draw ([yshift=-5pt]m-2-10.south) node{\footnotesize \color{Cerulean} $t^4$};
    \draw[thick, -stealth] ([xshift=5pt]m-3-1.east) -- ([xshift=5pt,yshift=20pt]m-1-1.east) node [left] {$q$} ;
    \draw[thick, -stealth] ([yshift=5pt]m-3-1.north) -- ([xshift=20pt,yshift=5pt]m-3-10.north) node [below] {$p$} ;
    \end{tikzpicture} \]
    and for degree reasons, no non-trivial differential can exist (on the $E^2$-page, differentials have degree (-1,2)). Thus, the spectral sequence collapses, and this finishes the proof.
\end{proof}

\begin{Lemma}\label{Prop:pairings} For any $E\in \SHR^{\eff,\heartsuit}$, there is an equivalence 
    $$\map_{\SH(\R)}(\HZtilde \Smash E, E) \lsimeq{} \map_{\SH(\R)}(E, E)$$
    induced by precomposition by the map $E \simeq \unit \Smash E \to \HZtilde \Smash E$ (given by the unit of the $\Einfty$-ring $\HZtilde$).
    
    In particular, the pairings $\HZtilde \Smash \HZtilde  \to  \HZtilde$ and $\HZtilde \Smash \Sigma^{1,0}\HZmod \to \HZmod$ for the multiplicative spectral sequence of Figure \ref{Fig:smallsseq} (before taking real realization and homotopy group) are given, respectively, by the multiplication of $\HZtilde$ and the composition $\HZtilde \Smash \HZmod \to \HZ \Smash \HZmod \to \HZmod$ induced by the natural map from Milnor--Witt K-theory to Milnor K-theory and the $\HZ$-algebra structure on $\HZmod$. 
\end{Lemma}
\begin{proof} First, we claim that, for any $F,G \in \SHR^\eff_{\geq 0}$, we have $(F\Smash G)_{\leq_e 0} \simeq (F_{\leq_e 0} \Smash G_{\leq_e 0})_{\leq_e 0}$ via the natural maps $F\to F_{\leq_e 0}$ and $G\to G_{\leq_e 0}$. Indeed, almost by definition of a t-structure, we have in $\SH(\R)^\eff_{\geq 0}$ a cofiber sequence $F_{\geq_e 1} \Smash G \to F \Smash G  \to F_{\leq_e 0} \Smash G $. The associated long exact sequence then gives an equivalence $(F \Smash G)_{\leq_e 0} \lsimeq{} (F_{\leq_e 0} \Smash G)_{\leq_e 0}$ (since $F_{\geq_e 1} \Smash G \in \SH(\R)^\eff_{\geq_e 1} \Smash \SH(\R)^\eff_{\geq_e 0} \subseteq \SH(\R)^\eff_{\geq_e 1}$). Applying the same argument while exchanging the roles of $F$ and $G$, we obtain our claim.

    Let $E\in \SH(\R)^{\eff,\heartsuit}$. Using the previous claim and $\HZtilde = \unit_{\leq_e 0}$ \cite[Lem.\ 12]{Tom-genslices}, we have
\begin{align*}
    \map_{\SHR}(\HZtilde \Smash E, E) &\simeq \map_{\SHR}((\HZtilde \Smash E)_{\leq_e 0}, E) \simeq \map_{\SHR}((\unit_{\leq_e 0} \Smash E_{\leq_e 0})_{\leq_e 0}, E)\\
    &\simeq \map_{\SHR}((\unit \Smash E)_{\leq_e 0}, E) \simeq \map_{\SHR}(\unit \Smash E, E).
\end{align*}
This equivalence of mapping spaces is induced by precomposition with $\unit \to \unit_{\leq_e 0} \simeq \HZtilde$ (tensored with $E$). Indeed, there is a commutative diagram
%  https://q.uiver.app/#q=WzAsNSxbMCwwLCJcXHVuaXQgXFx3ZWRnZSBFIl0sWzEsMCwiXFx1bml0X3tcXGxlcV9lIDB9IFxcd2VkZ2UgRV97XFxsZXFfZSAwfSBcXHNpbWVxXFxIWnRpbGRlIFxcd2VkZ2UgRSJdLFsyLDAsIkVfe1xcbGVxX2UgMH0gXFxzaW1lcSBFIl0sWzAsMSwiKFxcdW5pdCBcXHdlZGdlIEUpX3tcXGxlcV9lIDB9Il0sWzEsMSwiKFxcdW5pdF97XFxsZXFfZSAwfSBcXHdlZGdlIEVfe1xcbGVxX2UgMH0pX3tcXGxlcV9lIDB9Il0sWzEsMiwiXFx2YXJ0aGV0YSJdLFswLDEsIlxcdGF1XzEiXSxbMSw0XSxbMCwzLCJcXHRhdV8yIiwyXSxbMyw0LCJcXHRhdV8zIl0sWzQsMiwiXFx2YXJ0aGV0YSciLDIseyJzdHlsZSI6eyJib2R5Ijp7Im5hbWUiOiJkYXNoZWQifX19XV0=
\[\begin{tikzcd}
	{\unit \wedge E} & {\unit_{\leq_e 0} \wedge E_{\leq_e 0} \simeq\HZtilde \wedge E} & {E_{\leq_e 0} \simeq E} \\
	{(\unit \wedge E)_{\leq_e 0}} & {(\unit_{\leq_e 0} \wedge E_{\leq_e 0})_{\leq_e 0}.}
	\arrow["{\tau_1}", from=1-1, to=1-2]
	\arrow["{\tau_2}"', from=1-1, to=2-1]
	\arrow["\vartheta", from=1-2, to=1-3]
	\arrow[from=1-2, to=2-2]
	\arrow["{\tau_3}", from=2-1, to=2-2]
	\arrow["{\vartheta'}"', dashed, from=2-2, to=1-3]
\end{tikzcd}\]
where all maps in the square are induced by the natural maps in the truncation. The equivalence above sends a map $\vartheta$ to the composite $\vartheta'\circ\tau_3 \circ\tau_2 \simeq \vartheta \circ\tau_1$. 

We now apply this result to $E = \HZtilde$, or $E = \HZmod$ (recall that $\HZtilde,\HZmod \in\SHR^{\eff,\heartsuit}$ by Proposition \ref{Prop:HZHZmodveryeff}). Since $\SHR(\HZtilde \Smash \Sigma^{1,0}\HZmod,\Sigma^{1,0}\HZmod) \simeq \SHR(\HZtilde \Smash \HZmod,\HZmod)$, we only have to show that the pairings induced on the subquotients of the tower correspond to the unitor after precomposition by the natural map $\unit \to \unit_{\leq_e 0} \simeq \HZtilde$.
This can be proven by chasing through the construction of the pairing induced on the subquotients; which is ultimately obtained from the multiplication on $\ko$ (and the latter is unital). More precisely, the unit $\unit \to \unit_{\leq_e 0} = \HZtilde$ is exactly the one induced by 
$$\unit \longrightarrow \ko \longrightarrow \tilde{s}_0\ko = f_0(\tilde{s}_0\ko) \longrightarrow f_0((\tilde{s}_0\ko)_{\leq 0}) = f_0(\underline{\pi}_0(\tilde{s}_0\ko)) = s'_0\ko.$$
To check that $\unit \Smash s'_0\ko \to s'_0\ko\Smash s'_0\ko \to s'_0\ko$ is the unitor, for example, using Diagram (\ref{Diagram:pairing}), we want to show that it holds true for $\unit \Smash (\tilde{s}_0\ko)_{\geq_e i} \to (\tilde{s}_0\ko)_{\geq_e i}$ for $i=0,1$. Since these pairings are induced by one we had on $\tilde{s}_0$, which is induced from that on the very effective filtration, we want to show it for the maps $\unit \Smash \tilde{f}_i\ko \to \tilde{f}_i\ko$ for $i=0,1$, and these where ultimately induced by $\unit \Smash \ko \to \ko\Smash \ko \to \ko$, whence the claim follows.
\end{proof}

\begin{Lemma}\label{Prop:pi4C} With the same notation as in Lemma \ref{Prop:aiscardinality}, we have $|\pi_4(\rR C)|=16$.
\end{Lemma}
\begin{proof} 
Applying Theorem \ref{Prop:spectralsequence} again, this time with $X_n = \rR(\tilde{f}_{-n}\ko)$ for all $n \in \Z$ (by convention $\tilde{f}_{n}\ko = \ko$ for $n<0$), we obtain a spectral sequence concentrated in the first quadrant, with $E^1_{p,q} = \pi_p(\tilde{s}_q\ko)$ for all $p,q\in\N$. As we will see in Figure \ref{Fig:bigsseq}, the assumptions for strong convergence are satisfied. We have $\colim_n X_n \simeq \rR\ko$, and $\lim_n X_n \simeq 0$ by the same argument as in the beginning of the proof of Lemma \ref{Prop:pistilde0ko}, since a very $n$-effective spectra is in particular $n$-connective in the effective homotopy t-structure. Indeed, we have $\SH(k)^\veff(n) := T^{\Smash n} \Smash \SH(k)^\veff \subseteq \Sigma^{n}\SH(k)^\veff = \SH(k)^\eff_{\geq n}$ by Proposition \ref{Prop:propertiesofSHkeff}$(i)$. Therefore, our spectral sequence converges strongly to $\pi_p(\rR\ko)$. We know the latter groups by Proposition \ref{Prop:homgprRko}, and we will use this to compute several differentials in the $E^1$-page. Then, since the same spectral sequence for $\ko$ replaced with $C$ is given by the truncation above the third line, this will give us information about the homotopy groups of $\rR(C)$.\\

Using the explicit description of the very effective slices of $\ko$ (Theorem \ref{Prop:veffslicesko}) and the computation of the real realizations of these slices (Propositions \ref{Prop:rRHZHZmod} and \ref{Prop:rRHZtilde}, Lemma \ref{Prop:pistilde0ko}), the $E^1$-page of our spectral sequence reads as in Figure \ref{Fig:bigsseq} below. The notation is as in Figure \ref{Fig:smallsseq}. We will explain just below how to determine the differentials, but let us first see how this implies the result. The spectral sequence for $\rR C=\rR(\ko/\tilde{f}_4\ko)$ is the truncation of that for $\rR\ko$ above the third horizontal line (indeed, $\tilde{s}_i C = \tilde{s}_i\ko$ for $0\leq i \leq 3$, and 0 otherwise). In particular, the differentials must be the same and in the fourth column, all terms on the $E^1$-page survive to the $\Einfty$-page. Therefore, $\pi_4(\rR C)$ is an extension of four copies of $\Z/2$; it has cardinality 16 (we can even identify it as $\Z/16$ because we saw in the proof of Lemma \ref{Prop:aiscardinality} that it was cyclic), as desired.

\begin{figure}[h]
\centering
\begin{tikzpicture}
\matrix (m) [matrix of math nodes,
    nodes in empty cells,nodes={minimum width=5ex,
    minimum height=5ex,outer sep=-5pt},
    column sep=3.5ex,row sep=3ex]{
           {\color{gray} 6}     &  \bullet &  \bullet  & \bullet & \bullet & \bullet &  \bullet  & \Zmod \\
           {\color{gray} 5}     &  \bullet &  \bullet  & \bullet & \bullet & \bullet &  \Zmod  & \Zmod \\
           {\color{gray} 4}     &  \bullet &  \bullet  & \bullet & \bullet & \Z &  \bullet  & \Zmod \\
           {\color{gray} 3}     &  \bullet &  \bullet  & \bullet & \bullet & \bullet &  \bullet  & \bullet \\
           {\color{gray} 2}     &  \bullet &  \bullet  & \Zmod & \bullet & \Zmod &  \bullet  & \Zmod \\
           {\color{gray} 1}     &  \bullet &  \Zmod  & \Zmod & \Zmod & \Zmod &  \Zmod  & \Zmod \\
          {\color{gray} 0}    &  \Z  & \bullet &  \Zmod  & \Zmod & \mathclap{\substack{\text{Extension}\\\text{of two } \Zmod\text{'s}}}  & \bullet &  \Zmod  \\
    \quad\strut &  {\color{gray} 0}  &  {\color{gray} 1}  &  {\color{gray} 2}  & {\color{gray} 3} & {\color{gray} 4}  &  {\color{gray} 5}  &  {\color{gray} 6}  \\
    \quad \strut &  {\color{MyGreen}\Z}  &  {\color{MyGreen}0}  &  {\color{MyGreen}0}  & {\color{MyGreen}0} & {\color{MyGreen}\Z} &  {\color{MyGreen}0}  &  {\color{MyGreen}0}  \\};
    \draw [draw= BrickRed, thick, -stealth] (m-7-4.north west) -- (m-6-3.south east) node[midway,inner sep = 1pt,above right] {\footnotesize \color{BrickRed} $f_1$} node[midway,inner sep = 1pt,below left] {\footnotesize \color{BrickRed} $\cong$};
    \draw [draw= BrickRed, thick, -stealth] (m-7-5.north west) -- (m-6-4.south east) node[midway,inner sep = 1pt,above right] {\footnotesize \color{BrickRed} $f_2$} node[midway,inner sep = 1pt,below left] {\footnotesize \color{BrickRed} $\cong$};
    \draw [draw= BrickRed, thick, -stealth] (m-7-8.north west) -- (m-6-7.south east) node[midway,inner sep = 1pt,above right] {\footnotesize \color{BrickRed} $f_5$} node[midway,inner sep = 1pt,below left] {\footnotesize \color{BrickRed} $\cong$};
    \draw [draw= BrickRed, thick, -stealth] (m-6-5.north west) -- (m-5-4.south east) node[midway,inner sep = 1pt,above right] {\footnotesize \color{BrickRed} $f_3$}node[midway,inner sep = 1pt,below left] {\footnotesize \color{BrickRed} $\cong$};
    \draw [draw= BrickRed, dashed, thick, -stealth] ([yshift=2pt]m-7-6.north west) -- (m-6-5.south east) node[midway,inner sep = 1pt,above right] {\footnotesize \color{BrickRed} $f_4$};
    \draw [draw= BrickRed, dashed,thick, -stealth] (m-6-7.north west) -- (m-5-6.south east) node[midway,inner sep = 1pt,above right] {\footnotesize \color{BrickRed} $f_6$};
    \draw ([yshift=-5pt]m-7-4.south) node{\footnotesize \color{Cerulean} $s'$};
    \draw ([yshift=-5pt]m-6-3.south) node{\footnotesize \color{Cerulean} $a$};
    \draw ([yshift=-5pt]m-7-8.south) node{\footnotesize \color{Cerulean} $t^2\smash\cdot\smash s'$};
    \draw ([yshift=-5pt]m-6-7.south) node{\footnotesize \color{Cerulean} $t^2\smash\cdot\smash a$};
    \draw ([xshift=-15pt]m-9-1.west) node{\color{MyGreen} Converges to:};
\draw[thick, -stealth] ([xshift=5pt,yshift=-35pt]m-7-1.east) -- ([xshift=5pt,yshift=20pt]m-1-1.east) node [left] {$q$} ;
\draw[thick, -stealth] ([xshift=-45pt,yshift=5pt]m-8-2.north) -- ([xshift=20pt,yshift=5pt]m-8-8.north) node [below] {$p$} ;
\draw[dashed] ($([xshift=-45pt]m-4-2)! 0.5!([xshift=-45pt]m-3-2)$) -- ($([xshift=60pt]m-4-7)! 0.5!([xshift=60pt]m-3-7)$);
\draw [thick, decorate,decoration={brace,amplitude=4pt},xshift=0.3cm,yshift=0pt]
      ([xshift=-50pt,yshift=5pt]m-8-2.north) -- ($([xshift=-50pt]m-4-2)! 0.5!([xshift=-50pt]m-3-2)$) node [midway,left,xshift=-5pt] {$\rR C$};
\end{tikzpicture}
\caption{\footnotesize $E^1$-page of the spectral sequence for $\rR\ko$ with respect to the very effective slice filtration.}\label{Fig:bigsseq}
\end{figure}
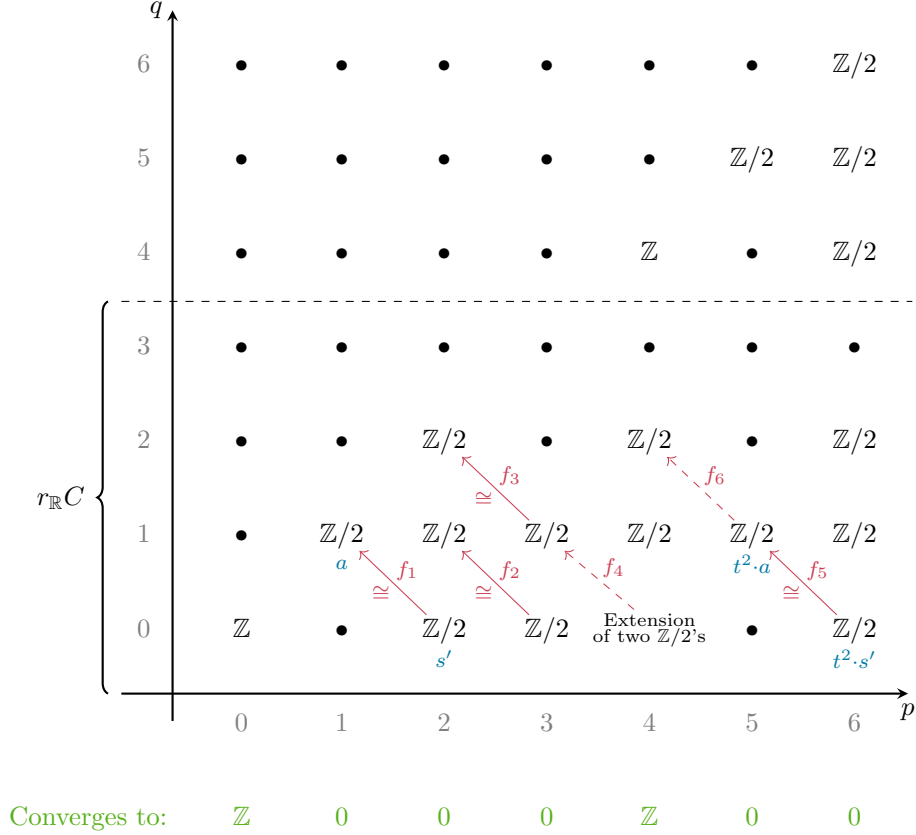

To conclude the proof, we only have to explain why the differentials are the ones displayed in Figure \ref{Fig:bigsseq}. First, note that by Proposition \ref{Prop:homgprRko}, the spectral sequence for $\rR\ko$ converges to the groups displayed in the last line in Figure \ref{Fig:bigsseq}, i.e., to $\Z$ in every degree divisible by $4$, and $0$ otherwise.
\begin{itemize}
    \item No non-zero differential with source $E^1_{1,1} = \Z/2$ can exist. So it must receive a non-trivial differential on some page; the only possibility is that $d^1: E^1_{2,0} \to E^1_{1,1}$, namely $f_1$, is an isomorphism. 
    \item The same argument shows that $f_2$ is an isomorphism.
    \item The $E^1_{2,2}$-term could a priori also receive a non-trivial differential $d^2$ from $E^2_{3,0}$ on the $E^2$-page; but since $f_2$ is an isomorphism, $E^2_{3,0} = 0$. Thus, $f_3$ must be an isomorphism. 
    \item Since $d^1\circ d^1=0$, we have $f_3\circ f_4 = 0$, so by the previous bullet point $f_4=0$. The $E^1_{4,0}$-term then survives to the $E_\infty$-page. So does the $E^1_{4,1}$-term (it cannot be reached by any non-trivial differential). 
\end{itemize}
\vspace{0.2cm}

The multiplicative structure on the spectral sequence provides the missing information to determine $f_5$ and $f_6$. The multiplicative structure comes this time from the pairing $\tilde{f}_n\KO \Smash \tilde{f}_m \KO \to \tilde{f}_{n+m}\KO$ obtained as follows: for any $n,m\geq 0$, the composition 
$\tilde{f}_n\KO \Smash \tilde{f}_m\KO \to \KO\Smash \KO \to \KO$ given by the natural maps, followed by multiplication, lifts to $\tilde{f}_{n+m}\ko$, because 
\begin{align*}\SHR^\veff(n) \Smash \SHR^\veff(m) &= (T^{\Smash n} \Smash \SHR^\veff) \Smash (T^{\Smash m} \Smash \SHR^\veff) \\
&\subseteq (T^{\Smash (n+m)} \Smash \SHR^\veff) = \SHR^\veff(n+m).
\end{align*}
The compatibility between these pairings come from the $\Einfty$-ring axioms for $\KO$. Then the pairing descends to the slices as Diagram (\ref{Diagram:pairing}). We claim that the pairing $\tilde{s}_0\ko \Smash \tilde{s}_1\ko \to \tilde{s}_1\ko$ factors through $(\tilde{s}_0\ko)/(s'_1\ko)\  \Smash\  \tilde{s}_1\ko \simeq s'_0\ko \Smash \tilde{s}_1\ko$. Indeed, the latter is the cofiber of $s'_1\ko \Smash \tilde{s}_1\ko \to \tilde{s}_0\ko \Smash \tilde{s}_1\ko$, whose post composition with the pairing is zero. This holds because $\Sigma_T^{-1}(s'_1\ko \Smash \tilde{s}_1\ko \to \tilde{s}_1\ko)$ is a map $\Sigma^{1,0}\HZmod \wedge \HZmod \to \HZmod$, so it must be zero by the axioms of a t-structure (the source belongs to $\SHR^\eff_{\geq 1}$ whereas the right-hand side belongs to  $\SHR^\eff_{\leq 0}$ (Proposition \ref{Prop:HZHZmodveryeff})). Moreover, the map $\tilde{s}_0\ko \Smash s'_1\ko \to \tilde{s}_0\ko \Smash \tilde{s}_0\ko \to \tilde{s}_0\ko$ induced by the natural map and the pairing lifts to $s'_1\ko$ on the target, and then by the same argument as above this lift factors through $s'_0\ko \Smash s'_1\ko$. Then, since $\tilde{s}_1\ko \simeq \Sigma^{2,1}\HZmod$, we can use the exact same strategy as we did for $s'_1\ko \simeq \Sigma^{1,0}\HZmod$ to show that $s'_0\ko \Smash \tilde{s}_1\ko \to \tilde{s}_1\ko$ is the $(2,1)$ suspension of the pairing $\HZtilde \Smash \HZmod \to \HZmod$ previously considered. In particular, we deduce the following:
\begin{itemize}
    \item Let us also denote by $s'$ a generator of $\pi_2(\rR\tilde{s}_0\ko) \cong \Zmod$ (it comes from the generator $s'$ of $\pi_2(\rR s'_1\ko)$ in Figure \ref{Fig:smallsseq}). Then, a preimage in $\pi_4(\rR\tilde{s}_0\ko)$ for $t^2 \in \pi_4(\rR s'_0\ko)$ acts upon $s'$ by sending it to the image of $t^2\cdot s' \in \pi_6(\rR s'_1\ko)$ in $\pi_6(\rR \tilde{s}_0\ko) \cong \Zmod$, which is a generator, as we can see from the spectral sequence for $\rR\tilde{s}_0\ko$ (Figure \ref{Fig:smallsseq}).
    \item Let $a$ be a generator of $\pi_1(\rR\tilde{s}_1\ko) \cong \Zmod$. Then a preimage in $\pi_4(\rR\tilde{s}_0\ko)$ for $t^2 \in \pi_4(\rR s'_0\ko)$ acts upon $a$ by sending it to a generator in $\pi_5(\rR\tilde{s}_1\ko) \cong \Zmod$ (this is the action of $\rR\HZtilde$ on $\rR\Sigma^{2,1}\HZmod$ as we have seen it before, for the spectral sequence in Figure \ref{Fig:smallsseq}). 
    \item Therefore, since by the Leibniz rule for $d^1$, we have 
    $$f_5(t^2\cdot s') = d^1(t^2\cdot s') = t^2 \cdot d^1(s') + d^1(t^2)\cdot s'= t^2 \cdot a \in E^1_{5,1}\cong\Zmod $$
    (since $f_1$ is an isomorphism and $f_4$ is trivial), $f_5$ maps a generator to a generator and is an isomorphism. In particular, since $d^1\circ d^1=0$, we also get $f_6=0$, and the $E^1_{4,2}$-term survives to the $E_\infty$-page.
\end{itemize}
This finishes the proof.
\end{proof}
\vspace{0.2cm}
    
\subsection{Comparison with the L-theory of \texorpdfstring{$\R$}{the real numbers}}\label{Subsect:compwithL(R)}\hfill\vspace{0.2cm}

\begin{thm}\label{Prop:compwithLtheory} There is an equivalence of $\E{1}$-rings
    $$\rR\ko \simeq \mathsf{L}(\R)_{\geq 0}$$ between the real realization of the very effective Hermitian K-theory spectrum and the connective cover of the L-theory spectrum of $\R$.
\end{thm}
\begin{proof} It suffices to prove that the 2-local fracture square for $\mathsf{L}(\R)_{\geq 0}$ is the same as in Theorem \ref{Prop:rRko}. Such a fracture square appears in \cite[p.\ 3]{HLN}. We compute the fracture square for $\mathsf{L}(\R)_{\geq 0}$ following the same strategy as for $\rR\ko$ (Theorem \ref{Prop:rRko}).
    
To identify the localization away from 2, we consider the same map $\kotop \to \mathsf{L}(\R)_{\geq 0}$ as in the proof of Theorem \ref{Prop:rRko}, from \cite{LNS}; recall that this map is an equivalence after localizing at 2, and induces multiplication by 8 on the fourth homotopy groups before localization. We post-compose it with the automorphism of $\kotop[1/2]$ given by the (connective cover of the) second Adams operation $\psi^2$ as in the proof of Theorem \ref{Prop:rRko}, and obtain $\mathsf{L}(\R)_{\geq 0} \to \kotop[1/2]$ sending $b$ to $\beta_4/2$ in the fourth homotopy groups.

To identify the localization at $(2)$, we consider the map of $\Einfty$-rings $\mathsf{MSO} \to \mathsf{L}(\R)$ from \cite[Thm.\ 3.4 and Rmk.\ 3.5]{HLN}. By \cite[Cor.\ 3.7]{HLN}, $\mathsf{MSO}_{(2)}$ receives an $\E{2}$-map from $\HZ_{(2)}$. Therefore, $\mathsf{L}(\R)_{(2)}$ receives an $\E{2}$-map from $\HZ_{(2)}$. By Proposition \ref{Prop:freeE1Ralgebra}$(iii)$, it then also receives a map of $\E{1}$-rings from the free $\E{1}$-$\HZ_{(2)}$-algebra $\HZ_{(2)}[t^4]$, sending $t^4$ to $b$ in $\pi_4$. Since the domain is connective, this map lifts to $\mathsf{L}(\R)_{\geq 0}$. The ring structure on the homotopy groups on both sides implies that this map is an equivalence (indeed, by \cite[Prop.\ 4.1]{HLN}, $\pi_\ast(\mathsf{L}(\R)) \cong \Z[b]$ with $|b|=4$, and by Proposition \ref{Prop:freeHA}$(ii)$, $\pi_\ast(\HZ_{(2)}[t^4]) \cong \Z[t^4]$ is the same ring).  

For the rationalization, by Remark \ref{Rmk:chQ} we have $\kotop[1/2]_\Q \simeq \HQ[t^4]$ via the Chern character map.

Finally, we have to determine what the map $\HZ_{(2)}[t^4] \to \HQ[t^4]$ in the pullback square is, corresponding to our choices of identifications. Studying the actions of these maps on the fourth homotopy groups, we see that the composite $\mathsf{L}(\R)_{\geq 0} \to \kotop[1/2] \to \HQ[t^4]$ maps $b$ to $t^4$. Since the map $\mathsf{L}(\R)_{\geq 0} \to \HZ_{(2)}[t^4]$ we have chosen sends $b$ to $t^4$, the bottom horizontal map must send $t^4$ to $t^4$.
\end{proof}

\newpage
\appendix\label{Sect:appendix}

\section{Day convolution on \texorpdfstring{$\infty$}{infinity}-categories of presheaves}\label{Subsect:dayconvolution}

The category of functors between two symmetric monoidal categories can be endowed with a symmetric monoidal structure, called Day convolution.

\begin{thm}[\protect{\cite[Prop.\ 2.11]{Glasman}} and \protect{\cite[Ex. 2.2.6.9]{Lurie-HA}}]\label{Prop:maindayconvolution} Let $\calC^\otimes$ and $\calD^\otimes$ be symmetric monoidal categories. Assume that $\calD$ has all small colimits and that the tensor product on $\calD$ preserves them in each variable separately. Then the category $\Fun(\calC,\calD)$ admits a symmetric monoidal structure $\Fun(\calC,\calD)^\otimes$, called \emph{Day convolution}. It has in particular the property that there is an equivalence of categories
$$\CAlg(\Fun(\calC,\calD)^\otimes) \simeq \Fun^\mathsf{lax}(\calC^\otimes,\calD^\otimes).$$
\end{thm}

For any symmetric monoidal category $\calC^\otimes$, in the particular case $\calD^\otimes = \spaces^\times$, Day convolution endows the category of presheaves $\Pre(\calC)$ with a symmetric monoidal structure (because the opposite of $\calC$ has an induced symmetric monoidal structure \cite[Rmk.\ 2.4.2.7]{Lurie-HA}). This symmetric monoidal structure has the following universal property:
\begin{Prop}[\protect{\cite[Cor.\ 4.8.1.12 and Rmk.\ 4.8.1.13]{Lurie-HA}}]\label{Prop:dayconvolutionpsh}
Let $\calC^\otimes$ be a symmetric monoidal category. Day convolution is the essentially unique symmetric monoidal structure on the category $\Pre(\calC)$ such that:
\begin{enumerate}
    \item the tensor product preserves colimits in both variables separately,
    \item and the Yoneda embedding $y: \calC \to \Pre(\calC)$ can be extended into a symmetric monoidal functor $y^\otimes$.
\end{enumerate}  
\end{Prop}

\begin{Prop}\label{Prop:laxdayconvolution} Let $\calC^\otimes$ be a symmetric monoidal category, and consider $\Pre(\calC)^\otimes$ with the Day convolution symmetric monoidal structure. Let $\calD^\otimes \in\CAlg(\PrL)$.

    Then, the symmetric monoidal Yoneda embedding $y^\otimes: \calC^\otimes \to \Pre(\calC)^\otimes$ induces an equivalence
    $$- \circ y^\otimes: \Fun^{\mathsf{L,lax}}(\Pre(\calC)^\otimes,\calD^\otimes) \xrightarrow{\simeq} \Fun^{\mathsf{lax}}(\calC^\otimes,\calD^\otimes),$$
    where $\Fun^{\mathsf{L,lax}}$ denotes colimit-preserving lax symmetric monoidal  functors, and $\Fun^{\mathsf{lax}}$ all lax symmetric monoidal functors.
\end{Prop}

\begin{Def}\label{Def:smLKE} In the setting of Proposition \ref{Prop:laxdayconvolution}, let $\mathsf{LKE}^\otimes$ be an inverse for the equivalence $- \circ y^\otimes$. Given $F\in \Fun^{\mathsf{lax}}(\calC^\otimes,\calD^\otimes)$, we say that the lax symmetric monoidal functor $\mathsf{LKE}^\otimes(F) : \Pre(\calC)^\otimes \to \calD^\otimes$ is \emph{left Kan extended as a lax symmetric monoidal functor} from $F$.
\end{Def}

\begin{proof}[Proof of Proposition \ref{Prop:laxdayconvolution}]
    Day convolution makes $\Fun(\Pre(\calC),\calD)$ and $\Fun(\calC,\calD)$ into symmetric monoidal categories $\Fun(\Pre(\calC),\calD)^\otimes$ and $\Fun(\calC,\calD)^\otimes$, respectively. By \cite[Cor.\ 3.8]{Nikolaus}, precomposition by a lax symmetric monoidal functor induces a lax symmetric monoidal functor with respect to the Day convolution on the functor categories. In our case, this implies that precomposition with the symmetric monoidal Yoneda embedding induces a lax symmetric monoidal functor $\Fun(\Pre(\calC),\calD)^\otimes \to \Fun(\calC,\calD)^\otimes$. By the usual universal property of the presheaf category (free cocompletion), the underlying functor restricts to an equivalence $\Fun^\mathsf{L}(\Pre(\calC),\calD) \to \Fun(\calC,\calD)$. In particular, $\Fun^\mathsf{L}(\Pre(\calC),\calD)$ is a symmetric monoidal subcategory $\Fun(\Pre(\calC),\calD)$, equivalent as a symmetric monoidal category to $\Fun(\calC,\calD)$.

    Therefore, precomposition by the Yoneda embedding induces an equivalence
    $$ \CAlg(\Fun^\mathsf{L}(\Pre(\calC),\calD)^\otimes) \xrightarrow{\ \ \simeq \ \ } \CAlg(\Fun(\calC,\calD)^\otimes).$$

    By Theorem \ref{Prop:maindayconvolution}, the left and right-hand sides are respectively equivalent to $\Fun^{\mathsf{L,lax}}(\Pre(\calC)^\otimes,\calD^\otimes)$ and $\Fun^{\mathsf{lax}}(\calC^\otimes,\calD^\otimes)$. Indeed, for the first one, note that if $\calE^\otimes$ is a symmetric monoidal category with a symmetric monoidal full subcategory $\calF^\otimes$, then the categories of commutative algebras in $\calF$ is that of commutative algebras in $\calE$ whose underlying object lies in $\calF$. This follows from the description of commutative algebras in terms of the representing coCartesian fibrations.
\end{proof}

\begin{Prop}\label{Prop:dayconvolution} The equivalence of Proposition \ref{Prop:laxdayconvolution} restricts to an equivalence:
$$- \circ y^\otimes: \Fun^{\mathsf{L,\otimes}}(\Pre(\calC)^\otimes,\calD^\otimes) \xrightarrow{\simeq} \Fun^{\otimes}(\calC^\otimes,\calD^\otimes)$$
between subcategories of (strongly) symmetric monoidal functors.
\end{Prop}
\begin{proof}
    Since $y^\otimes$ is (strongly) symmetric monoidal, the functor $-\circ y^\otimes$ preserves strong monoidality. On the other hand, if $F\in \Fun^{\mathsf{lax}}(\calC^\otimes,\calD^\otimes)$ is actually strongly symmetric monoidal (which, once we have specified a lax monoidal structure, is a property and does not require additional data), let $H=\mathsf{LKE}^\otimes(F)$. Then, we have to show that $H$ is strongly symmetric monoidal, i.e., that for any $\calF,\calG \in\Pre(\calC)$, the map $H(\calF)\otimes_\calD H(\calG) \longrightarrow H(\calF \otimes_{\Pre(\calC)} \calG)$ (coming from the lax symmetric monoidal structure on $H$) is an equivalence. By assumption, this holds for $\calF$ and $\calG$ representable presheaves. Moreover, the class of presheaves $\calF$ and $\calG$ for which this holds is closed under colimits in both $\calF$ and $\calG$. Indeed, $H$ preserves colimits by construction, and tensor products in both $\calD$ and $\Pre(\calC)$ preserve colimits in each variable (by Proposition \ref{Prop:dayconvolutionpsh}). Since every presheaf is a colimit of representable ones, this concludes the proof.
\end{proof}
\vspace{0.3cm}

\section{Slice \texorpdfstring{$\infty$}{infinity}-categories}\label{Subsect:slices}

For $\calC$ a category and $X\in\calC$, one defines a \emph{slice category} $\calC_{/X} := \Fun(\Delta^1,\calC) \times_{\Fun(\{1\},\calC)} \{X\}$ \cite[Prop.\ 1.2.9.2]{Lurie-HTT}. This can be thought of as the category of edges in $\calC$ with target $X$, and morphisms over $X$. It comes with a forgetful functor $\calC_{/X} \to \calC$, remembering the source of an edge. Slice categories are an essential ingredient of our discussion in Section \ref{Sect:Thom}. Here are some results about them that we have used.

\begin{Def}\label{Def:subslice}
    Let $\calC$ be a category, $X\in\calC$ and $\calD \subseteq \calC$ a subcategory. Then the category $\calD_{/X}$ is defined as the fiber of the forgetful functor $\calC_{/X} \to \calC$ over $\calD$. 
\end{Def}

\begin{Prop}\label{Prop:sliceofpsh} Let $\calC$ be a category, and $\calF\in\Pre(\calC)$. We view $\calC$ as a subcategory of $\Pre(\calC)$ via the Yoneda embedding. Then, the left Kan extension of the embedding $\calC_{/\calF} \to \Pre(\calC)_{/\calF}$ (from Definition \ref{Def:subslice}) defines an equivalence of categories
$$\theta: \Pre(\calC_{/\calF}) \lsimeq{} \Pre(\calC)_{/\calF}.$$
The functor $\theta$ restricts to an equivalence
$$\Pre_\Sigma(\calC_{/\calF}) \lsimeq{} \Pre_\Sigma(\calC)_{/\calF}.$$
\end{Prop}
\begin{proof} The first claim is proven in \protect{\cite[\S 5.3]{ABG}}. For the second part of the statement, consider $\calG\in\Pre_\Sigma(\calC)_{/\calF}$. Then $\calG$ is a sifted colimit of arrows of the form $y(c) \to \calF$ for $c\in\calC$, so that $\theta^{-1}(\calG)$ is a sifted colimit of representables $y(c\to \calF)$. It is thus a spherical presheaf in $\Pre(\calC_{/\calD})$. The argument for the converse is the same. 
\end{proof}

Slices of symmetric monoidal categories over commutative algebra objects inherit a symmetric monoidal structure:
\begin{Prop}[\protect{\cite[Thm.\ 2.2.2.4 and Rmk.\ 2.2.2.5]{Lurie-HA}}]\label{Prop:smstconslice} Let $\calD^\otimes$ be a symmetric monoidal category and $X\in \CAlg(\calD)$. Then the slice $\calD_{/X}$ admits a symmetric monoidal structure $\calD^\otimes_{/X}$, making in particular the projection $\calD^\otimes_{/X} \to \calD^\otimes$ symmetric monoidal. Informally, the tensor product of two objects $c\to X$ and $d\to X$ in the slice can be described by
$$(c\to X)\otimes (d\to X) = (c\otimes d \to X \otimes X \to X)$$ 
where $X\otimes X \to X$ is given by the multiplicative structure of $X$.

\end{Prop}

As a particular case of Proposition \ref{Prop:sliceofpsh}, for $\calC = \ast$, so that $\Pre(\calC) \simeq \spaces$, we obtain the following result.
\begin{Lemma}[\protect{\cite[\S 6.2]{ABG}}]\label{Prop:sliceofpshsm}
Let $X\in\mathsf{CMon}(\spaces)$. In the case $\calC = \ast$, Proposition \ref{Prop:sliceofpsh} provides an equivalence of symmetric monoidal categories
$$\spaces_{/X} \simeq \Pre(X),$$
where $\spaces_{/X}$ is endowed with the symmetric monoidal structure from Proposition \ref{Prop:smstconslice}, and $\Pre(X)$ is the category of presheaves on $X$ with the Day convolution symmetric monoidal structure (see Subsection \ref{Subsect:dayconvolution}), viewing $X$ as a symmetric monoidal category.
\end{Lemma}

The symmetric monoidal structure from Proposition \ref{Prop:sliceofpsh} has a very convenient universal property:
\begin{Prop}[\protect{\cite[Lem.\ 2.12]{ACB}}]\label{Prop:algebrasintheslice} Let $\calC^\otimes$ and $\calD^\otimes$ be symmetric monoidal categories classified by coCartesian fibrations $p:\calC^\otimes \to \Finstar$ and $q: \calD^\otimes \to \Finstar$. Let $X\in \CAlg(\calD)$, and $F:\calC^\otimes \to \calD^\otimes$ be a lax symmetric monoidal functor. 
    
    Then lax symmetric monoidal functors $\calC^\otimes \to \calD^\otimes_{/X}$ lifting $F$ correspond to symmetric monoidal natural transformations $F \to X \circ p$, where $X$ is viewed as a section of $q$. More precisely, there is an equivalence
$$ \map_{\Fun^\mathsf{lax}(\calC^\otimes,\calD^\otimes)}(F,X\circ p) \simeq \Fun^\mathsf{lax}(\calC^\otimes, \calD^\otimes_{/X}) \times_{\Fun^\mathsf{lax}(\calC^\otimes,\calD^\otimes)} \{F\}.$$

In particular, for $\calC^\otimes = \E{n}^\otimes$, viewing $F$ as an $\E{n}$-algebra $Y\in\Alg_{\E{n}}(\calD^\otimes)$, then $\E{n}$-algebras in $\calD^\otimes_{/X}$ lifting $Y\in\Alg_{\E{n}}(\calD^\otimes)$ correspond to morphisms of $\E{n}$-algebras $Y\to X$ in $\calD^\otimes$. More precisely, there is an equivalence
$$\map_{\Alg_{\E{n}}(\calD^\otimes)}(Y,X) \simeq \Alg_{\E{n}}(\calD^\otimes_{/X}) \times_{\Alg_{\E{n}}(\calD^\otimes)} \{Y\}.$$  
\end{Prop}

\begin{Rmk} Both Propositions \ref{Prop:smstconslice} and \ref{Prop:algebrasintheslice} are proven in a more general setting in the references we gave; namely for $\mathcal{O}^\otimes$-monoidal categories, for some $\infty$-operad $\mathcal{O}^\otimes$. We will only need the case $\mathcal{O}^\otimes = \Finstar$. 
\end{Rmk}

%Proposition \ref{Prop:sliceofpsh} generalizes to the symmetric monoidal case:
%\begin{Prop}\label{Prop:sliceofpshsm}\td{ref for first and second parts} Let $\calC$ be a category, and $\calF\in\mathsf{CMon}(\Pre(\calC))$. Then the equivalence of Proposition \ref{Prop:sliceofpsh} becomes an equivalence of symmetric monoidal categories
%$$\Pre(\calC)_{/\calF} \simeq \Pre(\calC_{/\calF})$$
%where $\Pre(\calC)_{/\calF}$ is endowed with a symmetric monoidal structure as in Proposition \ref{Prop:smstconslice}, $\calC_{/\calF}$ is viewed as a symmetric monoidal subcategory in the latter, and $\Pre(\calC_{/\calF})$ has the Day convolution symmetric monoidal structure \td{reference with definition}.
%\end{Prop}

\section{Complex realizations of motivic Thom spectra}\label{Subsect:rCThom}

This appendix extends the results of Section \ref{Sect:Thom} to the complex realization functor, namely, we compare the motivic and topological Thom spectrum functors under $\rC$. The structure of the proof is essentially the same, but we need a workaround for \Cref{Prop:ret2}, which stated in particular that $\rR = \Pre(\Sm_\R) \to \Spc$ was a localization; indeed, it is unknown whether this also holds for the complex realization functor. Instead, we reduce to a purely topological question, which appears as the first part of \Cref{Prop:MandMtopagreeassmII} below. The proof provided in this appendix also works for the real realization functor $\rR$.\\

We begin by proving the analog of \Cref{Prop:motiviccolim} (construction of the motivic Thom spectrum functor $M_\calF$) for presheaves of symmetric monoidal categories on $\Spc^\kappa$ instead of $\Sm_S$. Recall from \Cref{Notation:Mtopkappa} that $\kappa$ is an uncountable regular cardinal chosen so that the category of $\kappa$-compact objects $\Spc^\kappa$ is closed under finite limits in $\Spc$. Also recall that $\Spc^\kappa$ is essentially small.\\

The assumptions of \Cref{Prop:motiviccolim} involve Weil restrictions along fold maps. Here is a topological analog of this notion.

\begin{Notation}\label{Notation:topWeil} In $\Spc^\kappa$, for a fold map $\nabla: Y = \coprod_{i\leq n} Z_i^{\amalg n_i} \to \coprod_{i\leq n} Z_i = Z$, and a map $V \to Y$, let
    $$R_\nabla(V) := \coprod_{i\leq n} V_{Z_{i,1}} \times_{Z_i} \dots \times_{Z_i} V_{Z_{i,n_i}} \in (\Spc^\kappa)_{/Z},$$
where $V_{Z_{i,j}}$ is the component of $V$ above the $j$-th copy of $Z_i$ in $Y$, for $i\leq n$ and $j\leq n_i$.

As in the case of schemes, for any $W\in\Spc_{/Z}$, we then have an equivalence 
$$ \map_{(\Spc^\kappa)_{/Z}}(W,R_\nabla(V)) \simeq \map_{(\Spc^\kappa)_{/Y}}(W\times_Z Y,V).$$
\end{Notation}

\begin{thm}\label{Prop:topcolim} Let $\calF: \Span' := \Span(\Spc^\kappa,\all,\fold) \longrightarrow \Cat$ be a functor preserving finite products, such that:
\begin{enumerate}[label = (\roman*)]
    \item for any $f:Y\to X$ in $\Spc^\kappa$, $f^\ast := \calF(X\leftarrow Y \xlongequal{\ } Y) : \calF(X) \to \calF(Y)$ admits a left adjoint, denoted by $f_\sharp$.
    \item for any Cartesian square in $\Spc^\kappa$
    % https://q.uiver.app/#q=WzAsNCxbMCwxLCJZJyJdLFsxLDAsIlgiXSxbMSwxLCJZIl0sWzAsMCwiWCciXSxbMywxLCJnJyJdLFswLDIsImciXSxbMSwyLCJmIl0sWzMsMCwiZiciLDJdLFszLDIsIiIsMix7InN0eWxlIjp7Im5hbWUiOiJjb3JuZXIifX1dXQ==
\[\begin{tikzcd}[ampersand replacement=\&]
	{X'} \& X \\
	{Y'} \& Y,
	\arrow["{g'}", from=1-1, to=1-2]
	\arrow["{f'}"', from=1-1, to=2-1]
	\arrow["\lrcorner"{anchor=center, pos=0.125}, draw=none, from=1-1, to=2-2]
	\arrow["f", from=1-2, to=2-2]
	\arrow["g", from=2-1, to=2-2]
\end{tikzcd}\]
    the exchange transformation $\mathsf{Ex}_{\sharp}^\ast: f'_\sharp(g')^\ast \Longrightarrow g^\ast f_\sharp$ of functors $\calF(X) \to \calF(Y')$ (defined as in Theorem \ref{Prop:6FFforSH}) is an equivalence.
    \item Given $(\nabla: Y \to Z)\in\fold$, let $\nabla_\otimes := \calF(Y\xlongequal{\ } Y \to Z) : \calF(Y) \to \calF(Z)$. Then $\nabla$ encodes the tensor product on the symmetric monoidal category $\calF(Z)$ (Proposition \ref{Prop:spanvspsh}). For every diagram in $\Spc^\kappa$ of the form
% https://q.uiver.app/#q=WzAsNixbMCwxLCJYIl0sWzEsMSwiWSJdLFswLDAsIlciXSxbMSwwLCJSX3tZL1h9KFdcXHRpbWVzX1ggWSlcXHRpbWVzX1ogWSJdLFsyLDEsIloiXSxbMiwwLCJSX3tZL1h9KFdcXHRpbWVzX1ggWSkiXSxbMSwwLCJmIiwyXSxbMiwwLCJnIiwyXSxbMyw1LCJcXG5hYmxhJyJdLFsxLDQsIlxcbmFibGEiXSxbNSw0LCJ1Il0sWzMsMSwidSciXSxbMywyLCJmJyIsMl0sWzMsNCwiIiwwLHsic3R5bGUiOnsibmFtZSI6ImNvcm5lciJ9fV1d
\[\begin{tikzcd}
	W & {R_{Y/X}(W\times_X Y)\times_Z Y} & {R_{Y/X}(W\times_X Y)} \\
	X & Y & Z
	\arrow["g"', from=1-1, to=2-1]
	\arrow["{f'}"', from=1-2, to=1-1]
	\arrow["{\nabla'}", from=1-2, to=1-3]
	\arrow["{u'}", from=1-2, to=2-2]
	\arrow["\lrcorner"{anchor=center, pos=0.125}, draw=none, from=1-2, to=2-3]
	\arrow["u", from=1-3, to=2-3]
	\arrow["f"', from=2-2, to=2-1]
	\arrow["\nabla", from=2-2, to=2-3]
\end{tikzcd}\]
    with $\nabla, \nabla' \in \fold$, the distributivity transformation 
	$${\mathsf{Dis}_{\sharp\otimes} : u_\sharp\nabla'_\otimes(f')^\ast \Longrightarrow \nabla_\otimes(\pi_Y)_\sharp\pi_W^\ast}$$
	of functors $\calF(W) \to \calF(Z)$ (defined as in Theorem \ref{Prop:6FFforSH}) is an equivalence. 
\end{enumerate}
\vspace{0.2cm}
Then, there exists a natural transformation $M_\calF: ((\Spc^\kappa)_{/\bullet})_{/{\calF}^\simeq} \to \calF$ of finite products preserving functors $\Span' \to \Cat$  (i.e., of spherical presheaves of symmetric monoidal categories). 
\end{thm}

\begin{proof}
The proof of \Cref{Prop:motiviccolim} applies in this case, up to the following comments:
\begin{enumerate}
    \item[(0)] In the construction, the category $\Fun_\Sm(\Delta^1,\Span)$ has to be replaced with $\Fun_\all(\Delta^1,\Span')$, the full subcategory of $\Fun(\Delta^1,\Span')$ spanned by edges of the form $X \leftarrow Y \longeq Y$ with $X,Y\in\Spc^\kappa$. We also have to prove that $\Spc^\kappa$ is an extensive category, so that finite products preserving functors $\Span' \to \Cat$ indeed correspond to spherical presheaves of symmetric monoidal categories on $\Spc^\kappa$ (\Cref{Prop:spanvspsh}). Note that $\kappa$-small colimits in $\Spc^\kappa$ are computed in $\Spc$, because $\Spc^\kappa$ is a full subcategory closed under such colimits in $\Spc$. In particular, $\Spc^\kappa$ admits finite coproducts, and they are disjoint (by choice of $\kappa$, pullbacks in $\Spc^\kappa$ exist and are computed in $\Spc$). Finally, the stability of finite coproduct decompositions under pullbacks follows from the same property in $\Spc$. 
    \item The proof of Step 1 goes through verbatim, if we replace $\Span$ with $\Span'$, $\Fun_\Sm(\Delta^1,\Span)$ with $\Fun_\all(\Delta^1,\Span')$, and Weil restrictions with those of \Cref{Notation:topWeil}.
    \item Step 2 follows formally from the assumptions in the same way as in \Cref{Prop:motiviccolim}.
	\item The argument of Step 3 is still valid, given the description of Cartesian edges in Step 1: they are the diagrams from assumption $(iii)$ in the statement.
    \item In Step 4, the fiber of $s^\op$ over some $X\in\Spc^\kappa$ is our situation $(\Spc^\kappa)_{/X}$, and that of $t^\ast F$ over some $(Y\to X)\in(\Spc^\kappa)_{/X}$ is $\calF(Y)$. We thus obtain a transformation $M_\calF : ((\Spc^\kappa)_{/\bullet})_{\sslash\calF} \to \calF$ which restricts to the desired transformation $M_\calF : ((\Spc^\kappa)_{/\bullet})_{/\calF^\simeq} \to \calF$.
\end{enumerate}
\end{proof}

We will now apply the construction from the previous theorem to a presheaf $\calC$ on $\Spc^\kappa$ whose precomposition with $\rR$ is the presheaf $\calR$ of \Cref{Def:calR}. Working with $\rC$ instead will allow us to prove analogous results for the complex realization of Thom spectra. 

\begin{Def}\label{Def:calC} Let $\calC$ be the presheaf of (not necessarily small) symmetric monoidal categories on $\Spc^\kappa$, defined by $X \in \Spc^\kappa \mapsto \Sp(\spaces_{/X})$, where functoriality is given by pullback (see Proposition \ref{Prop:6FFforC}), and let $\calC^\kappa: \Span' = \Span(\Spc^\kappa, \all, \fold) \to \Cat$ be the spherical presheaf of the subcategories of $\kappa$-compact objects (again, see Proposition \ref{Prop:6FFforC}).
\end{Def}

\begin{Prop}\label{Prop:6FFforC} The presheaves $\calC$ and $\calC^\kappa$ from Definition \ref{Def:calC} satisfy all the assumptions of Theorem \ref{Prop:topcolim} (except that $\calC$ is valued in $\widehat{\mathsf{Cat}}_\infty$ instead of $\Cat$). In particular, we obtain symmetric monoidal, colimit-preserving functors
 $$M_\calC : (\Spc^\kappa)_{/\calC^\kgpd} \longrightarrow \calC(\ast)^\kappa \simeq \Sp^\kappa \quad \text{and} \quad M_\calC : \Pre(\Spc^\kappa)_{/\calC^\kgpd} \longrightarrow \Sp.$$  
\end{Prop}
\begin{proof} The assumptions of \Cref{Prop:topcolim} are satisfied because the proof of \Cref{Prop:6FFforR} goes through (we do not need Step 4), as the latter did not rely on the categories being sliced over real realizations of schemes, but only some $\kappa$-compact spaces. \Cref{Prop:topcolim} then provides a morphism of spherical presheaves $((\Spc^\kappa)_{/\bullet})_{/\calC^\kgpd} \to \calC^\kappa$. Taking the component on $\ast$, we obtain the first functor of the statement $(\Spc^\kappa)_{/\calC^\kgpd} \longrightarrow \calC(\ast)^\kappa \simeq \Sp^\kappa$. The symmetric monoidal left Kan extension (see \Cref{Def:smLKE}) of its postcomposition with the inclusion $\Sp^\kappa \to \Sp$ then yields the second functor of the statement.
\end{proof}

We may now prove the analog of \Cref{Prop:compMSHMR}, comparing the motivic Thom spectrum functor with a more topological colimit functor. 

\begin{Prop}\label{Prop:compMSHMC} Let $k$ be $\R$ or $\C$. There is a commutative diagram of symmetric monoidal functors
% https://q.uiver.app/#q=WzAsNCxbMCwxLCJcXFNIKGspIl0sWzEsMSwiIFxcU3AiXSxbMCwwLCIoXFxTbV9rKV97L1xcU0heXFx3Z3BkfSJdLFsxLDAsIihcXFNwY15cXGthcHBhKV97L1xcbWF0aGNhbHtDfV5cXGtncGR9Il0sWzAsMSwicl9rIl0sWzIsMCwiTSIsMl0sWzMsMSwiTV9cXG1hdGhjYWx7Q30iXSxbMiwzLCJcXGJldGFfayJdXQ==
\[\begin{tikzcd}
	{(\Sm_k)_{/\SH^\wgpd}} & {(\Spc^\kappa)_{/\mathcal{C}^\kgpd}} \\
	{\SH(k)} & { \Sp,}
	\arrow["{\beta_k}", from=1-1, to=1-2]
	\arrow["M"', from=1-1, to=2-1]
	\arrow["{M_\mathcal{C}}", from=1-2, to=2-2]
	\arrow["{r_k}", from=2-1, to=2-2]
\end{tikzcd}\]
where $M$ is the motivic Thom spectrum functor over $k$ (as in Proposition \ref{Prop:motivicThom}). 
\end{Prop}
\begin{proof} By \Cref{Prop:kappacompinslice}, the functor $r_k : \Sm_k \to \Spc$ factors through $\Spc^\kappa$. It preserves coproducts (and thus fold maps). Therefore, it induces a functor $r_k: \Span \to \Span'$ (see \cite[\S 5.11]{Barwick}).
	
Proceeding as in the proof of Propositions \ref{Prop:naturalityofmotiviccolim} and \ref{Prop:compMSHMR}, since $r_k^\ast\calC := \calC \circ r_k : \Span \to \Span' \to \Cat$ is simply the presheaf $\calR$ from \Cref{Def:calR} in the case $k=\R$, and a complex analog of it otherwise, we have a commutative diagram of morphisms of spherical presheaves of symmetric monoidal categories
% https://q.uiver.app/#q=WzAsNCxbMSwwLCIoXFxTbV9cXGJ1bGxldClfey9yX2teXFxhc3RcXGNhbENeXFxrZ3BkfSJdLFswLDAsIihcXFNtX1xcYnVsbGV0KV97L1xcU0heXFx3Z3BkfSJdLFswLDEsIlxcU0heXFxvbWVnYSJdLFsxLDEsInJfa15cXGFzdFxcY2FsQ15cXGthcHBhIl0sWzEsMF0sWzEsMiwiTSIsMl0sWzAsMywiTV97cl9rXlxcYXN0XFxjYWxDXlxca2FwcGF9Il0sWzIsM11d
\begin{equation}\label{Diag:SHvrkastcalC}
\begin{tikzcd}
	{(\Sm_\bullet)_{/\SH^\wgpd}} & {(\Sm_\bullet)_{/r_k^\ast\calC^\kgpd}} \\
	{\SH^\omega} & {r_k^\ast\calC^\kappa.}
	\arrow[from=1-1, to=1-2]
	\arrow["M"', from=1-1, to=2-1]
	\arrow["{M_{r_k^\ast\calC^\kappa}}", from=1-2, to=2-2]
	\arrow[from=2-1, to=2-2]
\end{tikzcd}
\end{equation}

We now compare $M_{r_k^\ast\calC^\kappa}$ with $M_{\calC^\kappa} : (\Spc^\kappa)_{/\calC^\kgpd} \to \calC^\kappa$, or more precisely, their components on $k\in\Sm_k$ and $\ast\in\Spc^\kappa$ respectively. To do so, we will construct a commutative diagram 
% https://q.uiver.app/#q=WzAsOCxbNCwwLCJ0J15cXGFzdFxcY2FsQ15cXGthcHBhIl0sWzYsMSwiXFxjYWxDXlxca2FwcGEiXSxbMywxLCJyX2teXFxhc3RcXGNhbENeXFxrYXBwYSJdLFsxLDAsInReXFxhc3Qgcl9rXlxcYXN0XFxjYWxDXlxca2FwcGEiXSxbMywzLCJcXFNwYW5eXFxvcCJdLFs2LDMsIlxcU3BhbideXFxvcCJdLFsxLDQsIlxcXFwiXSxbMCwzLCJcXFNwYW4oXFxGaW4pXlxcb3AiXSxbNCw1LCJyX2siLDFdLFsxLDUsIkciXSxbMCwxLCJcXGNoaSdfe1xcY2FsQ15cXGthcHBhfVxccHNpJ197XFxjYWxDXlxca2FwcGF9IiwxXSxbMCw1LCJzJ15cXG9wIFxcY2lyYyB0J15cXGFzdCBHIiwxLHsibGFiZWxfcG9zaXRpb24iOjYwLCJzdHlsZSI6eyJib2R5Ijp7Im5hbWUiOiJkYXNoZWQifX19XSxbMiwxXSxbMywwXSxbMywyLCJcXGNoaV97cl9rXlxcYXN0XFxjYWxDXlxca2FwcGF9XFxwc2lfe3Jfa15cXGFzdFxcY2FsQ15cXGthcHBhfSIsMV0sWzMsNCwic15cXG9wIFxcY2lyYyB0XlxcYXN0IHJfa15cXGFzdCBHIiwxLHsibGFiZWxfcG9zaXRpb24iOjYwLCJzdHlsZSI6eyJib2R5Ijp7Im5hbWUiOiJkYXNoZWQifX19XSxbMiw0LCJyX2teXFxhc3QgRyJdLFs3LDUsIlxcdGV4dHtwdH0iLDEseyJjdXJ2ZSI6M31dLFs3LDQsImsiLDFdLFsyLDUsIiIsMSx7InN0eWxlIjp7Im5hbWUiOiJjb3JuZXIifX1dXQ==
\begin{equation}\label{Diag:SpanvsSpan'}
\begin{tikzcd}
	& {t^\ast r_k^\ast\calC^\kappa} &&& {t'^\ast\calC^\kappa} \\
	&&& {r_k^\ast\calC^\kappa} &&& {\calC^\kappa} \\
	\\
	{\Span(\Fin)^\op} &&& {\Span^\op} &&& {\Span'^\op}
	\arrow["\theta"{description}, from=1-2, to=1-5]
	\arrow["{\chi_{r_k^\ast\calC^\kappa}\,\circ\,\psi_{r_k^\ast\calC^\kappa}}"{description}, from=1-2, to=2-4]
	\arrow["{s^\op \,\circ\, t^\ast r_k^\ast G}"{description, pos=0.6}, dashed, from=1-2, to=4-4]
	\arrow["{\chi'_{\calC^\kappa}\,\circ\,\psi'_{\calC^\kappa}}"{description}, from=1-5, to=2-7]
	\arrow["{s'^\op \,\circ\, t'^\ast G}"{description, pos=0.6}, dashed, from=1-5, to=4-7]
	\arrow[from=2-4, to=2-7]
	\arrow["{r_k^\ast G}", from=2-4, to=4-4]
	\arrow["\lrcorner"{anchor=center, pos=0.125}, draw=none, from=2-4, to=4-7]
	\arrow["G", from=2-7, to=4-7]
	\arrow["k"{description}, from=4-1, to=4-4]
	\arrow["{\text{pt}}"{description}, curve={height=18pt}, from=4-1, to=4-7]
	\arrow["{r_k}"{description}, from=4-4, to=4-7]
\end{tikzcd}
\end{equation}
where, abusing notation, $G: \calC^\kappa \to \Span'^\op$ is the Cartesian fibration classifying $\calC^\kappa$, and $k : \Span(\Fin)^\op \to \Span^\op$ and $\text{pt}: \Span(\Fin)^\op \to \Span'^\op$ are induced by the functors $\Fin \to \Sm_k$ and $\Fin \to \Spc^\kappa$ sending $\{1,\dots n\}$ to $k^{\amalg n}$ and $\ast^{\amalg n}$, respectively. Moreover, the maps $s,t, \chi, \psi$ are as in the proof of \Cref{Prop:motiviccolim}, and $s',t',\chi',\psi'$ are their topological analogs considered in the proof of \Cref{Prop:topcolim} (with the indices indicating the presheaf under consideration).\\

We will additionally prove that, pulling everything back to $\Span(\Fin)^\op$, we obtain a commutative diagram of morphism of Cartesian fibrations
% https://q.uiver.app/#q=WzAsNixbMywzLCJcXFxcIl0sWzIsNCwiXFxTcGFuKFxcRmluKV5cXG9wIl0sWzEsMiwia15cXGFzdCAgcl9rXlxcYXN0XFxjYWxDXlxca2FwcGEiXSxbMywyLCJcXHRleHR7cHR9XlxcYXN0XFxjYWxDXlxca2FwcGEgXFxzaW1lcSBcXFNwXntcXGthcHBhLFxcb3RpbWVzfSJdLFswLDAsImteXFxhc3QgdF5cXGFzdCByX2teXFxhc3RcXGNhbENeXFxrYXBwYSBcXHNpbWVxICgoXFxTbV9rKV97LyhyX2teXFxhc3RcXGNhbEMpXlxca2dwZH0pXntcXG90aW1lcyd9Il0sWzIsMCwiXFx0ZXh0e3B0fV5cXGFzdCB0J15cXGFzdCBcXGNhbENeXFxrYXBwYSBcXHNpbWVxICgoXFxTcGNeXFxrYXBwYSlfey9cXGNhbENeXFxrZ3BkfSlee1xcb3RpbWVzJ30iXSxbNCwyLCJNIiwxXSxbNCw1LCJcXGJldGFfayJdLFs1LDMsIk1fXFxjYWxDIiwxXSxbMiwzLCJcXHNpbWVxIiwwLHsibGFiZWxfcG9zaXRpb24iOjMwfV0sWzMsMSwiIiwxLHsiY29sb3VyIjpbMCwwLDQ2XSwic3R5bGUiOnsiYm9keSI6eyJuYW1lIjoiZGFzaGVkIn19fV0sWzUsMSwiIiwxLHsiY29sb3VyIjpbMCwwLDQ2XSwic3R5bGUiOnsiYm9keSI6eyJuYW1lIjoiZGFzaGVkIn19fV0sWzQsMSwiIiwxLHsiY29sb3VyIjpbMCwwLDQ2XSwic3R5bGUiOnsiYm9keSI6eyJuYW1lIjoiZGFzaGVkIn19fV0sWzIsMSwiIiwxLHsiY29sb3VyIjpbMCwwLDQ2XSwic3R5bGUiOnsiYm9keSI6eyJuYW1lIjoiZGFzaGVkIn19fV1d
\begin{equation}\label{Diag:pullbacktoFin}
\begin{tikzcd}[column sep = 1em, row sep = 1em]
	{k^\ast t^\ast r_k^\ast\calC^\kappa \simeq ((\Sm_k)_{/(r_k^\ast\calC)^\kgpd})^{\otimes'}} && {\text{pt}^\ast t'^\ast \calC^\kappa \simeq ((\Spc^\kappa)_{/\calC^\kgpd})^{\otimes'}} \\
	\\
	& {k^\ast  r_k^\ast\calC^\kappa} && {\text{pt}^\ast\calC^\kappa \simeq \Sp^{\kappa,\otimes}} \\
	&&& \begin{array}{c} \\ \end{array} \\
	&& {\Span(\Fin)^\op.}
	\arrow["{\beta_k}", from=1-1, to=1-3]
	\arrow["M"{description}, from=1-1, to=3-2]
	\arrow[draw={rgb,255:red,117;green,117;blue,117}, dashed, from=1-1, to=5-3]
	\arrow["{M_\calC}"{description}, from=1-3, to=3-4]
	\arrow[draw={rgb,255:red,117;green,117;blue,117}, dashed, from=1-3, to=5-3]
	\arrow["\simeq"{pos=0.3}, from=3-2, to=3-4]
	\arrow[draw={rgb,255:red,117;green,117;blue,117}, dashed, from=3-2, to=5-3]
	\arrow[draw={rgb,255:red,117;green,117;blue,117}, dashed, from=3-4, to=5-3]
\end{tikzcd}
\end{equation}

Composing the corresponding commutative square of symmetric monoidal functors with the evaluation at $k \in\Sm_k$ of the square (\ref{Diag:SHvrkastcalC}), we obtain the commutative diagram from the statement.\\

Recalling the constructions in \Cref{Prop:motiviccolim} and \Cref{Prop:topcolim}, we construct Diagram (\ref{Diag:SpanvsSpan'}) as follows:\\
\adjustbox{scale = 0.75, center}{
	\begin{tikzcd}[column sep = 2em, row sep=2.5em, font=\large, labels={font=\normalsize}]
    &&& {t'^\ast\calC^\kappa} && {s'^\ast\calC^\kappa} &&&& {\calC^\kappa} \\
	\\
	{t^\ast r_k^\ast\calC^\kappa} &&&& {s^\ast r_k^\ast\calC^\kappa} &&& {r_k^\ast\calC^\kappa} \\
	&&&& \begin{array}{c} \\ \end{array} & {\Fun_\all(\Delta^1,\Span')^\op} &&&& {\Span'^\op} \\
	\\
	&&&& {\Fun_\Sm(\Delta^1,\Span)^\op} &&& {\Span^\op}
	\arrow[""{name=0, anchor=center, inner sep=0}, "{\psi'_{\calC^\kappa}}", shift left=2, from=1-4, to=1-6]
	\arrow["{t'^\ast G}"{description, pos=0.2}, dashed, from=1-4, to=4-6]
	\arrow[""{name=1, anchor=center, inner sep=0}, "{\phi'_{\calC^\kappa}}", shift left=2, from=1-6, to=1-4]
	\arrow["{\chi'_{\calC^\kappa}}"{description, pos=0.8}, from=1-6, to=1-10]
	\arrow["{s'^\ast G}"{description}, from=1-6, to=4-6]
	\arrow["\lrcorner"{anchor=center, pos=0.125}, draw=none, from=1-6, to=4-10]
	\arrow["G"{description}, from=1-10, to=4-10]
	\arrow["\theta"{description}, from=3-1, to=1-4]
	\arrow[""{name=2, anchor=center, inner sep=0}, "{\psi_{r_k^\ast\calC^\kappa}}", shift left=2, from=3-1, to=3-5]
	\arrow["{t^\ast r_k^\ast G}"{description, pos=0.2}, from=3-1, to=6-5]
	\arrow["\nu"{description}, from=3-5, to=1-6]
	\arrow[""{name=3, anchor=center, inner sep=0}, "{\phi_{r_k^\ast\calC^\kappa}}", shift left=2, from=3-5, to=3-1]
	\arrow["{\chi_{r_k^\ast\calC^\kappa}}"{description, pos=0.8}, from=3-5, to=3-8]
	\arrow["{s^\ast r_k^\ast G}"{description}, from=3-5, to=6-5]
	\arrow["\lrcorner"{anchor=center, pos=0.125}, draw=none, from=3-5, to=6-8]
	\arrow[from=3-8, to=1-10]
	\arrow["\lrcorner"{anchor=center, pos=0.125, rotate=0}, draw=none, from=3-8, to=4-10]
	\arrow["{r_k^\ast G}"{description}, from=3-8, to=6-8]
	\arrow["{s'^\op}"{description, pos=0.2}, dashed, from=4-6, to=4-10]
	\arrow["{r_k \circ -}"{description}, from=6-5, to=4-6]
	\arrow["{s^\op}"{description, pos=0.2}, from=6-5, to=6-8]
	\arrow["{r_k}"{description}, from=6-8, to=4-10]
	\arrow["\dashv"{anchor=center, rotate=-90}, draw=none, from=0, to=1]
	\arrow["\dashv"{anchor=center, rotate=-90}, draw=none, from=2, to=3]
\end{tikzcd}
}\\

Here, $\nu$ and $\theta$ are obtained by the universal property of the pullbacks $s'^\ast\calC^\kappa$ and $t'^\ast\calC^\kappa$, respectively. In particular, the cube on the right-hand side commutes, and so do the squares 
$\phi'_{\calC^\kappa} \circ \nu \simeq \theta \circ \phi_{r_k^\ast\calC^\kappa}$ and $(r_k\circ -) \circ t^\ast r_k^\ast G \simeq t'^\ast G \circ \theta$ on the left-hand side. It remains to check the commutativity of the top left square involving the left adjoints, namely, that $\nu \circ \psi_{r_k^\ast\calC^\kappa} \simeq \psi'_{\calC^\kappa} \circ \theta$. To do so, we apply the same strategy as in \Cref{Prop:naturalityofmotiviccolim}: since the square with the right adjoints commutes, it suffices to show that the corresponding exchange transformation 
$$\psi'_{\calC^\kappa} \circ \theta \Longrightarrow \psi'_{\calC^\kappa} \circ \theta \circ \phi_{r_k^\ast\calC^\kappa} \circ \psi_{r_k^\ast\calC^\kappa} \simeq \psi'_{\calC^\kappa} \circ \phi'_{\calC^\kappa} \circ \nu \circ \psi_{r_k^\ast\calC^\kappa} \Longrightarrow \nu \circ \psi_{r_k^\ast\calC^\kappa}$$
is an equivalence. Again, this is easily checked using the explicit description of the edges in $t^\ast r_k^\ast \calC^\kappa$ as in the proof of \Cref{Prop:topcolim} (and thus as in the proof of \Cref{Prop:naturalityofmotiviccolim}).\\

To obtain Diagram (\ref{Diag:pullbacktoFin}), we are left to show that the map $\beta_k: k^\ast t^\ast r_k^\ast\calC^\kappa \to \text{pt}^\ast t'^\ast \calC^\kappa$ induced by the universal property of the pullback preserves Cartesian edges. Note first that an edge in $k^\ast t^\ast r_k^\ast \calC^\kappa$ is Cartesian over $\Span(\Fin)^\op$ if and only if its projection on $t^\ast r_k^\ast \calC^\kappa$ is, and similarly for Cartesian edges in $\text{pt}^\ast t'^\ast \calC^\kappa$ (see \cite[\href{https://kerodon.net/tag/01UF}{Tag 01UF}]{Kerodon}). One concludes using the explicit form of such Cartesian edges (as in the proof of \Cref{Prop:topcolim}, i.e., as in Step 3 of the proof of \Cref{Prop:motiviccolim}), and that $r_k$ preserves those.
\end{proof}

Finally, we prove the analogs of \Cref{Prop:Mtop'} and \Cref{Prop:Thomagreeassmfunctors}.

\begin{thm}\label{Prop:MandMtopagreeassmII}
There is a commutative diagram of symmetric monoidal functors 
% https://q.uiver.app/#q=WzAsNCxbMiwzLCJcXFxcIl0sWzAsMSwiXFxTcCJdLFsxLDAsIlxcU3BjX3svXFxTcF5cXGtncGR9Il0sWzAsMCwiXFxQcmUoXFxTcGNeXFxrYXBwYSlfey9cXGNhbENeXFxrZ3BkfSJdLFszLDIsIlxcbWF0aHNme2V2fV9cXGFzdCJdLFsyLDEsIlxcTXRvcCJdLFszLDEsIlxcbWF0aHNme0xLRX1eXFxvdGltZXMoTV9cXGNhbEMpIiwyXV0=
\[\begin{tikzcd}
	{\Pre(\Spc^\kappa)_{/\calC^\kgpd}} & {\Spc_{/\Sp^\kgpd}} \\
	\Sp
	\arrow["{\mathsf{ev}_\ast}", from=1-1, to=1-2]
	\arrow["M_\calC"', from=1-1, to=2-1]
	\arrow["\Mtop", from=1-2, to=2-1]
\end{tikzcd}\]
where $M_\calC$ is the functor from \Cref{Prop:6FFforC}, and the top horizontal arrow is a symmetric monoidal localization (with respect to the symmetric monoidal structure of \Cref{Prop:topcolim} on the source, and the Day convolution on the target, see \Cref{Prop:sliceofpshsm}).

In particular, composing with the square of \Cref{Prop:compMSHMC}, for $k=\R$ or $\C$, we obtain a commutative diagram of symmetric monoidal functors
% https://q.uiver.app/#q=WzAsNCxbMCwwLCJcXFByZShcXFNtX2spX3svXFxTSF5cXHdncGR9Il0sWzEsMCwiXFxzcGFjZXNfey9cXFNwXlxca2dwZH0iXSxbMSwxLCJcXFNwIl0sWzAsMSwiXFxTSChrKSJdLFszLDIsInJfayIsMl0sWzEsMiwiXFxNdG9wIl0sWzAsMywiTSIsMl0sWzAsMSwiXFxtYXRoc2Z7ZXZ9X1xcYXN0XFxjaXJjXFxiZXRhX2siXV0=
\[\begin{tikzcd}[column sep = 3em]
	{\Pre(\Sm_k)_{/\SH^\wgpd}} & {\spaces_{/\Sp^\kgpd}} \\
	{\SH(k)} & \Sp.
	\arrow["{\mathsf{ev}_\ast\circ\beta_k}", from=1-1, to=1-2]
	\arrow["M"', from=1-1, to=2-1]
	\arrow["\Mtop", from=1-2, to=2-2]
	\arrow["{r_k}"', from=2-1, to=2-2]
\end{tikzcd}\]
\end{thm}
\begin{proof} We reproduce and summarize the proofs of \Cref{Prop:Mtop'} and \Cref{Prop:Thomagreeassmfunctors}. We first show that 
	\begin{itemize}[leftmargin = 3em]
	\item[(1)] the functor $\mathsf{ev}_\ast : \Pre(\Spc^\kappa) \to \Spc$ given by evaluation at $\ast \in \Spc^\kappa$ is a localization, i.e., that it has a fully faithful right adjoint $R$.
	\end{itemize}
	By \cite[Prop.\ 2.11]{BEH} (whose proof is still valid in our case, i.e., with $\Sm_k$ replaced with $\Spc^\kappa$), if we show that
	\begin{itemize}[leftmargin = 3em]
	\item[(2)] $\calC^\kgpd$ is a local object, namely, $(R\circ \mathsf{ev}_\ast)(\calC^\kgpd) \simeq \calC^\kgpd$ (via the unit of the adjunction $\mathsf{ev}_\ast \dashv R$),
	\end{itemize}
	then we obtain an induced localization $L : \Pre(\Spc^\kappa)_{/\calC^\kgpd} \to \Spc_{/\mathsf{ev}_\ast( \calC^\kgpd)} \simeq \Spc_{/\Sp^\kgpd}$, such that local objects are those whose image by the source functor $\Pre(\Spc^\kappa)_{/\calC^\kgpd} \to \Pre(\Spc^\kappa)$ is local, and similarly for local equivalences. Just as in the proof of \Cref{Prop:otimes'smstc}, we show that the right adjoint of $L$, which we denote by $R'$, exhibits $\Spc_{/\Sp^\kgpd}$ as a presentably symmetric monoidal subcategory of $\Pre(\Spc^\kappa)_{/\calC^\kgpd}$, and that $L$ is symmetric monoidal. To do so, we have to verify that
	\begin{itemize}[leftmargin = 3em]
	\item[(3)] for any $X\in \Pre(\Spc^\kappa)_{/\calC^\kgpd}$, the functor $- \otimes X$ preserves local equivalences in $\Pre(\Spc^\kappa)_{/\calC^\kgpd}$,
	\item[(4)] for all $X,Y\in \Spc_{/\Sp^\kgpd}$, $R'X \otimes R'Y$ is a local object, and so is the monoidal unit.
	\end{itemize}
	In the same way as in \Cref{Prop:Mtop'}, we get an induced symmetric monoidal, colimit preserving functor $\Mtop': \Spc_{/\Sp^\kgpd} \to \Sp$, and then the same argument as in \Cref{Prop:Thomagreeassmfunctors} proves that the symmetric monoidal structure on the source is given by Day convolution, and that $\Mtop'$ agrees with the classical Thom spectrum functor $\Mtop$.\\
	
	We are therefore left to prove statements $(1)$ to $(4)$. For $(1)$, note that a right adjoint $R$ exists and is by definition given by right Kan extension (viewing a space as a presheaf on $\{\ast\} \subseteq \Spc^\kappa$). The pointwise formula for the right Kan extension shows that $R$ associates to a space $X$ the restriction to $\Spc^\kappa$ of its Yoneda embedding $y(X) = \map_\Spc(-,X)$. Now, $R$ is fully faithful because the counit is an equivalence $\mathsf{ev}_\ast \circ R \simeq \id{\Spc}$. Indeed, for all $Y\in\Spc^\kappa$, we have $\mathsf{ev}_\ast(R(Y)) \simeq \map_\Spc(\ast,Y) \simeq Y$. \\

To prove $(2)$, we compute:
\begin{align*}
	\forall Y\in\Spc^\kappa,\quad [(R\circ \mathsf{ev}_\ast)(\calC^\kgpd)](Y) &\simeq \map_{\Spc}(Y,\Sp^\kgpd) \simeq \limit^\Spc_Y\,\map_{\Spc}(\ast,\Sp^\kgpd) \\
	&\simeq \limit^\Spc_Y\,\Sp^\kgpd \simeq \left(\limit^{\Cat}_{Y}\,\Sp^\kappa\right)^\simeq,
\end{align*}
since $(-)^\simeq: \Cat \to \Spc$ is a right adjoint. We thus have to show that $\limit^{\Cat}_{Y} \Sp^\kappa \simeq \Sp(\Spc_{/Y})^\kappa$. Let $U: \PrL \to \widehat{\mathsf{Cat}}_\infty$ be the forgetful functor. We first compute
\begin{align*}
\limit^{\widehat{\mathsf{Cat}}_\infty}_{Y} \Sp &\simeq U\colim^{\PrL}_Y\, \Sp \tag{by \cite[Cor.\ 5.5.3.4 and Thm.\ 5.5.3.18]{Lurie-HTT}} \\
&\simeq U\left((\colim^{\PrL}_Y\, \Spc)\otimes \Sp\right) \tag{$\PrL$ is presentably symmetric monoidal \cite[Rmk.\ 4.8.1.18]{Lurie-HA}}\\
&\simeq \Sp(U(\colim^{\PrL}_Y\, \Spc)) \tag{tensoring with $\Sp\in\PrL$ is stabilization \cite[Ex.\ 4.8.1.23]{Lurie-HA}}\\
&\simeq \Sp(\limit^{\widehat{\mathsf{Cat}}_\infty}_Y\, \Spc_{/\ast})\\
&\simeq \Sp(\Spc_{/\colim^{\Spc}_Y\, \ast}) \tag{by descent for $\Spc$}\\
&\simeq \Sp(\Spc_{/Y}).
\end{align*}

By \Cref{Prop:6FFforC}, the functors $y^\ast: \calC(Y) \to \calC(\ast)$, for $y:\ast \to Y$, preserve $\kappa$-compact objects. Therefore, the above equivalence, which we denote by $G$, fits into a commutative diagram
% https://q.uiver.app/#q=WzAsNCxbMCwwLCJcXFNwKFxcU3BjX3svWX0pXFxzaW1lcSBcXGNhbEMoWSkiXSxbMiwwLCJcXGxpbWl0X3t5OlxcYXN0XFx0byBZfSBcXGNhbEMoXFx7eVxcfSkgXFxzaW1lcSBcXGxpbWl0XntcXHdpZGVoYXR7XFxtYXRoc2Z7Q2F0fX1fXFxpbmZ0eX1fe1l9IFxcU3AiXSxbMCwxLCJcXGNhbEMoWSleXFxrYXBwYSJdLFsyLDEsIlxcbGltaXRfe3k6XFxhc3RcXHRvIFl9IFxcY2FsQyhcXHt5XFx9KV5cXGthcHBhIl0sWzAsMSwiRiA6PVxcbGltaXRfe3k6XFxhc3RcXHRvIFl9IFxcaW90YV95XlxcYXN0IiwwLHsib2Zmc2V0IjotMn1dLFsxLDAsIkciLDAseyJvZmZzZXQiOi0yfV0sWzIsMywiRicgOj1cXGxpbWl0X3t5OlxcYXN0XFx0byBZfVxcIFxcaW90YV95XlxcYXN0XFx2ZXJ0X3tcXGNhbEMoWSleXFxrYXBwYX0iLDJdLFsyLDAsIkEiLDAseyJzdHlsZSI6eyJ0YWlsIjp7Im5hbWUiOiJob29rIiwic2lkZSI6InRvcCJ9fX1dLFszLDEsIkIgPSBcXGxpbWl0X3t5OlxcYXN0IFxcdG8gWX0gQl95IiwyXSxbNCw1LCJcXHNpbWVxIiwxLHsic2hvcnRlbiI6eyJzb3VyY2UiOjIwLCJ0YXJnZXQiOjIwfSwic3R5bGUiOnsiYm9keSI6eyJuYW1lIjoibm9uZSJ9LCJoZWFkIjp7Im5hbWUiOiJub25lIn19fV1d
\[\begin{tikzcd}[row sep = 3em, column sep = 5em]
	{\Sp(\Spc_{/Y})\simeq \calC(Y)} && {\limit_{y:\ast\to Y} \calC(\{y\}) \simeq \limit^{\widehat{\mathsf{Cat}}_\infty}_{Y} \Sp} \\
	{\calC(Y)^\kappa} && {\limit_{y:\ast\to Y} \calC(\{y\})^\kappa}
	\arrow[""{name=0, anchor=center, inner sep=0}, "{F :=\limit_{y:\ast\to Y} y^\ast}", shift left=2, from=1-1, to=1-3]
	\arrow[""{name=1, anchor=center, inner sep=0}, "G", shift left=2, from=1-3, to=1-1]
	\arrow["A", hook, from=2-1, to=1-1]
	\arrow["{F' :=\limit_{y:\ast\to Y}\ y^\ast\vert_{\calC(Y)^\kappa}}"', from=2-1, to=2-3]
	\arrow["{B = \limit_{y:\ast \to Y} B_y}"', from=2-3, to=1-3]
	\arrow["\simeq"{description}, draw=none, from=0, to=1]
\end{tikzcd}\]
where $B_y: \calC(\{y\})^\kappa \xhookrightarrow{} \calC(\{y\})$ is the inclusion. Therefore, it suffices to show that $GB$ factors through $A$ as a functor $G'$. Indeed, we then have $G'F' \simeq \id{}$ (as $AG'F'\simeq GBF' \simeq  GFA \simeq A$), and $F'G'\simeq \id{}$, as can be seen by computing the projections $\pi_y$ on each component of the limit: we have $\pi_y F'G' \simeq \pi_y$ because
$$B_y \pi_y F'G' \simeq B_y \circ y^\ast\vert_{\calC(X)^\kappa} \circ G' \simeq y^\ast A  G' \simeq y^\ast  G  B \simeq \pi_y  FG  B \simeq \pi_y  B \simeq B_y\pi_y.$$

Let $c\in\calC(Y)$ be in the image of $GB$, then each $y^\ast (c)$ is in the image of $\pi_yFGB \simeq B_y\pi_y$, so it is $\kappa$-compact in $\calC(\{y\})$. Then, for any $\colim_{j\in\calJ}\, z_j$ a $\kappa$-filtered colimit diagram in $\calC(Y)$, we have, using that $\calC(Y) \simeq \limit_{y:\ast \to Y}\, \calC(\{y\})$ by the above:
\begin{align*}
 \map_{\calC(Y)}(c, \colim_{j\in\calJ}\, z_j) &\simeq \limit_{y:\ast \to Y}\, \map_{\calC(\{y\})} (y^\ast c, y^\ast \colim_{j\in\calJ}\, z_j) \\
&\simeq \limit_{y:\ast \to Y}\, \map_{\calC(\{y\})} (y^\ast c, \colim_{j\in\calJ}\,y^\ast z_j) \tag{by \Cref{Prop:6FFforC}}\\
&\simeq \limit_{y:\ast \to Y}\, \colim_{j\in\calJ}\, \map_{\calC(\{y\})} (y^\ast c, y^\ast z_j) \tag{by $\kappa$-compactness of $y^\ast c$}\\
&\simeq \colim_{j\in\calJ}\, \limit_{y:\ast \to Y}\, \map_{\calC(\{y\})} (y^\ast c, y^\ast z_j) \tag{by \cite[Prop.\ 5.3.3.3]{Lurie-HTT}}\\
&\simeq \colim_{j\in\calJ}\, \map_{\calC(Y)}(c, z_j), 
\end{align*} 
so $c\in\calC(Y)$ is $\kappa$-compact, i.e., it is in the image of $A$. This proves $(2)$.\\

We now turn to the proof of $(3)$. Recall that local equivalences are preserved and detected by the source functor $\Pre(\Spc^\kappa)_{/\calC^\kgpd} \to \Pre(\Spc^\kappa)$. By the same argument as in \Cref{Prop:tensoragree}, the tensor product in $\Pre(\Spc^\kappa)_{/\calC^\kgpd}$ with the symmetric monoidal structure from \Cref{Prop:topcolim} is given on the sources by Day convolution, which is simply the product of the presheaves, since the monoidal structure on $\Spc^\kappa$ we are considering is the Cartesian one. For $(X\to \calC^\kgpd)\in \Pre(\Spc^\kappa)_{/\calC^\kgpd}$ and $f: (A \to \calC^\kgpd) \to (B \to \calC^\kgpd)$ a local equivalence, $\mathsf{ev}_\ast(A) \to \mathsf{ev}_\ast(B)$ is an equivalence, and so $\mathsf{ev}_\ast(A \times X) \simeq \mathsf{ev}_\ast(A) \times \mathsf{ev}_\ast(X)  \to \mathsf{ev}_\ast(B) \times \mathsf{ev}_\ast(X) \simeq \mathsf{ev}_\ast(B \times X)$ is an equivalence as well. Therefore, $(X\to \calC^\kgpd) \otimes f$ is a local equivalence, as needed.\\

Finally, to prove $(4)$, it again suffices to show the statement on the sources. The monoidal unit is local because $R(\mathsf{ev}_\ast(\ast)) = R(\ast) = y(\ast)\vert_{\Spc^\kappa} = \ast$. Moreover, given $X, Y$ as in $(4)$, with sources $x,y \in \Pre(\Spc^\kappa)$ respectively, the source of $R'X \otimes R'Y$ is $Rx\times Ry \simeq R(x\times y)$, which is local by definition.
\end{proof}

We now apply our comparison result to the standard examples of motivic Thom spectra.
\begin{thm}\label{Prop:MGLMSLMSpcomplex}
	The commutative diagram of \Cref{Prop:MandMtopagreeassmII} induces equivalences of $\Einfty$-rings 
	 $$\rC\mathsf{MGL} \simeq \mathsf{MU}, \qquad \rC\mathsf{MSL} \simeq \mathsf{MSU}, \qquad \rC\mathsf{MSp}\simeq \mathsf{MSp^{top}}.$$
\end{thm}
\begin{proof}
The proof is similar to that of \Cref{Prop:MGLMSLMSp}, using the construction of $\beta_k$ given in the proof of \Cref{Prop:compMSHMC}.
\end{proof}

\newpage
\section*{Index of notation}\label{Sect:indexofnotation}

\vspace{-1em}
\begin{table}[H]
  \centerline{%
    \begin{tabular}[h]{lll}
 $\Affl$ & Affine line over some base scheme & \\
 $\Alg(\calC^\otimes)$, $\CAlg(\calC^\otimes)$ & Categories of $\E{1}$-, resp.\ $\Einfty$-algebras in a symmetric monoidal category $\calC^\otimes$ & \S \ref{Subsect:freeE1HA} \\
 $\calC^\simeq$ & Maximal ($\infty$-)groupoid in a category $\calC$ (right adjoint to the forgetful functor) & \ref{Def:Thomspectrum} \\
 $\Cat$ & Category of small ($\infty$-)categories \\
 $\eta$ & Motivic Hopf map & \S \ref{Subsect:rRKGLkgl}\\
 $\E{n}$ & $\infty$-operad of little $n$-cubes & \S \ref{Subsect:freeE1HA}\\
 $f_n$, $\tilde{f}_n$ & $n$-effective, respectively very $n$-effective cover functors & \ref{Def:effcover}\\
 $\Fin$ & 1-category of finite sets (including $\emptyset$) & \ref{Def:finstar}\\
 $\mathsf{Fun^\times}(\mathcal{C},\mathcal{D})$ & Category of functors $\mathcal{C}\to \mathcal{D}$ preserving finite products & \ref{Prop:presheavesandspans}\\
 $\mathsf{Fun^L}(\mathcal{C},\mathcal{D})$ & Category of colimit-preserving functors $\mathcal{C}\to \mathcal{D}$ & \ref{Prop:laxdayconvolution}\\
 $\mathsf{Fun^{\otimes}}(\mathcal{C}^\otimes,\mathcal{D}^\otimes)$ & Category of symmetric monoidal functors $\mathcal{C}^\otimes\to \mathcal{D}^\otimes$ & \ref{Prop:dayconvolution}\\
 $\mathsf{Fun^{lax}}(\mathcal{C}^\otimes,\mathcal{D}^\otimes)$ & Category of lax symmetric monoidal functors $\mathcal{C}^\otimes \to \mathcal{D}^\otimes$ & \ref{Prop:maindayconvolution}\\
 $\Gm$ & Group scheme $S\times_{\Spec(\Z)} \Spec(\Z[t,t^{-1}])$ over a base scheme $S$ & \\
$\HA[t^n]$ & Free $\E{1}$-$\HA$-algebra on one generator in degree $n$ & \ref{Def:freeE1HA}\\
 $\HZ, \HZmod$ & Motivic Eilenberg-Mac Lane spectra, representing motivic cohomology & \ref{Def:HZ}\\
 $\HZtilde$ & Effective cover of the Milnor-Witt K-theory sheaf: $f_0\underline{K}^{MW}_\ast$ & \S \ref{Subsect:rRHZtilde} \\
 $\underline{K}^{MW}_\ast$, $\underline{K}^{M}_\ast$, $\underline{K}^{W}_\ast$ & Milnor-Witt, Milnor, and Witt K-theory sheaves & \ref{Def:HZ}\\
 $\KGL, \kgl$ & Algebraic K-theory motivic spectrum and its very effective cover & \ref{Subsect:rRKGLkgl}\\
 $\KO, \ko$ & Hermitian K-theory motivic spectrum and its very effective cover & \ref{Subsect:rRKO}\\
 $\KOtop, \kotop$ & Real topological K-theory spectrum and its connective cover & \ref{Subsect:rRKO}\\
 $\KU$ & Complex topological K-theory spectrum & \ref{Def:Cherncharacter} \\
 $\KW, \mathsf{kw}$ & Balmer--Witt K-theory motivic spectrum and its very effective cover & \ref{Prop:rRKO}\\
 $\mathsf{L}(-)$ & L-theory spectrum functor & \ref{Prop:rRKO}\\
 $M$ & Motivic Thom spectrum functor & \ref{Prop:motivicThom}\\
 $\mathsf{Map}_\calC$ & Mapping space in a category $\calC$ & \\
 $\mathsf{MGL}, \MSL, \mathsf{MSp}$ & General linear, special linear, and symplectic cobordism motivic Thom spectra & \ref{Ex:MGLMSLMSp}\\
 $\mathsf{MO}, \MSO, \mathsf{MU}$ & Unoriented, oriented, and complex cobordism Thom spectra & \ref{Ex:MSO}\\
 $\Mtop$ & Topological Thom spectrum functor & \ref{Def:Thomspectrum}\\
 $\finstar{n}$, $\fin{n}$ & Finite pointed set $\{\ast, 1,\dots,n\}$ in $\Finstar$, resp.\ finite set $\{1,\dots,n\}$ in $\Fin$ & \ref{Def:finstar}\\
 $\calC^\omega$ & Full subcategory spanned by compact objects in a category $\calC$ & \ref{Def:Thomspectrum}\\
 $\Omega^\infty, \Omega^\infty_T$ & Infinite loop space functors for classical, resp.\ motivic ($\mathbb{P}^1$-)spectra & \\
 $\homsh{\ast}{\ast}{-}$, $\underline{\pi}_{\ast,\ast}(-)$ & Homotopy sheaves of a motivic space or spectrum & \\
 $\Pre(\calC)$ & Category of presheaves of spaces on a category $\calC$ & \\ 
 $\Pre_\Sigma(\calC)$ & Category of spherical presheaves of spaces on a category $\calC$ & \ref{Def:sphericalpresheaves}\\ 
 $\PrL$ & Category of presentable categories and left-adjoint functors & \\ 
 $\rho$ & Inclusion $\calS^0 \to \Gm$ on the points corresponding to 1 and $-1$ & \ref{Prop:rRinvertingrho}\\
 $\rR$, $\rC$ & Real and complex Betti realization functors & \S \ref{Subsect:rR}\\
 $\spaces$ & Category of spaces & \\
 $\calS$ & Motivic sphere spectrum & \\
 $\calS^n$ & (Simplicial) $n$-sphere in the category of motivic spaces or motivic spectra & \\
 $\calS^{p,q}$ & Motivic bigraded spheres, $\calS^{p,q} = \calS^{p-q} \Smash \Gm^{\Smash q}$ & \\
 $s_n$, $\tilde{s}_n$ & $n$-effective, respectively very $n$-effective slices & \ref{Def:effcover}\\
 $\Sph$ & Topological sphere spectrum & \\
 $\Sph^n$ & $n$-sphere in the category of spaces or spectra & \\
 $\SH(S)$ & Category of motivic ($\mathbb{P}^1$-)spectra over $S$ & \\
 $\SH(S)^\eff$, $\SH(S)^\veff$ & Subcategories of effective, respectively very effective spectra over $S$ & \ref{Def:SHkeff}\\
 $\SH(S)^\eff(n)$, $\SH(S)^\veff(n)$ & Subcategories of $n$-effective, respectively very $n$-effective spectra over $S$ & \ref{Def:SHkeff}\\
 $\Sigma_T$ & Functor $ T \Smash - : \SH(S) \to \SH(S)$ & \\
 $\Sigma^\infty$ & Infinite suspension functors for topological or motivic spectra & \\
 $\Sm_S$ & 1-category of smooth schemes of finite type over the base scheme $S$ & \\
 $\Sp$ & Category of (topological) spectra & \\
 $\Span$ & When working over a base scheme $S$, $\Span(\Sm_S, \mathsf{all}, \mathsf{fold})$ & \ref{Notation:Span} \\
 $\Span(\calC,\mathsf{left},\mathsf{right})$ & Category of objects in $\calC$, with morphisms certain span diagrams & \ref{Def:Span} \\
 $\Spc(S)$ & Category of motivic spaces over the base scheme $S$ & \\
 $T$ & Pointed motivic space or spectrum corresponding to $(\mathbb{P}^1,\infty)$ & \\
 $\tau_{\geq n}, \tau_{\leq n}$ & Truncation functors for a $t$-structure\\
 $(-)_+$ & Disjoint base point functor (free functor from unpointed to pointed objects) & \\
 \end{tabular}%
  }
\end{table}

\newpage
\clearpage

\paragraph{\textbf{Acknowledgments: }} The research presented in this article was conducted for my Master's thesis, under the supervision of Prof.\ Tom Bachmann, in JGU Mainz. I would like to express my deepest gratitude to him, and to my second supervisor at EPFL, Prof.\ Jérôme Scherer. Many thanks are also due to Anton Engelmann, Klaus Mattis, Luca Passolunghi, and professors Marc Hoyois and Markus Land for very helpful discussions. Moreover, I wish to thank all the members of the AGTZ group for welcoming me so warmly in Mainz, and for insightful mathematical discussions. Finally, I acknowledge financial by the EPFL Student Support Program, and by the Deutsche Forschungsgemeinschaft (DFG, German Research Foundation) through the Collaborative Research Center TRR 326 Geometry and Arithmetic of Uniformized Structures, project number 444845124.\\

\paragraph{\textbf{Competing interests: }} The author declares none.

\bibliographystyle{abbrv}  
\bibliography{biblio.bib} \label{biblio}
\vspace{1cm}
\end{document}